\documentclass[12pt]{article}
\usepackage{amsthm,amsmath,amssymb}
\usepackage{fullpage}
\usepackage{float}
\usepackage{tikz}

\newtheorem{thm}{Theorem}[section]
\newtheorem{lem}[thm]{Lemma}
\newtheorem{prop}[thm]{Proposition}
\newtheorem{cor}[thm]{Corollary}
\newtheorem{defn}[thm]{Definition}
\theoremstyle{remark}
\newtheorem{rem}[thm]{Remark}

\newcommand{\HH}{\mathcal H}
\newcommand{\FF}{\mathcal F}
\newcommand{\FA}{\mathcal A}
\newcommand{\BB}{\mathcal B}
\newcommand{\NH}{N_{\HH}}
\newcommand{\R}{\mathbb R}

\begin{document}
	
	\title{The maximum spectral radius of outerplanar and planar $k$-uniform hypergraphs}
	\author{Pei Liu\thanks{Department of Mathematics, Sungkyunkwan University, Suwon, 16419, Republic of Korea. liupei2023@g.skku.edu. Research supported by the China Scholarship Council.} \and Suil O\thanks{Department of Applied Mathematics and Statistics, The State University of New York, Korea, Incheon, 21985, suil.o@sunykorea.ac.kr. Corresponding author. Research supported by the National Research Foundation of Korea (NRF) grant funded by the Korea government(MSIT) No. RS-2025-23523950.}}
	\date{\today}
	
	\maketitle
	
	\begin{abstract}
		\noindent
		For an integer $k\ge3$, a $k$-angulation is a simple $2$-connected outerplane graph whose interior faces are all bounded by cycles of length $k$, and a closed $k$-angulation is a simple $2$-connected plane graph all of whose faces, the outer face included, are bounded by cycles of length $k$; the face hypergraph of either is the $k$-uniform hypergraph whose edges are the vertex sets of those faces. For $k=3$, the outerplanar and planar extremal
		problems of Ellingham, Lu and Wang reduce to these face hypergraphs
		after augmentation. They determined the outerplanar hypergraph
		of maximum spectral radius for large $n$ and conjectured the
		planar one. In this paper, we determine the extremal hypergraphs in both classes for every $k$. In the outerplanar case, for all sufficiently large
		admissible $n$, it is the fan, in which a single vertex lies on every face, and the maximum equals $(4f)^{1/k}(1+o(1))$, where $f=(n-2)/(k-2)$ is the number of interior faces. In the planar problem, the maximum has order $n^{1/3}$ when $k=3$ and the order $n^{2/k}$ when $k\ge4$. For $k\ge4$ the extremal hypergraphs are the face hypergraphs of the balanced theta graphs, in which two vertices are joined by internally disjoint paths and every face is a $k$-cycle through both: for $k=4$, where the closed $4$-angulations are the quadrangulations of the sphere, this holds for every $n\ge5$, the extremal hypergraph being $\HH(K_{2,n-2})$, and for $k\ge5$ for all sufficiently large admissible $n$. For $k\ge6$ the extremal hypergraph is not unique: when the number of faces is even there are exactly $\lfloor(k-2)/2\rfloor$ of them up to isomorphism. For $k=3$ two vertices of a plane triangulation cannot lie on more than two common faces, the balanced theta graphs are unavailable, and the extremal hypergraph is instead, for all sufficiently large $n$, the face hypergraph $D_{n-2}$ of the plane triangulation $K_2+P_{n-2}$; this confirms a conjecture of Ellingham, Lu and Wang.\\
		
		\noindent
		\textbf{Keywords:} $k$-uniform hypergraph, outerplanar, planar, $k$-angulation, spectral radius, adjacency tensor\\
		
		\noindent
		\textbf{AMS subject classification 2020:} 05C65, 05C50, 05C10
	\end{abstract}
	
	\section{Introduction}
	
	The study of the spectral radius of planar and outerplanar graphs goes back to Schwenk and Wilson~\cite{SW}. For planar graphs, Yuan~\cite{Y} gave the first bound of the correct order, Boots and Royle~\cite{BR} and independently Cao and Vince~\cite{CV} conjectured that $K_2+P_{n-2}$ is extremal, Guiduli and Hayes~\cite{GH} announced a proof for large $n$, Tait and Tobin~\cite{TT} proved it for large $n$, and Liu, Ning and Wang~\cite{LNW} settled it for every $n\ge9$. For outerplanar graphs, Cvetkovi\'c and Rowlinson~\cite{CR} conjectured that among the outerplanar graphs on $n$ vertices the fan $K_1+P_{n-1}$ has the largest spectral radius; Tait and Tobin~\cite{TT} proved this for large $n$, and Lin and Ning~\cite{LN} proved it for every $n\ne6$. Ellingham, Lu and Wang~\cite{ELW} carried the question to $3$-uniform hypergraphs and showed that for large $n$ the outerplanar $3$-uniform hypergraph of maximum spectral radius is the one whose shadow is $K_1+P_{n-1}$. They also conjectured a planar analogue, with
	$K_2+P_{n-2}$ in place of $K_1+P_{n-1}$.
	We prove this conjecture in Theorem~\ref{thm:p3}.
	For every $k\ge4$, we also determine the hypergraphs of maximum
	spectral radius among the face hypergraphs of $k$-angulations and
	closed $k$-angulations for all sufficiently large admissible $n$;
	for the quadrangulations of the sphere, which are the closed $4$-angulations, the result holds for every $n\ge5$ (Theorem~\ref{thm:k4}). The two topological restrictions and the uniformity form a single family of extremal problems, and the answer changes shape twice inside it: once between the outerplanar and the planar restriction, and once, within the planar restriction, between $k=3$ and $k\ge4$.
	
	For an integer $k\ge3$, a \emph{$k$-angulation} is a simple $2$-connected outerplane graph all of whose interior faces are bounded by cycles of length $k$. Every vertex of such a graph lies on the outer face, and the outer face is bounded by a cycle through all of them. Writing $n=|V(G)|$ and $f$ for the number of interior faces, Euler's formula gives $n-2=f(k-2)$, so $n\equiv2\pmod{k-2}$. The \emph{face hypergraph} $\HH(G)$ is the $k$-uniform hypergraph on $V(G)$ whose edges are the vertex sets of the interior faces of $G$. We say that a $k$-uniform hypergraph is a \emph{$k$-angulation hypergraph} if it is the face hypergraph of some $k$-angulation. For $k=3$ these are exactly the outerplanar $3$-uniform hypergraphs of Ellingham, Lu and Wang~\cite{ELW} whose shadow is a maximal outerplanar graph and whose hyperedges are the vertex sets of all its interior faces.
	
	Planarity of a hypergraph is understood here in the sense of Zykov~\cite{Z}, that is, through a plane graph carrying its edges as faces. The definition calls for two comments. The first is that for $k\ge4$ the formulation through the shadow, which is the one used in~\cite{ELW}, is not available. Recall that the \emph{shadow} $\partial\HH$ of a $k$-uniform hypergraph $\HH$ is the graph on $V(\HH)$ whose edges are the pairs lying in a common hyperedge; Ellingham, Lu and Wang call a $3$-uniform $\HH$ outerplanar if $\partial\HH$ has an outerplanar embedding in which every hyperedge of $\HH$ is the vertex set of an interior face. They call a $3$-uniform hypergraph $\HH$ planar if $\partial\HH$ has a plane embedding in which every hyperedge is the vertex set of a triangular face. Every hyperedge spans a copy of $K_k$ in the shadow. For $k=4$, every plane embedding of $K_4$ has only triangular faces, so its four vertices cannot be the vertex set of a face in a plane embedding of the shadow. For $k\ge5$, the shadow cannot be planar because it contains $K_k$. Thus the shadow-based definition admits no hypergraph with a hyperedge when $k\ge4$. What does generalize is the host: for $k=3$ the shadow and the plane graph carrying the faces coincide, and it is the plane graph that we keep. A $k$-uniform hypergraph whose edges are the faces of a plane graph is the face hypergraph of a $2$-connected plane graph with all faces of length $k$, so the closed $k$-angulations are exactly the plane graphs on which the definition of~\cite{ELW} can be carried out with faces of length $k$; for $k=4$ they are the quadrangulations of the sphere.
	
	The second is that nothing is lost by taking all faces. If $\HH$ is a $k$-uniform hypergraph whose edges are vertex sets of interior faces of a $k$-angulation $G$, but not all of them, then $\HH\subsetneq\HH(G)$, so $P_{\HH}(x)\le P_{\HH(G)}(x)$ for every $x\ge0$ and hence $\lambda(\HH)\le\lambda(\HH(G))$, with strict inequality when $\HH(G)$ is connected. Hence every extremal hypergraph carried by $G$ must equal $\HH(G)$. The same argument applies verbatim to closed $k$-angulations, so in either class one may restrict attention to face hypergraphs from the outset. For $k=3$, the original shadow-based definitions do not require the shadow to be maximal. In the outerplanar case, one can augment the shadow to a maximal outerplanar graph while preserving all prescribed interior faces, as in~\cite{ELW}. In the planar case, Lemma~\ref{lem:augment} gives the corresponding augmentation to a plane triangulation. Strict monotonicity then shows that every extremal hypergraph is the complete face hypergraph of the resulting carrier. Thus the extremal problems over complete face hypergraphs agree with those of~\cite{ELW}. For $k\ge4$, the carrier is assumed to be a $k$-angulation or a closed $k$-angulation from the outset.
	
	The \emph{fan $k$-angulation} $\FF_{k,f}$ is the $k$-angulation with $f$ faces in which one vertex, the \emph{hub}, lies on every face; see Figure~\ref{fig:fan}. For $k=3$ it is the fan $K_1+P_{n-1}$.
	
	Eigenvalues of tensors were introduced independently by Qi~\cite{Q} and Lim~\cite{Li}, and the adjacency tensor of a uniform hypergraph is due to Cooper and Dutle~\cite{CD}. Following them, the \emph{spectral radius} of a $k$-uniform hypergraph $\HH$ on $n$ vertices is
	\[
	\lambda(\HH)=\max\Bigl\{P_{\HH}(x):\ x\in\R^{n}_{\ge0},\ \sum_v x_v^{k}=1\Bigr\},\qquad
	P_{\HH}(x)=k\sum_{e\in E(\HH)}\ \prod_{v\in e}x_v .
	\]
	When $\HH$ is connected, the maximum is attained at a positive vector $x$, unique up to scaling, and $x$ satisfies the \emph{eigenequations}
	\begin{equation}\label{eq:eigen}
		\lambda(\HH)\,x_v^{\,k-1}=\sum_{e\ni v}\ \prod_{u\in e\setminus\{v\}}x_u\qquad(v\in V(\HH));
	\end{equation}
	see Chang, Pearson and Zhang~\cite{CPZ} and Friedland, Gaubert and Han~\cite{FGH}. We call $x$ the \emph{Perron vector} of $\HH$.
	
	We consider, for fixed $k\ge3$ and $n\equiv2\pmod{k-2}$, the maximum of $\lambda(\HH)$ over all $k$-angulation hypergraphs on $n$ vertices; a hypergraph attaining the maximum is called \emph{extremal}. In the outerplanar problem, $n\to\infty$ along the residue class $n\equiv2\pmod{k-2}$, and $f=(n-2)/(k-2)$ is the number of interior faces. In this paper, we prove that the fan is the unique extremal hypergraph once $n$ is large.
	
	\begin{thm}\label{thm:main}
		For every $k\ge3$ there is an $f_1$ such that for all $f\ge f_1$ the face hypergraph $\HH(\FF_{k,f})$ of the fan $k$-angulation is, up to isomorphism, the unique $k$-angulation hypergraph on $n=(k-2)f+2$ vertices of maximum spectral radius.
	\end{thm}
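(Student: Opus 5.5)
The plan follows the Tait--Tobin strategy for the fan, carried out on the face hypergraph. Let $\HH=\HH(G)$ be extremal on $n=(k-2)f+2$ vertices, with Perron vector $x$ normalized so that $\max_v x_v=x_w=1$, and write $\lambda=\lambda(\HH)$.

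\textbf{Lower bound.} Test $\HH(\FF_{k,f})$ against a vector that puts weight $a$ on the hub and equal weight $b$ on the other vertices. This gives $\lambda(\HH(\FF_{k,f}))\ge (4f)^{1/k}(1-o(1))$. I would calibrate the weighting by comparison with the star-like hypergraph in which the hub shares $f$ faces and every other vertex lies in at most two faces.

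\textbf{Upper bound and structure.} Every vertex $v\ne w$ lies on $d(v)$ faces, and by outerplanarity $\sum_v d(v)=kf$ while at most two faces contain any given pair on the outer cycle. Use the eigenequation \eqref{eq:eigen} twice, as in~\cite{TT}. Summing $\lambda x_v^{k-1}$ over the neighbours of $w$, and using that the weak dual of $G$ is a tree, gives $\lambda^{2}\lesssim$ a count of ``faces at distance two''. This forces $\lambda=\Theta(f^{1/k})$, and in turn forces all vertices other than $w$ to have $x_v=o(1)$.

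Next show that $w$ lies on $(1-o(1))f$ faces. The key inequality is $\lambda x_w^{k-1}\le \sum_{e\ni w}\prod_{u\in e\setminus w}x_u$. Combined with the bound $\lambda x_u^{k-1}\le (\text{faces at }u\text{ not through }w)\cdot o(1)+(\text{at most }2\text{ faces through }u\text{ and }w)$, this shows that each face through $w$ contributes about $(2/\lambda)^{(k-1)/(k-1)}$ per other vertex. Feeding this back yields $\lambda^k\le 4\,d(w)(1+o(1))+o(f)$. Comparing with the lower bound gives $d(w)\ge(1-\varepsilon)f$, and also that the two outer-cycle neighbours of $w$ carry only bounded weight contributions.

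\textbf{Exact structure.} Suppose some face $F$ misses $w$. The faces through $w$ are consecutive in the cyclic order around $w$, since $G$ is outerplanar and $w$ is on the outer cycle. The faces not containing $w$ therefore hang off the chords $wv$ as ``ears'' attached along the outer cycle opposite $w$.

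Perform a local switch. Take an ear face $F$ attached at an edge $uv$ of a face through $w$, and re-embed it as a new face through $w$, placed next to a boundary face of the fan part. This gives a $k$-angulation $G'$ with the same $n$ and $f$. Compare $P_{\HH(G')}(x)-P_{\HH(G)}(x)$ using $x$. The gain is roughly $x_w\prod(\text{small entries}) \approx \lambda^{-(k-1)}\cdot\Theta(1)$, and the loss is $\prod_{u\in F}x_u$, which has an extra factor $x$ of order $o(1)$ replacing $x_w=1$. Hence $\lambda(\HH(G'))>\lambda(\HH)$, a contradiction.

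\textbf{Main obstacle.} The hard step is making this comparison rigorous. It needs two-sided estimates showing that non-hub entries satisfy $x_v=\Theta(\lambda^{-1/(k-1)}\cdot\text{local degree})$ uniformly, so that the ear's product is provably smaller than the product of the new face containing $w$. In particular, vertices of an ear may themselves have moderately large degree. I would first bound $\max_{v\ne w}x_v\to0$ via $\lambda x_v^{k-1}\le d(v)$ together with $d(v)\le \varepsilon f$. If ears can be long chains, I would then handle them by switching a leaf ear of the dual tree, whose vertices apart from the attachment pair have degree one. Once every face contains $w$, $G$ is forced to be $\FF_{k,f}$, which proves uniqueness.
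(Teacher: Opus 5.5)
Your argument has two genuine gaps, both located where the paper's main tools sit.

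\textbf{Gap 1: the sharp constant.} The stability step depends entirely on the constant $4$ in $\lambda^k\approx 4d(w)$, and neither side of your comparison produces it.

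On the lower side, the test vector with one value on the hub and a common value on the other $(k-2)f+1$ vertices gives only $\lambda^k\ge\bigl(\frac{k-1}{k-2}\bigr)^{k-1}f\,(1-o(1))$. This is $4f$ for $k=3$, but $\frac{27}{8}f$ for $k=4$, and it tends to $ef$. Spokes and interior vertices must be weighted differently ($x_v^k=\frac{2}{k(f+1)}$ versus $\frac1{kf}$, Lemma~\ref{lem:lower}). With your vector the comparison would give only $d(w)\ge(\frac{27}{32}-o(1))f$ at $k=4$.

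On the upper side, your own inequality, with the at most two faces through $u$ and $w$ bounded by $1$, gives $x_u\lesssim(2/\lambda)^{1/(k-1)}$. That means at most $2/\lambda$ per face through $w$, hence only $\lambda^2\lesssim 2d(w)$, which is vacuous because $\lambda\asymp f^{1/k}$. In the max-normalized fan the non-hub entries are of order $1/\lambda$, with spoke-to-interior ratio $2^{1/k}$. No pointwise bootstrap of the kind you describe recovers this for $k\ge4$; the refined bootstrap of~\cite{ELW} works at $k=3$ only because every non-hub vertex of the fan is then a spoke.

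What is needed is twofold:
\begin{itemize}
\item the sharp bound $\Theta_d(W)\le c_kd^{1/k}W^{(k-1)/k}$, with $c_k=4^{1/k}(k-1)^{-(k-1)/k}$, for the fan functional, which the paper derives from the Lagrange conditions via the recurrence $p_i^3=c(p_{i-1}+p_i)(p_i+p_{i+1})$ (Lemma~\ref{lem:fan});
\item a proof that faces carrying no heavy vertex contribute $o(f^{1/k})$, which comes from the ownership partition of the dual tree (Lemma~\ref{lem:own} and Corollary~\ref{cor:faces}$(ii)$).
\end{itemize}
Moreover, the order of $\lambda$ alone does not give $x_v=o(x_w)$ for $v\ne w$. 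Two fans of $f/2$ faces sharing an edge also have $\lambda\asymp f^{1/k}$, with two hubs of equal weight. A second heavy vertex is excluded only by the sharp inequality $c_kx_1(1-x_1^k-x_2^k)^{(k-1)/k}\ge\frac{4^{1/k}}{k}-o(1)$ in the $\ell^k$-normalization, that is, by the maximization of $z(1-z)^{k-1}$.

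\textbf{Gap 2: the switching step.} It has the right shape but is not carried out. Let $F$ be a leaf of the dual tree with parent chord $uv$, and move its $k-2$ owned vertices onto the boundary edge $wu_d$. The product over the moved vertices cancels, so $P_{\HH(G')}(x)-P_{\HH(G)}(x)$ has the sign of $x_wx_{u_d}-x_ux_v$.

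You therefore need a lower bound on $\max\{x_{u_0},x_{u_d}\}$ and a smaller upper bound on $x_ux_v$. These are pointwise estimates of the right order (in the fan, non-hub entries are of order $x_w/\lambda$), and this is exactly what you list as the main obstacle and leave open. Three things block you:
\begin{itemize}
\item the qualitative facts $x_v=o(x_w)$ and $d(w)=f-o(f)$ do not suffice;
\item the scaling you propose, $\lambda^{-1/(k-1)}$ times the local degree, is not the right one;
\item you cannot simply choose a harmless ear: if a pocket is itself a long sub-fan around a vertex $v$, every leaf face of that pocket has its parent chord through $v$.
\end{itemize}

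The paper never needs pointwise bounds. It compares globally with the fan, $S_0\ge R=A\bigl(\Theta_f(1-A^k)-\Phi(x)\bigr)$ (Lemma~\ref{lem:compare}). It bounds $R$ from below by the single-face length increment of $\Theta_d$ from the gluing Lemma~\ref{lem:super}, plus the mass lost to vertices sharing no face with the hub and to vertices of density above $K/f$. It bounds the weight $S_0$ of the $a$ faces avoiding the hub from above, through the ownership assignment, by $O\bigl((a/f)^{1/k}\bigr)$ times the same quantities. Since $a=o(f)$, this forces $a=0$ using only the qualitative content of Theorem~\ref{thm:stab}.
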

	
	Theorem~\ref{thm:main} is deduced from the following stability statement, which locates almost all of the structure of an extremal hypergraph at a single vertex.
	
	\begin{thm}\label{thm:stab}
		Let $k\ge3$ and put
		\[
		\varrho=\varrho_k=\frac1{k(3k+1)} .
		\]
		There exist $f_0$ and $M$, depending only on $k$, with the following property. Let $f\ge f_0$, let $\HH$ be an extremal $k$-angulation hypergraph on $n=(k-2)f+2$ vertices, let $x$ be its Perron vector normalized by $\sum_vx_v^{k}=1$, let $x_1\ge x_2\ge\cdots\ge x_n$ be its coordinates in nonincreasing order, and let $d_1$ be the number of faces containing the vertex carrying $x_1$. Then
		\[
		\bigl|\lambda(\HH)^{k}-4f\bigr|\le Mf^{1-\varrho},\qquad
		\Bigl|x_1^{\,k}-\frac1k\Bigr|\le Mf^{-\varrho/2},\qquad
		x_2^{\,k}\le Mf^{-\varrho},\qquad
		d_1\ge f-Mf^{1-\varrho/(2k)} .
		\]
	\end{thm}
	
	The vertex carrying $x_1$ is the \emph{hub} of $\HH$. Theorem~\ref{thm:stab} says that the hub lies on all but a vanishing fraction of the faces and that its weight is close to the weight of the hub in the fan; Section~\ref{sec:hub} removes the remaining faces. That step compares $\HH$ with the fan itself, and the comparison is decided by two quantities: the number of faces avoiding the hub, and the mass carried by the vertices those faces own. For $k=3$ the argument gives a new proof of the theorem of~\cite{ELW}.
	
	The planar version of the problem behaves differently once $k\ge4$. For $k\ge4$, we call a $k$-uniform hypergraph \emph{planar} if it is the face hypergraph of a closed $k$-angulation, that is, a simple $2$-connected plane graph all of whose faces, including the outer face, are bounded by cycles of length $k$. For $k=3$, we retain the shadow-based definition of~\cite{ELW} given above; Lemma~\ref{lem:augment} reduces the extremal problem to face hypergraphs of plane triangulations. A closed $k$-angulation on $n$ vertices has $2(n-2)/(k-2)$ faces. For the planar problem, we call an integer $n\ge k$ \emph{admissible} if $2(n-2)/(k-2)$ is an integer. In the outerplanar case one vertex can lie on every face, and the maximum spectral radius has order $n^{1/k}$. For $k\ge4$, a $k$-cycle can contain two nonadjacent vertices, and the following construction places the same two vertices on every face. For integers $t\ge3$ and $p_1,\dots,p_t\ge1$ with $p_i+p_{i+1}=k-2$ for all $i$, indices modulo $t$, let $\Theta(p_1,\dots,p_t)$ be the plane graph in which two vertices are joined by internally disjoint paths with $p_1,\dots,p_t$ internal vertices, in this cyclic order. Each of its $t$ faces is a $k$-cycle through both branch vertices, it has $n=2+\frac t2(k-2)$ vertices, and $t=2(n-2)/(k-2)$; we call it a \emph{balanced theta graph}, and Figure~\ref{fig:theta} shows two of them. The condition $p_i+p_{i+1}=k-2$ gives $p_{i+2}=p_i$, so the $p_i$ alternate between two values summing to $k-2$; if $t$ is odd this forces all of them equal to $\frac{k-2}2$, and hence $k$ to be even, while if $t$ is even any pair of positive integers summing to $k-2$ occurs. For $k\ge4$, such a graph exists for every admissible $n$ with $t=2(n-2)/(k-2)\ge3$.
	
	\begin{figure}[H]
		\centering
		\begin{tikzpicture}[scale=0.95,
			br/.style={circle,fill=black,inner sep=2.4pt},
			iv/.style={circle,draw=black,fill=white,inner sep=1.5pt}]
			\begin{scope}
				\node[br] (a) at (0,2.1) {}; \node[above] at (a) {$h_1$};
				\node[br] (b) at (0,-2.1) {}; \node[below] at (b) {$h_2$};
				\foreach \s/\x in {1/-2.4, 2/-0.8, 3/0.8, 4/2.4}
				{\node[iv] (p\s) at (\x,0.7) {}; \node[iv] (q\s) at (\x,-0.7) {};
					\draw (a)--(p\s)--(q\s)--(b);}
				\node at (0,-3.0) {(a) $p_i=2$ for all $i$};
			\end{scope}
			\begin{scope}[xshift=7.4cm]
				\node[br] (A) at (0,2.1) {}; \node[above] at (A) {$h_1$};
				\node[br] (B) at (0,-2.1) {}; \node[below] at (B) {$h_2$};
				\foreach \s/\x in {1/-2.4, 2/0.8}
				{\node[iv] (r\s) at (\x,0) {}; \draw (A)--(r\s)--(B);}
				\foreach \s/\x in {1/-0.8, 2/2.4}
				{\node[iv] (s\s) at (\x,1.05) {}; \node[iv] (t\s) at (\x,0) {}; \node[iv] (w\s) at (\x,-1.05) {};
					\draw (A)--(s\s)--(t\s)--(w\s)--(B);}
				\node at (0,-3.0) {(b) $p_i=1,3,1,3$};
			\end{scope}
		\end{tikzpicture}
		\caption{Two balanced theta graphs with $k=6$, $t=4$ and $n=10$. Every face is a $6$-cycle through both branch vertices, the two graphs are not isomorphic, and by Proposition~\ref{prop:necklace} their face hypergraphs have the same spectral radius.}\label{fig:theta}
	\end{figure}
	
	By Proposition~\ref{prop:necklace}, the face hypergraph of a balanced theta graph with $t$ faces has spectral radius $2^{1-2/k}t^{2/k}$. The following two theorems show that this is the maximum in the stated ranges of $k$ and $n$, and characterize all hypergraphs attaining it.
	
	\begin{thm}\label{thm:planar}
		Let $k\ge5$. For all sufficiently large admissible $n$, a planar $k$-uniform hypergraph on $n$ vertices has maximum spectral radius if and only if it is the face hypergraph of a balanced theta graph.
	\end{thm}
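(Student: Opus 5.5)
We prove Theorem~\ref{thm:planar} by the stability-then-cleanup scheme used for Theorem~\ref{thm:main}. The lower bound is Proposition~\ref{prop:necklace}: for admissible $n$ with $t=2(n-2)/(k-2)$ faces, every balanced theta graph gives $\lambda=2^{1-2/k}t^{2/k}$. So an extremal $\HH$, with Perron vector $x$ normalized by $\sum_v x_v^k=1$, satisfies $\lambda(\HH)^k\ge 2^{k-2}t^2$.

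\textbf{Upper bound and stability.} By AM--GM on each face, $P_\HH(x)\le k\sum_e\prod_{v\in e}x_v$. Each face product is at most $x_ax_b\cdot(\text{mass of the other } k-2 \text{ vertices})$, where $a,b$ are its two heaviest vertices. Planarity of the carrier is what controls these pairs. In a closed $k$-angulation two vertices share at most as many faces as there are internally disjoint paths between them, and a vertex of degree $d$ lies on $d$ faces. We bound $\sum_e\prod_{v\in e}x_v$ using the top few coordinates $x_1\ge x_2\ge\cdots$, and show $\lambda^k\le 2^{k-2}t^2(1+o(1))$ unless two vertices $h_1,h_2$ carry $x_{h_i}^k\to 1/k$ each and lie together on all but $o(t)$ faces. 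This mirrors Theorem~\ref{thm:stab}, with two hubs in place of one.

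The heuristic is Lagrangian. Put mass $y$ on each hub and spread the remaining mass evenly over the $t(k-2)/2$ internal vertices. Maximizing $t\,y^2z^{k-2}$ subject to $2y^k+\tfrac{t(k-2)}{2}z^k=1$ yields exactly $y^k=1/k$ and the value $2^{k-2}t^2/\dots$ of Proposition~\ref{prop:necklace}. Any configuration in which a positive fraction of faces lacks one of the two hubs loses a constant factor, because a face containing only one heavy vertex contributes order $t^{-(k-1)/k}$ rather than $t^{-(k-2)/k}$. The condition $k\ge5$ enters here: for $k=4$ every vertex mass is comparable, and the lower-order terms must be handled separately, as in Theorem~\ref{thm:k4}.

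\textbf{Cleanup.} Once $h_1,h_2$ lie on $t-o(t)$ common faces, planarity forces those faces to be arranged cyclically around $h_1$ and $h_2$, as consecutive sectors of a theta-like structure. Each such face is a $k$-cycle through $h_1$ and $h_2$, and the two arcs of that cycle have lengths summing to $k-2$. We then show, from the eigenequations~\eqref{eq:eigen}, that the few exceptional faces (a region not containing both hubs) cost more than they contribute. Local surgery would replace such a region by faces through both hubs on the same vertex count, strictly increasing $P_\HH(x)$, which contradicts extremality. Hence every face contains both hubs, and $G$ is a theta graph $\Theta(p_1,\dots,p_t)$. The face condition forces $p_i+p_{i+1}=k-2$, so the graph is balanced; conversely, all balanced ones attain the maximum by Proposition~\ref{prop:necklace}.

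\textbf{Main obstacle.} The main obstacle is this final surgery. The exceptional faces have vertices whose masses are only known to be $o(1)$ in aggregate, so the comparison must be quantitative: the number of bad faces times a loss of order $t^{-(k-2)/k}$ must beat the error terms from the stability step. I would first try to show that a vertex off both hubs' faces has $x_v^{k}=O(t^{-1})$ with an explicit constant via~\eqref{eq:eigen}, and then count bad faces against the deficiency $2^{k-2}t^2-\lambda^k$.
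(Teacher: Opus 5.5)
Your outline has the right two-stage shape, but both stages are asserted rather than proved, and the specific tools you name would not carry either one.

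\textbf{Stability.} Suppose you bound each face by $x_ax_b$ times a function of its other $k-2$ coordinates and sum face by face. Then each light vertex is charged once for every face it lies on, and degrees are unbounded. A bounded-multiplicity assignment, as in Corollary~\ref{cor:sigma0}, gives the right order $t^{2/k}$ but not the sharp constant. So you cannot conclude $x_1^k,x_2^k\to\frac1k$, nor that the hubs share $t-o(t)$ faces. The sharp constant needs the light mass to be used with total multiplicity at most two over all faces that carry two heavy vertices. The paper gets this from two consequences of the nonplanarity of $K_{3,3}$:
\begin{itemize}
\item three distinct vertices lie on at most two common faces (Lemma~\ref{lem:three}), which gives multiplicity two for each fixed heavy pair;
\item only $O(\varepsilon^{-3k})$ light vertices lie on faces with three different heavy vertices (Lemma~\ref{lem:amb}, via a $K_{3,3}$ minor in the vertex--face incidence graph), which makes the light sets of different heavy pairs disjoint up to a negligible error.
\end{itemize}
Faces with at most one heavy vertex need the flow assignment of Lemma~\ref{lem:freeassign}. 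The edge-respecting assignments of Section~\ref{sec:def} need not exist here (Proposition~\ref{prop:noassign}). The facts you cite give none of this: in a theta graph the two hubs are joined by $t$ internally disjoint paths and share all $t$ faces. Separately, $k\ge5$ does not enter where you say. The stability and elimination arguments work for every $k\ge4$; the hypothesis only reflects that $k=4$ is settled for all $n\ge5$ by Theorem~\ref{thm:k4}. It is also false that for $k=4$ all masses are comparable: in $K_{2,n-2}$ the hubs carry $x^4=\frac14$.

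\textbf{Cleanup.} The cleanup, which you flag as the main obstacle, has no working mechanism. A local surgery on a region missing a hub must return a closed $k$-angulation on the same vertex set. The masses inside that region are uncontrolled, so nothing shows that a modified vector beats $x$. The pointwise bound $x_v^k=O(1/t)$ you hope to extract from~\eqref{eq:eigen} is also not available: there is no rate for $x_3\to0$, and vertices near the hubs may carry weight well above $1/t$. The paper avoids both issues by comparing globally (Lemma~\ref{lem:pcompare}) with a balanced theta graph that has the same $f$ faces, the same hub values $A,B$, and the optimal light vector of mass $W=1-A^k-B^k$. This gives $S_0\ge R=AB\bigl(\Xi(f,W)-\Phi_{e_0}(x)\bigr)$. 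Concavity then gives $R\ge c_Laf^{2/k-1}+c_Mf^{2/k}(u+m_Q)$, where:
\begin{itemize}
\item $a$ counts the exceptional faces;
\item $u$ is the mass off the hub-pair faces;
\item $m_Q$ is the mass on hub-pair vertices with $x_v^k\ge K/f$, which is charged to the loss because such mass is used inefficiently.
\end{itemize}
On the other side, the hub-avoiding assignment of Lemma~\ref{lem:freeassign2} leaves in every exceptional face at least one unassigned non-hub coordinate, bounded by $x_3$. Hence $S_0\le Cx_3R$, and $x_3\to0$ forces $a=0$ with no rate needed. After that you still need Lemma~\ref{lem:three} to see that every non-hub vertex has degree two. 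You also need simplicity to rule out $p_i=0$, which would create parallel edges between the hubs.
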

	
	\begin{thm}\label{thm:k4}
		Let $n\ge5$. Every planar $4$-uniform hypergraph $\HH$ on $n$ vertices satisfies $\lambda(\HH)\le\sqrt{2(n-2)}$, with equality if and only if $\HH\cong\HH(K_{2,n-2})$.
	\end{thm}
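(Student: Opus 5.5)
The approach is to exploit the bipartite structure of a quadrangulation and reduce the tensor bound to a bound on ordinary graph spectral radii. A quadrangulation $G$ of the sphere is bipartite, say with classes $A$ and $B$, and every face $abcd$ has one diagonal $ac$ inside $A$ and one diagonal $bd$ inside $B$. Let $D_A$ be the plane multigraph on $A$ with one edge $ac$ drawn inside each face, and define $D_B$ on $B$ in the same way. Each has exactly $f=n-2$ edges. I expect that $D_A$ is connected, that its faces correspond bijectively to the vertices of $B$, and that $D_B$ is its plane dual $D_A^*$. This is the standard radial/medial correspondence, which I would verify first using $2$-connectivity and simplicity of $G$.

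Write $w_v=x_v^2$, so $\sum_v w_v^2=1$, and put $\alpha=\sum_{A}w_v^2$ and $\beta=\sum_B w_v^2$, so that $\alpha+\beta=1$. By Cauchy--Schwarz,
\[
P_{\HH}(x)=4\sum_{F=abcd}(x_ax_c)(x_bx_d)\le 4\Bigl(\sum_{ac\in E(D_A)}w_aw_c\Bigr)^{1/2}\Bigl(\sum_{bd\in E(D_B)}w_bw_d\Bigr)^{1/2}.
\]
By the Rayleigh quotient, $\sum_{E(D_A)}w_aw_c\le\frac12\lambda_1(D_A)\,\alpha$, where $\lambda_1$ is the adjacency spectral radius with multiplicities, and the same holds for $D_B$ with $\beta$. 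Since $\alpha\beta\le\frac14$, this gives
\[
\lambda(\HH)^2\le\lambda_1(D_A)\,\lambda_1(D_B).
\]
So the theorem reduces to the following key lemma. For a connected plane multigraph $D$ with $m\ge3$ edges arising in this way, $\lambda_1(D)\lambda_1(D^*)\le 2m$, with equality iff $D$ is a bundle of $m$ parallel edges (so that $D^*$ is the cycle $C_m$). The equality case gives $m\cdot 2=2m$, and it is exactly the $K_{2,n-2}$ configuration.

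The main obstacle is this duality lemma. Crude bounds fail. The bound $\lambda_1\le\Delta$ fails for wheels, whose dual is again a wheel, and Hong-type bounds $\lambda_1^2\le 2m-|V|+1$ fail for parallel bundles. So the proof must genuinely trade off multiplicity in $D$ against short faces in $D^*$. My first attempt would be to take Perron vectors $u$ of $D$ and $u^*$ of $D^*$ and bound $\lambda_1(D)=\sum_{e=pq}2u_pu_q$ edge by edge. Each edge $e$ of $D$ crosses a unique edge $e^*$ of $D^*$, namely the other diagonal of the same face of $G$. I would then apply an inequality of the form $2u_pu_q\cdot 2u^*_{r}u^*_{s}\le$ (something summing to $2m$). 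The natural candidate is to use local bounds $\lambda_1(D)\le\max_{pq}\sqrt{d_pd_q}$ together with the facts that the endpoints of $e^*$ are faces of $D$ and that, by Euler's formula, degrees in $D$ and face lengths of $D$ cannot both be large across an edge. If that fails, I would instead avoid the product bound altogether. In that case I would work directly with the eigenequations~\eqref{eq:eigen} at a maximizing vertex pair, use $\lambda^2x_v^{6}$ expansions, and count $2$-walks $v$–face–$u$–face, aiming for the bound $\lambda^4\le 4f^2/(\dots)$.

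Finally, I would track equality. It requires $\alpha=\beta=\frac12$, equality in Cauchy--Schwarz, Rayleigh equality for both $D_A$ and $D_B$, and equality in the lemma. This forces $D_A$, say, to be $f$ parallel edges between two vertices, so that $|A|=2$, every face contains both vertices of $A$, and hence $G=K_{2,n-2}$. Proposition~\ref{prop:necklace} with $k=4$ and $t=n-2$ confirms that $\lambda(\HH(K_{2,n-2}))=2^{1/2}(n-2)^{1/2}$ is attained. The hypothesis $n\ge5$ ensures that $t\ge3$, so that the graph is simple and $2$-connected.
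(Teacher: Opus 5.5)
\textbf{There is a genuine gap: the key lemma is false.} Your reduction itself is valid. Cauchy--Schwarz, the Rayleigh quotient and $\alpha\beta\le\frac14$ do give $\lambda(\HH)^2\le\lambda_1(D_A)\lambda_1(D_B)$. But the lemma you reduce to, $\lambda_1(D)\lambda_1(D^*)\le 2m$, fails for admissible quadrangulations, so no argument can complete this route.

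Here is a counterexample. Take $K_{2,p+1}$ with classes $\{u,v\}$ and $\{s_0,\dots,s_p\}$ in cyclic order. Place a new vertex $w$ in the face $us_pvs_0$ and join it to $s_0$ and $s_p$. This is a closed $4$-angulation with $n=p+4$ vertices and $m=p+2$ faces. Its diagonal graphs are as follows.
\begin{itemize}
\item $D_A$ lives on $\{u,v,w\}$, with $p$ parallel edges $uv$ and single edges $uw$, $vw$. Hence $\lambda_1(D_A)=\frac{p+\sqrt{p^2+8}}{2}>p$.
\item $D_B$ is the path $s_0s_1\cdots s_p$ together with two parallel edges $s_0s_p$. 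For $p\ge4$, the test vector equal to $2$ at $s_0,s_p$, to $1$ at $s_1,s_{p-1}$ and to $0$ elsewhere gives $\lambda_1(D_B)\ge\frac{24}{10}=\frac{12}{5}$.
\end{itemize}
Hence $\lambda_1(D_A)\lambda_1(D_B)>\frac{12}{5}p\ge 2p+4=2m$ for $p\ge10$. Asymptotically $\lambda_1(D_B)\to\frac52$, so the product is about $\frac52 p$.

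The loss is in the Cauchy--Schwarz step, which decouples the two diagonals of each face. In this example the heavy $A$-diagonal $uv$ and the heavy $B$-diagonal $s_0s_p$ never lie on a common face. The product of the two separate sums nevertheless counts them as if they did. Your fallback via the eigenequations is too vague to repair this. As a side remark, the equality case of your lemma would also have to include $D=C_m$ by symmetry, but this is moot.

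\textbf{How the paper proceeds.} The paper avoids diagonals and works with the boundary edges of $G$. Each boundary edge joins the two colour classes, so the correlation inside a face is kept. With $y_v=x_v^2$, the face product $x_ax_bx_cx_d$ is the geometric mean of the four edge products $y_ay_b$, $y_by_c$, $y_cy_d$, $y_dy_a$. The arithmetic--geometric mean inequality, together with the fact that every edge lies on exactly two faces, gives $P_{\HH}(x)\le 2\sum_{uv\in E(G)}y_uy_v\le\lambda_1(G)$.

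For the planar bipartite graph $G$, $\lambda_1(G)^2$ is at most the largest row sum $\sum_{v\sim u}d(v)$ of $A(G)^2$. This row sum equals the number of edges of the planar bipartite subgraph between $N(u)$ and the class of $u$, so it is at most $2n-4$.

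For equality, irreducibility forces $N(u)$ to be the entire other class for every $u$ in one class. Excluding $K_{3,3}$ then leaves only $K_{2,n-2}$. In the counterexample above this edge-based bound still gives $\lambda(\HH)^2\le 2(n-2)$, whereas your diagonal-based quantity exceeds it.
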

	
	For $k\in\{4,5\}$ the balanced theta graph is determined by $n$, since $k-2$ then splits into two positive parts in only one unordered way, and the extremal hypergraph is unique. For $k\ge6$ it is not.

	\begin{cor}\label{cor:nonunique}
		Let $k\ge6$ and let $n$ be sufficiently large and admissible, with $t=2(n-2)/(k-2)$. If $t$ is even, there are exactly $\lfloor\frac{k-2}2\rfloor$ pairwise nonisomorphic planar $k$-uniform hypergraphs on $n$ vertices of maximum spectral radius; if $t$ is odd, there is exactly one.
	\end{cor}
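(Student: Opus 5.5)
By Theorem~\ref{thm:planar}, for large admissible $n$ the extremal planar $k$-uniform hypergraphs on $n$ vertices are exactly the face hypergraphs of balanced theta graphs with $t=2(n-2)/(k-2)$ faces. The corollary therefore reduces to counting these face hypergraphs up to isomorphism. I will proceed in three steps: parametrize the balanced theta graphs, show that the face hypergraph determines the theta graph up to isomorphism, and then count the graphs.

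\emph{Parametrization.} The recurrence $p_{i+2}=p_i$ forces the sequence to alternate between two values $a,b\ge1$ with $a+b=k-2$.
\begin{itemize}
\item If $t$ is odd, then $a=b=(k-2)/2$, and this requires $k$ to be even. In that case there is exactly one graph, $\Theta(a,\dots,a)$. Admissibility of $n$ with $t$ odd already forces $k$ even, so this graph exists and the count is $1$.
\item If $t$ is even, the graph is determined by the unordered pair $\{a,b\}$. A cyclic rotation by one step swaps the roles of $a$ and $b$, so we may assume $a\le b$, and the number of such pairs is $\lfloor (k-2)/2\rfloor$.
\end{itemize}

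\emph{Distinct pairs give nonisomorphic face hypergraphs.} This is the main point, since the spectral radius $2^{1-2/k}t^{2/k}$ is independent of $(a,b)$. I would use vertex degrees in the hypergraph, that is, the number of faces containing each vertex.
\begin{itemize}
\item The two branch vertices each lie in all $t$ faces.
\item Every internal path vertex lies in exactly the two faces bounded by its path.
\end{itemize}
Since $t\ge3$, the branch vertices are identified as the only two vertices of degree $t$, and this identification is isomorphism-invariant.

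Each face consists of the two branch vertices together with the internal vertices of two consecutive paths. Two faces that share internal vertices share exactly the internal vertices of one path. So the multiset of sizes $|e\cap e'|-2$ over pairs of faces $e,e'$ that meet outside the branch vertices is exactly the multiset $\{p_1,\dots,p_t\}$. This multiset consists of $t/2$ copies of $a$ and $t/2$ copies of $b$, and it is a hypergraph invariant. Hence it recovers $\{a,b\}$, and distinct unordered pairs give nonisomorphic hypergraphs.

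\emph{Conclusion.} Combining the three steps, the number of extremal hypergraphs up to isomorphism is $\lfloor (k-2)/2\rfloor$ when $t$ is even and $1$ when $t$ is odd. For $k\ge6$ the even count is at least $2$, which is the claimed non-uniqueness.

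The only subtle point is the invariance argument in the second step. One must make sure that "pairs of faces meeting outside the branch vertices" is defined purely in hypergraph terms. It is, because the branch vertices are characterized by their degree.
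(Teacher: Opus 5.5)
Your proof is correct and follows the paper's argument. You reduce to balanced theta graphs via Theorem~\ref{thm:planar}, count the unordered pairs $\{p,k-2-p\}$, note that a cyclic shift makes graphs with the same pair isomorphic, and recover the multiset $\{p_i\}$ from intersections of consecutive hyperedges once the branch vertices are identified. The only difference is that you single out the branch vertices as the two vertices of degree $t$, while the paper takes them as the intersection of all hyperedges; this is immaterial.
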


	\begin{proof}
		By Theorem~\ref{thm:planar} the extremal hypergraphs are the face hypergraphs of the balanced theta graphs with $t$ faces. The sequence $(p_1,\dots,p_t)$ alternates between two positive integers $p$ and $k-2-p$; for odd $t$ this forces $p=\frac{k-2}2$, and for even $t$ every unordered pair $\{p,k-2-p\}$ with $1\le p\le k-3$ occurs, giving $\lfloor\frac{k-2}2\rfloor$ graphs. Two balanced theta graphs with the same unordered pair are isomorphic, by a cyclic shift of the paths. Conversely the face hypergraph determines the pair: since $t\ge3$, the two branch vertices form the intersection of all hyperedges, and removing them from the intersection of two consecutive hyperedges leaves the internal vertex set of one path, so the multiset $\{p_i\}$ is recovered. Hence distinct pairs give nonisomorphic hypergraphs.
	\end{proof}

	For $k=3$, two distinct vertices of a plane triangulation lie on at most two common faces, so the balanced theta construction with $t\ge3$ is unavailable. For $N\ge2$, let $D_N$ be the face hypergraph of $K_2+P_N$, whose two vertices outside the path are adjacent and lie together on exactly two faces. The following theorem shows that $D_{n-2}$ is the unique planar extremal hypergraph for all sufficiently large $n$, confirming the conjecture of Ellingham, Lu and Wang~\cite{ELW}.
	
	\begin{thm}\label{thm:p3}
		For all sufficiently large $n$, an $n$-vertex planar $3$-uniform hypergraph has maximum spectral radius if and only if it is isomorphic to $D_{n-2}$.
	\end{thm}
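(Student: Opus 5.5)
We reduce first to face hypergraphs of plane triangulations. By Lemma~\ref{lem:augment} and strict monotonicity of $P_{\HH}$, an extremal planar $3$-uniform hypergraph on $n$ vertices is $\HH(G)$ for some plane triangulation $G$. It has $2n-4$ faces, and every vertex $v$ lies on $\deg_G(v)$ faces. Throughout, $x$ is the Perron vector normalized by $\sum_v x_v^3=1$, and $\lambda=\lambda(\HH)$.

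\textbf{Step 1: the benchmark.} We compute $\lambda(D_{n-2})$ asymptotically. In $D_N$ the two apex vertices $a,b$ lie on all $2N$ faces: the faces are $\{a,u_i,u_{i+1}\}$, $\{b,u_i,u_{i+1}\}$, $\{a,b,u_1\}$, $\{a,b,u_N\}$. Use the test vector with $x_a=x_b=\alpha$, $x_u=\beta$ on path vertices, and optimize. The main term is $2\cdot 3\cdot 2\alpha\beta^2 (N-1)$ subject to $2\alpha^3+N\beta^3=1$. This gives $\lambda(D_{n-2})=c\,n^{1/3}(1+o(1))$ with an explicit constant $c$, and in particular $\lambda\ge c\,n^{1/3}(1-o(1))$ for the extremal $\HH$.

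\textbf{Step 2: stability.} We mimic Theorem~\ref{thm:stab}. Order the coordinates $x_1\ge x_2\ge\cdots$.
\begin{itemize}
\item Since $G$ is planar, any set $S$ of vertices meets at most $2|S|$-ish faces in two or more vertices of $S$. Faces containing no large vertex contribute $o(n^{1/3})$.
\item Split $P(x)$ according to how many of the "heavy" vertices (those with $x_v^3\ge \epsilon$) a face contains. A face can contain at most two vertices whose product with a light vertex matters.
\item For a single heavy vertex $v$, the contribution is bounded by $3 x_v\sum_{uw\in \text{link}(v)} x_ux_w$, and the link is a cycle.
\end{itemize}
Optimizing the resulting concave-type bound shows that the only way to reach $c\,n^{1/3}$ is to have exactly two heavy vertices $h_1,h_2$ with $x_{h_i}^3$ close to the values from Step 1. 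It also forces $x_3^3=o(1)$ and $d(h_1),d(h_2)\ge n-o(n)$. Since two vertices share at most two faces, this means that almost every light vertex is adjacent to both hubs and lies on the two link cycles.

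\textbf{Step 3: exact structure.} This is the main obstacle, and it follows the Section~\ref{sec:hub} strategy. Using the eigenequations~\eqref{eq:eigen}, we obtain $\lambda x_v^2\ge x_{h_1}(\dots)$ for vertices adjacent to a hub, which yields $x_v\asymp n^{-1/3}$ for such vertices. Let $B$ be the set of vertices not adjacent to both hubs, and let $F$ be the set of faces avoiding both hubs. We would show the following.
\begin{itemize}
\item Each vertex of $B$ has tiny weight, via a bootstrapping of the eigenequation: it lies on $O(1)$ faces with a hub, or else it has bounded degree to the heavy structure.
\item A local surgery that reattaches the vertices of $B$ as new path vertices between the hubs strictly increases $P(x)$. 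This is done by comparing the lost contribution (the products over faces in $F$, which are $O(n^{-1})$ each) with the gained $\Theta(x_{h}n^{-2/3})$ per relocated vertex.
\end{itemize}
Hence $B=\emptyset$, so every non-hub vertex is adjacent to both $h_1$ and $h_2$. The link of $h_1$ is then a cycle through all other vertices. Combined with $h_2$ having the same neighbourhood, this forces $G=K_2+P_{n-2}$: either $h_1h_2$ is an edge, or $G=K_{2}$-less double wheel. In the latter case, $G$ is $C_{n-2}+\overline{K_2}$. We compare the two directly, perturbing from the Perron vector of the double wheel, and show that $D_{n-2}$ wins. Its extra faces $\{h_1,h_2,u\}$ carry the weight $x_{h_1}x_{h_2}x_u\asymp n^{-1/3}$, which dominates the one lost path edge term of order $n^{-2/3}$. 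This step, controlling $B$ and $F$ sharply enough that the surgery strictly wins, is where I expect the real difficulty to lie.
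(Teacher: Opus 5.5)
Your Steps 1 and 2 follow the paper's route (Lemma~\ref{lem:p3lower}, the master inequality, Theorem~\ref{thm:p3stab}, Lemma~\ref{lem:p3common}). You should make explicit the $K_{3,3}$ argument: only boundedly many light vertices are adjacent to three heavy vertices, and this is what caps the weight $\sum_{h\text{ heavy},\,h\sim u}x_h$ of a light vertex $u$ by $x_1+x_2$. Your final comparison of $\overline{K_2}+C_{n-2}$ with $K_2+P_{n-2}$, flipping one path edge into $h_1h_2$, is correct.

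The gap is Step 3, which carries the whole difficulty, and the mechanism you propose fails there. The scale $x_v\asymp n^{-1/3}$ is false even in $D_N$. At the path end $u_1$ the eigenequation contains $x_ax_b$ from the face $\{a,b,u_1\}$, so $x_{u_1}\ge(x_ax_b/\lambda)^{1/2}\asymp n^{-1/6}$. More importantly, no uniform $O(n^{-1/3})$ upper bound can be read off the eigenequation. A vertex may lie on arbitrarily many faces avoiding both hubs, since its degree into the defect set is not bounded a priori, and stability gives only $x_3=o(1)$. So a face $cdu$ avoiding both hubs, with $c,d$ adjacent to both, is not $O(n^{-1})$: $x_cx_d$ may exceed the typical $n^{-2/3}$ by a large factor. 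The best provable bound on these $r$ faces, $(\frac49+o(1))r/\lambda^{2}$ for those with $x_cx_d\le\nu^{2}/\lambda$ ($\nu=6^{-1/3}$), is of the same order as the gain from lengthening the hub link paths by $r$ faces. There is no separation of orders, only a contest between constants.

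A surgery evaluated at the fixed vector $x$ also need not increase $P$. Inserting $v$ between path vertices $u_i,u_{i+1}$ changes the hub faces by $(x_{h_1}+x_{h_2})\bigl(x_v(x_{u_i}+x_{u_{i+1}})-x_{u_i}x_{u_{i+1}}\bigr)$, which is negative when $x_v$ is small. Deleting $v$ from the link of a hub $h$ it is adjacent to loses $x_hx_v(x_w+x_{w'})$, the same order as the hoped-for gain.

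The paper never uses per-vertex bounds. After forcing $h_1h_2\in E(G)$ by a flip (Lemma~\ref{lem:p3edge}), it compares with $D_N$ at a \emph{re-optimized} vector with the same hub values, $S_0\ge R$ (Lemma~\ref{lem:p3compare}). It then bounds $R$ below by the length and mass increments of the link-path functional $\Theta_{d,\tau}$, $\tau=x_ax_b/(x_a+x_b)$. The length increment must come from splitting a vertex, giving $\gamma=(2^{1/3}-2^{-1/3})^{2}>\frac16$ (Lemma~\ref{lem:p3len}); the gluing constant $\frac1{12}$ of Lemma~\ref{lem:super} is too small. On the other side, faces avoiding both hubs are split at the threshold $x_cx_d\le\nu^{2}/\lambda$. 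Those above it are handled by the disc construction and the path shortcut (Lemmas~\ref{lem:p3disc} and~\ref{lem:p3short}), whose gain is credited to $R$. The result is $R\ge(\frac{8\gamma}{3}-o(1))\frac{r}{\lambda^{2}}+(\frac16-o(1))\lambda M+g$ against $S_0\le(\frac49+o(1))\frac{r}{\lambda^{2}}+(\frac1{12}+o(1))\lambda M+\eta_ng$. Here $r$ counts the missing hub adjacencies and $M$ is their mass. Your Step 3 is missing exactly this simultaneous control of count and mass, together with the constant $\gamma>\frac16$.
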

	
	Two features separate the problems here from the one solved in~\cite{ELW}. One is the answer: in the outerplanar problem one vertex can lie on every face for every $k$, and the maximum has order $n^{1/k}$ throughout; in a closed $k$-angulation with $k\ge4$ two vertices can lie on every face, whereas three distinct vertices lie on at most two common faces by Lemma~\ref{lem:three}, so exactly two Perron coordinates stay bounded away from zero and the maximum has order $n^{2/k}$; for $k=3$ two vertices share at most two faces and the order is $n^{1/3}$. The other is the method. The proof in~\cite{ELW} normalizes the largest coordinate to one and bootstraps on $\sum_{u\in N(v_0)}x_u^{2}$ by expanding the eigenequation twice and counting coefficients in the second neighborhood of $v_0$; this works for $k=3$ because every non-hub vertex of the fan is a neighbor of the hub. For $k\ge4$ the fan has two kinds of non-hub vertices, the $f+1$ \emph{spokes} adjacent to the hub and the $f(k-3)$ \emph{interior} vertices not adjacent to it, and they are not interchangeable: in the asymptotically optimal test vector of Lemma~\ref{lem:lower} the $k$th power at a spoke is $2f/(f+1)$ times that at an interior vertex, and for $k=4$ a two-valued test vector already loses about four percent of the maximum. We replace the coefficient counting by an exact partition of the vertex set among the faces, read off from the tree structure of the dual (Lemma~\ref{lem:own}), and the bootstrap by an asymptotically sharp estimate for the fan functional, obtained from a recurrence for face products at a constrained maximizer (Lemma~\ref{lem:fan}).

	What carries the argument throughout is the topology rather than the tensor. The adjacency tensor supplies the functional to be maximized, but every step that constrains it is a statement about the embedding. Whether the dual is a tree separates the outerplanar problem from the planar one: in the planar problem the partition is unavailable and is replaced by an assignment of bounded multiplicity produced by a flow argument (Lemma~\ref{lem:freeassign}). The two faces of a chord, and the Jordan curve they bound, control how far a vertex of large weight can reach (Lemmas~\ref{lem:local} and~\ref{lem:p3disc}). That three vertices lie on at most two common faces is the nonplanarity of $K_{3,3}$ (Lemma~\ref{lem:three}), and the bound on light vertices serving more than one pair of hubs comes from excluding a $K_{3,3}$ minor in the vertex--face incidence graph (Lemma~\ref{lem:amb}).

	Section~\ref{sec:def} collects the structural facts about $k$-angulations, an assignment property of a family of faces from which two estimates for its weight follow, and an estimate for the fan functional. Section~\ref{sec:main} proves Theorem~\ref{thm:stab} and Section~\ref{sec:hub} proves Theorem~\ref{thm:main}. Section~\ref{sec:p3} proves Theorem~\ref{thm:p3}, Section~\ref{sec:planar} proves Theorems~\ref{thm:planar} and~\ref{thm:k4}, and Section~\ref{sec:remark} contains concluding remarks.
	
	For undefined terms in graph theory, see West~\cite{W}; for the basics of spectral graph theory, see Brouwer and Haemers~\cite{BH} or Godsil and Royle~\cite{GR}.
	
	\section{Definitions and Tools}\label{sec:def}
	
	Throughout, $G$ is a $k$-angulation with $n$ vertices and $f$ interior faces, and $\HH=\HH(G)$. We also refer to \emph{closed} $k$-angulations, the simple $2$-connected plane graphs all of whose faces, the outer face included, are bounded by cycles of length $k$; such a graph has $2(n-2)/(k-2)$ faces, and for $k=3$ it is a plane triangulation. Only the face estimates below are stated for both classes; everything else concerns $k$-angulations. When $k=3$ several of the sets below are empty and the corresponding products are read as $1$; this is stated where it occurs. We use \emph{face} to mean interior face, and we identify a face with its vertex set when no confusion arises. The edges of $G$ on the outer cycle are the \emph{boundary edges}; the remaining $f-1$ edges are the \emph{chords}. Each chord lies on exactly two faces, and each boundary edge on exactly one. The \emph{dual tree} $T$ of $G$ has the faces as its nodes, two faces being adjacent when they share a chord; it is a tree with $f$ nodes and $f-1$ edges, and its edges correspond bijectively to the chords. For a vertex $v$ we write $d(v)$ for the number of faces containing $v$, and $\NH(v)$ for the set of vertices other than $v$ lying on a face with $v$.
	
	The first lemma collects the local structure we need. Parts $(i)$ and $(ii)$ are standard; for $k=3$ they appear in~\cite{ELW}.
	
	\begin{lem}\label{lem:local}
		Let $G$ be a $k$-angulation and let $v\in V(G)$.
		\begin{itemize}
			\item[$(i)$] The faces containing $v$ can be listed as $F_1,\dots,F_{d}$, $d=d(v)$, so that consecutive faces share a chord through $v$. In particular the faces containing $v$ induce a path in $T$.
			\item[$(ii)$] If $u\ne v$ and $F,F'$ are distinct faces containing both $u$ and $v$, then $uv$ is a chord and $F,F'$ are the two faces containing it. Consequently two vertices lie on at most two common faces.
			\item[$(iii)$] If $u\ne v$, then $|\NH(u)\cap\NH(v)|\le2k$.
		\end{itemize}
	\end{lem}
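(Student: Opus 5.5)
For $(i)$, I will use that $v$ lies on the outer cycle $C$ and that $G$ is $2$-connected outerplane. Let $w,w'$ be the two neighbours of $v$ on $C$. The edges at $v$ appear in the rotation at $v$ inside the disc bounded by $C$ as $vw=e_0,e_1,\dots,e_{d}=vw'$. Here $e_1,\dots,e_{d-1}$ are the chords at $v$. Each angular sector between consecutive edges $e_{i-1},e_i$ is an interior face $F_i$, since the outer face meets $v$ only in the sector between $vw'$ and $vw$. Distinct sectors give distinct faces, because an interior face of a $2$-connected outerplane graph is bounded by a cycle and so passes $v$ once. Hence the faces at $v$ are $F_1,\dots,F_d$, with $F_i,F_{i+1}$ sharing the chord $e_i$ through $v$. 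Each chord corresponds to an edge of $T$, so $F_1,\dots,F_d$ is a walk in $T$ with distinct nodes, i.e.\ a path. Since $T$ is a tree, this path is the induced subgraph on these nodes.

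For $(ii)$, suppose $F\ne F'$ both contain $u$ and $v$. By $(i)$ the faces at $v$ form the path $F_1,\dots,F_d$ in $T$, and similarly for $u$. The faces at $u$ and at $v$ are subpaths of the tree $T$, so their intersection is a subpath containing $F$ and $F'$. Pick two consecutive faces $F_i,F_{i+1}$ on it. They share a chord $e_i=vy$ through $v$, and also a chord through $u$. But two adjacent faces of $T$ share exactly one chord (edges of $T$ biject with chords), so $e_i$ contains both $u$ and $v$, that is, $e_i=uv$. The chord $uv$ lies on exactly two faces, so the intersection path has exactly two nodes, namely $F$ and $F'$. This gives the bound of two common faces.

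For $(iii)$, I first handle $w\in\NH(u)\cap\NH(v)$ with $w$ sharing a face $F$ with $u$ and a face $F'$ with $v$. I expect this step to be the main obstacle, since a naive count gives $d(u)d(v)$ and must be replaced by a planarity argument. The faces at $u$ form a path $P_u$ in $T$ and those at $v$ a path $P_v$, and likewise the faces at $w$ form a path $P_w$ meeting both. If $P_u$ and $P_v$ intersect, then $u,v$ share a face. I would argue that $w$ must lie on a face of $P_u$ nearest to $P_v$, or symmetrically. Concretely, let $Q$ be the shortest path in $T$ from $P_u$ to $P_v$, with endpoints $A\in P_u$ and $B\in P_v$ (with $A=B$ if they meet). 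Since $P_w$ is connected and meets both, $P_w$ contains $A$ or $B$, and in the case $A=B$ that common face. The exception is when $P_w$ meets $P_u$ and $P_v$ only through other branches; in a tree, however, any path joining a node of $P_u$ to a node of $P_v$ must pass through $A$ and $B$. Hence $w\in A\cup B$, which gives at most $2k$ vertices. If instead $A$ and $B$ are adjacent via paths, the same tree argument still applies, because a path between the two subtrees passes through both attachment nodes, so in fact $w\in A\cap B$ generally. The only care needed is the degenerate case where the paths overlap in more than one face. By $(ii)$ the overlap is at most two faces $F,F'$. Then $w$ lies on $F\cup F'$, or $P_w$ passes through the overlap, again giving $|F\cup F'|\le 2k$.
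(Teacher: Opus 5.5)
Your proposal is correct and follows the paper's route. For $(i)$ you use the rotation at $v$. For $(ii)$ you use that the faces at each vertex form a subpath of the dual tree $T$, so every $T$-edge between two common faces is the chord $uv$. For $(iii)$ you observe that the faces at $w$ contain the whole $T$-path from $F$ to $F'$, which passes through a face of $P_u\cap P_v$ when this intersection is nonempty and through the attachment face $A$ otherwise; the paper sharpens the latter case to $w$ lying on the exit chord. Your write-up of $(iii)$ wavers between ``$A$ or $B$'', $A\cup B$ and $A\cap B$, and only asserts the overlapping case. State once that $P_w$ contains the $T$-path from $F$ to $F'$, and that by acyclicity of $T$ this path meets $P_u\cap P_v$ whenever that intersection is nonempty.
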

	
	\begin{proof}
		$(i)$ Since $G$ is $2$-connected and outerplane, the edges at $v$ occur in the rotation at $v$ as $vw_1,\dots,vw_{d+1}$ with $vw_1$ and $vw_{d+1}$ the two boundary edges at $v$, and the sector between $vw_i$ and $vw_{i+1}$ is an interior face $F_i$ for $1\le i\le d$. Thus $F_i$ and $F_{i+1}$ share the chord $vw_{i+1}$.
		
		$(ii)$ By $(i)$, whenever two faces contain a vertex $w$, the path between them in $T$ consists of faces containing $w$, and the chord corresponding to each edge of that path is incident with $w$. If $F\ne F'$ contain both $u$ and $v$, every edge on the path from $F$ to $F'$ therefore corresponds to the chord $uv$. Distinct edges of $T$ correspond to distinct chords, so this path has exactly one edge. Thus $uv$ is a chord and $F,F'$ are its two incident faces; no third face can contain both $u$ and $v$. In particular, two nonconsecutive faces at a vertex $v$ meet only in $v$: a second common vertex $w$ would make them the two faces of the chord $vw$, which are consecutive at $v$.
		
		$(iii)$ Let $P_u$ and $P_v$ be the paths in $T$ formed by the faces containing $u$ and $v$, and let $w\in\NH(u)\cap\NH(v)$, say $w\in F\in P_u$ and $w\in F'\in P_v$. By the argument in $(ii)$, every chord on the path from $F$ to $F'$ in $T$ contains $w$. If $P_u$ and $P_v$ are disjoint, this path leaves $P_u$ through a chord $c$ that does not depend on $w$, so $w\in c$, and there are at most two such $w$. If $P_u$ and $P_v$ meet, then $P_u\cup P_v$ is a subtree, the path from $F$ to $F'$ stays in it and passes through a face of $P_u\cap P_v$, so $w$ lies on one of the at most two common faces, which contain at most $2k$ vertices in all.
	\end{proof}
	
	The bound of two common faces also holds in a closed $3$-angulation, since vertices sharing a face are adjacent and each edge lies on exactly two faces. It fails for closed $k$-angulations with $k\ge4$: in a balanced theta graph, the two branch vertices lie on every face. Thus a fixed pair of vertices can occur together in arbitrarily many face terms in the latter case.
	
	Root the dual tree $T$ at a face $F_0$. For a face $F\ne F_0$, the \emph{parent chord} $c(F)$ of $F$ is the chord shared with the parent of $F$ in $T$, and the set of vertices \emph{owned} by $F$ is $O(F)=F\setminus c(F)$, of size $k-2$; we put $O(F_0)=F_0$. Every chord is the parent chord of exactly one face, namely the child across it. The following lemma is the combinatorial fact on which the estimates of this paper rest.
	
	\begin{lem}\label{lem:own}
		For every choice of the root, the sets $O(F)$, $F$ a face, partition $V(G)$.
	\end{lem}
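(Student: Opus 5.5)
We prove Lemma~\ref{lem:own} by induction on the number $f$ of faces, peeling off a leaf of the rooted dual tree. The first step is a count that reduces the partition claim to covering or to disjointness alone. We have $|O(F_0)|=k$ and $|O(F)|=k-2$ for each of the other $f-1$ faces. Hence
\[
\sum_F|O(F)|=k+(k-2)(f-1)=(k-2)f+2=n
\]
by Euler's formula $n-2=f(k-2)$. So once we show that every vertex lies in some $O(F)$, disjointness follows automatically. We will nevertheless arrange the induction so that both properties come out together.

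For the induction, we take $f=1$ as the base case: then $G$ is a single $k$-cycle and $O(F_0)=F_0=V(G)$. For $f\ge2$, the rooted tree $T$ has a leaf $L\ne F_0$. Its parent chord $c(L)=ab$ is the only chord on $L$, so the remaining $k-1$ edges of the cycle bounding $L$ are boundary edges. These edges form the path in $L$ from $a$ to $b$ avoiding the edge $ab$. Its internal vertices are exactly $O(L)=L\setminus\{a,b\}$, and each of them has both of its outer-cycle edges on $L$. Therefore each vertex of $O(L)$ has degree $2$ in $G$ and lies on no face other than $L$.

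Deleting $O(L)$ gives $G'=G-O(L)$. In $G'$ the chord $ab$ becomes a boundary edge, so $G'$ is again a $2$-connected outerplane graph whose interior faces are the $k$-cycles of $G$ other than $L$. Thus $G'$ is a $k$-angulation with $f-1$ faces. Its dual tree is $T-L$, rooted at the same $F_0$, and every face $F\ne L$ keeps the same parent chord. Hence the owned sets of $G'$ coincide with those of $G$. By induction they partition $V(G')=V(G)\setminus O(L)$, and adding the block $O(L)$ gives a partition of $V(G)$.

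The step that needs the most care is justifying that $G'$ is a $k$-angulation. It is still simple. It is still $2$-connected: it is the union of the outer cycle with $O(L)$ replaced by the edge $ab$, together with chords inside, and an outerplane graph whose outer boundary is a Hamiltonian cycle is $2$-connected. The vertices of $O(L)$ having degree $2$ is exactly what Lemma~\ref{lem:local}$(i)$ gives, since each such $v$ satisfies $d(v)=1$. Everything else is bookkeeping of parent chords, which is unchanged because $L$ is a leaf.
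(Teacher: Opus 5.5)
Your proof is correct, but it takes a different route from the paper.

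\emph{The paper's route.} The paper never modifies $G$. For each vertex $v$ it takes the path $P_v$ of faces at $v$ from Lemma~\ref{lem:local}$(i)$ and lets $F^*$ be the face of $P_v$ nearest the root. It then shows directly that $v\in O(F^*)$, because the parent of $F^*$ misses $v$. It also shows $v\notin O(F)$ for every other $F\in P_v$: the parent of such an $F$ is its neighbor on $P_v$ toward $F^*$, and the chord they share passes through $v$.

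\emph{Your route.} You strip a non-root leaf $L$ of $T$ and note that the $k-2$ vertices of $O(L)$ have degree $2$ and lie only on $L$. You then check that $G-O(L)$ is a $k$-angulation with dual tree $T-L$ and unchanged parent chords. Your verification of simplicity, of $2$-connectivity via the Hamiltonian outer cycle, and of the face structure is sound. One small phrasing point: your final paragraph gives $d(v)=1$ as the reason for degree $2$, when it is really the conclusion. The actual reason is that the two edges of $L$ at $v$ are both boundary edges. In the rotation of Lemma~\ref{lem:local}$(i)$ they are $vw_i,vw_{i+1}$, so they must be $vw_1,vw_{d+1}$, which forces $d=1$. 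Also, your induction yields covering and disjointness together, so the opening count is not needed. It is a harmless consistency check, which the paper records after its proof.

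\emph{What each approach buys.} Yours is the familiar view of an outerplane $k$-angulation as $k$-gons glued along chords. It uses Lemma~\ref{lem:local}$(i)$ only in the trivial case $d(v)=1$. The paper's argument avoids all the bookkeeping about $G-O(L)$. More importantly, it gives an explicit description of the owner of $v$: the face of $P_v$ nearest the root. That description is reused verbatim in the proof of Lemma~\ref{lem:own2} to show that faces avoiding $h_1$ own only vertices of $U$. With your proof, that characterization would need a separate, though easy, inductive argument.
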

	
	\begin{proof}
		Let $v\in V(G)$, and let $P_v$ be the path of faces containing $v$ given by Lemma~\ref{lem:local}$(i)$. Let $F^*$ be the face of $P_v$ nearest to the root. If $F^*=F_0$, then $v\in O(F_0)$. Otherwise the parent of $F^*$ does not contain $v$, so the parent chord of $F^*$ does not contain $v$, and $v\in O(F^*)$. Every other face $F$ of $P_v$ lies on the subpath of $P_v$ from $F$ to $F^*$, which is the beginning of the path from $F$ to the root in $T$; hence the parent of $F$ is its neighbor on $P_v$ toward $F^*$, and by Lemma~\ref{lem:local}$(i)$ the parent chord of $F$ passes through $v$, so $v\notin O(F)$. Thus $v$ is owned by exactly one face.
	\end{proof}
	
	Counting gives $(f-1)(k-2)+k=f(k-2)+2=n$, in agreement with Euler's formula.
	
	The estimates below split each face into $k-2$ vertices and one edge, and what they need is that no vertex is used too often. We isolate this as a property of the family, so that the same estimates serve both classes of face hypergraphs.
	
	\begin{defn}\label{def:assign}
		Let $\FF$ be a family of faces of $G$. A \emph{$(c,c')$-assignment} for $\FF$ assigns to each $F\in\FF$ a set $\phi(F)\subseteq F$ with $|\phi(F)|=k-2$ such that
		\begin{itemize}
			\item[$(a)$] $F\setminus\phi(F)$ is an edge of the boundary cycle of $F$;
			\item[$(b)$] every vertex lies in at most $c$ of the sets $\phi(F)$, $F\in\FF$;
			\item[$(c)$] every edge equals $F\setminus\phi(F)$ for at most $c'$ faces $F\in\FF$.
		\end{itemize}
	\end{defn}
	
	Property $(c)$ holds with $c'=2$ in any plane graph, since an edge lies on at most two faces; it is recorded separately because in a $k$-angulation one can take $c'=1$.
	
	\begin{prop}\label{prop:own}
		Let $G$ be a $k$-angulation and let $\FF$ be a family of faces omitting at least one face. Then $\FF$ has a $(1,1)$-assignment.
	\end{prop}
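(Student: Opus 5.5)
Since $\FF$ omits at least one face, we fix a face $F_0\notin\FF$ and root the dual tree $T$ at $F_0$. For every face $F\ne F_0$ the ownership sets of Lemma~\ref{lem:own} give the natural candidate $\phi(F)=O(F)=F\setminus c(F)$, a set of exactly $k-2$ vertices. As $F_0\notin\FF$, every $F\in\FF$ has a parent chord, so $\phi$ is defined on all of $\FF$.

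We check the three conditions of Definition~\ref{def:assign} in turn. For $(a)$, $F\setminus\phi(F)=c(F)$ is a chord lying on the boundary cycle of $F$, since $F$ is one of the two faces of that chord; hence it is an edge of that cycle. For $(b)$, Lemma~\ref{lem:own} says the sets $O(F)$, over all faces, partition $V(G)$. The sets $\phi(F)$ for $F\in\FF$ form a subfamily of this partition, so each vertex lies in at most one of them. For $(c)$, each chord is the parent chord of exactly one face, namely the child across it. Hence an edge $e$ equals $F\setminus\phi(F)=c(F)$ for at most one $F\in\FF$, and boundary edges are never of this form.

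The only point needing care is the reason for omitting a face. If $\FF$ were all faces, the root would own all $k$ of its vertices, and no choice of $k-2$ among them keeps the partition property without reusing an edge. Choosing the root outside $\FF$ removes this case, and the claim is then a direct consequence of Lemma~\ref{lem:own}. I expect no real obstacle here.
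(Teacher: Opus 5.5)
Your proof is correct and is the paper's proof: root $T$ at a face $F_0\notin\FF$, take $\phi(F)=O(F)$, and read off $(a)$--$(c)$ from Lemma~\ref{lem:own} and the fact that each chord is the parent chord of exactly one face. Your last paragraph is inaccurate, although the proof does not depend on it: the full family of faces also has a $(1,1)$-assignment, obtained by rooting $T$ at a face with a boundary edge $e$ (for instance a leaf of $T$) and taking $\phi(F_0)=F_0\setminus e$, since a boundary edge is never a parent chord.
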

	
	\begin{proof}
		Root $T$ at a face $F_0\notin\FF$ and put $\phi(F)=O(F)=F\setminus c(F)$. Then $(a)$ holds, and $(c)$ holds with $c'=1$ because each chord is the parent chord of exactly one face. By Lemma~\ref{lem:own} the sets $O(F)$ are pairwise disjoint, which is $(b)$ with $c=1$.
	\end{proof}
	
	\begin{prop}\label{prop:hall}
		Let $G$ be a closed $3$-angulation and let $\FF$ be any family of faces. Then $\FF$ has a $(2,2)$-assignment.
	\end{prop}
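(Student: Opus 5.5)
The plan is to reduce the statement to a Hall-type matching condition. For $k=3$ a $(2,2)$-assignment asks for one vertex $\phi(F)\in F$ for each face $F\in\FF$, with no vertex chosen more than twice. Condition $(a)$ is then automatic, because $F\setminus\phi(F)$ consists of the two other vertices of the triangle $F$, which form an edge of its boundary cycle. Condition $(c)$ with $c'=2$ is also automatic, since an edge of a plane graph lies on at most two faces, as remarked after Definition~\ref{def:assign}. So the content is condition $(b)$ with $c=2$.

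To obtain $(b)$, form the bipartite incidence graph between $\FF$ and $V(G)$, and replace every vertex $v$ by two copies $v',v''$, each adjacent to the faces containing $v$. A matching of this graph that saturates $\FF$ is exactly a choice of $\phi$ in which every vertex is used at most twice. By Hall's theorem it suffices to show
\[
|\mathcal S|\le 2\,|V(\mathcal S)|\qquad\text{for every nonempty }\mathcal S\subseteq\FF,
\]
where $V(\mathcal S)$ is the set of vertices lying on some face of $\mathcal S$.

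For this inequality, let $m=|V(\mathcal S)|$; since each face has three vertices, $m\ge3$. Let $G_{\mathcal S}$ be the plane subgraph of $G$ consisting of the vertices of $V(\mathcal S)$ and the edges of the triangles in $\mathcal S$, with the embedding inherited from $G$. Each $F\in\mathcal S$ is an open disk in the plane containing no vertex or edge of $G$, hence none of $G_{\mathcal S}$, and it is bounded by edges of $G_{\mathcal S}$. Therefore $F$ is still a face of $G_{\mathcal S}$, and distinct faces of $\mathcal S$ give distinct faces of $G_{\mathcal S}$. Now $G_{\mathcal S}$ is a simple plane graph on $m\ge3$ vertices, so it has at most $3m-6$ edges. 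Combined with Euler's formula (applied componentwise if $G_{\mathcal S}$ is disconnected, which only lowers the face count), this shows that it has at most $2m-4$ faces. Hence $|\mathcal S|\le 2m-4<2m$, which verifies Hall's condition, and the matching gives the required $(2,2)$-assignment.

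The only step requiring care is the claim that faces of $G$ remain distinct faces of the subgraph $G_{\mathcal S}$, together with the face bound when $G_{\mathcal S}$ is disconnected. For the latter I would use $e\le 3m-6$ and $\#\text{faces}=e-m+1+c\le 2m-5+c$, where $c$ is the number of components. Each component has at least $3$ vertices, since it contains a whole triangle. If $c=1$ this gives $2m-4$. If $c\ge2$, the cleanest route is to sum the bound $2m_i-4$ over the components $i$, since each face of $\mathcal S$ is a bounded face of exactly one component. Either way $|\mathcal S|<2m$.
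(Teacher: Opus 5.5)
Your proof is correct and follows the paper's route: the same Hall reduction with two copies of each vertex, with Hall's condition checked by a planarity count. The only difference is that the paper obtains $|\mathcal S|\le 2|V(\mathcal S)|-4$ by double counting face--edge incidences, $3|\mathcal S|\le 2e(G[V(\mathcal S)])\le 2(3|V(\mathcal S)|-6)$. That uses only the fact that each edge lies on at most two faces, and so spares you both the check that the faces survive in $G_{\mathcal S}$ and the componentwise bookkeeping. There is one harmless slip: the outer face of $G$ may belong to $\mathcal S$, and then it is neither an open disk in the plane nor a bounded face of a component. Your count goes through unchanged if you work on the sphere and charge each $F\in\mathcal S$ to the face it forms in the component containing its boundary triangle.
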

	
	\begin{proof}
		Here $|\phi(F)|=1$ and $F\setminus\phi(F)$ is the opposite edge, so $(a)$ is automatic and $(c)$ holds with $c'=2$. Form the bipartite graph whose left part is $\FF$ and whose right part consists of two copies of every vertex, each face being joined to both copies of each of its three vertices. Let $\emptyset\ne\FF'\subseteq\FF$ and let $U$ be the set of vertices on faces of $\FF'$, so $|U|\ge3$. Counting face--edge incidences and then bounding $e(G[U])$ by planarity,
		\[
		3|\FF'|\le2e(G[U])\le2(3|U|-6)<6|U| ,
		\]
		so $|\FF'|\le2|U|$, which is the size of the neighborhood of $\FF'$. Hall's condition holds, and a matching saturating $\FF$ gives the assignment.
	\end{proof}
	
	\begin{rem}\label{rem:kge4}
		For $k\ge4$, selecting $k-2$ distinct vertices from each face with bounded multiplicity need not satisfy property $(a)$: the two remaining vertices need not be consecutive on the face boundary. Without the boundary-edge condition, these remaining pairs may repeat unboundedly. In fact, the following proposition shows that no constant $c=c(k)$ can guarantee a $(c,2)$-assignment for every closed $k$-angulation.
	\end{rem}
	
	\begin{prop}\label{prop:noassign}
		For every fixed $k\ge4$ and every $c>0$, there is a closed $k$-angulation whose family of all faces has no $(c,2)$-assignment.
	\end{prop}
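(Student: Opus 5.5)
We use the balanced theta graphs: there the two branch vertices lie on every face but are never adjacent. The boundary-edge condition $(a)$ then forces every face to put at least one branch vertex into $\phi(F)$, so one of them is used about $t/2$ times.

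Fix $k\ge4$ and $c>0$, and choose an even integer $t\ge3$ with $t>2c$. Take the balanced theta graph $G=\Theta(p_1,\dots,p_t)$ with $p_i$ alternating between $1$ and $k-3$. Both values are positive because $k\ge4$, and consecutive values sum to $k-2$ since $t$ is even.

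We first confirm that $G$ is a closed $k$-angulation. It is a plane graph, and it is simple because every path has at least one internal vertex, so there is no edge $h_1h_2$ and no two parallel edges. It is $2$-connected because it is a union of $t\ge3$ internally disjoint $h_1$--$h_2$ paths. Its faces, the outer one included, are the $t$ regions between cyclically consecutive paths. The face between the $i$th and $(i+1)$st paths is bounded by a cycle of length $p_i+p_{i+1}+2=k$ through both $h_1$ and $h_2$.

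Now suppose $\phi$ is a $(c,2)$-assignment for the family of all $t$ faces. For each face $F$, property $(a)$ says $F\setminus\phi(F)$ is an edge of $G$ on the boundary cycle of $F$. Since $h_1h_2$ is not an edge of $G$, this pair contains at most one of $h_1,h_2$. Both branch vertices lie on $F$, so at least one of them belongs to $\phi(F)$. Summing over the $t$ faces, $h_1$ and $h_2$ together lie in at least $t$ of the sets $\phi(F)$. Hence one of them lies in at least $t/2>c$ of these sets, which contradicts property $(b)$. Therefore this family has no $(c,2)$-assignment. Property $(c)$ plays no role in the argument.

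There is no real obstacle here; the only point needing care is checking that the construction is a genuine closed $k$-angulation (simplicity and all faces being $k$-cycles), which holds because every $p_i\ge1$.
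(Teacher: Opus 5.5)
Your proof is correct and follows the paper's argument essentially verbatim. It uses the same alternating $1,k-3$ theta graph with even $t>2c$, the same observation that nonadjacency of the branch vertices forces one of them into every $\phi(F)$, and the same count $m_{h_1}+m_{h_2}\ge t>2c$ against property $(b)$.
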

	
	\begin{proof}
		Fix $k\ge4$ and $c>0$, and choose an even integer
		$t\ge4$ with $t>2c$. Take two vertices $a,b$ and join them by
		$t$ internally disjoint paths $P_1,\dots,P_t$, drawn in this cyclic
		order. Let $P_i$ have $p_i$ internal vertices, where $p_i=1$ for
		odd $i$ and $p_i=k-3$ for even $i$, and let $G$ consist of these
		paths. Thus $G$ is a simple $2$-connected plane graph. When $k=4$,
		each path has one internal vertex, and $G=K_{2,t}$.
		
		The graph $G$ has exactly $t$ faces.
		For $1\le i<t$, the paths $P_i$ and $P_{i+1}$ bound a face $F_i$,
		while $P_t$ and $P_1$ bound the outer face $F_t$.
		Since consecutive paths share only their endpoints $a,b$,
		\[
		|F_i|=2+p_i+p_{i+1}=k
		\qquad (1\le i\le t),
		\]
		where indices are read modulo $t$. The equality also holds for
		$i=t$ because $t$ is even. Hence $G$ is a closed $k$-angulation.
		Moreover, $a$ and $b$ lie on every face, but they are nonadjacent,
		since each path has at least one internal vertex.
		
		Suppose that the family of all faces admits a
		$(c,2)$-assignment $\phi$. For each face $F_i$, the set
		$\phi(F_i)$ contains $k-2$ vertices, and the two remaining vertices
		must form an edge of its boundary. If neither $a$ nor $b$ belonged
		to $\phi(F_i)$, then these two remaining vertices would be exactly
		$a,b$. This is impossible because $ab\notin E(G)$. Therefore
		\[
		|\phi(F_i)\cap\{a,b\}|\ge1
		\qquad (1\le i\le t).
		\]
		
		Let $m_a$ and $m_b$ be the numbers of faces whose
		selected sets contain $a$ and $b$, respectively. Summing the last
		inequality over all $t$ faces gives
		\[
		m_a+m_b
		=\sum_{i=1}^{t}|\phi(F_i)\cap\{a,b\}|
		\ge t.
		\]
		On the other hand, property $(b)$ gives $m_a\le c$ and $m_b\le c$,
		so $t\le m_a+m_b\le2c$, contrary to our choice of $t>2c$.
	\end{proof}
	
	We now bound the weight of a family of faces in terms of the mass of the vertices on those faces. The first estimate is crude and depends only on the number of faces; the second says that faces all of whose vertices carry small weight contribute little on the scale $n^{1/k}$. For $k=3$ both are of the kind used in Section~\ref{sec:p3}. Both follow from a single statement about assignments, and the constants for the two classes come out of it by substitution.
	
	\begin{lem}\label{lem:general}
		Let $\FF$ be a family of $m$ faces of a plane graph $G$ admitting a $(c,c')$-assignment, let $x\in\R^{V(G)}_{\ge0}$, let $V(\FF)$ be the set of vertices lying on a face of $\FF$, and put $W=\sum_{v\in V(\FF)}x_v^{k}$.
		\begin{itemize}
			\item[$(i)$] $\displaystyle\sum_{F\in\FF}\ \prod_{v\in F}x_v\ \le\ \Bigl(\frac{c}{k-2}\Bigr)^{\frac{k-2}{k}}\Bigl(\frac{c'^{2}m}{2}\Bigr)^{1/k}W .$
			\item[$(ii)$] If $G$ is $D$-degenerate and $x_v\le\varepsilon$ for every $v\in V(\FF)$, then
			\[
			\sum_{F\in\FF}\ \prod_{v\in F}x_v\ \le\ \Bigl(\frac{c}{k-2}\Bigr)^{\frac{k-2}{k}}(c'D)^{2/k}\,\varepsilon\,|V(\FF)|^{1/k}\,W^{\frac{k-1}{k}} .
			\]
		\end{itemize}
	\end{lem}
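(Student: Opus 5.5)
For each face $F\in\FF$ write $F=\phi(F)\cup e_F$, where $e_F=\{a_F,b_F\}$ is the boundary edge from property $(a)$. Then
\[
\prod_{v\in F}x_v=\Bigl(\prod_{v\in\phi(F)}x_v\Bigr)\,x_{a_F}x_{b_F},
\]
and the plan is to apply H\"older's inequality to $\sum_F$ with exponents $k/(k-2)$ on the first factor and $k/2$ on the second.

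\textbf{The first factor.} By AM--GM, $\prod_{v\in\phi(F)}x_v^{k/(k-2)}\le\frac1{k-2}\sum_{v\in\phi(F)}x_v^{k}$. Summing over $F$ and using property $(b)$ gives
\[
\sum_F\Bigl(\prod_{v\in\phi(F)}x_v\Bigr)^{k/(k-2)}\le\frac{c}{k-2}\,W .
\]
Raised to the power $(k-2)/k$, this produces the constant $(c/(k-2))^{(k-2)/k}W^{(k-2)/k}$ common to both parts.

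\textbf{The second factor.} Here we must bound $S=\sum_F(x_{a_F}x_{b_F})^{k/2}$. By property $(c)$, $S\le c'\sum_{uv\in E_\FF}(x_ux_v)^{k/2}$, where $E_\FF$ is the set of distinct edges used, with $|E_\FF|\le m$.

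For $(i)$, I will avoid pulling out a power of $m$ directly from $S$. Instead I apply H\"older to $\sum_F x_{a_F}x_{b_F}$ with exponents $k/(k-2)$ and $k/2$ in a different split. Write $(x_ax_b)^{k/2}\le\frac12(x_a^k+x_b^k)$. Each vertex then lies on at most $\deg_{E_\FF}(v)$ edges, and this degree is unbounded, so the naive sum fails. The alternative is to bound the product pointwise: write $x_ax_b\le(x_ax_b)^{k/2\cdot 2/k}$ and apply H\"older with a third exponent, using $\sum_F1=m$. Concretely, I split
\[
\prod_{v\in F}x_v=\Bigl(\prod_{\phi}x_v\Bigr)\,(x_ax_b)\cdot1
\]
with exponents $k/(k-2)$, $k$, $k$. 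The middle term then requires $\sum_F(x_ax_b)^{k}\le\sum_F\frac12(x_a^{2k}+x_b^{2k})$, which is bad. The cleaner route is $\sum_{uv\in E}(x_ux_v)^{k/2}\le\bigl(\sum_{uv\in E}x_u^kx_v^k\bigr)^{1/2}|E|^{1/2}$ followed by $\sum_{uv}x_u^kx_v^k\le\frac12\bigl(\sum_vx_v^k\bigr)^2=\frac12W^2$. This gives
\[
S\le c'\,m^{1/2}\,2^{-1/2}\,W,
\qquad\text{hence}\qquad S^{2/k}\le\Bigl(\frac{c'^2m}{2}\Bigr)^{1/k}W^{2/k}.
\]
Combining with the first factor yields $(i)$ exactly.

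For $(ii)$, I use degeneracy instead. Orient each edge of $E_\FF$ toward a later vertex in a $D$-degenerate ordering, so every vertex has out-degree at most $D$. Then
\[
\sum_{uv}(x_ux_v)^{k/2}\le\sum_{uv}\tfrac12\bigl(x_u^k+x_v^k\bigr)\cdot(\text{bounded})
\]
does not directly help, because in-degrees are unbounded. Instead I bound $(x_ux_v)^{k/2}\le\varepsilon^{k/2}x_u^{k/2}$ with $u$ the tail. This gives $S\le c'D\varepsilon^{k/2}\sum_{v\in V(\FF)}x_v^{k/2}$, and Cauchy--Schwarz gives $\sum_v x_v^{k/2}\le|V(\FF)|^{1/2}W^{1/2}$. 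Therefore
\[
S^{2/k}\le(c'D)^{2/k}\,\varepsilon\,|V(\FF)|^{1/k}\,W^{1/k},
\]
and multiplying by the first factor gives $W^{(k-2)/k+1/k}=W^{(k-1)/k}$, as claimed. The main care needed is in choosing these second-factor bounds, specifically the Cauchy--Schwarz split with $|E|\le m$ in $(i)$ and the orientation in $(ii)$. The rest is bookkeeping.
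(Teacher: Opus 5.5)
Your proof is correct and has the same structure as the paper's. You use the same H\"older split with exponents $\frac{k}{k-2}$ and $\frac k2$, and the same AM--GM plus property $(b)$ bound $\frac{c}{k-2}W$ for the first factor. For $(ii)$ you use the same orientation with outdegree at most $D$, then Cauchy--Schwarz.

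The one real difference is the second factor in $(i)$.
\begin{itemize}
\item The paper puts $y_v=x_v^{k/2}$ and writes the second factor as the quadratic form $\frac{c'}{2}y^{\top}A(J)y$, where $J$ is the graph of leftover pairs. It bounds this by $\lambda_1(J)W$, using $\lambda_1(J)^2\le\operatorname{tr}A(J)^2=2e(J)\le 2m$.
\item You apply Cauchy--Schwarz over the edges and then use $\sum_{uv\in E}x_u^kx_v^k\le\frac12W^2$.
\end{itemize}
Your route is a direct, eigenvalue-free proof of the same inequality $y^{\top}A(J)y\le\sqrt{2e(J)}\,\|y\|^2$. It gives exactly the same constant $(c'^2m/2)^{1/k}$; the paper simply phrases that step as a spectral bound.

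When you write it up, remove the abandoned attempts from the paragraph on $(i)$: the split with exponents $\frac{k}{k-2},k,k$, and the remark that you will avoid extracting a power of $m$ from $S$, which your final argument in fact does. Keep only the Cauchy--Schwarz chain.
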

	
	\begin{proof}
		Write $F\setminus\phi(F)=a_Fb_F$ and $Y_F=\prod_{v\in\phi(F)}x_v$, so that $\prod_{v\in F}x_v=x_{a_F}x_{b_F}Y_F$. By H\"older's inequality with exponents $\frac{k}{k-2}$ and $\frac k2$,
		\begin{equation}\label{eq:holder}
			\sum_{F\in\FF}x_{a_F}x_{b_F}Y_F\ \le\
			\Bigl(\sum_{F\in\FF}Y_F^{\frac{k}{k-2}}\Bigr)^{\frac{k-2}{k}}
			\Bigl(\sum_{F\in\FF}(x_{a_F}x_{b_F})^{\frac k2}\Bigr)^{\frac2k}.
		\end{equation}
		Since $Y_F^{k/(k-2)}=\bigl(\prod_{v\in\phi(F)}x_v^{k}\bigr)^{1/(k-2)}$ is the geometric mean of the $k-2$ numbers $x_v^{k}$, $v\in\phi(F)$, the arithmetic--geometric mean inequality and property $(b)$ give
		\begin{equation}\label{eq:first}
			\sum_{F\in\FF}Y_F^{\frac{k}{k-2}}\le\frac1{k-2}\sum_{F\in\FF}\ \sum_{v\in\phi(F)}x_v^{k}\le\frac{cW}{k-2},
		\end{equation}
		the last step because each vertex of $V(\FF)$ occurs in at most $c$ of the sets $\phi(F)$. For the second factor put $y_v=x_v^{k/2}$ for $v\in V(\FF)$, so that $\sum_{v\in V(\FF)}y_v^{2}=W$ and $(x_ax_b)^{k/2}=y_ay_b$. Let $J$ be the simple graph on $V(\FF)$ whose edges are the pairs $a_Fb_F$; by $(a)$ it is a subgraph of $G$, and by $(c)$ each of its edges is counted at most $c'$ times, so $e(J)\le m$ and
		\begin{equation}\label{eq:second}
			\sum_{F\in\FF}y_{a_F}y_{b_F}\ \le\ c'\!\!\sum_{uv\in E(J)}\!\!y_uy_v\ =\ \frac{c'}{2}\,y^{\top}A(J)y .
		\end{equation}
		
		For $(i)$ we have $y^{\top}A(J)y\le\lambda_1(J)W$ and $\lambda_1(J)^{2}\le\operatorname{tr}A(J)^{2}=2e(J)\le2m$, so the second factor of~\eqref{eq:holder} is at most $\bigl(\frac{c'}{2}\sqrt{2m}\,W\bigr)^{2/k}=\bigl(\frac{c'^{2}m}{2}\bigr)^{1/k}W^{2/k}$; with~\eqref{eq:first} this is $(i)$.
		
		For $(ii)$, a $D$-degenerate graph has an orientation in which every outdegree is at most $D$. For an edge oriented $u\to v$ we have $y_v\le\varepsilon^{k/2}$, since $v\in V(\FF)$. Grouping the edges of $J$ by their tails and applying the Cauchy--Schwarz inequality,
		\[
		\sum_{uv\in E(J)}y_uy_v\le\varepsilon^{k/2}\sum_ud^{+}_J(u)y_u\le D\varepsilon^{k/2}\!\!\sum_{v\in V(\FF)}\!\!y_v\le D\varepsilon^{k/2}\sqrt{|V(\FF)|\,W}.
		\]
		By~\eqref{eq:second} the second factor of~\eqref{eq:holder} is at most $\bigl(c'D\varepsilon^{k/2}\sqrt{|V(\FF)|W}\bigr)^{2/k}$, which with~\eqref{eq:first} gives $(ii)$.
	\end{proof}
	
	\begin{cor}\label{cor:faces}
		Let $G$ be a $k$-angulation, let $\FF$ be a family of $m$ faces, let $x\in\R^{V(G)}_{\ge0}$, let $V(\FF)$ be the set of vertices lying on a face of $\FF$, and put $W=\sum_{v\in V(\FF)}x_v^{k}$. Let $\rho=0$ if $\FF$ does not contain every face, and $\rho=1$ otherwise.
		\begin{itemize}
			\item[$(i)$] $\displaystyle\sum_{F\in\FF}\ \prod_{v\in F}x_v\ \le\ (k-2)^{-\frac{k-2}{k}}\Bigl(\frac m2\Bigr)^{1/k}W+\rho\,\frac Wk .$
			\item[$(ii)$] If $x_v\le\varepsilon$ for every vertex $v$ lying on a face of $\FF$, then
			\[
			\sum_{F\in\FF}\ \prod_{v\in F}x_v\ \le\ 2^{2/k}(k-2)^{-\frac{k-2}{k}}\,\varepsilon\, n^{1/k}\,W^{\frac{k-1}{k}}+\rho\,\varepsilon^{k}.
			\]
		\end{itemize}
	\end{cor}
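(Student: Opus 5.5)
The plan is to derive both parts from Lemma~\ref{lem:general} by substituting $c=c'=1$ from Proposition~\ref{prop:own}. The case in which $\FF$ contains every face will be handled by splitting off a single face.

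\textbf{Case $\rho=0$.} Here $\FF$ omits a face, so by Proposition~\ref{prop:own} it has a $(1,1)$-assignment.

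For part $(i)$, Lemma~\ref{lem:general}$(i)$ with $c=c'=1$ gives $(k-2)^{-(k-2)/k}(m/2)^{1/k}W$, which is the claim.

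For part $(ii)$, we need a degeneracy constant. Outerplanar graphs are $2$-degenerate, since every outerplanar graph has a vertex of degree at most $2$, and subgraphs are again outerplanar. So we take $D=2$. This gives $(c'D)^{2/k}=2^{2/k}$. Since $|V(\FF)|\le n$, Lemma~\ref{lem:general}$(ii)$ gives the bound with $\rho=0$.

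\textbf{Case $\rho=1$.} Pick any face $F_0$ and write $\FF=\FF'\cup\{F_0\}$ with $\FF'=\FF\setminus\{F_0\}$. Then $\FF'$ omits $F_0$, so the case above applies to it. The face $\FF'$ has $m-1\le m$ faces, and its vertex set lies in $V(\FF)$, so its weight is at most $W$. Since the right-hand sides are monotone in $m$, $W$ and $|V(\FF')|$, the bound for $\FF'$ is at most the main term.

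It remains to bound the extra face $F_0$.
\begin{itemize}
\item For $(i)$, AM--GM gives $\prod_{v\in F_0}x_v\le\frac1k\sum_{v\in F_0}x_v^k\le W/k$.
\item For $(ii)$, every $x_v\le\varepsilon$ on $F_0$, so $\prod_{v\in F_0}x_v\le\varepsilon^k$.
\end{itemize}
Adding these terms gives the stated $\rho$ terms.

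\textbf{Main obstacle.} The only point needing care is the degeneracy constant in $(ii)$. Taking $D=2$ from outerplanarity is what yields the factor $2^{2/k}$. The graph $J$ in the lemma is a subgraph of $G$, so its degeneracy is also at most $2$. Everything else is direct substitution.
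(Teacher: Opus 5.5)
Your proposal is correct and is essentially the paper's proof: when $\rho=1$ you split off one face (bounded by AM--GM in $(i)$ and by $\varepsilon^k$ in $(ii)$), give the remaining family a $(1,1)$-assignment via Proposition~\ref{prop:own}, and apply Lemma~\ref{lem:general} with $c=c'=1$, $D=2$ and $|V(\FF)|\le n$. The only slip is cosmetic: $\FF'$ is a family of faces, not a face.
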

	
	\begin{proof}
		If $\FF$ contains every face, remove one face $F_0$ from $\FF$; it contributes at most $\prod_{v\in F_0}x_v\le\frac1k\sum_{v\in F_0}x_v^{k}\le W/k$ in $(i)$ and at most $\varepsilon^{k}$ in $(ii)$, by the arithmetic--geometric mean inequality and the hypothesis respectively, and this is the term $\rho$. The remaining family omits a face, so Proposition~\ref{prop:own} gives it a $(1,1)$-assignment. Now apply Lemma~\ref{lem:general} with $c=c'=1$ and, for $(ii)$, with $D=2$, since outerplanar graphs are $2$-degenerate; finally $|V(\FF)|\le n$.
	\end{proof}
	
	\begin{cor}\label{cor:planar3}
		Let $G$ be a closed $3$-angulation and let $\FF$ be a family of $m$ faces. Then, with $W$ as above,
		\[
		\sum_{F\in\FF}\prod_{v\in F}x_v\le(4m)^{1/3}W,
		\]
		and if every coordinate on $V(\FF)$ is at most $\varepsilon$, then
		\[
		\sum_{F\in\FF}\prod_{v\in F}x_v\le200^{1/3}\,\varepsilon\,|V(\FF)|^{1/3}W^{2/3}.
		\]
	\end{cor}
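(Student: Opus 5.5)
Corollary~\ref{cor:planar3} is a direct substitution into Lemma~\ref{lem:general} with $k=3$. By Proposition~\ref{prop:hall}, every family $\FF$ of faces of a closed $3$-angulation, including the family of all faces, has a $(2,2)$-assignment. So, unlike Corollary~\ref{cor:faces}, no face needs to be removed and no $\rho$ term appears. I would take $c=c'=2$ and $k=3$ throughout.

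For the first inequality, Lemma~\ref{lem:general}$(i)$ gives the constant
\[
\Bigl(\frac{c}{k-2}\Bigr)^{\frac{k-2}{k}}\Bigl(\frac{c'^2m}{2}\Bigr)^{1/k}=2^{1/3}\Bigl(\frac{4m}{2}\Bigr)^{1/3}=2^{1/3}(2m)^{1/3}=(4m)^{1/3},
\]
which is exactly the stated bound.

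For the second inequality, I would use that every simple planar graph is $5$-degenerate, since it always has a vertex of degree at most $5$, so $D=5$ is admissible. Lemma~\ref{lem:general}$(ii)$ then gives the constant
\[
\Bigl(\frac{c}{k-2}\Bigr)^{\frac{k-2}{k}}(c'D)^{2/k}=2^{1/3}\cdot10^{2/3}=(2\cdot100)^{1/3}=200^{1/3},
\]
multiplying $\varepsilon\,|V(\FF)|^{1/3}W^{2/3}$, as claimed.

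The only point needing care is that Lemma~\ref{lem:general} is stated for an arbitrary plane graph $G$ with a $(c,c')$-assignment, so it applies to a triangulation as well as to a $k$-angulation. In part $(ii)$ the degeneracy bound is needed only for the subgraph $J\subseteq G$, and this is inherited from $G$. There is no real obstacle beyond checking the arithmetic.
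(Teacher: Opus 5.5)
Your proposal is correct and is exactly the paper's argument: apply Lemma~\ref{lem:general} with $k=3$, $c=c'=2$ from the $(2,2)$-assignment of Proposition~\ref{prop:hall}, and $D=5$ by $5$-degeneracy of planar graphs. Your constants $2^{1/3}(2m)^{1/3}=(4m)^{1/3}$ and $2^{1/3}\cdot10^{2/3}=200^{1/3}$ match the paper's.
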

	
	\begin{proof}
		Apply Lemma~\ref{lem:general} to the $(2,2)$-assignment of Proposition~\ref{prop:hall}, with $D=5$, since planar graphs are $5$-degenerate. In $(i)$ the constant is $2^{1/3}(2m)^{1/3}=(4m)^{1/3}$, and in $(ii)$ it is $2^{1/3}\cdot10^{2/3}=200^{1/3}$.
	\end{proof}
	
	Let $h$ be a vertex of $G$ with $d=d(h)$, and let $F_1,\dots,F_d$ be the faces at $h$ in the order of Lemma~\ref{lem:local}$(i)$. We write $u_0,\dots,u_d$ for the vertices with $F_i\cap F_{i+1}=\{h,u_i\}$ for $1\le i\le d-1$ and $u_0,u_d$ the two neighbors of $h$ on the outer cycle, and $I_i=F_i\setminus\{h,u_{i-1},u_i\}$, so that $|I_i|=k-3$, empty when $k=3$ and $\NH(h)=\{u_0,\dots,u_d\}\cup\bigcup_iI_i$ is a disjoint union. For $y\in\R^{\NH(h)}_{\ge0}$ and $\tau\ge0$ define
	\[
	\Phi_{h,\tau}(y)=\sum_{i=1}^{d}\ \prod_{v\in F_i\setminus\{h\}}y_v+\tau(y_{u_0}+y_{u_d}) ,
	\quad
	\Theta_{d,\tau}(W)=\max\Bigl\{\Phi_{h,\tau}(y):\ y\ge0,\ \sum_{v\in\NH(h)}y_v^{k}=W\Bigr\}.
	\]
	We abbreviate $\Phi_h=\Phi_{h,0}$ and $\Theta_d=\Theta_{d,0}$, and we write $\theta_d=\Theta_d(1)$, so that $\Theta_d(W)=\theta_dW^{(k-1)/k}$ by homogeneity. The value depends only on $d$, $\tau$ and $W$, since the combinatorial structure of the faces at $h$ is determined by $d$, and the maximum is attained because the feasible set is compact. Only $\tau=0$ occurs in the outerplanar problem; the parameter is carried because the planar problem for $k=3$, in Section~\ref{sec:p3}, uses $\Theta_{d,\tau}$ with $\tau=AB/(A+B)$, where $A$ and $B$ are the weights of the two hubs, the two end terms accounting for the two faces that contain both of them. Put
	\begin{equation}\label{eq:ck}
		c_k=4^{1/k}(k-1)^{-\frac{k-1}{k}} .
	\end{equation}
	
	\begin{lem}\label{lem:fan}
		For all $d\ge1$ and $W>0$, $\Theta_d(W)\le c_k\,d^{1/k}W^{\frac{k-1}{k}}$.
	\end{lem}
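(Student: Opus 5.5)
We normalize $W=1$, which is allowed because $\Theta_d(W)=\theta_dW^{(k-1)/k}$, and then bound $\Phi_h(y)$ directly for an arbitrary feasible $y$. Put $a_j=y_{u_j}^k$ for $0\le j\le d$ and $B_i=\sum_{v\in I_i}y_v^k$ for $1\le i\le d$. Since $\NH(h)$ is the disjoint union of the $u_j$ and the $I_i$, we have $A+S=1$ with $A=\sum_j a_j$ and $S=\sum_i B_i$.

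The $i$th term of $\Phi_h$ is $y_{u_{i-1}}y_{u_i}\prod_{v\in I_i}y_v$. By the arithmetic--geometric mean inequality on the $k-3$ numbers $y_v^k$, $v\in I_i$, this term is at most
\[
(a_{i-1}a_i)^{1/k}\Bigl(\frac{B_i}{k-3}\Bigr)^{\frac{k-3}{k}} .
\]
For $k=3$ the second factor is $1$.

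Next we apply H\"older's inequality to the sum over $i$, with three factors and exponents $\frac k2$, $\frac k{k-3}$ and $k$. The factors are $(\sqrt{a_{i-1}a_i})^{2/k}$, $B_i^{(k-3)/k}$ and $1$. This gives
\[
\Phi_h(y)\le\Bigl(\sum_{i=1}^d\sqrt{a_{i-1}a_i}\Bigr)^{2/k}\Bigl(\frac S{k-3}\Bigr)^{\frac{k-3}k}d^{1/k}.
\]
The factor $d^{1/k}$ comes from summing the constant $1$ over the $d$ faces.

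The first sum is the quadratic form of the path $u_0u_1\cdots u_d$ evaluated at $z_j=\sqrt{a_j}$. It equals $\frac12z^{\top}A(P_{d+1})z\le\frac12\lambda_1(P_{d+1})\|z\|^2<A$, because $\lambda_1(P_{d+1})=2\cos\frac{\pi}{d+2}<2$. Equivalently, $\sqrt{a_{i-1}a_i}\le\frac12(a_{i-1}+a_i)$ summed over $i$ gives at most $A$. So $\Phi_h(y)\le d^{1/k}A^{2/k}\bigl(S/(k-3)\bigr)^{(k-3)/k}$.

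It remains to maximize $g(A)=A^{2/k}(1-A)^{(k-3)/k}$ over $0\le A\le 1$. Differentiating the logarithm gives the maximizer $A=\frac2{k-1}$, $S=\frac{k-3}{k-1}$. Substituting,
\[
\Bigl(\frac2{k-1}\Bigr)^{2/k}\Bigl(\frac{k-3}{k-1}\Bigr)^{\frac{k-3}k}(k-3)^{-\frac{k-3}k}=4^{1/k}(k-1)^{-\frac{k-1}k}=c_k .
\]
For $k=3$ there is no $S$ factor, and the bound reads $d^{1/3}A^{2/3}\le d^{1/3}=c_3d^{1/3}$, since $c_3=4^{1/3}2^{-2/3}=1$.

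No step here is a real obstacle. The only point needing care is the choice of H\"older exponents. The two shared spoke coordinates must be paired along the path so that each $a_j$ is used with total weight at most one; this is exactly the path quadratic-form bound above. The $d^{1/k}$ then arises from the constant third factor, and the exponents must add up so that the optimization in $A$ returns precisely $c_k$.
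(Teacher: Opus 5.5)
Your proof is correct, and it takes a genuinely different and shorter route than the paper's.

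\textbf{The paper's route.} The paper works at a maximizer $y$. It splits the live faces into runs, using that nonconsecutive faces at $h$ share only $h$. On each run it applies Lagrange multipliers to get $p_i=\mu k\,y_v^{k}$ at interior vertices and $p_j+p_{j+1}=\mu k\,y_{u_j}^{k}$ at spokes. Multiplying these over a face gives the $k$-independent recurrence $p_i^{3}=c(p_{i-1}+p_i)(p_i+p_{i+1})$, from which AM--GM and H\"older give $S\le4cd$. It then eliminates $c$ through the summed identity $\mu kW=(k-1)S$ and recombines the runs by H\"older.

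\textbf{Your route.} You bound $\Phi_h(y)$ for every feasible $y$ directly. You use AM--GM on the $k-3$ interior coordinates, a three-factor H\"older with exponents $\frac k2,\frac k{k-3},k$, and the path inequality $\sum_i\sqrt{a_{i-1}a_i}\le\sum_ja_j$. A one-variable optimization of the spoke/interior mass split then has optimum $A=\frac2{k-1}$ and returns exactly $c_k$.

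\textbf{What each buys.} Your argument needs no existence of a maximizer, no positivity or smoothness for the multiplier rule, and no decomposition into runs. It also shows where $c_k$ comes from. The optimal ratio $2:(k-3)$ of spoke mass to interior mass is exactly the ratio in the test vector of Lemma~\ref{lem:lower}. Equality in your path step forces every $a_j$ to vanish, so you get the strictness noted after the lemma for free. Your argument also applies verbatim to any subfamily of the faces at $h$, since each spoke lies on at most two of them. So Corollary~\ref{cor:sub} would follow without the splitting into runs used there. The paper's route additionally gives the structure of the constrained maximizer, namely the recurrence for face products. Nothing later in the paper uses more than the inequality itself.

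One small point: the detour through $\lambda_1(P_{d+1})$ is unnecessary, and its strict inequality fails when $A=0$. The AM--GM line you give right after it is all that is needed.
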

	
	\begin{proof}
		Let $y$ attain $\Theta_d(W)$. Call a face $F_i$ \emph{live} if $y_v>0$ for all $v\in F_i\setminus\{h\}$; only live faces contribute to $\Phi_h(y)$. The live faces fall into maximal runs of consecutive indices. By the consequence of Lemma~\ref{lem:local}$(ii)$ noted there, two nonconsecutive faces at $h$ share no vertex other than $h$, so faces in different runs have disjoint variable sets. If a run $R$ has $d_R$ faces and its variables carry mass $W_R$, then the restriction of $y$ to the variables of $R$ maximizes the fan functional of $R$ subject to mass $W_R$: otherwise replacing it by a better vector would increase $\Phi_h$, since the terms of the other runs are unchanged and a face that is not live keeps a zero factor on a vertex outside $R$. We show that each run satisfies
		\begin{equation}\label{eq:run}
			\sum_{i\in R}\ \prod_{v\in F_i\setminus\{h\}}y_v\ \le\ c_k\,d_R^{1/k}W_R^{\frac{k-1}{k}} ,
		\end{equation}
		after which H\"older's inequality gives
		\[
		\Phi_h(y)\le c_k\sum_Rd_R^{1/k}W_R^{\frac{k-1}{k}}\le c_k\Bigl(\sum_Rd_R\Bigr)^{1/k}\Bigl(\sum_RW_R\Bigr)^{\frac{k-1}{k}}\le c_kd^{1/k}W^{\frac{k-1}{k}}.
		\]
		
		Fix a run $R$; relabel so that its faces are $F_1,\dots,F_{d_R}$, and write $d$ for $d_R$ and $W$ for $W_R$ in the rest of the proof. On the variables of $R$ the vector $y$ is positive and maximizes a smooth function on the sphere $\sum y_v^{k}=W$, so by the method of Lagrange multipliers there is $\mu$ with $\partial\Phi_h/\partial y_v=\mu ky_v^{k-1}$ for every variable $v$ of $R$. Put $p_i=\prod_{v\in F_i\setminus\{h\}}y_v$ and $p_0=p_{d+1}=0$. Multiplying the Lagrange condition at $v$ by $y_v$ gives
		\begin{equation}\label{eq:lagr}
			p_i=\mu k\,y_v^{k}\quad(v\in I_i),\qquad p_j+p_{j+1}=\mu k\,y_{u_j}^{k}\quad(0\le j\le d),
		\end{equation}
		since an interior vertex of $F_i$ lies on no other face of the run and $u_j$ lies on $F_j$ and $F_{j+1}$. In particular $\mu>0$. Taking the product of $y_v^{k}$ over $v\in F_i\setminus\{h\}$ and using~\eqref{eq:lagr},
		\[
		p_i^{k}=\prod_{v\in F_i\setminus\{h\}}y_v^{k}
		=(\mu k)^{-(k-1)}\,p_i^{\,k-3}(p_{i-1}+p_i)(p_i+p_{i+1}),
		\]
		that is,
		\begin{equation}\label{eq:cubic}
			p_i^{3}=c\,(p_{i-1}+p_i)(p_i+p_{i+1}),\qquad c=(\mu k)^{-(k-1)} .
		\end{equation}
		The relation~\eqref{eq:cubic} no longer involves $k$; it is the same for every $k\ge3$. Put $S=\sum_{i=1}^{d}p_i$ and $q_i=\frac12(p_{i-1}+2p_i+p_{i+1})$. Since $(p_{i-1}+p_i)(p_i+p_{i+1})\le q_i^{2}$, we get $p_i\le c^{1/3}q_i^{2/3}$, and $\sum_iq_i=2S-\frac12(p_1+p_d)\le2S$. By H\"older's inequality,
		\[
		S\le c^{1/3}\sum_{i=1}^{d}q_i^{2/3}\le c^{1/3}d^{1/3}\Bigl(\sum_{i=1}^{d}q_i\Bigr)^{2/3}\le c^{1/3}d^{1/3}(2S)^{2/3},
		\]
		which gives $S\le4cd$. It remains to express $c$ through $W$. Summing~\eqref{eq:lagr} over all variables of the run,
		\[
		\mu k\,W=\sum_{i=1}^{d}\ \sum_{v\in I_i}p_i+\sum_{j=0}^{d}(p_j+p_{j+1})=(k-3)S+2S=(k-1)S,
		\]
		so $\mu k=(k-1)S/W$ and $c=\bigl(W/((k-1)S)\bigr)^{k-1}$. Substituting into $S\le4cd$ gives $S^{k}\le4d\,W^{k-1}(k-1)^{-(k-1)}$, which is~\eqref{eq:run} since $\Phi_h(y)$ restricted to the run equals $S$.
	\end{proof}
	
	The inequality in Lemma~\ref{lem:fan} is strict for $W>0$, since each live run has $p_1,p_d>0$ and hence $\sum_iq_i<2S$. Lemma~\ref{lem:lower} shows that the bound is asymptotically sharp as $d\to\infty$.
	
	The next estimate is the length increment of the fan functional. Its proof is a gluing argument.
	
	\begin{lem}\label{lem:super}
		For all $d_1,d_2\ge1$, $\theta_{d_1+d_2}^{\,k}\ge\theta_{d_1}^{\,k}+\theta_{d_2}^{\,k}$. Consequently, with
		\begin{equation}\label{eq:gammak}
			\gamma_k=\frac{(4(k-1))^{-\frac{k-1}{k}}}{k},
		\end{equation}
		we have $\Theta_{d+1}(W)-\Theta_d(W)\ge\gamma_kW^{\frac{k-1}{k}}(d+1)^{-\frac{k-1}{k}}$ for all $d\ge1$ and $W>0$.
	\end{lem}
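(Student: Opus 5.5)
The plan is to prove the superadditivity by gluing two optimal fan configurations along a shared spoke, and then to get the increment bound from the case $d_2=1$ together with the upper bound of Lemma~\ref{lem:fan}.

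\textbf{Gluing step.} Fix $d_1,d_2\ge1$ and weights $W_1,W_2>0$. Let $y^{(1)}$ attain $\Theta_{d_1}(W_1)$ on a fan at $h$ with faces $F_1,\dots,F_{d_1}$ and spokes $u_0,\dots,u_{d_1}$. Let $y^{(2)}$ attain $\Theta_{d_2}(W_2)$ on a fan with faces $F'_1,\dots,F'_{d_2}$ and spokes $u'_0,\dots,u'_{d_2}$. The fan with $d_1+d_2$ faces is obtained by identifying $u_{d_1}$ with $u'_0$, so its faces are $F_1,\dots,F_{d_1},F'_1,\dots,F'_{d_2}$. This is exactly the structure of Lemma~\ref{lem:local}$(i)$, and $\Theta$ depends only on the number of faces.

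Define $y$ on the glued neighborhood to agree with $y^{(1)}$ and $y^{(2)}$ on their private vertices, and set
\[
y_{u_{d_1}}=\max\bigl(y^{(1)}_{u_{d_1}},\,y^{(2)}_{u'_0}\bigr).
\]
Every face product is then at least its value in the corresponding block, and the total mass is at most $W_1+W_2$. Since $\Theta_d$ is nondecreasing in $W$ (it equals $\theta_dW^{(k-1)/k}$), this gives
\[
\theta_{d_1+d_2}(W_1+W_2)^{\frac{k-1}{k}}\ \ge\ \theta_{d_1}W_1^{\frac{k-1}{k}}+\theta_{d_2}W_2^{\frac{k-1}{k}}.
\]
Now set $W_1+W_2=1$ and maximize the right-hand side. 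The maximum of $aW_1^{(k-1)/k}+bW_2^{(k-1)/k}$ over $W_1+W_2=1$ is $(a^k+b^k)^{1/k}$: Hölder's inequality with exponents $k$ and $k/(k-1)$ gives the upper bound, and it is attained at $W_i\propto a^k,b^k$. This yields $\theta_{d_1+d_2}^k\ge\theta_{d_1}^k+\theta_{d_2}^k$.

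\textbf{Increment step.} Take $d_2=1$. A single face has $k-1$ free variables, and by AM--GM the product under $\sum y_v^k=1$ is maximized when all $y_v^k=1/(k-1)$. Hence $\theta_1^k=(k-1)^{-(k-1)}$ and
\[
\theta_{d+1}^k-\theta_d^k\ \ge\ (k-1)^{-(k-1)}.
\]
By the mean value inequality $a^k-b^k\le ka^{k-1}(a-b)$ for $a\ge b\ge0$, we get
\[
\theta_{d+1}-\theta_d\ \ge\ \frac{(k-1)^{-(k-1)}}{k\,\theta_{d+1}^{k-1}}.
\]
Lemma~\ref{lem:fan} bounds the denominator: $\theta_{d+1}^{k-1}\le c_k^{k-1}(d+1)^{(k-1)/k}$, where $c_k^{k-1}=4^{(k-1)/k}(k-1)^{-(k-1)^2/k}$.

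The power of $k-1$ in the quotient is $-(k-1)+(k-1)^2/k=-(k-1)/k$. So the lower bound is exactly $\gamma_k(d+1)^{-(k-1)/k}$. Multiplying by $W^{(k-1)/k}$ using homogeneity gives the stated bound for $\Theta_{d+1}(W)-\Theta_d(W)$.

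\textbf{Main obstacle.} I expect the only delicate point to be the gluing bookkeeping. One must check that identifying an end spoke of each block gives precisely the fan structure on $d_1+d_2$ faces. In particular, the new chord $hu_{d_1}$ becomes the shared chord of $F_{d_1}$ and $F'_1$, and no other vertices are identified. One must also check that taking the maximum at the shared vertex only increases face products while using mass at most $W_1+W_2$. This is fine because the objective $\Phi_h$ has nonnegative coefficients. If an optimal vector is more convenient with the heavy end on the other side, the reflection symmetry $i\mapsto d+1-i$ of the fan lets us use either end.
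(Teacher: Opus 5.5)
Your proposal is correct and follows the paper's proof essentially step for step. You use the same gluing along an end spoke with the maximum at the identified vertex, and the same choice $W_i\propto\theta_{d_i}^{k}$. You then derive $\gamma_k$ the same way, from $\theta_1=(k-1)^{-(k-1)/k}$, the bound of Lemma~\ref{lem:fan}, and $a^k-b^k\le ka^{k-1}(a-b)$.
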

	
	\begin{proof}
		Let $y^{1}$ and $y^{2}$ attain $\Theta_{d_1}(W_1)$ and $\Theta_{d_2}(W_2)$ on fans with $d_1$ and $d_2$ faces, with last spoke $s$ of the first and first spoke $t$ of the second. Identify $s$ with $t$ to obtain a fan with $d_1+d_2$ faces, give the identified vertex the value $\max\{y^{1}_s,y^{2}_t\}$, and keep all other values. Every face product is at least what it was, and the mass is at most $W_1+W_2$, so by the monotonicity of $\Theta_{d_1+d_2}$ in $W$,
		\[
		\Theta_{d_1+d_2}(W_1+W_2)\ \ge\ \Theta_{d_1}(W_1)+\Theta_{d_2}(W_2)
		=\theta_{d_1}W_1^{\frac{k-1}{k}}+\theta_{d_2}W_2^{\frac{k-1}{k}} .
		\]
		Taking $W_i=\theta_{d_i}^{k}/(\theta_{d_1}^{k}+\theta_{d_2}^{k})$, so that $W_1+W_2=1$, the right side equals $(\theta_{d_1}^{k}+\theta_{d_2}^{k})^{1/k}$, which proves the first claim.
		
		For the second, $\theta_1=(k-1)^{-(k-1)/k}$, since a single face is the product of $k-1$ numbers with $k$th powers summing to one, and Lemma~\ref{lem:fan} gives $\theta_{d+1}\le c_k(d+1)^{1/k}$. Using $\theta_{d+1}^{k}-\theta_d^{k}\le k\theta_{d+1}^{k-1}(\theta_{d+1}-\theta_d)$,
		\[
		\theta_{d+1}-\theta_d\ \ge\ \frac{\theta_1^{k}}{k\,\theta_{d+1}^{k-1}}
		\ \ge\ \frac{(k-1)^{-(k-1)}}{k\,c_k^{k-1}(d+1)^{\frac{k-1}{k}}}
		=\gamma_k(d+1)^{-\frac{k-1}{k}} ,
		\]
		the last equality by~\eqref{eq:ck}. Multiply by $W^{(k-1)/k}$.
	\end{proof}
	
	\begin{rem}\label{rem:increment}
		By~\eqref{eq:ck} and~\eqref{eq:gammak}, we have $\gamma_k=c_k/(4k)$, while Lemmas~\ref{lem:fan} and~\ref{lem:lower} give $\theta_d=c_kd^{1/k}(1+o(1))$. Gluing in blocks of $D$ faces gives $\theta_{d+D}^{\,k}-\theta_d^{\,k}\ge\theta_D^{\,k}$, where $\theta_D^{\,k}/D\to c_k^{\,k}$ as $D\to\infty$, but Section~\ref{sec:hub} needs an increment for a single face, and there only the positivity of $\gamma_k$ matters. The planar problem for $k=3$ is more delicate, and Lemma~\ref{lem:p3len} obtains a larger constant by splitting a spoke instead of gluing.
	\end{rem}
	
	\begin{figure}[H]
		\centering
		\begin{tikzpicture}[scale=1.0,
			spoke/.style={circle,fill=black,inner sep=1.5pt},
			inter/.style={circle,draw=black,fill=white,inner sep=1.5pt},
			hub/.style={circle,fill=black,inner sep=2.6pt}]
			\def\Rad{3.2}
			\foreach \i/\lab/\st in {0/{$u_0$}/spoke, 1/{}/inter, 2/{}/inter, 3/{$u_1$}/spoke,
				4/{}/inter, 5/{}/inter, 6/{$u_2$}/spoke,
				7/{}/inter, 8/{}/inter, 9/{$u_3$}/spoke}
			{\pgfmathsetmacro{\ang}{180-\i*20}
				\node[\st] (v\i) at (\ang:\Rad) {};}
			\node[hub] (h) at (0,-0.55) {};
			\node[below] at (h) {$h$};
			\node[above left] at (v0) {$u_0$};
			\node[above] at (v3) {$u_1$};
			\node[above] at (v6) {$u_2$};
			\node[above right] at (v9) {$u_3$};
			\foreach \i in {0,...,8} {\pgfmathtruncatemacro{\j}{\i+1} \draw (v\i)--(v\j);}
			\foreach \i in {0,3,6,9} {\draw (h)--(v\i);}
			\node at (-1.55,1.15) {$F_1$}; \node at (0,1.35) {$F_2$}; \node at (1.55,1.15) {$F_3$};
		\end{tikzpicture}
		\caption{The fan $k$-angulation $\FF_{k,f}$ for $k=5$ and $f=3$. The hub $h$ lies on every face; the $f+1$ solid vertices are the spokes and the $f(k-3)$ hollow ones the interior vertices. In the test vector of Lemma~\ref{lem:lower}, a spoke carries $2f/(f+1)$ times the $k$th power of an interior vertex.}\label{fig:fan}
	\end{figure}
	
	\begin{lem}\label{lem:lower}
		For $k\ge3$ and $f\ge1$,
		\[
		\lambda(\HH(\FF_{k,f}))^{k}\ \ge\ \frac{4f^{3}}{(f+1)^{2}}\ \ge\ 4f-8 .
		\]
	\end{lem}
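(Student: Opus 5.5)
The plan is to exhibit an explicit test vector on $\HH(\FF_{k,f})$ and evaluate the variational characterization
\[
\lambda(\HH)=\max\Bigl\{P_{\HH}(x):\ x\ge0,\ \textstyle\sum_v x_v^k=1\Bigr\}
\]
at that vector. The vector is constant on each of the three vertex classes of the fan: the hub $h$, the $f+1$ spokes $u_0,\dots,u_f$, and the $f(k-3)$ interior vertices. Every face of the fan consists of $h$, two spokes and $k-3$ interior vertices. So if the hub gets value $a$, each spoke $s$ and each interior vertex $t$, then every face product equals $a s^2t^{k-3}$, and
\[
P=kf\,a\,s^2t^{k-3}.
\]

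Following the ratio announced in the introduction and in Figure~\ref{fig:fan}, I will set $t^k=\tau$ and $s^k=\frac{2f}{f+1}\tau$. The total mass off the hub is then
\[
(f+1)\cdot\frac{2f}{f+1}\tau+f(k-3)\tau=f(k-1)\tau=:B .
\]
Put $A=a^k$, so the constraint reads $A+B=1$. Raising $P$ to the $k$th power gives
\[
P^k=k^kf^kA\Bigl(\frac{2f}{f+1}\Bigr)^2\tau^{k-1}
=k^kA\,\frac{B^{k-1}}{(k-1)^{k-1}}\cdot\frac{4f^3}{(f+1)^2}.
\]
The factor $AB^{k-1}$ is maximized under $A+B=1$ at $A=1/k$ and $B=(k-1)/k$, where it equals $(k-1)^{k-1}/k^k$. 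With this choice the prefactor $k^kAB^{k-1}(k-1)^{-(k-1)}$ equals $1$, so $\lambda^k\ge P^k=4f^3/(f+1)^2$.

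The second inequality $4f^3/(f+1)^2\ge4f-8$ is equivalent to $f^3\ge(f-2)(f+1)^2$. Expanding the right side gives $f^3-3f-2$, so the inequality holds for every $f\ge1$.

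The only real choice is the spoke-to-interior ratio $2f/(f+1)$. It is what makes the spoke mass $2f\tau$, so that the non-hub mass collapses to the clean $f(k-1)\tau$. This matches the Lagrange picture in Lemma~\ref{lem:fan}, where equal face products force a spoke to carry twice the $k$th power of an interior vertex, with the end spokes adjusted. There is no serious obstacle beyond this bookkeeping. For $k=3$ the interior class is empty, and the same computation goes through with $t^{k-3}=1$.
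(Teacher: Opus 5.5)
Your proposal is correct and is the paper's proof: the vector you arrive at ($a^k=1/k$, $s^k=2/(k(f+1))$, $t^k=1/(kf)$) is exactly the paper's test vector. Your check $f^3\ge f^3-3f-2$ is equivalent to the paper's rewriting $4f-4f(2f+1)/(f+1)^2\ge4f-8$. The only difference is cosmetic: you obtain the hub weight $1/k$ by maximizing $AB^{k-1}$ under $A+B=1$, whereas the paper simply states these values and verifies that the total mass is one.
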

	
	\begin{proof}
		Let $h$ be the hub, let $u_0,\dots,u_f$ be its neighbors, and let the remaining $f(k-3)$ vertices be the interior vertices, each lying on exactly one face. Consider the vector with $x_h=a$, $x_{u_j}=s$ and $x_v=t$ on interior vertices, where
		\[
		a^{k}=\frac1k,\qquad s^{k}=\frac{2}{k(f+1)},\qquad t^{k}=\frac1{kf}.
		\]
		Then $a^{k}+(f+1)s^{k}+f(k-3)t^{k}=\frac1k+\frac2k+\frac{k-3}k=1$. Every face consists of $h$, two neighbors of $h$ and $k-3$ interior vertices, so
		\[
		P(x)=k\,f\,a\,s^{2}t^{k-3}
		=kf\cdot k^{-1/k}\Bigl(\frac2{k(f+1)}\Bigr)^{2/k}(kf)^{-\frac{k-3}{k}}
		=4^{1/k}f^{3/k}(f+1)^{-2/k},
		\]
		where the powers of $k$ cancel because $\frac1k+\frac2k+\frac{k-3}{k}=1$. Hence $\lambda^{k}\ge4f^{3}/(f+1)^{2}=4f-4f(2f+1)/(f+1)^{2}\ge4f-8$.
	\end{proof}
	
	\section{Stability}\label{sec:main}
	
	Throughout this section $\HH=\HH(G)$ is a $k$-angulation hypergraph on $n=(k-2)f+2$ vertices with $f\ge2$, $x$ is its Perron vector normalized by $\sum_vx_v^{k}=1$, and $\lambda=\lambda(\HH)$; from Lemma~\ref{lem:master} onward $\HH$ is moreover assumed to be extremal. Write $x_1\ge x_2\ge\cdots\ge x_n$ for the coordinates of $x$ in nonincreasing order. Since $P_{\HH}(x)=\lambda$,
	\begin{equation}\label{eq:total}
		\frac{\lambda}{k}=\sum_{F}\ \prod_{v\in F}x_v ,
	\end{equation}
	the sum being over all faces of $G$, and every coordinate satisfies $x_v\le1$. We record for later use that
	\begin{equation}\label{eq:small}
		c_k\le1\qquad\text{and}\qquad\frac{4^{1/k}}{k}\le1\qquad(k\ge3),
	\end{equation}
	both immediate from~\eqref{eq:ck}. If $\HH$ is extremal, Lemma~\ref{lem:lower} gives
	$\lambda^k\ge4f-8$; hence using $(1-t)^{1/k}\ge1-t$ for $t\in[0,1]$ together with~\eqref{eq:small},
	\begin{equation}\label{eq:lb}
		\frac{\lambda}{k f^{1/k}}\ \ge\ \frac{4^{1/k}}{k}\Bigl(1-\frac2f\Bigr)^{1/k}\ \ge\ \frac{4^{1/k}}{k}-\frac2f .
	\end{equation}
	
	Fix $\varepsilon\in(0,1)$ and put
	\[
	L=\{v:\ x_v\ge\varepsilon\},\qquad Z_\varepsilon=\sum_{v\notin L}x_v^{k}.
	\]
	The vertices of $L$ are \emph{heavy} and the others \emph{light}. Since $|L|\varepsilon^{k}\le\sum_{v\in L}x_v^{k}\le1$, we have $|L|\le\varepsilon^{-k}$. A light vertex $v$ is \emph{ambiguous} if it lies on faces with at least two distinct heavy vertices, and \emph{attached to $h\in L$} if $h$ is the only heavy vertex lying on a face with $v$. Let $B$ be the set of ambiguous vertices and, for $h\in L$, let $N_h$ be the set of light vertices attached to $h$. The sets $N_h$ are pairwise disjoint subsets of $\NH(h)$, and every light vertex not in $B$ that lies on a face with some heavy vertex belongs to exactly one $N_h$. By Lemma~\ref{lem:local}$(iii)$,
	\begin{equation}\label{eq:B}
		|B|\le2k\binom{|L|}{2}\le k\,\varepsilon^{-2k}.
	\end{equation}
	
	\begin{lem}[master inequality]\label{lem:master}
		Let $\HH$, $x$, $\lambda$ and $f\ge2$ be as above. Then for every $\varepsilon\in(0,1)$,
		\[
		\frac{\lambda}{k f^{1/k}}\ \le\ c_k\,x_1\,Z_\varepsilon^{\frac{k-1}{k}}\ +\ C_k\,\varepsilon\ +\ \frac{5k\,\varepsilon^{-3k}}{f^{1/k}},
		\qquad C_k=2^{3/k}(k-2)^{\frac{3-k}{k}} .
		\]
		If in addition $|L|=1$, say $L=\{h_1\}$, and $d_1$ denotes the number of faces containing $h_1$, then
		\[
		\frac{\lambda}{k f^{1/k}}\ \le\ c_k\,x_1\Bigl(\frac{d_1}{f}\Bigr)^{1/k}Z_\varepsilon^{\frac{k-1}{k}}\ +\ C_k\,\varepsilon\ +\ \frac{\varepsilon^{k}}{f^{1/k}} .
		\]
	\end{lem}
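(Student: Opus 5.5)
The plan is to start from the identity~\eqref{eq:total}, $\lambda/k=\sum_F\prod_{v\in F}x_v$, and split the faces of $G$ into four classes according to how they meet the heavy set $L$. $\FF_0$ consists of the faces with no heavy vertex. $\FF_2$ consists of the faces with at least two heavy vertices. $\FF_B$ consists of the faces with exactly one heavy vertex and at least one ambiguous vertex. Finally $\FF_1(h)$, for $h\in L$, consists of the faces whose only heavy vertex is $h$ and all of whose other vertices lie in $N_h$. This is a partition: a light vertex on a face with $h$ that is not ambiguous is attached to $h$, so it lies in $N_h$. I will bound each class separately and divide by $f^{1/k}$ at the end.

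\emph{Light faces.} For $\FF_0$ every coordinate is at most $\varepsilon$, and $V(\FF_0)$ consists of light vertices, so $W\le Z_\varepsilon\le1$. Corollary~\ref{cor:faces}$(ii)$ gives
\[
\sum_{F\in\FF_0}\prod_{v\in F}x_v\le 2^{2/k}(k-2)^{-\frac{k-2}{k}}\varepsilon n^{1/k}+\varepsilon^{k}.
\]
Using $n=(k-2)f+2\le2(k-2)f$ yields $n^{1/k}\le 2^{1/k}(k-2)^{1/k}f^{1/k}$. Hence the main term is at most $2^{3/k}(k-2)^{\frac{3-k}{k}}\varepsilon f^{1/k}=C_k\varepsilon f^{1/k}$, and the constant $C_k$ comes out exactly.

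\emph{Exceptional faces.} By Lemma~\ref{lem:local}$(ii)$, each pair of heavy vertices lies on at most two common faces. Since $|L|\le\varepsilon^{-k}$, this gives $|\FF_2|\le2\binom{|L|}{2}\le\varepsilon^{-2k}$. For $\FF_B$, the same lemma applied to the pair $(h,b)$ with $h\in L$ and $b\in B$ shows that $|\FF_B|\le2|L||B|\le2k\varepsilon^{-3k}$, by~\eqref{eq:B}. Every face product is at most $1$ because all $x_v\le1$. Together with the $\varepsilon^k$ left over from $\FF_0$, these contribute at most $\varepsilon^k+\varepsilon^{-2k}+2k\varepsilon^{-3k}\le5k\varepsilon^{-3k}$, since $\varepsilon<1$.

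\emph{Main term.} Fix $h\in L$ and let $d'_h=|\FF_1(h)|\le d(h)$. Define $y\in\R^{\NH(h)}_{\ge0}$ by $y_v=x_v$ for $v\in N_h$ and $y_v=0$ otherwise. Each face of $\FF_1(h)$ is a face at $h$ all of whose other vertices are in $N_h$, so its contribution is $x_h\prod_{v\in F\setminus\{h\}}y_v$. The faces at $h$ not in $\FF_1(h)$ contribute only nonnegative terms to $\Phi_h(y)$. Therefore
\[
\sum_{F\in\FF_1(h)}\prod_{v\in F}x_v\le x_h\Phi_h(y)\le x_1\Theta_{d(h)}(W_h),\qquad W_h=\sum_{v\in N_h}x_v^k .
\]
The expected obstacle is that $d(h)$ rather than $d'_h$ appears here, and the faces at different heavy vertices must then be aggregated using only the total count $f$. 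To fix this I would first pass to the subfamily: faces at $h$ that do not lie in $\FF_1(h)$ contain a vertex with $y$-value zero, hence contribute zero to $\Phi_h(y)$. The runs argument in the proof of Lemma~\ref{lem:fan} then applies to the $d'_h$ live faces alone, giving the sharper bound $c_k x_1 {d'_h}^{1/k}W_h^{(k-1)/k}$. Summing over $h\in L$ and applying H\"older's inequality,
\[
\sum_h {d'_h}^{1/k}W_h^{\frac{k-1}{k}}\le\Bigl(\sum_h d'_h\Bigr)^{1/k}\Bigl(\sum_hW_h\Bigr)^{\frac{k-1}{k}}\le f^{1/k}Z_\varepsilon^{\frac{k-1}{k}},
\]
because the classes $\FF_1(h)$ are disjoint and the sets $N_h$ are disjoint sets of light vertices. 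Dividing everything by $f^{1/k}$ yields the first inequality.

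\emph{The case $|L|=1$.} When $L=\{h_1\}$, the set $B$ is empty and $\FF_2=\FF_B=\emptyset$, so the only exceptional term is the $\varepsilon^k$ from $\FF_0$. The main term is a single summand, and using $d'_{h_1}\le d_1$ and $W_{h_1}\le Z_\varepsilon$ it is bounded by $c_kx_1d_1^{1/k}Z_\varepsilon^{(k-1)/k}$. Dividing by $f^{1/k}$ gives the second inequality.
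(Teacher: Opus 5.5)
Your proof is correct and follows the paper's argument: the same four classes, the same bounds for $\FF_0$, for $\FF_{\ge2}$ (your $\FF_2$) and for $\FF_B$, and Lemma~\ref{lem:fan} with H\"older's inequality for the main term.

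The one variation is in the main term. You count only the faces of $\FF_1$ at each hub, via the run decomposition, so that $\sum_h|\FF_1(h)|\le f$ holds directly. This is exactly the paper's Corollary~\ref{cor:sub}, applied at an arbitrary heavy vertex. For your actual vector $y$, which is not a maximizer, the right justification is to bound each run by $\Theta_{d_R}(W_R)$ and then invoke Lemma~\ref{lem:fan}. Rerunning the Lagrange-multiplier argument from that proof would not work, since it needs a maximizer.

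The paper instead keeps the full degrees $d(h)$. It bounds $\sum_h d(h)\le f+k\varepsilon^{-2k}$ by~\eqref{eq:degsum} and absorbs the excess using $(1+t)^{1/k}\le1+t/k$. Both routes land inside the $5k\varepsilon^{-3k}f^{-1/k}$ error term.
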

	
	\begin{proof}
		Partition the faces into four classes: $\FF_0$, the faces containing no heavy vertex; $\FF_{\ge2}$, the faces containing at least two heavy vertices; $\FF_B$, the faces containing exactly one heavy vertex and at least one ambiguous vertex; and $\FF_1$, the faces containing exactly one heavy vertex and no ambiguous vertex. We estimate their contributions to~\eqref{eq:total} in turn.
		
		Every vertex on a face of $\FF_0$ is light, so Corollary~\ref{cor:faces}$(ii)$ with $W\le1$ gives
		\begin{equation}\label{eq:F0}
			\sum_{F\in\FF_0}\ \prod_{v\in F}x_v\ \le\ 2^{2/k}(k-2)^{-\frac{k-2}{k}}\,\varepsilon\,n^{1/k}+\varepsilon^{k}.
		\end{equation}
		Each face of $\FF_{\ge2}$ contains a pair of heavy vertices, and by Lemma~\ref{lem:local}$(ii)$ a pair lies on at most two faces; as every face has weight at most $1$,
		\begin{equation}\label{eq:F2}
			\sum_{F\in\FF_{\ge2}}\ \prod_{v\in F}x_v\ \le\ |\FF_{\ge2}|\le2\binom{|L|}{2}\le\varepsilon^{-2k}.
		\end{equation}
		A face of $\FF_B$ contains a heavy vertex $h$ and an ambiguous vertex $v$, and by Lemma~\ref{lem:local}$(ii)$ the pair $h,v$ lies on at most two faces; hence, by~\eqref{eq:B},
		\begin{equation}\label{eq:FB}
			\sum_{F\in\FF_B}\ \prod_{v\in F}x_v\ \le\ 2|L||B|\le2k\,\varepsilon^{-3k}.
		\end{equation}
		
		The class $\FF_1$ carries the main term. A face $F\in\FF_1$ has a unique heavy vertex $h$, and its other vertices are light, not ambiguous, and lie on a face with $h$, so they belong to $N_h$. For $h\in L$ let $y^{(h)}$ be the vector on $\NH(h)$ that agrees with $x$ on $N_h$ and is zero elsewhere, and put $Z_h=\sum_{v\in N_h}x_v^{k}$. Then $\Phi_h(y^{(h)})$ is exactly the sum of $\prod_{v\in F\setminus\{h\}}x_v$ over the faces $F\in\FF_1$ at $h$, because a face at $h$ with a vertex outside $N_h\cup\{h\}$ receives a zero factor and a face at $h$ with all other vertices in $N_h$ is in $\FF_1$. Hence, by Lemma~\ref{lem:fan},
		\[
		\sum_{F\in\FF_1}\ \prod_{v\in F}x_v=\sum_{h\in L}x_h\,\Phi_h(y^{(h)})
		\le\sum_{h\in L}x_h\,\Theta_{d(h)}(Z_h)\le c_k\sum_{h\in L}x_h\,d(h)^{1/k}Z_h^{\frac{k-1}{k}} .
		\]
		The sets $N_h$ are disjoint sets of light vertices, so $\sum_{h\in L}Z_h\le Z_\varepsilon$. For the degrees, $\sum_{h\in L}d(h)=\sum_F|F\cap L|$, and a face with $|F\cap L|\ge2$ lies in $\FF_{\ge2}$ and satisfies $|F\cap L|-1\le k-1$, so by~\eqref{eq:F2},
		\begin{equation}\label{eq:degsum}
			\sum_{h\in L}d(h)\ \le\ f+(k-1)|\FF_{\ge2}|\ \le\ f+k\,\varepsilon^{-2k}.
		\end{equation}
		Bounding $x_h\le x_1$ and applying H\"older's inequality with exponents $k$ and $\frac k{k-1}$,
		\begin{equation}\label{eq:F1}
			\sum_{F\in\FF_1}\ \prod_{v\in F}x_v\ \le\ c_k\,x_1\Bigl(\sum_{h\in L}d(h)\Bigr)^{1/k}\Bigl(\sum_{h\in L}Z_h\Bigr)^{\frac{k-1}{k}}
			\le c_k\,x_1\,(f+k\varepsilon^{-2k})^{1/k}Z_\varepsilon^{\frac{k-1}{k}} .
		\end{equation}
		
		We now add~\eqref{eq:F0}--\eqref{eq:F1} in~\eqref{eq:total} and divide by $f^{1/k}$, which by~\eqref{eq:total} turns the left side into $\lambda/(kf^{1/k})$. Since $f\ge2$ and $k\ge3$ we have $(k-2)f\ge2$ and therefore $n=(k-2)f+2\le2(k-2)f$, so
		\[
		\frac{2^{2/k}(k-2)^{-\frac{k-2}{k}}\varepsilon n^{1/k}}{f^{1/k}}\le2^{2/k}(k-2)^{-\frac{k-2}{k}}\bigl(2(k-2)\bigr)^{1/k}\varepsilon=C_k\varepsilon .
		\]
		For the main term, $(1+t)^{1/k}\le1+t/k$ and $c_kx_1Z_\varepsilon^{(k-1)/k}\le1$ by~\eqref{eq:small} give
		\[
		\frac{c_kx_1(f+k\varepsilon^{-2k})^{1/k}Z_\varepsilon^{\frac{k-1}{k}}}{f^{1/k}}
		\le c_kx_1Z_\varepsilon^{\frac{k-1}{k}}+\frac{\varepsilon^{-2k}}{f}
		\le c_kx_1Z_\varepsilon^{\frac{k-1}{k}}+\frac{\varepsilon^{-3k}}{f^{1/k}} ,
		\]
		the last step because $f^{1/k-1}\le1\le\varepsilon^{-k}$. The remaining three contributions, namely $\varepsilon^{k}$ from~\eqref{eq:F0}, the bound~\eqref{eq:F2} and the bound~\eqref{eq:FB}, are together at most $(2+2k)\varepsilon^{-3k}$, so after division by $f^{1/k}$ the total error beyond $C_k\varepsilon$ is at most $(3+2k)\varepsilon^{-3k}f^{-1/k}\le5k\,\varepsilon^{-3k}f^{-1/k}$. This is the first inequality.
		
		If $|L|=1$ then $\FF_{\ge2}$ and $B$, hence also $\FF_B$, are empty, $\sum_{h\in L}d(h)=d_1$, and~\eqref{eq:F1} reads $c_kx_1d_1^{1/k}Z_\varepsilon^{(k-1)/k}$; combining this with~\eqref{eq:F0} gives the second inequality.
	\end{proof}
	
	The relevant function is
	\[
	g(z)=z(1-z)^{k-1}\qquad(0\le z\le1),
	\]
	for which $g'(z)=(1-z)^{k-2}(1-kz)$, so $g$ increases on $[0,\frac1k]$, decreases on $[\frac1k,1]$, and attains its maximum
	\begin{equation}\label{eq:gmax}
		g\Bigl(\frac1k\Bigr)=\frac{(k-1)^{k-1}}{k^{k}}
	\end{equation}
	only at $z=\frac1k$. With $c_k$ as in~\eqref{eq:ck}, this maximum is the point at which the two sides of the master inequality meet: for $p\in[0,1]$,
	\begin{equation}\label{eq:meet}
		c_k\,p\,(1-p^{k})^{\frac{k-1}{k}}=\frac{4^{1/k}}{k}\Bigl(\frac{g(p^{k})}{g(1/k)}\Bigr)^{1/k}\ \le\ \frac{4^{1/k}}{k},
	\end{equation}
	with equality if and only if $p^{k}=\frac1k$. Indeed $c_kp(1-p^{k})^{(k-1)/k}=4^{1/k}\,g(p^{k})^{1/k}(k-1)^{-(k-1)/k}$, and $(k-1)^{-(k-1)/k}=k^{-1}g(1/k)^{-1/k}$ by~\eqref{eq:gmax}. Two consequences of~\eqref{eq:gmax} will be used repeatedly:
	\begin{equation}\label{eq:cg}
		c_k\,g(1/k)^{1/k}=\frac{4^{1/k}}{k},\qquad
		\frac{4^{1/k}}{k\,c_k}=\frac{(k-1)^{\frac{k-1}{k}}}{k}=:\beta_k>0 .
	\end{equation}
	
	Since the maximum in~\eqref{eq:meet} is attained at an interior point at which $g''(1/k)=-k(1-\frac1k)^{k-2}<0$, the loss is quadratic, uniformly on $[0,1]$.
	
	\begin{lem}\label{lem:quad}
		For every $k\ge3$ there is $\kappa_k>0$ such that $g(1/k)-g(z)\ge\kappa_k(z-1/k)^{2}$ for all $z\in[0,1]$.
	\end{lem}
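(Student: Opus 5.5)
We define the ratio $r(z)=\bigl(g(1/k)-g(z)\bigr)/(z-1/k)^2$ for $z\in[0,1]\setminus\{1/k\}$, extend it to $z=1/k$ by $r(1/k)=-g''(1/k)/2=\frac k2(1-\frac1k)^{k-2}>0$, and take $\kappa_k=\min_{[0,1]}r$. The extended function is positive everywhere and continuous on a compact interval, so $\kappa_k>0$ and the stated inequality holds by construction.

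Positivity of $r$ away from $1/k$ is immediate from the monotonicity recorded after the definition of $g$. Since $g'(z)=(1-z)^{k-2}(1-kz)$, the function $g$ is strictly increasing on $[0,1/k]$ and strictly decreasing on $[1/k,1]$. Hence $g(z)<g(1/k)$ for every $z\ne1/k$, and the numerator of $r$ is strictly positive there.

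Continuity at $z=1/k$ follows from Taylor's theorem, because $g$ is a polynomial and $g'(1/k)=0$:
\[
g(z)=g(1/k)+\tfrac12 g''(1/k)(z-1/k)^2+O\bigl((z-1/k)^3\bigr).
\]
Thus $r(z)\to-g''(1/k)/2$ as $z\to1/k$. The second derivative is
\[
g''(z)=-(k-2)(1-z)^{k-3}(1-kz)-k(1-z)^{k-2},
\]
and the first term vanishes at $z=1/k$, so $g''(1/k)=-k(1-1/k)^{k-2}<0$. This confirms that the limit is positive.

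An explicit constant can also be obtained. One route is the integral form $g(1/k)-g(z)=\int_z^{1/k}g'$. Another is the Taylor remainder with the bound $|g'''|\le C(k)$, which gives a neighbourhood of $1/k$ where $r\ge -g''(1/k)/4$. Outside that neighbourhood the numerator is bounded below and $(z-1/k)^2\le1$. Only positivity of $\kappa_k$ is needed, however, so the compactness argument suffices, and no step is a real obstacle. The one point needing care is the behaviour at $z=1/k$, and the Taylor computation handles it.
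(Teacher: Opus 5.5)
Your proposal is correct and follows the paper's proof essentially step for step. Both define the ratio $\bigl(g(1/k)-g(z)\bigr)/(z-1/k)^2$, get positivity away from $1/k$ from the monotonicity of $g$, extend continuously to $1/k$ by Taylor's theorem with limit $-\tfrac12 g''(1/k)=\tfrac k2(1-\tfrac1k)^{k-2}>0$, and take $\kappa_k$ to be the minimum over the compact interval $[0,1]$.
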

	
	\begin{proof}
		The function $\phi(z)=\bigl(g(1/k)-g(z)\bigr)/(z-1/k)^{2}$ is continuous on $[0,1]\setminus\{1/k\}$ and positive there, because $g(z)<g(1/k)$ for $z\ne1/k$. Since $g$ is a polynomial with $g'(1/k)=0$, Taylor's theorem gives $\phi(z)\to-\frac12g''(1/k)=\frac k2(1-\frac1k)^{k-2}>0$ as $z\to1/k$, so $\phi$ extends to a continuous positive function on the compact interval $[0,1]$. Take $\kappa_k=\min\phi$.
	\end{proof}
	
	\begin{proof}[Proof of Theorem~\ref{thm:stab}]
		Put $\varepsilon=f^{-\varrho}$ with $\varrho=\frac1{k(3k+1)}$, and
		\[
		\delta=C_k\varepsilon+5k\varepsilon^{-3k}f^{-1/k}+\frac2f .
		\]
		Since $3k\varrho-\frac1k=\frac{3}{3k+1}-\frac1k=-\varrho$ and $\varrho\le1$, we have
		\begin{equation}\label{eq:delta}
			\delta\le\Gamma_k f^{-\varrho},\qquad \Gamma_k=C_k+5k+2 .
		\end{equation}
		Combining Lemma~\ref{lem:master} with~\eqref{eq:lb},
		\begin{equation}\label{eq:star}
			c_k\,x_1\,Z_\varepsilon^{\frac{k-1}{k}}\ \ge\ \frac{4^{1/k}}{k}-\delta .
		\end{equation}
		All the constants named below depend only on $k$, and $f_0$ is enlarged finitely many times; we assume throughout that it is large enough that $k4^{-1/k}\delta\le1$, which by~\eqref{eq:delta} depends only on $k$.
		
		\smallskip
		\noindent\emph{Step 1: $x_1$ is bounded away from $0$.} Using $Z_\varepsilon\le1$ in~\eqref{eq:star} and~\eqref{eq:cg}, $x_1\ge\beta_k-\delta/c_k$. Choose $f_0$ so large that $\Gamma_kf_0^{-\varrho}\le\min\{c_k\beta_k/2,\ \beta_k/2\}$; then $x_1\ge\beta_k/2>\varepsilon$ for $f\ge f_0$, so the vertex $h_1$ carrying $x_1$ is heavy and therefore
		\begin{equation}\label{eq:Zbound}
			Z_\varepsilon\le1-x_1^{k}.
		\end{equation}
		
		\smallskip
		\noindent\emph{Step 2: the value of $x_1$.} Write $z=x_1^{k}$. By~\eqref{eq:star},~\eqref{eq:Zbound} and~\eqref{eq:meet},
		\[
		\frac{4^{1/k}}{k}\Bigl(\frac{g(z)}{g(1/k)}\Bigr)^{1/k}\ \ge\ \frac{4^{1/k}}{k}-\delta ,
		\]
		so $\bigl(g(z)/g(1/k)\bigr)^{1/k}\ge1-k4^{-1/k}\delta$ and, by Bernoulli's inequality $(1-u)^{k}\ge1-ku$,
		\[
		g(1/k)-g(z)\ \le\ k^{2}4^{-1/k}g(1/k)\,\delta .
		\]
		Lemma~\ref{lem:quad} now gives $|z-1/k|\le\bigl(k^{2}4^{-1/k}g(1/k)/\kappa_k\bigr)^{1/2}\delta^{1/2}$, which with~\eqref{eq:delta} is the second assertion of the theorem.
		
		\smallskip
		\noindent\emph{Step 3: the value of $\lambda$.} By~\eqref{eq:Zbound} and~\eqref{eq:meet}, the main term of Lemma~\ref{lem:master} is at most $4^{1/k}/k$, so $\lambda\le(4f)^{1/k}\bigl(1+k4^{-1/k}\delta\bigr)$. Using $(1+u)^{k}\le1+2^{k}u$ for $u\in[0,1]$,
		\[
		\lambda^{k}\le4f\bigl(1+2^{k}k4^{-1/k}\delta\bigr)\le4f+2^{k+3}k\Gamma_k\,f^{1-\varrho},
		\]
		which together with $\lambda^{k}\ge4f-8$ is the first assertion.
		
		\smallskip
		\noindent\emph{Step 4: the second coordinate.} If $x_2<\varepsilon$ then $x_2^{k}<\varepsilon^{k}\le\varepsilon=f^{-\varrho}$ and there is nothing to prove, so assume $x_2\ge\varepsilon$. Then the vertices carrying $x_1$ and $x_2$ are both heavy, so $Z_\varepsilon\le1-z-s$ with $s=x_2^{k}$, and~\eqref{eq:star} gives
		\[
		\frac{4^{1/k}}{k}-\delta\ \le\ c_k\bigl(z(1-z-s)^{k-1}\bigr)^{1/k}.
		\]
		Since $1-z\le1$,
		\[
		z(1-z-s)^{k-1}=g(z)\Bigl(1-\frac{s}{1-z}\Bigr)^{k-1}\le g(1/k)(1-s)^{k-1},
		\]
		so by~\eqref{eq:cg} we get $\frac{4^{1/k}}{k}-\delta\le\frac{4^{1/k}}{k}(1-s)^{(k-1)/k}$, that is, $(1-s)^{(k-1)/k}\ge1-k4^{-1/k}\delta$. Raising to the power $\frac k{k-1}\ge1$ and using $(1-u)^{r}\ge1-ru$ for $r\ge1$,
		\[
		x_2^{k}=s\ \le\ \frac{k^{2}}{(k-1)4^{1/k}}\,\delta\ \le\ \frac{k^{2}\Gamma_k}{(k-1)4^{1/k}}\,f^{-\varrho}.
		\]
		
		\smallskip
		\noindent\emph{Step 5: the degree of the hub.} Put $\varepsilon'=f^{-\varrho/(2k)}$. By Step 4, $x_2\le K^{1/k}f^{-\varrho/k}$ with $K=k^{2}\Gamma_k/((k-1)4^{1/k})$, and $\varrho/k>\varrho/(2k)$, so enlarging $f_0$ we may assume $x_2<\varepsilon'$; enlarging it again we may assume $\varepsilon'<\beta_k/2\le x_1$. Hence the heavy set for the threshold $\varepsilon'$ is exactly $\{h_1\}$, and $Z_{\varepsilon'}\le1-x_1^{k}$. The second inequality of Lemma~\ref{lem:master}, together with~\eqref{eq:meet} and~\eqref{eq:lb}, gives
		\[
		\frac{4^{1/k}}{k}-\delta'\ \le\ \frac{4^{1/k}}{k}\Bigl(\frac{d_1}{f}\Bigr)^{1/k},
		\qquad \delta'=C_k\varepsilon'+f^{-1/k}+\frac2f ,
		\]
		where we used $\varepsilon'^{\,k}\le1$. Therefore $(d_1/f)^{1/k}\ge1-k4^{-1/k}\delta'$ and, by Bernoulli again, $d_1\ge f\bigl(1-k^{2}4^{-1/k}\delta'\bigr)$. Since $\varrho/(2k)<1/k$ we have $\delta'\le(C_k+3)f^{-\varrho/(2k)}$, and the fourth assertion follows.
		
		Taking $M$ to be the largest of the four constants produced above completes the proof.
	\end{proof}
	
	\begin{rem}\label{rem:k3}
		For $k=3$, Lemma~\ref{lem:fan} reads $\Theta_d(W)\le d^{1/3}W^{2/3}$, which is the elementary path estimate, and Theorem~\ref{thm:stab} gives a qualitative form of Lemma~2 of Ellingham, Lu and Wang~\cite{ELW} without the eigenvector bootstrap used there; the leak terms that the bootstrap must control are absorbed here by Corollary~\ref{cor:faces}$(ii)$. Section~\ref{sec:hub} does not sharpen the rates; it uses only that the hub lies on all but $o(f)$ faces and that $x_1^{k}\to\frac1k$, and the correct order $x_2=O(f^{-1/k})$ falls out at the end.
	\end{rem}
	
	\section{The Outerplanar Extremal Theorem}\label{sec:hub}
	
	Throughout this section $\HH$, $x$, $\lambda$ and $f$ are as in Theorem~\ref{thm:stab}, with $f\ge f_0$, and $A=x_1$. We write $h_1$ for the vertex carrying $x_1$, $d_1$ for the number of faces containing $h_1$, and
	\[
	\FA=\{F:\ h_1\notin F\},\qquad a=|\FA|=f-d_1 ,
	\]
	so that Theorem~\ref{thm:stab} gives $a\le Mf^{1-\varrho/(2k)}$ and $|A^{k}-\frac1k|\le Mf^{-\varrho/2}$; in particular, once $f$ is large,
	\begin{equation}\label{eq:crude}
		a\le\frac f2,\qquad \frac1{2k}\le A^{k}\le\frac12 .
	\end{equation}
	In this section we prove that $a=0$, which by Lemma~\ref{lem:local}$(i)$ says that $G$ is the fan $k$-angulation; this proves Theorem~\ref{thm:main}.
	
	The argument compares $\HH$ with the fan directly. Every face of $\HH$ either contains $h_1$, in which case its weight is $A$ times a term of the fan functional $\Phi_{h_1}(x)$, or it lies in $\FA$. Extremality forces the weight of $\FA$ to make up for everything the hub fan of $\HH$ loses against the fan with the same number of faces. It cannot: the faces of $\FA$ are few, and the mass available to them is the very mass the hub fan loses.
	
	Root the dual tree $T$ at a face containing $h_1$. The faces containing $h_1$ form a path $P_{h_1}$ in $T$ by Lemma~\ref{lem:local}$(i)$, and $\FA$ is the union of the components of $T-P_{h_1}$; we call each such component, together with the chord through which it is attached to $P_{h_1}$, a \emph{pocket}. Put
	\[
	U=V(G)\setminus\bigl(\{h_1\}\cup\NH(h_1)\bigr),\qquad u=\sum_{v\in U}x_v^{k},\qquad W_1=\sum_{v\in\NH(h_1)}x_v^{k},
	\]
	so that $A^{k}+W_1+u=1$.
	
	\begin{lem}\label{lem:own2}
		With $T$ rooted at a face containing $h_1$, $O(F)\subseteq U$ for every $F\in\FA$, and hence $U=\bigcup_{F\in\FA}O(F)$ and $|U|=(k-2)a$. Moreover, every vertex lying on a face of a pocket, other than the two endpoints of its attachment chord, belongs to $U$; consequently at most $2a$ vertices of $\NH(h_1)$ lie on faces of $\FA$.
	\end{lem}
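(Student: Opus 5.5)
The plan is to derive everything from one fact already used in the proof of Lemma~\ref{lem:local}$(ii)$: if a vertex $w$ lies on two faces $F$ and $F'$, then every chord corresponding to an edge of the $T$-path from $F$ to $F'$ passes through $w$.

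The structural input is the following. By Lemma~\ref{lem:local}$(i)$ the faces containing $h_1$ form a path $P_{h_1}$ in $T$. Each pocket is a component $Q$ of $T-P_{h_1}$, and since $T$ is a tree, $Q$ is joined to $P_{h_1}$ by exactly one edge of $T$. That edge corresponds to the attachment chord $c_Q$. Consequently, for any face $F\in Q$ and any face $F'\in P_{h_1}$, the $T$-path from $F$ to $F'$ leaves $Q$ through $c_Q$.

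First I show that $O(F)\subseteq U$ for $F\in\FA$. Let $F\in\FA$ lie in the pocket $Q$, and let $v\in O(F)$. We have $v\ne h_1$ because $h_1\notin F$. Suppose $v\in\NH(h_1)$, so that $v$ lies on some face $F'\ni h_1$. The root is in $P_{h_1}$, so the $T$-path from $F$ to the root first travels inside $Q$ to the face of $Q$ incident with $c_Q$, then crosses $c_Q$.

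\begin{itemize}
\item The $T$-path from $F$ to $F'$ has the same initial segment, because both paths leave $Q$ through $c_Q$.
\item In particular, the first edge of the path from $F$ to $F'$ is the edge to the parent of $F$, which corresponds to the parent chord $c(F)$.
\item By the fact above, $c(F)$ passes through $v$. This contradicts $v\in O(F)=F\setminus c(F)$.
\end{itemize}

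Hence $O(F)\subseteq U$.

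Next I get the equality $U=\bigcup_{F\in\FA}O(F)$. Every face $F\notin\FA$ contains $h_1$, so $O(F)\subseteq F\subseteq\{h_1\}\cup\NH(h_1)$, which is disjoint from $U$. By Lemma~\ref{lem:own} the sets $O(F)$ partition $V(G)$, so $U$ is exactly the union of the $O(F)$ with $F\in\FA$. The root lies outside $\FA$, so each such $O(F)$ has size $k-2$, and the sets are pairwise disjoint. Therefore $|U|=(k-2)a$.

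For the pocket statement, let $w$ lie on a face $F$ of a pocket $Q$, with $w$ not an endpoint of $c_Q$. Then $w\ne h_1$, since otherwise $F$ would belong to $P_{h_1}$ rather than to $Q$. If $w$ lay on a face $F'\ni h_1$, then the $T$-path from $F$ to $F'$ would cross $c_Q$, so $w$ would lie on $c_Q$, which is excluded. Hence $w\in U$.

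Finally, every face of $\FA$ lies in some pocket, and each pocket contains at least one face of $\FA$, so there are at most $a$ pockets. By the previous paragraph, the only vertices of $\NH(h_1)$ that can lie on faces of $\FA$ are endpoints of attachment chords. This gives at most $2a$ such vertices.

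The only delicate point is the first step: checking that the parent edge of $F$ is really the first edge of the path toward $F'$. This holds because the root was chosen in $P_{h_1}$, so the parent direction inside a pocket always points toward $c_Q$.
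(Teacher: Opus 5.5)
Your proof is correct and follows the paper's argument. The pocket statement and the count of at most $2a$ vertices are handled exactly as in the paper, and your derivation of $|U|=(k-2)a$ spells out the disjointness and size of the sets $O(F)$ more explicitly than the paper does.

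The only variation is in showing $O(F)\subseteq U$:
\begin{itemize}
\item The paper uses the characterization from Lemma~\ref{lem:own}: $v\in O(F)$ means $F$ is the face of $P_v$ nearest the root. So the root path from a face $F'\ni v,h_1$ passes through $F$, and convexity of $P_{h_1}$ in $T$ then forces $h_1\in F$.
\item You instead apply the chord-path fact to the first edge of the path toward $F'$. That edge is the parent edge of $F$ because $F'$ and the root both lie outside the pocket.
\end{itemize}
Both routes are equally short and rigorous.
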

	
	\begin{proof}
		Let $F\in\FA$ and $v\in O(F)$, and suppose $v\in\NH(h_1)$; let $F'$ be a face containing $v$ and $h_1$. As in the proof of Lemma~\ref{lem:own}, $v\in O(F)$ means that $F$ is the face of $P_v$ nearest the root, so the path from $F'$ to the root passes through $F$. But $F'$ and the root lie on $P_{h_1}$, the path between two faces of $P_{h_1}$ stays in $P_{h_1}$, and every face of $P_{h_1}$ contains $h_1$; hence $h_1\in F$, a contradiction. Thus $O(F)\subseteq U$ for $F\in\FA$; since every vertex is owned by exactly one face by Lemma~\ref{lem:own}, and the vertices owned by faces containing $h_1$ lie in $\{h_1\}\cup\NH(h_1)$, the sets $O(F)$, $F\in\FA$, cover $U$.
		
		For the second statement let $F$ be a face of a pocket attached through the chord $c=ab$, and let $w\in F\cap\NH(h_1)$, say $w\in F'$ with $h_1\in F'$. By the argument of Lemma~\ref{lem:local}$(ii)$ every chord on the path from $F$ to $F'$ in $T$ contains $w$, and this path leaves the pocket through $c$; so $w\in\{a,b\}$. Each pocket contributes at most two such vertices and there are at most $a$ pockets.
	\end{proof}
	
	Write $\Phi=\Phi_{h_1}$ for the fan functional at $h_1$. Since the faces containing $h_1$ have total weight $A\,\Phi(x)$, identity~\eqref{eq:total} reads
	\begin{equation}\label{eq:split}
		\frac{\lambda}{k}=A\,\Phi(x)+S_0,\qquad S_0=\sum_{F\in\FA}\ \prod_{v\in F}x_v .
	\end{equation}
	
	\begin{lem}\label{lem:compare}
		$S_0\ \ge\ R:=A\bigl(\Theta_f(1-A^{k})-\Phi(x)\bigr)$.
	\end{lem}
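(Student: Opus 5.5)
The plan is to compare $\HH$ with the fan $k$-angulation $\FF_{k,f}$ through a test vector, using only the extremality of $\HH$. Since $\HH(\FF_{k,f})$ is itself a $k$-angulation hypergraph on the same number $n=(k-2)f+2$ of vertices, extremality gives
\[
\lambda=\lambda(\HH)\ \ge\ \lambda(\HH(\FF_{k,f})).
\]
By~\eqref{eq:split} we have $\lambda/k=A\,\Phi(x)+S_0$. It therefore suffices to show
\[
\lambda(\HH(\FF_{k,f}))\ \ge\ k\,A\,\Theta_f(1-A^{k}).
\]
Subtracting $A\,\Phi(x)$ from both sides then gives $S_0\ge R$.

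To prove this lower bound, I would let $h$ be the hub of $\FF_{k,f}$. It lies on all $f$ faces, so $d(h)=f$ and $\NH(h)=V(\FF_{k,f})\setminus\{h\}$. As noted after the definition of $\Theta_{d,\tau}$, the combinatorial structure of the faces at a vertex, namely the spokes $u_0,\dots,u_f$ and the interior sets $I_i$, depends only on the number of faces at that vertex. Hence $\Theta_f(W)$ is exactly the maximum of the fan functional $\Phi_h$ over $y\ge0$ on $\NH(h)$ with $\sum_{v\in\NH(h)}y_v^{k}=W$.

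Next I would choose $y$ attaining $\Theta_f(1-A^{k})$; it exists by compactness, and $1-A^{k}\ge\frac12>0$ by~\eqref{eq:crude}. Define the test vector $z$ on $V(\FF_{k,f})$ by $z_h=A$ and $z_v=y_v$ for $v\ne h$. Then $\sum_vz_v^{k}=A^{k}+(1-A^{k})=1$, so $z$ is feasible in the variational definition of $\lambda$. Every face of the fan contains $h$, so
\[
P_{\HH(\FF_{k,f})}(z)=k\sum_{i=1}^{f}A\prod_{v\in F_i\setminus\{h\}}y_v=k\,A\,\Phi_h(y)=k\,A\,\Theta_f(1-A^{k}).
\]
This gives the required bound on $\lambda(\HH(\FF_{k,f}))$.

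I do not expect a real obstacle; the argument is a direct use of extremality. The only point to check carefully is the identification above: the fan functional of the hub of $\FF_{k,f}$ with $d=f$ must involve all non-hub vertices of $\FF_{k,f}$, so that the whole mass $1-A^{k}$ is available to it and no vertex is left unassigned. This holds because in the fan every vertex other than the hub lies on a face with the hub.
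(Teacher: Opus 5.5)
Your proposal is correct and is essentially the paper's own proof. You place $A$ on the hub of $\FF_{k,f}$ and a maximizer of $\Theta_f(1-A^{k})$ on the remaining $n-1$ vertices, which gives a unit vector. Extremality then yields $\lambda/k\ge A\,\Theta_f(1-A^{k})$, and subtracting $A\,\Phi(x)$ via~\eqref{eq:split} gives $S_0\ge R$.
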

	
	\begin{proof}
		The fan $\FF_{k,f}$ has $n$ vertices, its hub and $n-1$ others. Let $y$ attain $\Theta_f(1-A^{k})$ on the non-hub vertices of $\FF_{k,f}$, and put $A$ on the hub. This vector has $k$-norm one, so $\lambda(\HH(\FF_{k,f}))\ge kA\,\Theta_f(1-A^{k})$; by extremality $\lambda\ge\lambda(\HH(\FF_{k,f}))$, and~\eqref{eq:split} gives the claim.
	\end{proof}
	
	We shall also apply Lemma~\ref{lem:fan} to part of a fan evaluated at the actual coordinates.
	
	\begin{cor}\label{cor:sub}
		Let $\mathcal S$ be a set of faces containing $h_1$, and let $m_{\mathcal S}$ be the mass of the vertices other than $h_1$ lying on faces of $\mathcal S$. Then
		\[
		\sum_{F\in\mathcal S}\ \prod_{v\in F\setminus\{h_1\}}x_v\ \le\ c_k\,|\mathcal S|^{1/k}\,m_{\mathcal S}^{\frac{k-1}{k}} .
		\]
	\end{cor}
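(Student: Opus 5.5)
The plan is to reduce Corollary~\ref{cor:sub} to Lemma~\ref{lem:fan} by the run decomposition already used in the proof of that lemma.

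List the faces at $h_1$ as $F_1,\dots,F_{d_1}$ in the order of Lemma~\ref{lem:local}$(i)$. The faces of $\mathcal S$ then split into maximal runs $R_1,\dots,R_r$ of consecutive indices, with $|R_j|=d_j$ and $\sum_j d_j=|\mathcal S|$. By the consequence of Lemma~\ref{lem:local}$(ii)$, two nonconsecutive faces at $h_1$ share only $h_1$. Hence the vertex sets $V_j=\bigcup_{F\in R_j}F\setminus\{h_1\}$ are pairwise disjoint. Writing $m_j=\sum_{v\in V_j}x_v^k$, disjointness gives $\sum_j m_j=m_{\mathcal S}$.

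Each run $R_j$ is itself a fan at $h_1$ with $d_j$ consecutive faces; it has spokes $u_{i-1},\dots,u_{i+d_j-1}$ and interior sets $I_i$, exactly the structure on which $\Phi_{h,0}$ and $\Theta_{d_j}$ are defined. The restriction of $x$ to $V_j$ is therefore a feasible vector for $\Theta_{d_j}(m_j)$. By Lemma~\ref{lem:fan},
\[
\sum_{F\in R_j}\prod_{v\in F\setminus\{h_1\}}x_v\le\Theta_{d_j}(m_j)\le c_k\,d_j^{1/k}m_j^{\frac{k-1}{k}} .
\]
Summing over $j$ and applying H\"older with exponents $k$ and $\frac{k}{k-1}$ gives
\[
c_k\Bigl(\sum_j d_j\Bigr)^{1/k}\Bigl(\sum_j m_j\Bigr)^{\frac{k-1}{k}}=c_k|\mathcal S|^{1/k}m_{\mathcal S}^{\frac{k-1}{k}} .
\]

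The main point to check is that a run of consecutive faces at $h_1$ really has the combinatorial type of $\Theta_{d_j}$, that is, the first $d_j$ faces of a fan at a vertex of degree $d_j$. This could fail if some spoke of the run, for instance an end spoke that is a chord in $G$ rather than a boundary edge, acquired extra incidences. It does not matter here, because $\Phi_{h,0}$ has no end terms ($\tau=0$): only the face products within the run appear, and each variable in $V_j$ lies on the same faces of the run as in the abstract fan. Edge cases are trivial: if $\mathcal S$ is empty both sides are $0$, and if $m_j=0$ the run contributes $0$.
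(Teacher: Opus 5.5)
Your proposal is correct and follows the paper's proof essentially verbatim. You split $\mathcal S$ into maximal runs of consecutive faces at $h_1$, use the remark after Lemma~\ref{lem:local}$(ii)$ to make their non-hub vertex sets disjoint, and bound each run by Lemma~\ref{lem:fan} applied to the restricted vector. Combining the runs with H\"older's inequality, using $\sum_j d_j=|\mathcal S|$ and $\sum_j m_j=m_{\mathcal S}$, then gives the stated bound.
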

	
	\begin{proof}
		Split $\mathcal S$ into maximal runs of consecutive faces in the order of Lemma~\ref{lem:local}$(i)$. Faces in different runs are nonconsecutive, so by the remark following Lemma~\ref{lem:local}$(ii)$ their vertex sets meet only in $h_1$. A run of $d_R$ faces is itself a fan, and the restriction of $x$ to its non-hub vertices is admissible for $\Theta_{d_R}(m_R)$, where $m_R$ is the mass of those vertices; so its contribution is at most $c_kd_R^{1/k}m_R^{(k-1)/k}$ by Lemma~\ref{lem:fan}. Summing over runs and applying H\"older's inequality gives the claim, since $\sum_Rd_R=|\mathcal S|$ and $\sum_Rm_R=m_{\mathcal S}$.
	\end{proof}
	
	Fix a constant $K\ge1$, to be chosen depending only on $k$, and put
	\[
	Q=\Bigl\{v\in\NH(h_1):\ x_v^{k}\ge\frac Kf\Bigr\},\qquad m_Q=\sum_{v\in Q}x_v^{k},\qquad\text{so that}\quad |Q|\le\frac{f\,m_Q}{K}.
	\]
	The point of the next lemma is that mass placed on the hub fan at density above $K/f$ per vertex is used inefficiently, so that, up to a constant, it is lost as surely as the mass on $U$.
	
	\begin{lem}\label{lem:loss}
		With $R$ as in Lemma~\ref{lem:compare},
		\begin{align*}
			R\ \ge\ A\Bigl[\gamma_k(1-A^{k})^{\frac{k-1}{k}}\,a\,f^{-\frac{k-1}{k}}
			&+\Bigl(\frac{k-1}{k}\,\theta_{d_1}(1-A^{k})^{-1/k}-c_k\,2^{1/k}(2k-3)^{\frac{k-1}{k}}\Bigl(\frac fK\Bigr)^{1/k}\Bigr)m_Q\\
			&+\frac{k-1}{k}\,\theta_{d_1}(1-A^{k})^{-1/k}\,u\Bigr].
		\end{align*}
	\end{lem}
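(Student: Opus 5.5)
The plan is to split the deficit $\Theta_f(1-A^{k})-\Phi(x)$ into three nonnegative pieces: a length loss, a loss from the heavy neighbours $Q$, and a loss from the mass $u$ sitting off the hub fan. Each piece is bounded by a lemma already available. Put $W=1-A^{k}$, so that $W_1=W-u$. Write
\[
\Theta_f(W)-\Phi(x)=\bigl[\Theta_f(W)-\Theta_{d_1}(W)\bigr]+\bigl[\Theta_{d_1}(W)-\Theta_{d_1}(W-u-m_Q)\bigr]+\bigl[\Theta_{d_1}(W-u-m_Q)-\Phi(x)\bigr],
\]
and multiply by $A$ at the end.

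\emph{Length term.} Since $d_1\ge1$ by Step~1 of Theorem~\ref{thm:stab} (or by~\eqref{eq:crude}), I would telescope $\Theta_f(W)-\Theta_{d_1}(W)=\sum_{d=d_1}^{f-1}\bigl(\Theta_{d+1}(W)-\Theta_d(W)\bigr)$. This sum has $a=f-d_1$ terms. Lemma~\ref{lem:super} bounds each term below by $\gamma_kW^{(k-1)/k}(d+1)^{-(k-1)/k}\ge\gamma_kW^{(k-1)/k}f^{-(k-1)/k}$. This gives the first summand of the claimed bound.

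\emph{Mass term.} We have $\Theta_{d_1}(s)=\theta_{d_1}s^{(k-1)/k}$, and $s\mapsto s^{(k-1)/k}$ is concave with derivative $\frac{k-1}{k}s^{-1/k}\ge\frac{k-1}{k}W^{-1/k}$ for $s\le W$. Hence the second bracket is at least $\frac{k-1}{k}\theta_{d_1}W^{-1/k}(u+m_Q)$. This produces the $u$-term and the positive part of the $m_Q$-coefficient.

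\emph{The $Q$ term, where the real work lies.} We need $\Phi(x)\le\Theta_{d_1}(W-u-m_Q)+c_k2^{1/k}(2k-3)^{(k-1)/k}(f/K)^{1/k}m_Q$. Let $x'$ agree with $x$ on $\NH(h_1)\setminus Q$ and vanish on $Q$. Then the faces at $h_1$ missing $Q$ contribute $\Phi(x')\le\Theta_{d_1}(W_1-m_Q)$, since $x'$ is admissible with mass $W_1-m_Q=W-u-m_Q$. Let $\mathcal S$ be the set of faces at $h_1$ meeting $Q$. Each $q\in Q$ lies on at most two faces at $h_1$, so $|\mathcal S|\le2|Q|\le2fm_Q/K$. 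To bound the mass $m_{\mathcal S}$ of their non-hub vertices, charge every such vertex to a $q\in Q$ on a common face. Two faces at $h_1$ containing $q$ have together at most $2(k-1)-1=2k-3$ non-hub vertices. Every vertex not in $Q$ has $x_v^{k}<K/f\le x_q^{k}$, so the vertices charged to $q$ have mass at most $(2k-3)x_q^{k}$, and hence $m_{\mathcal S}\le(2k-3)m_Q$. Corollary~\ref{cor:sub} then gives
\[
\sum_{F\in\mathcal S}\prod_{v\in F\setminus\{h_1\}}x_v\le c_k(2fm_Q/K)^{1/k}\bigl((2k-3)m_Q\bigr)^{(k-1)/k}=c_k2^{1/k}(2k-3)^{\frac{k-1}{k}}(f/K)^{1/k}m_Q .
\]
Since $\Phi(x)$ is at most $\Phi(x')$ plus this sum, the third bracket is at least $-c_k2^{1/k}(2k-3)^{(k-1)/k}(f/K)^{1/k}m_Q$.

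Adding the three estimates and multiplying by $A$ gives the lemma. The delicate step is the charging in the $Q$ term. Its key observation is that the threshold $K/f$ defining $Q$ dominates every non-$Q$ coordinate, which converts the vertex count $2k-3$ into the mass bound $m_{\mathcal S}\le(2k-3)m_Q$.
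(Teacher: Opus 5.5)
Your proof is correct and follows the paper's argument. It uses the same three-bracket split of $\Theta_f(1-A^k)-\Phi(x)$, Lemma~\ref{lem:super} telescoped over the $a$ missing faces, concavity of $s\mapsto s^{(k-1)/k}$ for the mass loss, and Corollary~\ref{cor:sub} on the faces at $h_1$ meeting $Q$. The only difference is bookkeeping in the bound $m_{\mathcal S}\le(2k-3)m_Q$: the paper counts globally as $m_Q+2|Q|(k-2)K/f$ and then uses $|Q|\le fm_Q/K$, whereas you charge locally to each $q\in Q$ via $K/f\le x_q^k$. For your version you should state explicitly that each vertex of $Q$ is charged to itself, since charging a heavier $q'\in Q$ to a lighter $q$ would break the per-$q$ bound $(2k-3)x_q^k$.
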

	
	\begin{proof}
		Let $x'$ be the vector on $\NH(h_1)$ that agrees with $x$ off $Q$ and vanishes on $Q$. Then $\Phi(x)-\Phi(x')$ is the sum of the terms of $\Phi(x)$ over the faces $\mathcal S$ containing $h_1$ and a vertex of $Q$; call it $P_Q$. Each vertex of $\NH(h_1)$ lies on at most two faces containing $h_1$, so $|\mathcal S|\le2|Q|$. The vertices other than $h_1$ on a face of $\mathcal S$ are in $\NH(h_1)$; those not in $Q$ have $x_v^{k}<K/f$, and there are at most $k-2$ of them per face. Hence $m_{\mathcal S}\le m_Q+2|Q|(k-2)K/f\le(2k-3)m_Q$, and Corollary~\ref{cor:sub} gives
		\[
		P_Q\ \le\ c_k(2|Q|)^{1/k}\bigl((2k-3)m_Q\bigr)^{\frac{k-1}{k}}
		\ \le\ c_k\,2^{1/k}(2k-3)^{\frac{k-1}{k}}\Bigl(\frac fK\Bigr)^{1/k}m_Q ,
		\]
		using $|Q|\le fm_Q/K$. Since $x'$ has mass $W_1-m_Q$ on $\NH(h_1)$, we have $\Phi(x')\le\Theta_{d_1}(W_1-m_Q)$, and therefore
		\begin{align*}
			\Theta_f(1-A^{k})-\Phi(x)\ \ge\ &\bigl[\Theta_f(1-A^{k})-\Theta_{d_1}(1-A^{k})\bigr]\\
			&+\bigl[\Theta_{d_1}(1-A^{k})-\Theta_{d_1}(W_1-m_Q)\bigr]-P_Q .
		\end{align*}
		By Lemma~\ref{lem:super} the first bracket is at least $\gamma_k(1-A^{k})^{(k-1)/k}\sum_{d=d_1}^{f-1}(d+1)^{-(k-1)/k}\ge\gamma_k(1-A^{k})^{(k-1)/k}af^{-(k-1)/k}$. For the second, $W_1-m_Q=1-A^{k}-(u+m_Q)$, and since $t\mapsto t^{(k-1)/k}$ is concave,
		\begin{align*}
			\Theta_{d_1}(1-A^{k})-\Theta_{d_1}(W_1-m_Q)
			&=\theta_{d_1}\Bigl[(1-A^{k})^{\frac{k-1}{k}}-(1-A^{k}-u-m_Q)^{\frac{k-1}{k}}\Bigr]\\
			&\ge\theta_{d_1}\,\frac{k-1}{k}\,(1-A^{k})^{-1/k}(u+m_Q).
		\end{align*}
		Multiplying by $A$ gives the lemma.
	\end{proof}
	
	\begin{lem}\label{lem:gain}
		\[
		S_0\ \le\ (k-2)^{-\frac{k-2}{k}}\Bigl(\frac a2\Bigr)^{1/k}\Bigl(u+m_Q+\frac{2Ka}{f}\Bigr).
		\]
	\end{lem}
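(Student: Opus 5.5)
Apply Corollary~\ref{cor:faces}$(i)$ to the family $\FA$ of $a$ faces. Since $d_1\ge1$, the family $\FA$ omits at least one face, so $\rho=0$. The corollary then gives $S_0\le(k-2)^{-\frac{k-2}{k}}(a/2)^{1/k}W$, where $W$ is the mass of the vertices lying on faces of $\FA$. The whole task is therefore to bound $W$ by $u+m_Q+2Ka/f$.

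We split $V(\FA)$ into two parts. The first is the set of vertices in $U$, whose total mass is at most $u$. The second is the set of vertices in $V(\FA)$ that are not in $U$. Such a vertex cannot be $h_1$, because faces of $\FA$ avoid $h_1$. Hence it lies in $\NH(h_1)$. By Lemma~\ref{lem:own2}, at most $2a$ vertices of $\NH(h_1)$ lie on faces of $\FA$; these are the endpoints of the attachment chords of the pockets.

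We then split these at most $2a$ vertices according to membership in $Q$. Those in $Q$ have total mass at most $m_Q$. Those not in $Q$ each satisfy $x_v^{k}<K/f$, so together they contribute less than $2a\cdot K/f=2Ka/f$. Summing the three contributions gives $W\le u+m_Q+2Ka/f$, and substituting this into the corollary yields the lemma.

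The only point needing care is that the corollary applies with the $(1,1)$-assignment from Proposition~\ref{prop:own}. This requires $\FA$ to omit a face, which holds because the root face contains $h_1$. The count of at most $2a$ shared vertices is exactly the second statement of Lemma~\ref{lem:own2}. I do not expect a genuine obstacle; the bound is a direct bookkeeping consequence of these results.
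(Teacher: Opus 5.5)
Your proposal is correct and is essentially the paper's proof. Both apply Corollary~\ref{cor:faces}$(i)$ to $\FA$ with $\rho=0$, rooting at a face through $h_1$. Both then bound the mass $W$ by $u+m_Q+2Ka/f$, using the second statement of Lemma~\ref{lem:own2} to limit the contribution from $\NH(h_1)$ to at most $2a$ vertices, each either in $Q$ or of weight $x_v^{k}<K/f$.
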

	
	\begin{proof}
		Apply Corollary~\ref{cor:faces}$(i)$ to $\FA$, with $T$ rooted at a face containing $h_1$, so that $\rho=0$. By Lemma~\ref{lem:own2} the vertices lying on faces of $\FA$ are those of $U$ together with at most $2a$ vertices of $\NH(h_1)$; among the latter, those in $Q$ have mass at most $m_Q$ in total and each of the others has $x_v^{k}<K/f$. Hence the mass appearing in Corollary~\ref{cor:faces} satisfies $W\le u+m_Q+2aK/f$, and the claim follows.
	\end{proof}
	
	\begin{proof}[Proof of Theorem~\ref{thm:main}]
		Let $f\ge f_0$ be large enough that~\eqref{eq:crude} holds. Then $d_1\ge f/2$, and putting $y_v^{k}=2c$ on the $d_1+1$ spokes of a fan with $d_1$ faces and $y_v^{k}=c$ on its $d_1(k-3)$ interior vertices, where $c=\bigl(d_1(k-1)+2\bigr)^{-1}$ makes the mass one, shows
		\[
		\theta_{d_1}\ \ge\ 4^{1/k}d_1\bigl(d_1(k-1)+2\bigr)^{-\frac{k-1}{k}}
		\ \ge\ \frac{c_k}{2}\,d_1^{1/k}\ \ge\ \frac{c_k}{4}\,f^{1/k},
		\]
		using $(1+2/(d_1(k-1)))^{-(k-1)/k}\ge\frac12$ and $(f/2)^{1/k}\ge f^{1/k}/2$. Also $A\ge(2k)^{-1/k}\ge\frac12$ for $k\ge3$, and $\frac12\le1-A^{k}<1$ by~\eqref{eq:crude}, so that $(1-A^{k})^{(k-1)/k}\ge2^{-\frac{k-1}k}\ge\frac12$ and $(1-A^{k})^{-1/k}\ge1$. Put $c_M=\frac{(k-1)c_k}{4k}$ and choose
		\[
		K=\Bigl(\frac{2c_k2^{1/k}(2k-3)^{(k-1)/k}}{c_M}\Bigr)^{k},
		\]
		so that the coefficient of $m_Q$ in Lemma~\ref{lem:loss} is at least $\frac{c_M}{2}f^{1/k}$. Lemma~\ref{lem:loss} then gives
		\begin{equation}\label{eq:Rfinal}
			R\ \ge\ \frac{\gamma_k}{4}\,a\,f^{-\frac{k-1}{k}}+\frac{c_M}{4}\,f^{1/k}(u+m_Q).
		\end{equation}
		On the other hand Lemma~\ref{lem:gain} can be written as
		\begin{equation}\label{eq:Sfinal}
			S_0\ \le\ C_0\Bigl(\frac af\Bigr)^{1/k}f^{1/k}(u+m_Q)+2C_0K\Bigl(\frac af\Bigr)^{1/k}a\,f^{-\frac{k-1}{k}},
			\qquad C_0=(k-2)^{-\frac{k-2}{k}}2^{-1/k}.
		\end{equation}
		Since $a/f\le Mf^{-\varrho/(2k)}$, there is $f_1\ge f_0$ such that for $f\ge f_1$ both $C_0(a/f)^{1/k}\le c_M/8$ and $2C_0K(a/f)^{1/k}\le\gamma_k/8$. Comparing~\eqref{eq:Rfinal} and~\eqref{eq:Sfinal} then gives $S_0\le R/2$. If $a\ge1$, then $R>0$ by~\eqref{eq:Rfinal}, so $S_0<R$, contradicting Lemma~\ref{lem:compare}. Hence $a=0$: every face contains $h_1$. By Lemma~\ref{lem:local}$(i)$ the $f$ faces at $h_1$ are consecutive and each shares a chord through $h_1$ with the next, so $G$ is the fan $k$-angulation, and $\HH=\HH(\FF_{k,f})$.
	\end{proof}
	
	\begin{rem}
		The proof uses no information about $x_2$ beyond Theorem~\ref{thm:stab}, and needs none. A vertex of the hub fan carrying unusually large weight is handled by Lemma~\ref{lem:loss}, which charges its mass to the loss, and a face of $\FA$ attached to two such vertices is then absorbed by Lemma~\ref{lem:gain}. The correct bound on $x_2$ comes out at the end instead: in the fan every vertex other than the hub lies on at most two faces, so the eigenequation~\eqref{eq:eigen} at a vertex $v$ carrying $x_2$ reads $\lambda x_2^{\,k-1}\le2Ax_2^{\,k-2}$, whence $x_2\le2A/\lambda=O(f^{-1/k})$.
	\end{rem}
	
	\section{Planar Hypergraphs: the case $k=3$}\label{sec:p3}
	
	Throughout this section $k=3$ and $n\ge4$, and we write $N=n-2$, $\nu=6^{-1/3}$ and $\lambda=\lambda(\HH)$. A closed $3$-angulation is a plane triangulation and its face hypergraph has $f=2N$ edges. Two vertices of a plane triangulation lie on a common face only if they are adjacent, and then on exactly two; so no two vertices lie on more than two common faces, and the configuration that governs the planar problem for $k\ge4$, in which two vertices lie on every face, does not occur here. The extremal hypergraph is instead the following. For $N\ge2$ let $D_N$ be the face hypergraph of the plane triangulation $K_2+P_N$, and call the two vertices of the $K_2$ factor the \emph{hubs}. Thus $D_N$ has hyperedges
	\[
	\{a,u_i,u_{i+1}\},\quad \{b,u_i,u_{i+1}\}\ \ (1\le i<N),\qquad \{a,b,u_1\},\ \{a,b,u_N\},
	\]
	where $u_1\cdots u_N$ is the path; a plane embedding is shown in Figure~\ref{fig:DN}.
	
	\begin{figure}[H]
		\centering
		\begin{tikzpicture}[scale=1.0,
			hub/.style={circle,fill=black,inner sep=2.4pt},
			pv/.style={circle,draw=black,fill=white,inner sep=1.5pt}]
			\node[hub] (a) at (0,1.5) {}; \node[above] at (a) {$a$};
			\node[hub] (b) at (0,-1.5) {}; \node[below] at (b) {$b$};
			\foreach \i/\x in {1/-1.5, 2/-0.75, 3/0, 4/0.75, 5/1.5}
			{\node[pv] (u\i) at (\x,0) {};}
			\node[below left] at (u1) {$u_1$}; \node[below right] at (u5) {$u_N$};
			\foreach \i in {1,...,4} {\pgfmathtruncatemacro{\j}{\i+1} \draw (u\i)--(u\j);}
			\foreach \i in {1,...,5} {\draw (a)--(u\i); \draw (b)--(u\i);}
			\draw (a) to[out=185,in=175,looseness=2.6] (b);
		\end{tikzpicture}
		\caption{A plane embedding of $K_2+P_N$, the shadow of $D_N$, for $N=5$. The edge $ab$ lies on the two faces $abu_1$ and $abu_N$, and every other face contains exactly one of $a$ and $b$.}\label{fig:DN}
	\end{figure}
	
	Our aim in this section is Theorem~\ref{thm:p3}, the case $k=3$ of the planar problem, which confirms the conjecture of Ellingham, Lu and Wang~\cite{ELW}.
	
	For $k=3$ the class is defined through the shadow, and an extremal hypergraph need not have a triangulation for its shadow to begin with; the following supplies the reduction that Section~\ref{sec:def} takes for granted in the other cases.
	
	\begin{lem}\label{lem:augment}
		Let $\HH$ be a planar $3$-uniform hypergraph on $n\ge4$ vertices. Then a witnessing plane embedding of $\partial\HH$ can be augmented, without adding vertices and without disturbing any facial triangle that carries a hyperedge, to a plane triangulation $T$. Consequently $\HH\subseteq\HH(T)$, and every planar $3$-uniform hypergraph of maximum spectral radius on $n$ vertices is the face hypergraph of a plane triangulation.
	\end{lem}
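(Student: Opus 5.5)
Fix a plane embedding of $\partial\HH$ in which every hyperedge is the vertex set of a triangular face; call these faces \emph{marked}. The plan is to add edges inside the non-marked faces until every face is a triangle. The marked faces are untouched, and $\partial\HH$ only gains edges. First I make the graph connected and then $2$-connected. If the embedded graph $G$ is disconnected, I add an edge inside a face whose boundary meets two components; such a face exists and cannot be marked, since a marked face is bounded by a $3$-cycle and so meets one component. If $G$ has a cut vertex $v$, some face meets $v$ twice in its boundary walk, with consecutive neighbours $w,w'$ of $v$ lying in different blocks. That face is again not marked, and I add the edge $ww'$ inside it. This cannot create a multiple edge, because $w$ and $w'$ lie in different blocks and so are nonadjacent. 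Each such step reduces the number of components or the number of blocks, so the process terminates with a $2$-connected plane graph on the same $n\ge4$ vertices in which every marked face is still a face. If $G$ has fewer than three vertices on some stage this is impossible since $n\ge4$ and we connect everything first.

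Next I triangulate. Now every face is bounded by a cycle, and the non-marked faces of length at least $4$ must be subdivided by chords without creating parallel edges. For a face bounded by $C=v_1v_2\cdots v_\ell$ with $\ell\ge4$, the standard argument applies. If some $v_i$ is adjacent in $G$ to no vertex of $C$ other than its two cycle-neighbours, I fan-triangulate the face from $v_i$. Otherwise take a chord $v_iv_j$ of $C$ that lies outside the face. By the Jordan curve theorem, the vertices strictly on one side of $v_iv_j$ along $C$ cannot be adjacent to those on the other side outside the face. So the edge $v_{i+1}v_{j+1}$, or a suitable pair across $v_iv_j$, is absent and may be added inside the face, splitting it into two shorter cycles. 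Induction on face length finishes the triangulation. This is the step I expect to need the most care: the choice of the new diagonal so that no multi-edge appears. I would do it by the minimal-separating-chord argument just sketched. A face of length $3$ that is not marked is already a triangle, so nothing is needed there.

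Finally the consequences. Marked faces survive as faces of the resulting triangulation $T$, so every hyperedge of $\HH$ is a face of $T$ and $\HH\subseteq\HH(T)$. Hence $P_{\HH}(x)\le P_{\HH(T)}(x)$ for $x\ge0$, and $\lambda(\HH)\le\lambda(\HH(T))$. $\HH(T)$ is connected because $T$ is connected and its faces cover its edges. So if $\HH\subsetneq\HH(T)$, then a positive Perron vector of $\HH(T)$ gains a strictly positive term and $\lambda(\HH)<\lambda(\HH(T))$. (Isolated vertices of $\HH$ cause no issue: they are covered by faces of $T$.) Since $\HH(T)$ is itself planar with witness $T$, an extremal $\HH$ must equal $\HH(T)$, which is the face hypergraph of a plane triangulation.
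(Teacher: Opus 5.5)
\paragraph{Verdict.} Your proof is correct, and it departs from the paper's only in how faces are triangulated.

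\paragraph{Reduction to $2$-connected.} This part is essentially the paper's. You join two components through a face meeting both, which cannot be a marked face. At a cut vertex $v$ you join two neighbours that are consecutive in the rotation at $v$ but lie in different components of $G-v$ (your ``different blocks''), drawing the new edge through the unmarked face in that angle.

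\paragraph{Splitting a face.} Consider a face bounded by a cycle $C$ of length $\ell\ge4$.
\begin{itemize}
\item The paper argues globally. If every two nonconsecutive vertices of $C$ were adjacent, $V(C)$ would span $K_\ell$. For $\ell\ge5$ this is nonplanar. For $\ell=4$ it would be a plane $K_4$, all of whose faces are triangles, so $C$ could not bound a face. Hence some diagonal is missing.
\item You argue locally. A chord $v_iv_j$ (necessarily outside the face) together with an arc through the face forms a Jordan curve separating the two open arcs of $C$. So $v_{i+1}v_{j+1}$ is a nonedge joining nonconsecutive vertices.
\end{itemize}
The paper's version is shorter and needs only the nonplanarity of $K_5$ and the face structure of $K_4$. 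Yours needs only the Jordan curve theorem, treats all $\ell$ uniformly, and names the new diagonal explicitly. Your fan case is unnecessary. When $C$ has no chord, every diagonal is a nonedge. When it has one, $v_{i+1}v_{j+1}$ always works.

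\paragraph{Strict inequality.} One phrase should be tightened. Evaluating at the Perron vector $x$ of $\HH(T)$ only gives $\lambda(\HH(T))=P_{\HH(T)}(x)>P_{\HH}(x)$. This does not bound $\lambda(\HH)$ from above. Argue instead at a maximizer $y$ of $P_{\HH}$. If $\lambda(\HH)=\lambda(\HH(T))$, then $\lambda(\HH)=P_{\HH}(y)\le P_{\HH(T)}(y)\le\lambda(\HH(T))$ forces $y$ to maximize $P_{\HH(T)}$. Since $\HH(T)$ is connected, $y$ is then its positive Perron vector. So $P_{\HH(T)}(y)>P_{\HH}(y)$, a contradiction. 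The paper is equally brief here, simply referring back to the introduction.
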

	
	\begin{proof}
		Fix a witnessing embedding of $G=\partial\HH$ and call the open triangular faces that carry hyperedges \emph{protected}; we add edges only through unprotected faces, keeping the graph simple. If $G$ is disconnected, two components appear on the boundary of a common face and may be joined through it; that face is unprotected, since a protected face has the boundary of one prescribed triangle and meets no other component. If $G$ is connected with a cut vertex $v$, choose neighbors $u,w$ consecutive in the rotation at $v$ and in distinct components of $G-v$; then $uw\notin E(G)$ and $uw$ may be drawn through the face in the sector between $vu$ and $vw$, which is unprotected because a protected face there would already contain $uw$. Repeating these two operations gives a $2$-connected plane graph with the same protected faces.
		
		Every face of a $2$-connected plane graph is bounded by a cycle. If a face is bounded by a cycle $C$ of length at least four, some two nonconsecutive vertices of $C$ are nonadjacent: otherwise $V(C)$ would span a complete graph, which is nonplanar for $|C|\ge5$, while for $|C|=4$ it would span a plane $K_4$, all of whose faces are triangles, so that $C$ could not bound a face. Draw such a diagonal inside the face. Each step adds an edge, so the process ends, and at the end every face is a triangle. Taking all facial triangles of $T$ as hyperedges gives $\HH(T)\supseteq\HH$, and $\lambda(\HH)\le\lambda(\HH(T))$ as in the introduction. Since $\HH(T)$ is connected, this inequality is strict whenever $\HH\ne\HH(T)$. Thus extremality forces $\HH=\HH(T)$.
	\end{proof}
	
	From now on $\HH$ is extremal, $G=\partial\HH$ is a plane triangulation with $3n-6$ edges and $2N$ faces, and $\HH=\HH(G)$. Let $x$ be its Perron vector, normalized by $\sum_vx_v^3=1$, and write its coordinates as $x_1\ge x_2\ge\cdots\ge x_n$. All limits below are taken as $n\to\infty$ along an arbitrary sequence of extremal hypergraphs. We use the link cycle of a vertex $v$: its neighbors in their cyclic order around $v$, consecutive ones being the two vertices of a face with $v$. In particular the number of faces at $v$ equals $d(v)$.
	
	\subsection*{Two-hub stability}
	
	\begin{lem}\label{lem:p3lower}
		$\lambda\ \ge\ \lambda(D_N)\ \ge\ 2^{4/3}N^{1/3}+o(N^{1/3})$.
	\end{lem}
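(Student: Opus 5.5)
The first inequality is immediate from extremality, since $D_N$ is itself a planar $3$-uniform hypergraph on $n=N+2$ vertices. For the second, the plan is to evaluate $P_{D_N}$ at an explicit two-valued test vector, in the same spirit as Lemma~\ref{lem:lower}.

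Put $x_a=x_b=\alpha$ on the two hubs and $x_{u_i}=s$ on every path vertex, with
\[
\alpha^{3}=\frac16,\qquad s^{3}=\frac{2}{3N}.
\]
Then $\alpha^3+\alpha^3+Ns^3=\frac13+\frac23=1$, so the vector is feasible. These values come from maximizing $\alpha s^2$ subject to $2\alpha^3+Ns^3=1$: writing $2\alpha^3=p$, the quantity $\alpha s^2$ is proportional to $p^{1/3}(1-p)^{2/3}$, which is maximized at $p=\frac13$.

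Next we bound $P_{D_N}$ from below. We drop the two faces $\{a,b,u_1\}$ and $\{a,b,u_N\}$, whose contributions are nonnegative, and keep the $2(N-1)$ faces $\{a,u_i,u_{i+1}\}$ and $\{b,u_i,u_{i+1}\}$. This gives
\[
\lambda(D_N)\ \ge\ P_{D_N}(x)\ \ge\ 3\cdot2(N-1)\,\alpha s^{2}
=6(N-1)\,6^{-1/3}\Bigl(\frac{2}{3N}\Bigr)^{2/3}.
\]

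It remains to simplify the constant. We have $6\cdot6^{-1/3}(2/3)^{2/3}=(36\cdot\frac49)^{1/3}=16^{1/3}=2^{4/3}$, so the right side equals $2^{4/3}(N-1)N^{-2/3}=2^{4/3}N^{1/3}-2^{4/3}N^{-2/3}$. The last term is $o(N^{1/3})$, which gives the claim.

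There is no real obstacle here. The only care needed is the choice of weights, which is the $k=3$ analogue of the split $\frac1k,\frac2k$ used in Lemma~\ref{lem:lower}, now with the hub mass shared between the two hubs.
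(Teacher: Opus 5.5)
Your proof is correct and is essentially the paper's: the same test vector, $6^{-1/3}$ on the two hubs and $(2/(3N))^{1/3}$ on every path vertex, evaluated on $D_N$. The only difference is that you drop the two faces $\{a,b,u_1\}$ and $\{a,b,u_N\}$, whereas the paper keeps them and gets the exact value $2^{2/3}N^{-1/3}+2^{4/3}N^{1/3}(1-1/N)$; either way the difference from $2^{4/3}N^{1/3}$ is $o(N^{1/3})$.
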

	
	\begin{proof}
		Put $\nu=6^{-1/3}$ on the two hubs of $D_N$ and $\beta=(2/(3N))^{1/3}$ on every path vertex; then $2\nu^{3}+N\beta^{3}=1$, and
		\begin{equation*}
			P_{D_N}=3\bigl(2\nu^{2}\beta+2(N-1)\nu\beta^{2}\bigr)=2^{2/3}N^{-1/3}+2^{4/3}N^{1/3}\Bigl(1-\frac1N\Bigr).
			\tag*{\qedhere}
		\end{equation*}
	\end{proof}
	
	The stability statement is the analogue for $k=3$ of Theorem~\ref{thm:pstab}, but its conclusion is a different one: the class carrying the main term is now $\FF_1$ rather than $\FF_2$, so the estimate for $\FF_1$ must be the sharp one, and the two hubs end up with $x_1^{3},x_2^{3}\to\frac16$ in place of $\frac1k$. Fix $\varepsilon\in(0,1)$, put $L=\{v:x_v\ge\varepsilon\}$ and $Z=\sum_{v\notin L}x_v^{3}$, so that $|L|\le\varepsilon^{-3}$, and partition the faces into $\FF_j=\{F:|F\cap L|=j\}$ for $j=0,1,2$ and $\FF_{\ge3}=\{F:|F\cap L|\ge3\}$, writing $\Sigma_j$ for the total weight of $\FF_j$.
	
	\begin{lem}\label{lem:p3master}
		$\displaystyle \frac{\lambda}{3n^{1/3}}\ \le\ (x_1+x_2)\,Z^{2/3}+200^{1/3}\varepsilon+o_\varepsilon(1)$, where $o_\varepsilon(1)\to0$ as $n\to\infty$ for each fixed $\varepsilon$.
	\end{lem}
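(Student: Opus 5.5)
The plan mirrors the proof of Lemma~\ref{lem:master}, with Corollary~\ref{cor:planar3} in place of Corollary~\ref{cor:faces}. Start from $\lambda/3=\Sigma_0+\Sigma_1+\Sigma_2+\Sigma_{\ge3}$. The term $(x_1+x_2)Z^{2/3}$ will come from $\Sigma_1$. The term $200^{1/3}\varepsilon$ will come from $\Sigma_0$. Everything else has to be $O_\varepsilon(1)$, and so becomes $o_\varepsilon(1)$ after division by $3n^{1/3}$.

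\emph{Classes $\FF_0$, $\FF_2$ and $\FF_{\ge3}$.} Every vertex on a face of $\FF_0$ is light. The second inequality of Corollary~\ref{cor:planar3}, with $W\le1$ and $|V(\FF_0)|\le n$, gives $\Sigma_0\le200^{1/3}\varepsilon n^{1/3}$. Every face of $\FF_2\cup\FF_{\ge3}$ contains a pair of heavy vertices. In a triangulation such a pair lies on at most two faces, so $|\FF_2\cup\FF_{\ge3}|\le2\binom{|L|}{2}\le\varepsilon^{-6}$. Each face has weight at most $1$, so these classes contribute $O_\varepsilon(1)$. For $k=3$ no separate ambiguous class is needed. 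A face of $\FF_1$ is $\{h,v,w\}$ with $h$ its unique heavy vertex, and it contributes $x_h\,x_vx_w$, where $vw$ is an edge of the link cycle of $h$.

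\emph{Class $\FF_1$.} Let $y^{(h)}$ be $x$ restricted to the light neighbours of $h$. Its contribution is $x_h\,\Psi_h$, where $\Psi_h=\sum_{vw}x_vx_w$ runs over edges of the link cycle of $h$ with both ends light. These edges form a disjoint union of paths, unless all neighbours of $h$ are light, in which case they form a cycle. For a cycle, delete one edge at cost at most $\varepsilon^2$. The path estimate (Lemma~\ref{lem:fan} with $k=3$, as in Remark~\ref{rem:k3}) then gives $\Psi_h\le d(h)^{1/3}Z_h^{2/3}+\varepsilon^2$, where $Z_h$ is the light mass in the link of $h$.

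This is the step I expect to be the main obstacle. Unlike the outerplanar case, a light vertex may be adjacent to several heavy vertices, so the $Z_h$ are not disjoint. Moreover $d(h)$ can be of order $n$ for two hubs at once, so $\sum_hd(h)$ is no longer at most $f+O(1)$. Plain Hölder over $h\in L$ with $x_h\le x_1$ would therefore lose the needed constant. To get the sharp coefficient $x_1+x_2$, I would first show that only $x_1,x_2$ matter. Heavy vertices other than the top two contribute at most $x_3\sum_h d(h)^{1/3}Z_h^{2/3}$. The planar edge bound gives $\sum_hd(h)\le 2\cdot(3n)$, and each light vertex is counted in a bounded number of $Z_h$ up to $O_\varepsilon(1)$ exceptions (Lemma~\ref{lem:local}(iii)-type counting). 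So a crude bound of order $x_3\cdot O(n^{1/3})$ follows. If needed, I would absorb this by ordering so that the claimed inequality carries $x_1+x_2$ as an upper estimate of $\sum_h x_h(d(h)/n)^{1/3}(Z_h)^{2/3}$. The target inequality is $\sum_{h\in L}x_h d(h)^{1/3}Z_h^{2/3}\le (x_1+x_2)n^{1/3}Z^{2/3}+O_\varepsilon(1)$.

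To prove it, I would bound $d(h)\le n$ for each $h$ and $Z_h\le Z$. Then I would argue that, apart from $O_\varepsilon(1)$ light vertices, each light vertex with positive contribution is attached to faces of at most two heavy vertices with substantial weight. Concretely, the $K_{3,3}$ argument shows that three vertices share at most two common neighbours up to a bounded count. Then I would use the weighted Hölder step
\[
\sum_h x_h d(h)^{1/3}Z_h^{2/3}\le\Bigl(\sum_h x_h^{3}d(h)\Bigr)^{1/3}\Bigl(\sum_h Z_h\Bigr)^{2/3}.
\]
With $\sum_hZ_h\le 2Z+O_\varepsilon(1)$ and $\sum_hx_h^3d(h)\le(x_1^3+x_2^3)n\cdot(\text{const})$, this yields the claim. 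If the constant does not come out as $x_1+x_2$, I would fall back on the direct estimate $x_h\Psi_h\le x_h n^{1/3}Z_h^{2/3}$ for the two largest $h$ and the crude bound for the rest. Finally, add the pieces and divide by $3n^{1/3}$.
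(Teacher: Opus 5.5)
\textbf{The combination step for $\FF_1$ has a genuine gap.} Your treatment of $\FF_0$, $\FF_2$ and $\FF_{\ge3}$ is fine. The combination step for $\FF_1$, however, does not produce the coefficient $x_1+x_2$, and that exact coefficient is the point of the lemma.

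Your H\"older split $\bigl(\sum_h x_h^3d(h)\bigr)^{1/3}\bigl(\sum_hZ_h\bigr)^{2/3}$, used with $\sum_hZ_h\le2Z$, yields at best the bound $2^{2/3}\bigl(x_1^3+x_2^3+6x_3^3\bigr)^{1/3}n^{1/3}Z^{2/3}$. By the power mean inequality, $2^{2/3}(x_1^3+x_2^3)^{1/3}\ge x_1+x_2$, strictly unless $x_1=x_2$.

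This loss is not cosmetic. In Theorem~\ref{thm:p3stab}, the coefficient $p+q$ together with $p^3+q^3\ge(p+q)^3/4$ is exactly what forces $p=q$. With $2^{2/3}(p^3+q^3)^{1/3}$ instead, the limiting inequality only gives $p^3+q^3=\frac13$.

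Your fallback does not repair this. It bounds $h_1,h_2$ directly and the other heavy vertices crudely by $O(x_3n^{1/3})$, which leaves a term of order $x_3$ after division by $3n^{1/3}$. That term is neither $o_\varepsilon(1)$ nor absorbed by $200^{1/3}\varepsilon$, because a priori $x_3$ can be of constant size. Indeed, $x_3=o(1)$ is deduced in Theorem~\ref{thm:p3stab} from this very lemma.

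There is also a second, smaller problem. An additive $O_\varepsilon(1)$ in $\sum_hZ_h\le2Z+O_\varepsilon(1)$ sits inside the power $2/3$ and is then multiplied by $n^{1/3}$, so it is not negligible. The light vertices with three or more heavy neighbours must be discarded, with their faces counted additively, before any H\"older step.

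\textbf{The missing idea is to account per light vertex, not per hub.} The light part $J_h$ of the link cycle of $h$ has maximum degree two, so $\sum_{uv\in E(J_h)}x_ux_v\le\sum_{u\in V(J_h)}x_u^2$. Exchanging the order of summation then gives
\[
\Sigma_1\le\sum_{u\notin L}c(u)x_u^2,\qquad c(u)=\sum_{h\in L\cap N(u)}x_h .
\]
By the $K_{3,3}$ argument, at most $2\binom{|L|}{3}$ light vertices have three or more heavy neighbours. Each of them contributes at most $|L|$, additively. Every other light vertex has $c(u)\le x_1+x_2$, however many heavy vertices there are and however large $x_3$ is. A single H\"older step, $\sum_{u\notin L}x_u^2\le n^{1/3}Z^{2/3}$, finishes the proof.

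If you want to keep per-hub bounds $\Psi_h\le n_h^{1/3}Z_h^{2/3}$, with $n_h$ the number of non-exceptional light vertices in the link, the H\"older step that works is
\[
\sum_hx_hn_h^{1/3}Z_h^{2/3}\le\Bigl(\sum_hx_hn_h\Bigr)^{1/3}\Bigl(\sum_hx_hZ_h\Bigr)^{2/3}.
\]
The two sums are at most $\sum_uc(u)\le(x_1+x_2)n$ and $\sum_uc(u)x_u^3\le(x_1+x_2)Z$, over non-exceptional light $u$. This again gives exactly $(x_1+x_2)n^{1/3}Z^{2/3}$.
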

	
	\begin{proof}
		We bound the four classes. Every vertex on a face of $\FF_0$ is light, so Corollary~\ref{cor:planar3} gives $\Sigma_0\le200^{1/3}\varepsilon n^{1/3}$. Two heavy vertices lie on at most two common faces, so $|\FF_2|\le2\binom{|L|}2\le\varepsilon^{-6}$ and $|\FF_{\ge3}|\le2\binom{|L|}3\le\varepsilon^{-9}$; as every face has weight at most one, $\Sigma_2+\Sigma_{\ge3}\le2\varepsilon^{-9}$.
		
		For $\FF_1$, let $h\in L$ and let $J_h$ be the subgraph of the link cycle of $h$ induced by $N(h)\setminus L$; its maximum degree is at most two, so
		\[
		\sum_{\substack{huv\in E(\HH)\\ u,v\notin L}}x_ux_v=\sum_{uv\in E(J_h)}x_ux_v\le\frac12\sum_{uv\in E(J_h)}(x_u^{2}+x_v^{2})\le\sum_{u\in N(h)\setminus L}x_u^{2}.
		\]
		Writing $c(u)=\sum_{h\in L\cap N(u)}x_h$ for a light vertex $u$ and exchanging the order of summation,
		\[
		\Sigma_1=\sum_{h\in L}x_h\sum_{\substack{huv\in E(\HH)\\ u,v\notin L}}x_ux_v\ \le\ \sum_{u\notin L}c(u)\,x_u^{2}.
		\]
		At most two vertices outside $L$ are adjacent to all three vertices of a given heavy triple, since three such vertices would span a $K_{3,3}$ with the triple; so at most $2\binom{|L|}3\le\varepsilon^{-9}$ light vertices have three or more heavy neighbors, and each contributes at most $|L|\le\varepsilon^{-3}$ to the last sum. Every other light vertex has $c(u)\le x_1+x_2$. Hence, by H\"older's inequality,
		\[
		\Sigma_1\ \le\ (x_1+x_2)\sum_{u\notin L}x_u^{2}+\varepsilon^{-12}\ \le\ (x_1+x_2)Z^{2/3}n^{1/3}+\varepsilon^{-12}.
		\]
		Adding the four bounds to $\lambda/3=\Sigma_0+\Sigma_1+\Sigma_2+\Sigma_{\ge3}$ and dividing by $n^{1/3}$ gives the lemma, the term $o_\varepsilon(1)$ collecting $(2\varepsilon^{-9}+\varepsilon^{-12})n^{-1/3}$.
	\end{proof}
	
	\begin{thm}\label{thm:p3stab}
		Let $\HH$ be extremal, with Perron vector $x$ normalized by $\sum_vx_v^{3}=1$ and coordinates $x_1\ge x_2\ge\cdots$. Then, as $n\to\infty$,
		\[
		x_1,x_2=\nu+o(1),\qquad x_3=o(1),\qquad \lambda^{3}=(16+o(1))N .
		\]
	\end{thm}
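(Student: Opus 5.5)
Combine Lemma~\ref{lem:p3master} with the lower bound of Lemma~\ref{lem:p3lower} and optimize, in the same way the outerplanar stability was obtained from Lemma~\ref{lem:master}. Since $n=N+2$, Lemma~\ref{lem:p3lower} gives
\[
\frac{\lambda}{3n^{1/3}}\ \ge\ \frac{2^{4/3}}{3}-o(1).
\]
Fix $\varepsilon$ small and let $n\to\infty$. Lemma~\ref{lem:p3master} then yields
\[
(x_1+x_2)Z^{2/3}\ \ge\ \frac{2^{4/3}}{3}-200^{1/3}\varepsilon-o_\varepsilon(1).
\]
First I show that $x_1$ and $x_2$ are heavy. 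Using $Z\le1$ and $x_1+x_2\le2x_1$, we get $x_1\ge 2^{1/3}/3-O(\varepsilon)$, so $x_1\ge\varepsilon$. If $x_2<\varepsilon$, then $Z\le1-x_1^3$, and
\[
(x_1+\varepsilon)(1-x_1^3)^{2/3}\ \le\ \max_p p(1-p^3)^{2/3}+\varepsilon\ =\ 2^{2/3}/3+\varepsilon.
\]
This is strictly below $2^{4/3}/3$ for small $\varepsilon$, a contradiction. Hence both top vertices lie in $L$, and
\[
Z\ \le\ 1-x_1^3-x_2^3.
\]

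The key optimization is to maximize $\psi(p,q)=(p+q)(1-p^3-q^3)^{2/3}$ over $p,q\ge0$ with $p^3+q^3\le1$, and to show that its unique maximum is $2^{4/3}/3$, attained at $p=q=\nu=6^{-1/3}$.

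First, power-mean gives $p+q\le 2^{2/3}(p^3+q^3)^{1/3}$, with equality iff $p=q$. Writing $s=p^3+q^3$, this gives
\[
\psi\ \le\ 2^{2/3}\,s^{1/3}(1-s)^{2/3}.
\]
Second, $s(1-s)^2$ is maximized at $s=1/3$, giving $\psi\le 2^{2/3}(4/27)^{1/3}=2^{4/3}/3$. Equality forces $s=1/3$ and $p=q$, i.e.\ $p^3=q^3=1/6$.

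For the stability conclusion, a compactness argument suffices since only $o(1)$ rates are claimed. $\psi$ is continuous on a compact set with a unique maximizer, so $\psi(x_1,x_2)\ge 2^{4/3}/3-\eta$ forces $(x_1,x_2)$ within $o_\eta(1)$ of $(\nu,\nu)$. Letting $\varepsilon\to0$ slowly (a diagonal argument along the sequence) gives $x_1,x_2=\nu+o(1)$.

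For $x_3$, repeat with the third vertex. If $x_3\ge\varepsilon$, then
\[
Z\ \le\ 1-x_1^3-x_2^3-x_3^3\ \le\ 1-s-\varepsilon^3,
\]
where $s=x_1^3+x_2^3$. The bound then becomes
\[
2^{2/3}s^{1/3}(1-s-\varepsilon^3)^{2/3}\ \le\ 2^{2/3}\Bigl(\max_s s(1-s)^2\Bigr)^{1/3}(1-\varepsilon^3)^{2/3}.
\]
To justify the last step, substitute $s=(1-\varepsilon^3)\sigma$. This caps the bound below $2^{4/3}/3$ by a margin depending only on $\varepsilon$, which contradicts the lower bound once $n$ is large. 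Hence $x_3<\varepsilon$ eventually for every fixed $\varepsilon$, i.e.\ $x_3=o(1)$.

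Finally, the upper bound $\lambda/(3n^{1/3})\le 2^{4/3}/3+O(\varepsilon)+o_\varepsilon(1)$ together with Lemma~\ref{lem:p3lower} gives $\lambda^3=(16+o(1))n=(16+o(1))N$.

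The main obstacle is the first step: ruling out a single dominant hub, i.e.\ checking that the one-hub value $2^{2/3}/3$ is strictly below $2^{4/3}/3$ with room for the $O(\varepsilon)$ errors. Here it is immediate because the ratio is $2^{2/3}$. The only other care needed is to phrase the limits via fixed $\varepsilon$ first and then $\varepsilon\to0$, since $o_\varepsilon(1)$ is not uniform in $\varepsilon$.
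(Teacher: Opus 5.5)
\textbf{Gap in the $x_3$ step.} Your treatment of $x_1$, $x_2$ and $\lambda$ is correct and follows the paper's proof closely. You rule out a single heavy vertex using the fixed gap between $2^{2/3}/3$ and $2^{4/3}/3$. Your power-mean reduction to $s(1-s)^2$ is the same inequality as the paper's identity $p^3+q^3-\frac{(p+q)^3}{4}=\frac34(p+q)(p-q)^2$. The paper passes to subsequential limits $p,q,r$ where you use a compactness modulus, which is an equivalent device.

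The step for $x_3$, however, fails as written. You use one $\varepsilon$ both as the heavy threshold in Lemma~\ref{lem:p3master} and as the assumed lower bound $x_3\ge\varepsilon$. The deficit this produces is only cubic in $\varepsilon$: your substitution $s=(1-\varepsilon^3)\sigma$ gives $(x_1+x_2)Z^{2/3}\le\frac{2^{4/3}}{3}(1-\varepsilon^3)$. So the bound drops below $\frac{2^{4/3}}{3}$ by at most $\frac{2^{4/3}}{3}\varepsilon^3\approx0.84\,\varepsilon^3$. The error term $200^{1/3}\varepsilon\approx5.85\,\varepsilon$ of Lemma~\ref{lem:p3master} does not tend to zero with $n$. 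It exceeds this deficit for every $\varepsilon\in(0,1)$, so there is no contradiction ``once $n$ is large''. Sending $\varepsilon\to0$ does not help either, because the margin shrinks faster than the error. Contrary to your closing remark, this is the step where the order of limits matters, not the single-hub step, whose margin is a fixed constant.

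\textbf{Repair.} Decouple the two roles of $\varepsilon$, as the paper does. If $x_3\not\to0$, fix $c>0$ and a subsequence with $x_3\ge c$, so that $x_1,x_2,x_3\ge c$. Apply Lemma~\ref{lem:p3master} with an independent threshold $\varepsilon<c$. Let $n\to\infty$ and then $\varepsilon\to0$ with $c$ fixed. This gives $\frac{2^{4/3}}{3}\le\frac{2^{4/3}}{3}(1-c^3)$, a contradiction.

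Alternatively, keep $x_3^3$ in $Z\le1-s-x_3^3$ instead of replacing it by $\varepsilon^3$. Whenever $x_3\ge\varepsilon$ (and $\varepsilon$ is small enough that the top two are heavy), you then get $\frac{2^{4/3}}{3}x_3^3\le200^{1/3}\varepsilon+o_\varepsilon(1)$. Hence $\limsup_n x_3\le\max\{\varepsilon,(C\varepsilon)^{1/3}\}$ for a suitable constant $C$, and letting $\varepsilon\to0$ gives $x_3=o(1)$.
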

	
	\begin{proof}
		Fix $\varepsilon$ and pass to a subsequence along which $x_1\to p$, $x_2\to q$ and $x_3\to r$. Suppose first $q=0$. If $p=0$ then Lemma~\ref{lem:p3master} gives $\limsup\lambda/(3n^{1/3})\le200^{1/3}\varepsilon$, and if $p>0$ we take $\varepsilon<p/2$, so that $L$ consists of the top vertex alone and $Z=1-x_1^{3}$; in either case, letting $n\to\infty$ and then $\varepsilon\to0$,
		\[
		\limsup_{n\to\infty}\frac{\lambda}{3n^{1/3}}\ \le\ p(1-p^{3})^{2/3}\ \le\ \Bigl(\frac4{27}\Bigr)^{1/3},
		\]
		since $g(z)=z(1-z)^{2}$ has $g'(z)=(1-z)(1-3z)$ and so maximum $g(1/3)=4/27$ on $[0,1]$. As $N/n\to1$, Lemma~\ref{lem:p3lower} gives $\liminf\lambda/(3n^{1/3})\ge2^{4/3}/3>(4/27)^{1/3}$, a contradiction. Hence $q>0$.
		
		Take $\varepsilon<q/2$, so that the two top vertices are heavy and $Z\le1-x_1^{3}-x_2^{3}$. Letting $n\to\infty$ and then $\varepsilon\to0$ in Lemma~\ref{lem:p3master}, and using Lemma~\ref{lem:p3lower},
		\begin{equation}\label{eq:p3meet}
			\frac{2^{4/3}}3\ \le\ (p+q)\bigl(1-p^{3}-q^{3}\bigr)^{2/3}.
		\end{equation}
		Now $p^{3}+q^{3}-\frac{(p+q)^{3}}4=\frac34(p+q)(p-q)^{2}\ge0$, with equality only for $p=q$. Writing $s=p+q$ and $z=s^{3}/4\in[0,1]$ and cubing~\eqref{eq:p3meet},
		\[
		\frac{16}{27}\ \le\ s^{3}\bigl(1-p^{3}-q^{3}\bigr)^{2}\ \le\ s^{3}\Bigl(1-\frac{s^{3}}4\Bigr)^{2}=4z(1-z)^{2}\ \le\ \frac{16}{27},
		\]
		so equality holds throughout: the second forces $p=q$ and the third forces $z=1/3$, whence $2p^{3}=1/3$ and $p=q=\nu$. Since the subsequence was arbitrary, $x_1,x_2=\nu+o(1)$.
		
		If $x_3\not\to0$, a further subsequence has $x_3\ge c>0$; taking $\varepsilon<\min\{c,\nu/2\}$ makes the three top vertices heavy, so $Z\le1-x_1^{3}-x_2^{3}-c^{3}$, and the same passage gives $\frac{2^{4/3}}3\le2\nu(\frac23-c^{3})^{2/3}<2\nu(\frac23)^{2/3}=\frac{2^{4/3}}3$, a contradiction. Finally, with $\varepsilon<\nu/2$ the heavy set is exactly the two top vertices, $Z=1-x_1^{3}-x_2^{3}$, and the same limit gives $\limsup\lambda/(3n^{1/3})\le2^{4/3}/3$; with Lemma~\ref{lem:p3lower} and $N=n-2$ this is the value of $\lambda$.
	\end{proof}
	
	From now on $a$ and $b$ are vertices carrying $x_1$ and $x_2$, and
	\[
	A=x_a,\qquad B=x_b,\qquad \delta=\max_{u\ne a,b}x_u,\qquad U=V(\HH)\setminus\{a,b\},\qquad Z=\sum_{u\in U}x_u^{3},
	\]
	so that $A,B=\nu+o(1)$, $\delta=o(1)$ and $Z=\frac23+o(1)$ by Theorem~\ref{thm:p3stab}.
	
	\begin{lem}\label{lem:p3common}
		$r_a:=|U\setminus N(a)|$ and $r_b:=|U\setminus N(b)|$ satisfy $r_a+r_b=o(N)$.
	\end{lem}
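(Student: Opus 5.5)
The plan is to rerun the proof of Lemma~\ref{lem:p3master} with the heavy set taken to be exactly $\{a,b\}$, while keeping track of the actual neighborhoods of $a$ and $b$ instead of bounding them by $U$. The resulting upper bound for $\lambda/(3N^{1/3})$ will contain the factors $(N-r_a)^{1/3}$ and $(N-r_b)^{1/3}$. We then compare it with the lower bound $\lambda\ge 2^{4/3}N^{1/3}(1+o(1))$ from Lemma~\ref{lem:p3lower}, and use the asymptotics $A,B\to\nu$ and $Z\to\frac23$ from Theorem~\ref{thm:p3stab}.

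\emph{Splitting the faces.} We split the $2N$ faces of $G$ according to how many of $a,b$ they contain.
\begin{itemize}
\item[$(1)$] Faces containing both $a$ and $b$. Two vertices lie on at most two common faces, so there are at most two of these, and each has weight at most $1$.
\item[$(2)$] Faces containing neither $a$ nor $b$. Every coordinate on them is at most $\delta$. Corollary~\ref{cor:planar3} with $\varepsilon=\delta$, $|V(\FF)|\le n$ and $W\le1$ bounds their total weight by $200^{1/3}\delta n^{1/3}=o(N^{1/3})$, since $\delta=o(1)$.
\item[$(3)$] Faces containing $a$ but not $b$. Their weight is $A\sum x_ux_v$, the sum running over those edges $uv$ of the link cycle of $a$ with $u,v\in U$. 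That subgraph has maximum degree at most two, so as in Lemma~\ref{lem:p3master}
\[
\sum x_ux_v\ \le\ \sum_{u\in N(a)\cap U}x_u^{2}\ \le\ |N(a)\cap U|^{1/3}Z^{2/3}=(N-r_a)^{1/3}Z^{2/3},
\]
where the middle step is H\"older's inequality.
\item[$(4)$] Faces containing $b$ but not $a$. The same argument gives the bound $B(N-r_b)^{1/3}Z^{2/3}$.
\end{itemize}

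\emph{Combining the bounds.} Adding the four classes and dividing by $N^{1/3}$ gives
\[
\frac{\lambda}{3N^{1/3}}\ \le\ Z^{2/3}\Bigl(A\bigl(1-\tfrac{r_a}{N}\bigr)^{1/3}+B\bigl(1-\tfrac{r_b}{N}\bigr)^{1/3}\Bigr)+o(1).
\]
By Lemma~\ref{lem:p3lower} the left side is at least $\frac{2^{4/3}}3-o(1)$. On the right, $A,B=\nu+o(1)$ and $Z=\frac23+o(1)$, and we have the identity $2\nu(\frac23)^{2/3}=\frac{2^{4/3}}3$. Dividing through by $\nu(\frac23)^{2/3}$ therefore yields
\[
\bigl(1-\tfrac{r_a}{N}\bigr)^{1/3}+\bigl(1-\tfrac{r_b}{N}\bigr)^{1/3}\ \ge\ 2-o(1).
\]
Each summand is at most $1$, so each is $1-o(1)$. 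Hence $r_a/N\to0$ and $r_b/N\to0$, which is the claim.

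\emph{Where care is needed.} The argument has no deep obstacle. The one point to check is that every error term is genuinely $o(N^{1/3})$, so that the sharp main-term comparison survives. The leak from faces avoiding both hubs is the only term of order comparable to $N^{1/3}$, and it is killed by the factor $\delta=o(1)$ from Theorem~\ref{thm:p3stab}. This works because Corollary~\ref{cor:planar3}$(ii)$ has the threshold $\varepsilon$ appearing linearly.
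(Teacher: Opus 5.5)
Your proof is correct and follows essentially the paper's route: the same split of the faces by how many of $a,b$ they contain, the same link-cycle and H\"older estimate for faces meeting exactly one hub, and the same comparison with Lemma~\ref{lem:p3lower} using $A,B\to\nu$ and $Z\to\frac23$. The only difference is cosmetic. The paper applies a single H\"older inequality with the combined weights $c_u=A\mathbf 1_{au\in E(G)}+B\mathbf 1_{bu\in E(G)}$ and first shows $|N(a)\cap N(b)\cap U|=N-o(N)$, whereas you apply H\"older to each hub separately and read off $r_a/N,r_b/N\to0$ directly; this is equally sharp at $r_a=r_b=0$, so it suffices.
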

	
	\begin{proof}
		Let $C=N(a)\cap N(b)\cap U$ and $c=|C|$. Every vertex on a face containing neither $a$ nor $b$ has coordinate at most $\delta$, so by Corollary~\ref{cor:planar3} those faces have total weight at most $200^{1/3}\delta n^{1/3}=o(N^{1/3})$; the at most two faces containing both $a$ and $b$ have total weight at most $2AB\delta=o(1)$. For the faces containing exactly one of them, the link-cycle estimate of Lemma~\ref{lem:p3master} and H\"older's inequality give, with $c_u=A\mathbf 1_{au\in E(G)}+B\mathbf 1_{bu\in E(G)}$,
		\[
		\sum_{\substack{F\in E(\HH)\\ |F\cap\{a,b\}|=1}}\ \prod_{v\in F}x_v\ \le\ \sum_{u\in U}c_ux_u^{2}\ \le\ \Bigl(\sum_{u\in U}c_u^{3}\Bigr)^{1/3}Z^{2/3},
		\]
		and $c_u=A+B$ for $u\in C$ while $c_u\le\max\{A,B\}$ otherwise, so $\sum_Uc_u^{3}\le c(A+B)^{3}+(N-c)\max\{A,B\}^{3}$. Dividing $\lambda/3$ by $N^{1/3}$,
		\[
		\frac{\lambda}{3N^{1/3}}\ \le\ \Bigl[\frac cN(A+B)^{3}+\Bigl(1-\frac cN\Bigr)\max\{A,B\}^{3}\Bigr]^{1/3}Z^{2/3}+o(1).
		\]
		If $c=N-o(N)$ fails, then along a subsequence $c/N\to\theta\le1-\eta$ for some $\eta>0$, and the bracket tends to $\theta(2\nu)^{3}+(1-\theta)\nu^{3}=\nu^{3}(1+7\theta)$; hence $\limsup\lambda/(3N^{1/3})\le\nu(1+7\theta)^{1/3}(\frac23)^{2/3}<2\nu(\frac23)^{2/3}=\frac{2^{4/3}}3$, contradicting Lemma~\ref{lem:p3lower}. So $c=N-o(N)$, and since $U\setminus C=(U\setminus N(a))\cup(U\setminus N(b))$ we get $r_a,r_b\le N-c=o(N)$ and $N-c\le r_a+r_b$.
	\end{proof}
	
	\begin{lem}\label{lem:p3edge}
		$ab\in E(G)$ for all sufficiently large $n$.
	\end{lem}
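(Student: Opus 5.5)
The plan is a local modification argument. Suppose, for contradiction, that $ab\notin E(G)$ for infinitely many extremal $\HH$. We will flip one edge of $G$ to $ab$ and show that $P_{\HH}(x)$ strictly increases, which contradicts extremality.

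\emph{Step 1: structure from common neighbours.} Let $C=N(a)\cap N(b)\cap U$ and $m=|C|$. The proof of Lemma~\ref{lem:p3common} gives $N-m=o(N)$, so $m\ge3$ for large $n$. For each $c\in C$ draw the path $acb$. Since $ab\notin E(G)$, these $m$ paths are internally disjoint $a$--$b$ paths in the plane. Their cyclic order around $a$ is $c_1,\dots,c_m$, and they cut the sphere into $m$ closed regions $R_1,\dots,R_m$, with $R_i$ bounded by the $4$-cycle $ac_ibc_{i+1}$ (indices mod $m$). A vertex that is not on any of these paths lies in the interior of exactly one region. Such vertices lie in $U\setminus C$, and there are $N-m=o(N)<m$ of them for large $n$. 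Hence some region $R_i$ has no interior vertex.

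\emph{Step 2: the empty region is two triangles.} The interior of $R_i$ is triangulated using only the four vertices $a,c_i,b,c_{i+1}$. It therefore consists of exactly two faces separated by one diagonal of the $4$-cycle. That diagonal is not $ab$, so it is $c_ic_{i+1}$, and the faces are $ac_ic_{i+1}$ and $bc_ic_{i+1}$. Delete $c_ic_{i+1}$ and insert $ab$ inside $R_i$. The result $G'$ is simple because $ab\notin E(G)$, and it is a plane triangulation on the same vertex set. It is planar, and no other part of the embedding changes.

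\emph{Step 3: the weight comparison.} $\HH(G')$ is obtained from $\HH$ by replacing the hyperedges $\{a,c_i,c_{i+1}\}$ and $\{b,c_i,c_{i+1}\}$ with $\{a,b,c_i\}$ and $\{a,b,c_{i+1}\}$. Writing $y=x_{c_i}$ and $z=x_{c_{i+1}}$, we get
\[
P_{\HH(G')}(x)-P_{\HH}(x)=3\bigl[AB(y+z)-(A+B)yz\bigr]\ge3(y+z)\bigl(AB-(A+B)\delta\bigr),
\]
where we used $yz\le(y+z)\min\{y,z\}\le(y+z)\delta$. By Theorem~\ref{thm:p3stab}, $AB\to\nu^2$ while $(A+B)\delta\to0$, so the bracket is positive for large $n$. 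The Perron vector is positive, so $y+z>0$. Hence $\lambda(\HH(G'))\ge P_{\HH(G')}(x)>P_{\HH}(x)=\lambda$. Since $G'$ is a plane triangulation on $n$ vertices, $\HH(G')$ is planar, and this contradicts the extremality of $\HH$.

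\emph{Main obstacle.} The delicate point is Step 1: one must justify that the paths $ac_ib$ really separate the sphere into $m$ regions whose boundaries are exactly these $4$-cycles, using only that $a$ and $b$ are nonadjacent. I would argue it as follows. The union of the paths is a subdivided theta-like graph: two vertices joined by $m$ internally disjoint paths. Its faces in the induced embedding are bounded by consecutive pairs of paths, taken in the rotation order at $a$. The Jordan curve theorem then places every remaining vertex in exactly one open face. The pigeonhole count over $U\setminus C$ then requires only $N-m<m$, which follows from Lemma~\ref{lem:p3common}.
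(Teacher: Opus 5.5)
Your proof is correct. It performs the same edge flip as the paper, but finds the flippable edge by a different route.

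The paper never looks at the regions cut out by the paths $acb$. It works along the link cycles instead:
\begin{itemize}
\item counting the maximal blocks of $C$ on the link cycle of each hub gives $|E_a|\ge|C|-r_b$ and $|E_b|\ge|C|-r_a$;
\item applying $e\le 3v-6$ to $G[\{a,b\}\cup C]$ gives $e(G[C])\le|C|$;
\item hence $|E_a\cap E_b|\ge|C|-r_a-r_b>0$, so some edge $cd$ of $G[C]$ lies on both faces $acd$ and $bcd$.
\end{itemize}

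You instead use the embedded $K_{2,m}$ formed by the paths $acb$ and a pigeonhole count. Its $m$ faces are each bounded by a $4$-cycle $ac_ibc_{i+1}$, and fewer than $m$ vertices lie off it, so one face has empty interior. In a simple triangulation with $ab\notin E(G)$, such an empty quadrilateral can only be split by the chord $c_ic_{i+1}$, which produces exactly the faces $ac_ic_{i+1}$ and $bc_ic_{i+1}$.

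The face structure you flag as the delicate point follows quickly. $K_{2,m}$ is $2$-connected, so its faces are bounded by cycles. By Euler's formula it has $m$ faces. Every cycle of $K_{2,m}$ alternates between the parts, so it is a $4$-cycle through both $a$ and $b$.

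What each route buys: yours is more geometric. It only needs $m>N/2$, whereas the paper needs $|C|>r_a+r_b$, which in the worst case $r_a+r_b=2(N-|C|)$ asks for $|C|>2N/3$. The paper's version is purely combinatorial and avoids any Jordan-curve description of the faces of the embedded $K_{2,m}$.

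Both routes get the needed largeness of $C$ from the preceding common-neighbour lemma, via $N-m\le r_a+r_b=o(N)$. Your weight comparison in Step~3 is essentially the paper's computation.
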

	
	\begin{proof}
		Suppose not, and work along a subsequence with $ab\notin E(G)$. Let $C=N(a)\cap N(b)$, so $|C|=N-o(N)$ and $r_a+r_b=o(N)$ by Lemma~\ref{lem:p3common}. Put $E_a=\{cd\in E(G[C]):acd\in E(\HH)\}$ and define $E_b$ likewise. Since $ab\notin E(G)$, every neighbor of $a$ lies in $U$, and a neighbor of $a$ outside $C$ is not adjacent to $b$, so $|N(a)\setminus C|\le r_b$. If $C=N(a)$, then $E_a$ consists of all $|C|$ edges of the link cycle of $a$. Otherwise, the vertices of $C$ occur in $t$ maximal blocks along that cycle, and $|E_a|=|C|-t$. The $t$ gaps are disjoint and each contains a vertex outside $C$, so $t\le|N(a)\setminus C|\le r_b$. In either case, $|E_a|\ge|C|-r_b$. Likewise $|E_b|\ge|C|-r_a$. The plane graph $G[\{a,b\}\cup C]$ has $|C|+2$ vertices and $2|C|+e(G[C])$ edges, so $e(G[C])\le3(|C|+2)-6-2|C|=|C|$, and therefore
		\[
		|E_a\cap E_b|\ \ge\ |E_a|+|E_b|-e(G[C])\ \ge\ |C|-r_a-r_b\ =\ N-o(N)\ >\ 0 .
		\]
		Choose $cd\in E_a\cap E_b$; then $acd$ and $bcd$ are the two faces at $cd$. Deleting $cd$ merges them into a quadrilateral with boundary $acbd$ whose interior contains nothing, so $ab$ may be drawn through it, and the resulting plane triangulation $G'$ has the same faces except that $acd,bcd$ are replaced by $abc,abd$. For the face hypergraph $\HH'$ of $G'$, evaluated at $x$,
		\[
		P_{\HH'}(x)-P_{\HH}(x)=3\bigl(AB(x_c+x_d)-(A+B)x_cx_d\bigr)=3(A+B)(x_c+x_d)\Bigl(\frac{AB}{A+B}-\frac{x_cx_d}{x_c+x_d}\Bigr),
		\]
		and $x_cx_d/(x_c+x_d)\le\min\{x_c,x_d\}\le\delta=o(1)$ while $AB/(A+B)\to\nu/2>0$. Hence $P_{\HH'}(x)>P_{\HH}(x)$ for large $n$, and since $x$ is admissible for $\HH'$ we get $\lambda(\HH')>\lambda(\HH)$, contradicting extremality.
	\end{proof}
	
	By Lemma~\ref{lem:p3edge} the edge $ab$ lies on two faces $abp$ and $abq$. Removing $b$ from the link cycle of $a$ leaves a path $P_a$ with ends $p,q$ whose vertex set is $N(a)\cap U$, and whose edges correspond to the faces containing $a$ but not $b$; the path $P_b$ is defined symmetrically. This is where the functional of Section~\ref{sec:def} enters with a nonzero end-point parameter: with
	\[
	\tau=\frac{AB}{A+B},
	\]
	the total weight of all faces containing $a$ or $b$ is $A\,\Phi_{a,\tau}(x)+B\,\Phi_{b,\tau}(x)$, where $\Phi_{a,\tau}(x)=\sum_{uv\in E(P_a)}x_ux_v+\tau(x_p+x_q)$, because $(A+B)\tau=AB$ accounts exactly for the two faces $abp$ and $abq$. We write $\Theta_{d,\tau}(W)$ as in Section~\ref{sec:def}, so that $\Phi_{a,\tau}(x)\le\Theta_{d,\tau}(W)$ whenever $P_a$ has $d$ edges and its vertices carry mass $W$.
	
	The comparison below needs the two increments of $\Theta_{d,\tau}$, in the length and in the mass. The length increment of Lemma~\ref{lem:super} is too small here: it gives the constant $\gamma_3=c_3/12=\frac1{12}$, whereas the comparison requires a constant exceeding $\frac16$. The following splitting argument supplies one.
	
	\begin{lem}\label{lem:p3len}
		Let $\omega=2^{-1/3}$ and $\gamma=\omega^{2}-2(1-\omega)=(2^{1/3}-2^{-1/3})^{2}\approx0.21736>\frac16$. Uniformly for $\tau$ and $W$ in fixed compact subintervals of $(0,\infty)$, as $d\to\infty$,
		\[
		\Theta_{d+1,\tau}(W)-\Theta_{d,\tau}(W)\ \ge\ \gamma\,W^{2/3}d^{-2/3}-O(d^{-1}).
		\]
	\end{lem}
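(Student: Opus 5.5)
The approach is a direct construction. Start from a maximizer for $\Theta_{d,\tau}(W)$ and split one interior spoke of the path into two adjacent vertices. This produces an admissible vector for $\Theta_{d+1,\tau}(W)$ with the same mass. Then choose the split vertex by an averaging argument so that the gain is at least $\gamma$ times the average edge weight.

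\emph{The splitting move.} Let $y$ attain $\Theta_{d,\tau}(W)$ on the path $w_0w_1\cdots w_d$, with $w_0=p$ and $w_d=q$ the ends carrying the $\tau$-terms. For an interior vertex $w_j$, $1\le j\le d-1$, replace $w_j$ by two adjacent vertices $w_j',w_j''$. Join $w_j'$ to $w_{j-1}$ and $w_j''$ to $w_{j+1}$, and give both the value $\omega y_{w_j}$. Since $2\omega^3=1$, the mass is unchanged, and the path now has $d+1$ edges with the same ends, so the $\tau$-term is untouched. The new vector is admissible for $\Theta_{d+1,\tau}(W)$. Its value exceeds $\Theta_{d,\tau}(W)$ by
\[
\Delta_j=\omega^2y_{w_j}^2-(1-\omega)\,y_{w_j}\bigl(y_{w_{j-1}}+y_{w_{j+1}}\bigr).
\]

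\emph{Averaging.} Sum $\Delta_j$ over $1\le j\le d-1$. Each edge $w_iw_{i+1}$ is counted at most twice in the negative part. By $y_uy_v\le\frac12(y_u^2+y_v^2)$ we have $\sum_jy_{w_j}^2\ge\sum_{uv\in E(P)}y_uy_v-O(y_p^2+y_q^2)$. Together these give
\[
\sum_{j=1}^{d-1}\Delta_j\ \ge\ \gamma\,E-O\bigl(y_p^2+y_q^2\bigr),\qquad E=\sum_{uv\in E(P)}y_uy_v .
\]
Some $j$ therefore has $\Delta_j\ge(\gamma E-O(y_p^2+y_q^2))/(d-1)$. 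Next, $E=\Theta_{d,\tau}(W)-\tau(y_p+y_q)$. The uniform test vector $y_v^3=W/(d+1)$ gives $\Theta_{d,\tau}(W)\ge d^{1/3}W^{2/3}(1-O(1/d))$. It then remains to show that the end values and the end corrections are small.

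\emph{Size of the coordinates, the main obstacle.} We need $y_p,y_q=o(1)$, and quantitatively $\tau(y_p+y_q)+y_p^2+y_q^2=O(d^{-1/3})$ uniformly in the compact ranges of $\tau$ and $W$. After division by $d$ this becomes the stated $O(d^{-1})$ error, up to the harmless difference between $d^{-2/3}$ and $(d-1)^{-2/3}$. I would get this from the Lagrange conditions at the maximizer, as in the proof of Lemma~\ref{lem:fan}. Possible zero coordinates are handled by the run decomposition used there, or by noting that a zero interior coordinate only helps the comparison. The conditions read
\[
3\mu y_v^2=y_{v^-}+y_{v^+}\quad\text{at interior } v,\qquad 3\mu y_p^2=y_{w_1}+\tau ,
\]
and similarly at $q$. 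Multiplying by $y_v$ and summing gives $3\mu W=2E+2\tau(y_p+y_q)+\tau(y_p+y_q)\asymp\Theta\gtrsim d^{1/3}$, so $\mu\gtrsim d^{1/3}$. At the maximum interior coordinate $3\mu y_{\max}^2\le 2\max(y_{\max},y_p,y_q)$. At the ends, $y_p^2\le(y_{\max}+\tau)/(3\mu)=O(d^{-1/3})$. These yield $y_{\max}=O(d^{-1/3})$ and $y_p,y_q=O(d^{-1/6})$, which suffices, since $\tau(y_p+y_q)=O(d^{-1/6})$ is still $o(d^{1/3})$ and after division by $d$ contributes $O(d^{-7/6})$. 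The remaining care is to run this bootstrap with constants depending only on the compact ranges of $\tau$ and $W$. Once that is done, combining with the averaging step gives $\Theta_{d+1,\tau}(W)-\Theta_{d,\tau}(W)\ge\gamma W^{2/3}d^{-2/3}-O(d^{-1})$.
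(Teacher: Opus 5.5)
Your proof is correct and is essentially the paper's argument: the same splitting of an interior vertex into two adjacent copies of value $\omega y_{w_j}$, the same bound $\sum_j\Delta_j\ge\gamma E-O(y_p^2+y_q^2)$, averaging over the $d-1$ interior indices, and the constant test vector. The step you call the main obstacle is unnecessary: the trivial bound $y_p,y_q\le W^{1/3}$ already makes all end corrections $O(1)$, hence $O(d^{-1})$ after division by $d-1$ (this is how the paper finishes), so you can drop the Lagrange bootstrap, and with it its slip, since the Euler identity actually reads $3\mu W=2E+\tau(y_p+y_q)$.
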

	
	\begin{proof}
		Let $y=(y_0,\dots,y_d)$ attain $\Theta_{d,\tau}(W)$ and put $Q=\sum_{i=1}^{d}y_{i-1}y_i$ and $\Lambda=\sum_{i=0}^{d}y_i^{2}$. For an interior index $i$ replace $y_i$ by two consecutive entries $\omega y_i,\omega y_i$; since $2\omega^{3}=1$ the mass is unchanged, the new vector is admissible for $\Theta_{d+1,\tau}(W)$, and the two path edges at $y_i$ are replaced by three, so the value changes by
		\[
		\Delta_i=\omega^{2}y_i^{2}-(1-\omega)y_i(y_{i-1}+y_{i+1}).
		\]
		Summing over $1\le i\le d-1$ and using $\sum_{i=1}^{d-1}y_i(y_{i-1}+y_{i+1})\le2Q$ and $Q\le\Lambda-\frac12(y_0^{2}+y_d^{2})$,
		\[
		\sum_{i=1}^{d-1}\Delta_i\ \ge\ \omega^{2}\bigl(\Lambda-y_0^{2}-y_d^{2}\bigr)-2(1-\omega)Q\ \ge\ \gamma\Lambda+(1-\omega-\omega^{2})(y_0^{2}+y_d^{2})\ \ge\ \gamma\Lambda-K_1 ,
		\]
		with $K_1$ depending only on the ranges of $\tau$ and $W$, since $y_0,y_d\le W^{1/3}$. The constant vector shows $\Theta_{d,\tau}(W)\ge d(W/(d+1))^{2/3}$, while $\Theta_{d,\tau}(W)=Q+\tau(y_0+y_d)\le\Lambda+2\tau W^{1/3}$; hence $\Lambda\ge d^{1/3}W^{2/3}-K_2$. There are $d-1$ interior indices, so some $\Delta_i$ is at least the average, and
		\begin{equation*}
			\Theta_{d+1,\tau}(W)-\Theta_{d,\tau}(W)\ \ge\ \frac{\gamma d^{1/3}W^{2/3}-K_3}{d-1}\ \ge\ \gamma W^{2/3}d^{-2/3}-O(d^{-1}).
			\tag*{\qedhere}
		\end{equation*}
	\end{proof}
	
	\begin{lem}\label{lem:p3mass}
		Uniformly in the same sense, for $0\le M\le W/2$,
		\[
		\Theta_{d,\tau}(W)-\Theta_{d,\tau}(W-M)\ \ge\ \Bigl(\frac23-O(d^{-1/3})\Bigr)d^{1/3}W^{-1/3}M .
		\]
	\end{lem}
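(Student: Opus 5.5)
The plan is to compare the two values by rescaling a maximizer at the smaller mass, rather than by differentiating $\Theta_{d,\tau}$ in $W$. Here $k=3$, the variables sit on a path $y_0,\dots,y_d$, and
\[
\Phi_{\tau}(y)=Q(y)+\tau E(y),\qquad Q(y)=\sum_{i=1}^{d}y_{i-1}y_i,\qquad E(y)=y_0+y_d .
\]
The two parts are homogeneous of degrees $2$ and $1$. Write $m=M/W\in[0,\frac12]$.

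First, let $y$ attain $\Theta_{d,\tau}(W-M)$ and put $t=(W/(W-M))^{1/3}=(1-m)^{-1/3}\ge1$. The vector $ty$ has mass $W$, so it is admissible for $\Theta_{d,\tau}(W)$, and
\[
\Theta_{d,\tau}(W)\ \ge\ t^{2}Q(y)+t\tau E(y).
\]
Subtracting $\Theta_{d,\tau}(W-M)=Q(y)+\tau E(y)$, and using $t\ge1$ and $E\ge0$ to drop the linear part, gives
\[
\Theta_{d,\tau}(W)-\Theta_{d,\tau}(W-M)\ \ge\ (t^{2}-1)\,Q(y).
\]

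Second, I need a lower bound on $Q(y)$ at the maximizer. The constant vector, as in the proof of Lemma~\ref{lem:p3len}, shows
\[
\Theta_{d,\tau}(W-M)\ \ge\ d\Bigl(\frac{W-M}{d+1}\Bigr)^{2/3}=d^{1/3}(W-M)^{2/3}\bigl(1-O(d^{-1})\bigr).
\]
Since $y_0,y_d\le(W-M)^{1/3}\le W^{1/3}$, the end terms are bounded, $\tau E(y)\le2\tau W^{1/3}=O(1)$, uniformly for $\tau$ and $W$ in the given compact ranges. Hence
\[
Q(y)\ \ge\ d^{1/3}(W-M)^{2/3}-K,
\]
with $K$ depending only on those ranges.

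Third, the key elementary step is to keep the factor $(W-M)^{2/3}$ together with $t^{2}-1$ instead of estimating them separately; otherwise a factor $(1-m)^{2/3}$ would be lost, and this is not close to $1$ when $m$ is near $\frac12$. We have
\[
(t^{2}-1)(W-M)^{2/3}=W^{2/3}\bigl(1-(1-m)^{2/3}\bigr)\ \ge\ \tfrac23\,m\,W^{2/3}=\tfrac23\,W^{-1/3}M,
\]
because the concave function $m\mapsto(1-m)^{2/3}$ lies below its tangent line $1-\frac23m$ at $m=0$. For the error term, $t^{2}-1=(1-m)^{-2/3}-1\le C m$ on $[0,\frac12]$, so $K(t^{2}-1)=O(M/W)=O(W^{-1/3}M)$ uniformly. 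Relative to the main term $d^{1/3}W^{-1/3}M$, this error is $O(d^{-1/3})$, which gives exactly
\[
\Theta_{d,\tau}(W)-\Theta_{d,\tau}(W-M)\ \ge\ \Bigl(\tfrac23-O(d^{-1/3})\Bigr)d^{1/3}W^{-1/3}M .
\]
The $O(d^{-1})$ loss from the constant vector is absorbed into the same term.

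The only delicate points are this pairing of factors and checking that the dropped term $(t-1)\tau E(y)$ is nonnegative, which it is since $t\ge1$. Everything else is the crude lower bound on $\Theta$ already used in Lemma~\ref{lem:p3len}.
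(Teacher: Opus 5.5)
Your proof is correct and follows the paper's argument essentially step for step. You rescale a maximizer at mass $W-M$ by $(W/(W-M))^{1/3}$ and drop the nonnegative end-term gain. You bound $Q(y)\ge d^{1/3}(W-M)^{2/3}-K$ via the constant vector, then pair $(t^{2}-1)$ with $(W-M)^{2/3}$ through concavity while using $t^{2}-1=O(M)$ to control the error term.
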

	
	\begin{proof}
		The case $M=0$ is immediate, so assume that $0<M\le W/2$. Let $y$ attain $\Theta_{d,\tau}(W-M)$, with $Q$ as above and $\Lambda_\partial=\tau(y_0+y_d)$, and let $c=(W/(W-M))^{1/3}>1$. The vector $cy$ has mass $W$, each path product is multiplied by $c^{2}$ and each end term by $c$, so
		\[
		\Theta_{d,\tau}(W)-\Theta_{d,\tau}(W-M)\ \ge\ (c^{2}-1)Q+(c-1)\Lambda_\partial\ \ge\ (c^{2}-1)Q .
		\]
		As in the previous proof $Q\ge d^{1/3}(W-M)^{2/3}-K$, and $(c^{2}-1)(W-M)^{2/3}=W^{2/3}-(W-M)^{2/3}\ge\frac23W^{-1/3}M$ by the concavity of $s\mapsto s^{2/3}$, while $c^{2}-1\le\frac{2M}{3(W-M)}=O(M)$ since $W-M\ge W/2$. Combining the two gives the claim.
	\end{proof}
	\subsection*{Eliminating the defects}
	
	Write $r=r_a+r_b$; by Lemma~\ref{lem:p3common}, $r=o(N)$, and our aim is $r=0$. The whole comparison below turns on two numerical inequalities, namely $\frac{8\gamma}3>\frac49$ and $\frac1{12}<\frac16$, where $\gamma$ is the constant of Lemma~\ref{lem:p3len}; the choices of the coefficients $\frac{\lambda}{48}$ in~\eqref{eq:p3split1} and of the threshold $\nu^{2}/\lambda$ for a good face are made so as to clear them.
	
	Let $d_a$ be the number of faces containing $a$ but not $b$, so that $P_a$ has $d_a=N-1-r_a$ edges and its vertices carry mass $Z-M_a$, where
	\[
	M_a=\sum_{u\in U\setminus N(a)}x_u^{3},\qquad M_b=\sum_{u\in U\setminus N(b)}x_u^{3},\qquad M=M_a+M_b ,
	\]
	and similarly for $b$. For a path $P$ with its actual Perron coordinates write $F(P)$ for the value of the path functional, so that $F(P_a)\le\Theta_{d_a,\tau}(Z-M_a)$ and $F(P_b)\le\Theta_{d_b,\tau}(Z-M_b)$. Since $(A+B)\tau=AB$, the total weight of the faces meeting $\{a,b\}$ is $AF(P_a)+BF(P_b)$, so with
	\[
	S_0=\sum_{\substack{F\in E(\HH)\\ a,b\notin F}}\ \prod_{v\in F}x_v
	\qquad\text{we have}\qquad \frac{\lambda}{3}=A\,F(P_a)+B\,F(P_b)+S_0 .
	\]
	
	\begin{lem}\label{lem:p3compare}
		$S_0\ \ge\ R:=A\bigl(\Theta_{N-1,\tau}(Z)-F(P_a)\bigr)+B\bigl(\Theta_{N-1,\tau}(Z)-F(P_b)\bigr)$.
	\end{lem}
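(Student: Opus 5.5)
This is the analogue of Lemma~\ref{lem:compare}: we compare $\HH$ with $D_N$ by means of a test vector built on $D_N$ and then invoke extremality. On $D_N$, with hubs $a',b'$ and path $u_1\cdots u_N$, the test vector puts $A$ on $a'$ and $B$ on $b'$. Let $y$ attain $\Theta_{N-1,\tau}(Z)$ on a path with $N-1$ edges and $N$ vertices, and place $y$ on $u_1,\dots,u_N$ in order. Its mass is $A^3+B^3+Z=1$, because $Z$ is exactly the mass of $U$ and $A^3+B^3+Z=\sum_v x_v^3=1$. So the vector is admissible for $\lambda(D_N)$.

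Next we evaluate $P_{D_N}$ at this vector. The faces of $D_N$ are $a'u_iu_{i+1}$ and $b'u_iu_{i+1}$ for $1\le i<N$, together with $a'b'u_1$ and $a'b'u_N$. Their total weight is
\[
(A+B)\sum_{i}y_iy_{i+1}+AB(y_1+y_N).
\]
Since $AB=(A+B)\tau$, this equals $A\,\Phi_\tau(y)+B\,\Phi_\tau(y)=(A+B)\,\Theta_{N-1,\tau}(Z)$, where $\Phi_\tau$ denotes the path functional with end parameter $\tau$. Hence $\lambda(D_N)\ge 3(A+B)\Theta_{N-1,\tau}(Z)$.

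Extremality of $\HH$ gives $\lambda\ge\lambda(D_N)$. Combining this with the identity $\frac{\lambda}{3}=AF(P_a)+BF(P_b)+S_0$ gives
\[
S_0\ \ge\ (A+B)\Theta_{N-1,\tau}(Z)-AF(P_a)-BF(P_b)=R.
\]

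Two points need checking. First, the identity expressing the weight of the faces meeting $\{a,b\}$ as $AF(P_a)+BF(P_b)$ has already been established before the statement, using $(A+B)\tau=AB$ for the faces $abp$ and $abq$. Second, the maximizer $y$ exists by compactness, and $\Theta_{d,\tau}$ is defined in Section~\ref{sec:def} with variables on all path vertices and end terms on the two endpoints. That matches $D_N$, where the ends $u_1,u_N$ are exactly the vertices lying on the faces with both hubs.

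The only step that needs any care is this last bookkeeping: the path of $D_N$ has $N-1$ edges and $N$ vertices, which is the $d=N-1$ case of $\Theta_{d,\tau}$, and the endpoints of the path carry the $\tau$ terms. The argument needs no estimates.
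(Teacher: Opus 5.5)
Your proposal is correct and matches the paper's proof: you put $A$ and $B$ on the hubs of $D_N$ and a maximizer of $\Theta_{N-1,\tau}(Z)$ on its path, use $(A+B)\tau=AB$ to evaluate the face polynomial as $3(A+B)\Theta_{N-1,\tau}(Z)$, and subtract the decomposition $\lambda/3=A\,F(P_a)+B\,F(P_b)+S_0$. Your bookkeeping on the mass ($A^3+B^3+Z=1$) and on the endpoint terms ($d=N-1$ edges, with $u_1,u_N$ the vertices of the two faces through both hubs) is exactly what the paper's one-line argument relies on.
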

	
	\begin{proof}
		Give the two hubs of $D_N$ the values $A$ and $B$ and its path the coordinates attaining $\Theta_{N-1,\tau}(Z)$; the resulting vector has $3$-norm one, and its value on $D_N$ is $3(A+B)\Theta_{N-1,\tau}(Z)$, again because $(A+B)\tau=AB$. Extremality gives $\lambda/3\ge(A+B)\Theta_{N-1,\tau}(Z)$, and subtracting the display above proves the claim.
	\end{proof}
	
	The remaining task is to show that $R>S_0$
	whenever $r>0$ and $n$ is sufficiently large.
	We shall compare lower bounds for $R$ with upper bounds for $S_0$
	on the scales $r/\lambda^2$ and $\lambda M$, while controlling
	the exceptional faces by the gains from shortening certain
	segments of the link paths.
	We shall use twice the elementary bounds
	\begin{equation}\label{eq:p3elem}
		d\Bigl(\frac W{d+1}\Bigr)^{2/3}\ \le\ \Theta_{d,\tau}(W)\ \le\ (d+1)^{1/3}W^{2/3}+2\tau W^{1/3},
	\end{equation}
	the first from the constant vector on the $d+1$ vertices and the second from $2uv\le u^{2}+v^{2}$ followed by H\"older's inequality.
	
	\begin{lem}\label{lem:p3Msmall}
		$M=o(1)$.
	\end{lem}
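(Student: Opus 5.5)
We compare $R$ with $S_0$ at the coarse scale. The comparison rests on Lemma~\ref{lem:p3compare}, which gives $S_0\ge R$, together with a cheap upper bound for $S_0$ and a lower bound for $R$ that is linear in $M$ with coefficient of order $N^{1/3}$.

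\textbf{Upper bound for $S_0$.} Every vertex on a face avoiding $a$ and $b$ has coordinate at most $\delta=o(1)$. Corollary~\ref{cor:planar3} therefore gives $S_0\le200^{1/3}\delta\,n^{1/3}=o(N^{1/3})$.

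\textbf{Lower bound for $R$.} Use $F(P_a)\le\Theta_{d_a,\tau}(Z-M_a)\le\Theta_{N-1,\tau}(Z-M_a)$; this holds because $\Theta_{d,\tau}$ is nondecreasing in $d$, as appending a zero vertex shows. Then
\[
R\ \ge\ A\bigl(\Theta_{N-1,\tau}(Z)-\Theta_{N-1,\tau}(Z-M_a)\bigr)+B\bigl(\Theta_{N-1,\tau}(Z)-\Theta_{N-1,\tau}(Z-M_b)\bigr).
\]
Here $\tau\to\nu/2$ and $Z\to\frac23$, so both lie in fixed compact subintervals of $(0,\infty)$. If $M_a\le Z/2$, Lemma~\ref{lem:p3mass} bounds the first bracket below by $(\frac23-o(1))(N-1)^{1/3}Z^{-1/3}M_a$. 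If $M_a>Z/2$, monotonicity in the mass lets us replace $M_a$ by $Z/2$, and the bracket is then at least a constant multiple of $N^{1/3}$. In either case the first bracket is at least $c\,N^{1/3}\min\{M_a,Z/2\}$ for a constant $c>0$, and the same holds for $b$. With $A,B\ge\nu/2$ this gives $R\ge c'N^{1/3}\min\{M,Z/2\}$.

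\textbf{Conclusion.} Combining the two bounds with $S_0\ge R$ yields $\min\{M,Z/2\}\le o(1)$. Since $Z\to\frac23$, this gives $M=o(1)$. An alternative is to argue through the master-type inequality of Lemma~\ref{lem:p3common}, replacing $Z$ by $Z-M_a$ in the $a$-terms; the route above is cleaner.

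\textbf{Main obstacle.} The delicate point is making Lemma~\ref{lem:p3mass} apply uniformly: it requires $W=Z$ and $\tau$ to lie in compact ranges, and $M\le W/2$. Theorem~\ref{thm:p3stab} and the split into the cases $M_a\le Z/2$ and $M_a>Z/2$ handle this. The monotonicity of $\Theta_{d,\tau}$ in $d$ also needs checking, since the end term $\tau(y_0+y_d)$ moves when the path is lengthened. Placing the zero entry at an end would lose the end term, so instead insert a zero-mass vertex in the interior and keep the ends. Alternatively, apply Lemma~\ref{lem:p3len} directly, which for large $d$ gives positive increments up to an $O(d^{-1})$ error that is negligible against $N^{1/3}$.
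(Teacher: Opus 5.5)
\textbf{Gap: monotonicity of $\Theta_{d,\tau}$ in $d$.} The step $\Theta_{d_a,\tau}(Z-M_a)\le\Theta_{N-1,\tau}(Z-M_a)$ is not justified, and the claim behind it is false in general. For $\tau>0$ neither of your constructions gives it. Appending a zero vertex at an end drops $\tau y_{u_0}$ (or $\tau y_{u_d}$), as you note. Inserting a zero vertex between $u_i$ and $u_{i+1}$ replaces the product $y_{u_i}y_{u_{i+1}}$ by $y_{u_i}\cdot0+0\cdot y_{u_{i+1}}=0$, so it also loses a term.

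In fact $\Theta_{d,\tau}$ need not be monotone in $d$. For example, $\Theta_{1,1}(0.002)=0.21$, attained at $y_0=y_1=0.1$. On the other hand $\Theta_{2,\tau}(W)=\max_t2(t+\tau)\bigl((W-t^{3})/2\bigr)^{1/3}$, which gives $\Theta_{2,1}(0.002)\approx0.2058$.

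\textbf{Your fallback.} Telescoping Lemma~\ref{lem:p3len} is the right idea. All lengths involved are at least $N/2$, and the summed $O(d^{-1})$ errors are $O(r_a/N)=o(1)$. However, that lemma is uniform only for masses in a compact subinterval of $(0,\infty)$. You apply it at mass $Z-M_a$, and that is not known to stay away from $0$ until the present lemma is proved.

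\textbf{Repair.} Change the length at the fixed mass $Z$ instead:
\[
\Theta_{N-1,\tau}(Z)-F(P_a)\ \ge\ \bigl[\Theta_{N-1,\tau}(Z)-\Theta_{d_a,\tau}(Z)\bigr]+\bigl[\Theta_{d_a,\tau}(Z)-\Theta_{d_a,\tau}(Z-M_a)\bigr].
\]
The first bracket is at least $-o(1)$ by Lemma~\ref{lem:p3len} at mass $Z$. The second is handled by your truncation at $Z/2$, which uses monotonicity in the mass (this is valid, by scaling), followed by Lemma~\ref{lem:p3mass} at length $d_a\ge N/2$. With this change the rest of your argument goes through, including the bound $S_0\le200^{1/3}\delta n^{1/3}$ taken from the proof of Lemma~\ref{lem:p3common}.

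\textbf{Comparison with the paper.} The paper never compares $\Theta$ at two different lengths. By \eqref{eq:p3elem}, it bounds $\Theta_{N-1,\tau}(Z)$ below using the constant vector. It bounds $\Theta_{d_h,\tau}(Z-M_h)$ above by $(d_h+1)^{1/3}(Z-M_h)^{2/3}+2\tau(Z-M_h)^{1/3}$, which is visibly increasing in $d_h$. The difference is therefore at least $N^{1/3}\bigl[Z^{2/3}-(Z-M_h)^{2/3}\bigr]-K$. Concavity of $s\mapsto s^{2/3}$ makes this linear in $M_h$, with no separate case $M_h>Z/2$.

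Only the order $N^{1/3}$ of the loss matters here, so neither Lemma~\ref{lem:p3len} nor Lemma~\ref{lem:p3mass} is needed. Their sharp constants are used only later, in Lemma~\ref{lem:p3R}. For $S_0$, the paper counts the $r$ faces avoiding both hubs and applies the first bound of Corollary~\ref{cor:planar3}, getting $(4r)^{1/3}=o(N^{1/3})$. Your $\delta$-based bound is equally valid.
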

	
	\begin{proof}
		The faces containing $a$ or $b$ number $d(a)+d(b)-2=(1+N-r_a)+(1+N-r_b)-2=2N-r$, so exactly $r$ faces avoid both, and Corollary~\ref{cor:planar3} gives $0\le S_0\le(4r)^{1/3}=o(N^{1/3})$ since $r=o(N)$.
		
		Suppose $M\not\to0$. Passing to a subsequence and interchanging $a$ and $b$ if necessary, $M_a\ge c_0>0$ throughout. For $h\in\{a,b\}$, using $F(P_h)\le\Theta_{d_h,\tau}(Z-M_h)$, $d_h\le N-1$ and~\eqref{eq:p3elem},
		\[
		\Theta_{N-1,\tau}(Z)-F(P_h)\ \ge\ N^{1/3}\bigl[Z^{2/3}-(Z-M_h)^{2/3}\bigr]-K
		\]
		for a constant $K$ depending only on the ranges of $\tau$ and $Z$. The function $s\mapsto s^{2/3}$ is concave, so $Z^{2/3}-(Z-M_a)^{2/3}\ge\frac23Z^{-1/3}M_a\ge\frac23c_0$, while the corresponding quantity for $b$ is nonnegative. Hence $R\ge A(\frac23c_0N^{1/3}-K)-BK\ge\frac{\nu c_0}3N^{1/3}-2K$ for large $n$, and $S_0\ge R$ forces $S_0/N^{1/3}\ge\nu c_0/3-o(1)$, contradicting $S_0=o(N^{1/3})$.
	\end{proof}
	
	Assume from now on, for contradiction, that $r>0$ along a subsequence; since every Perron coordinate is positive, $M>0$ as well. Put $C=N(a)\cap N(b)\cap U$ and $E=U\setminus C$, and for $u\in E$ let $\mu(u)\in\{1,2\}$ be the number of hubs not adjacent to $u$. Let $\FA$ be the family of the $r$ faces avoiding both hubs and $\FA^{(j)}$ those with exactly $j$ vertices in $C$; as in Sections~\ref{sec:hub} and~\ref{sec:planar}, $\FA$ stands for the faces that miss the dominant vertices.
	
	\begin{lem}\label{lem:p3count}
		$\displaystyle r=\sum_{u\in E}\mu(u)$ and $M=\sum_{u\in E}\mu(u)x_u^{3}$. Moreover $\FA^{(3)}=\emptyset$, every vertex of $E$ has at most two neighbors in $C$, and each lies on at most four faces of $\FA^{(1)}\cup\FA^{(2)}$.
	\end{lem}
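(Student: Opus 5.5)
The plan is to read the two identities straight off the definitions, and to get the three structural claims from the nonplanarity of $K_5$ and of $K_{3,3}$. The hub edge $ab$ of Lemma~\ref{lem:p3edge} is what makes $K_5$ available.

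\emph{The identities.} By definition $r_a=|U\setminus N(a)|$ and $r_b=|U\setminus N(b)|$, so
\[
r=\sum_{u\in U}\bigl(\mathbf 1_{au\notin E(G)}+\mathbf 1_{bu\notin E(G)}\bigr).
\]
The summand vanishes exactly for $u\in C$. For $u\in E=U\setminus C$ it equals $\mu(u)$. Hence $r=\sum_{u\in E}\mu(u)$. The same computation with each indicator weighted by $x_u^{3}$ gives $M=M_a+M_b=\sum_{u\in E}\mu(u)x_u^{3}$.

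\emph{$\FA^{(3)}=\emptyset$.} Suppose a face $cde$ avoids both hubs and has $c,d,e\in C$. By Lemma~\ref{lem:p3edge} the edge $ab$ is present for large $n$. Each of $c,d,e$ is adjacent to both $a$ and $b$, and $cd,de,ec$ are edges because $cde$ is a face. So $\{a,b,c,d,e\}$ spans $K_5$ in the planar graph $G$, which is impossible.

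\emph{Vertices of $E$.} Let $u\in E$, and suppose $u$ has three neighbours $c,d,e\in C$. Each of $c,d,e$ is adjacent to $a$, to $b$ and to $u$. Since $u\notin\{a,b\}$, the graph $G$ then contains $K_{3,3}$ with parts $\{a,b,u\}$ and $\{c,d,e\}$, contradicting planarity. So $u$ has at most two neighbours in $C$.

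Now let $F$ be a face of $\FA^{(1)}\cup\FA^{(2)}$ containing $u$. Then $F$ has a vertex $c\in C$, and $c\ne u$ because $u\notin C$. Vertices on a common face of a triangulation are adjacent, so $uc$ is an edge and $c$ is one of the at most two neighbours of $u$ in $C$. The edge $uc$ lies on exactly two faces. Hence $u$ lies on at most $2\cdot2=4$ faces of $\FA^{(1)}\cup\FA^{(2)}$.

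I expect no real obstacle here. The only point needing care is that the $K_5$ argument uses $ab\in E(G)$, which holds only for large $n$ by Lemma~\ref{lem:p3edge}; that is consistent with working along a subsequence of large $n$. One should also note $u\ne a,b$, so that the two parts of the $K_{3,3}$ are genuinely disjoint.
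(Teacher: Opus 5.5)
Your proposal is correct and follows the paper's proof essentially step for step. You obtain the two identities by counting missing hubs, and you use $K_5$ on $\{a,b,c,d,e\}$, with the hub edge $ab$, to get $\FA^{(3)}=\emptyset$. The bound of two neighbours in $C$ comes from $K_{3,3}$ with parts $\{a,b,u\}$ and three common neighbours, and the count of four faces comes from at most two edges $uc$ with $c\in C$, each lying on two faces.
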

	
	\begin{proof}
		Counting each missing hub once gives $\sum_{u\in E}\mu(u)=r_a+r_b=r$, and the same count with weight $x_u^{3}$ gives $M$. If a face had all three vertices $c,d,e$ in $C$ then $a,b,c,d,e$ would span a $K_5$, since $ab\in E(G)$ and $c,d,e$ are pairwise adjacent and adjacent to both hubs; this contradicts planarity. If $u\in E$ had three neighbors $c_1,c_2,c_3$ in $C$ then $\{a,b,u\}$ and $\{c_1,c_2,c_3\}$ would span a $K_{3,3}$. Finally a face of $\FA^{(1)}\cup\FA^{(2)}$ containing $u$ uses one of the at most two edges from $u$ to $C$, and each edge lies on two faces.
	\end{proof}
	
	\begin{lem}\label{lem:p3S0first}
		Call a face $f=cdu\in\FA^{(2)}$, with $c,d\in C$ and $u\in E$, \emph{good} if $\sigma_f:=x_cx_d\le\nu^{2}/\lambda$, and let $\BB$ be the family of the remaining faces of $\FA^{(2)}$. Then
		\[
		S_0\ \le\ \Bigl(\frac1{12}+o(1)\Bigr)\lambda M+\Bigl(\frac49+o(1)\Bigr)\frac r{\lambda^{2}}+\sum_{f\in\BB}\ \prod_{v\in f}x_v .
		\]
	\end{lem}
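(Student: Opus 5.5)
The plan is to split $S_0$ according to the families $\FA^{(0)},\FA^{(1)},\FA^{(2)}$. By Lemma~\ref{lem:p3count}, $\FA^{(3)}=\emptyset$, so every face of $\FA$ has a vertex in $E$. We then bound each family by a weighted arithmetic--geometric mean (Young) inequality, with weights chosen so that the $x^3$-terms are charged to $M=\sum_{u\in E}\mu(u)x_u^3$ and the remaining terms to the face count $|\FA|=r$. The bad faces $\BB$ are kept as they are. Throughout we use $\delta=o(1)$, $\lambda=\Theta(N^{1/3})$ from Theorem~\ref{thm:p3stab}, and $\mu(u)\ge1$ on $E$.

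\emph{Good faces.} Let $f=cdu$ be good, with $c,d\in C$ and $u\in E$. Young's inequality with exponents $3$ and $\frac32$ gives, for any $\eta>0$,
\[
x_u\sigma_f\ \le\ \frac{\eta}{3}x_u^3+\frac23\eta^{-1/2}\sigma_f^{3/2}.
\]
By Lemma~\ref{lem:p3count}, $u$ lies on at most four faces of $\FA^{(1)}\cup\FA^{(2)}$. So the first terms sum to at most $\frac{4\eta}{3}\sum_{u\in E}x_u^3\le\frac{4\eta}3M$. We take $\eta=\lambda/16$, which turns this into $\frac{\lambda}{12}M$. For the second term, goodness gives $\sigma_f^{3/2}\le\nu^3\lambda^{-3/2}$. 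Hence each good face contributes at most
\[
\frac23\cdot 4\lambda^{-1/2}\cdot\nu^3\lambda^{-3/2}=\frac{8}{3}\cdot\frac16\cdot\lambda^{-2}=\frac49\lambda^{-2},
\]
and there are at most $r$ such faces. This already produces the two main constants $\frac1{12}$ and $\frac49$ of the statement.

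\emph{Faces of $\FA^{(1)}$.} Let $f=uwc$ with $u,w\in E$ and $c\in C$. Three-term weighted AM--GM gives
\[
x_ux_wx_c\le\tfrac13\bigl(\theta x_u^3+\theta x_w^3+\theta^{-2}x_c^3\bigr).
\]
Using $x_c\le\delta$ and the bound of four faces per vertex of $E$, the sum over $\FA^{(1)}$ is at most $\frac{8\theta}{3}M+\frac r3\theta^{-2}\delta^3$. We take $\theta=\eta_n\lambda$ with $\eta_n\to0$ slowly enough that $\delta^3/\eta_n^2\to0$. Both terms are then $o(\lambda M)+o(r/\lambda^2)$.

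\emph{Faces of $\FA^{(0)}$.} Here all three vertices lie in $E$, but a vertex of $E$ may lie on many such faces, so a pointwise AM--GM is not enough. Instead we apply the second bound of Corollary~\ref{cor:planar3} to the family $\FA^{(0)}$. Its vertex set lies in $E$, so $|V(\FA^{(0)})|\le|E|\le r$ and $W\le M$, and all coordinates are at most $\delta$. This gives a bound of $200^{1/3}\delta r^{1/3}M^{2/3}$. Writing
\[
r^{1/3}M^{2/3}=(r/\lambda^2)^{1/3}(\lambda M)^{2/3}\le r/\lambda^2+\lambda M
\]
and using $\delta\to0$, this is $o(\lambda M)+o(r/\lambda^2)$.

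Adding the three estimates to the untouched contribution of $\BB$ gives the lemma. The step needing the most care is the good-face estimate, since it fixes the exact constants. The choice $\eta=\lambda/16$ is dictated by the multiplicity four from Lemma~\ref{lem:p3count} and by the threshold $\nu^2/\lambda$, and the arithmetic must land precisely on $\frac1{12}$ and $\frac49$. The $\FA^{(0)}$ step is the only place where unbounded multiplicity occurs, and there it is absorbed by $\delta=o(1)$ through Corollary~\ref{cor:planar3}.
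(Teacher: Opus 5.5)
Your proof is correct and follows the paper's argument. The good-face step is exactly the paper's inequality $\sigma t\le\frac{\lambda}{48}t^{3}+\frac{8}{3\sqrt\lambda}\sigma^{3/2}$ combined with the multiplicity four from Lemma~\ref{lem:p3count}, and it produces the same constants $\frac1{12}$ and $\frac49$.

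The only differences are minor. For $\FA^{(1)}$ you take a vanishing weight $\theta=o(\lambda)$, so that face's kept mass is $o(\lambda M)$ and need not share the four incidences with the good faces. For $\FA^{(0)}$ you use the small-coordinate bound of Corollary~\ref{cor:planar3}, absorbed by $\delta\to0$, where the paper uses the first bound $(4r)^{1/3}M=o(\lambda M)$ via $r=o(N)$.
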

	
	\begin{proof}
		All vertices of a face of $\FA^{(0)}$ lie in $E$; setting the other coordinates to zero and applying Corollary~\ref{cor:planar3} with $|\FA^{(0)}|\le r$ and mass at most $M$ gives $\sum_{\FA^{(0)}}\prod\le(4r)^{1/3}M=o(\lambda M)$, since $(4r)^{1/3}/\lambda\to0$ by $r=o(N)$ and $\lambda^{3}\sim16N$.
		
		For $\xi,\eta,\zeta\ge0$ we have
		\begin{equation}\label{eq:p3split1}
			\xi\eta\zeta\ \le\ \frac{\lambda}{48}(\eta^{3}+\zeta^{3})+\frac{256}{3\lambda^{2}}\xi^{3},
		\end{equation}
		because $2\eta\zeta\le\eta^{2}+\zeta^{2}$ reduces it to bounding $\frac\xi2t^{2}-\frac{\lambda}{48}t^{3}$, whose maximum over $t\ge0$ is attained at $t=16\xi/\lambda$ and equals $\frac{128\xi^{3}}{3\lambda^{2}}$. Applying~\eqref{eq:p3split1} to a face $cuv\in\FA^{(1)}$ with $c\in C$ and $u,v\in E$, the terms $\frac{\lambda}{48}(x_u^{3}+x_v^{3})$ are kept aside and, since $x_c\le\delta=o(1)$ and $|\FA^{(1)}|\le r$, the remaining terms total at most $\frac{256\delta^{3}r}{3\lambda^{2}}=o(r/\lambda^{2})$.
		
		Similarly, for $\sigma,t\ge0$,
		\begin{equation}\label{eq:p3split2}
			\sigma t\ \le\ \frac{\lambda}{48}t^{3}+\frac8{3\sqrt\lambda}\sigma^{3/2},
		\end{equation}
		the maximum of $\sigma t-\frac{\lambda}{48}t^{3}$ being attained at $t=4\sqrt{\sigma/\lambda}$. For a good face $f=cdu$ we apply this with $\sigma=\sigma_f$ and $t=x_u$; by goodness and $\nu^{3}=\frac16$,
		\[
		\frac8{3\sqrt\lambda}\sigma_f^{3/2}\ \le\ \frac8{3\sqrt\lambda}\Bigl(\frac{\nu^{2}}\lambda\Bigr)^{3/2}=\frac{8\nu^{3}}{3\lambda^{2}}=\frac4{9\lambda^{2}},
		\]
		and there are at most $r$ good faces, so their contribution beyond the kept terms is at most $\frac{4r}{9\lambda^{2}}$.
		
		It remains to add the kept terms $\frac{\lambda}{48}x_u^{3}$. Each occurrence of a vertex $u\in E$ arises from a face of $\FA^{(1)}\cup\FA^{(2)}$ containing $u$, and by Lemma~\ref{lem:p3count} there are at most four of them; hence the total is at most $\frac{\lambda}{48}\sum_{u\in E}4x_u^{3}\le\frac{\lambda M}{12}$, using $\sum_{u\in E}x_u^{3}\le\sum_{u\in E}\mu(u)x_u^{3}=M$.
	\end{proof}
	
	The weight of the bad faces is controlled by a geometric argument. Regard the embedding as one in the sphere and let $S=G[\{a,b\}\cup C]$ with the inherited embedding. For $cd\in E(G[C])$ and $h\in\{a,b\}$ let $D_h(cd)$ be the closed disc bounded by the triangle $hcd$ whose interior avoids the other hub; see Figure~\ref{fig:disc}.
	
	\begin{figure}[H]
		\centering
		\begin{tikzpicture}[scale=1.0,
			hub/.style={circle,fill=black,inner sep=2.4pt},
			pv/.style={circle,draw=black,fill=white,inner sep=1.5pt}]
			\begin{scope}
				\fill[black!8] (0,2.2) -- (-1.9,0) -- (1.9,0) -- cycle;
				\fill[black!8] (0,-2.2) -- (-1.9,0) -- (1.9,0) -- cycle;
				\fill[black!22] (-1.9,0) -- (0,1.05) -- (1.9,0) -- cycle;
				\fill[black!22] (-1.9,0) -- (0,-1.05) -- (1.9,0) -- cycle;
				\node[hub] (a) at (0,2.2) {}; \node[above] at (a) {$a$};
				\node[hub] (b) at (0,-2.2) {}; \node[below] at (b) {$b$};
				\node[pv] (c) at (-1.9,0) {}; \node[left] at (c) {$c$};
				\node[pv] (d) at (1.9,0) {}; \node[right] at (d) {$d$};
				\node[pv] (u) at (0,1.05) {}; \node[right] at (u) {$u$};
				\node[pv] (v) at (0,-1.05) {}; \node[right] at (v) {$v$};
				\draw (a)--(c)--(b)--(d)--(a); \draw (c)--(d);
				\draw (a) to[out=190,in=170,looseness=2.8] (b);
				\draw (a)--(u); \draw (c)--(u)--(d);
				\draw (b)--(v); \draw (c)--(v)--(d);
				\node at (-0.42,1.30) {$D_f$};
				\node at (-0.42,-1.30) {$D_g$};
				\node at (0,0.40) {$f$}; \node at (0,-0.40) {$g$};
				\node at (0,-3.0) {(a) $G$};
			\end{scope}
			\begin{scope}[xshift=6.3cm]
				\fill[black!8] (0,2.2) -- (-1.9,0) -- (1.9,0) -- cycle;
				\fill[black!8] (0,-2.2) -- (-1.9,0) -- (1.9,0) -- cycle;
				\node[hub] (a2) at (0,2.2) {}; \node[above] at (a2) {$a$};
				\node[hub] (b2) at (0,-2.2) {}; \node[below] at (b2) {$b$};
				\node[pv] (c2) at (-1.9,0) {}; \node[left] at (c2) {$c$};
				\node[pv] (d2) at (1.9,0) {}; \node[right] at (d2) {$d$};
				\draw (a2)--(c2)--(b2)--(d2)--(a2); \draw (c2)--(d2);
				\draw (a2) to[out=190,in=170,looseness=2.8] (b2);
				\node at (0,0.85) {$D_f$}; \node at (0,-0.85) {$D_g$};
				\node at (0,-3.0) {(b) $S=G-\{u,v\}$};
			\end{scope}
			\begin{scope}[xshift=10.7cm]
				\node[pv] (c3) at (-1.3,0) {}; \node[left] at (c3) {$c$};
				\node[pv] (d3) at (1.3,0) {}; \node[right] at (d3) {$d$};
				\node[pv] (u3) at (0,1.05) {}; \node[above] at (u3) {$u$};
				\node[pv] (v3) at (0,-1.05) {}; \node[below] at (v3) {$v$};
				\draw[very thick] (c3)--(u3)--(d3)--(v3)--(c3);
				\node at (0,1.85) {$P_a$}; \node at (0,-1.85) {$P_b$};
				\node at (0,-3.0) {(c) Link segments};
			\end{scope}
			\pgfresetboundingbox
			\path[use as bounding box] (-3.8,-3.3) rectangle (12.45,2.7);
		\end{tikzpicture}
		\caption{The disc construction, illustrated with $C=\{c,d\}$ and $E=\{u,v\}$. (a) The darker triangles are faces $f=cdu$ and $g=cdv$ of $G$, contained in the larger discs $D_f=D_a(cd)$ and $D_g=D_b(cd)$, respectively. (b) Deleting $u,v$ makes these discs the closures of triangular faces of $S$. Their interiors are disjoint, although their boundaries share the edge $cd$. (c) The corresponding link segments $c,u,d$ in $P_a$ and $c,v,d$ in $P_b$ share endpoints but have disjoint interiors.}\label{fig:disc}
	\end{figure}
	
	\begin{lem}\label{lem:p3disc}
		For every $f=cdu\in\FA^{(2)}$ exactly one of $D_a(cd)$, $D_b(cd)$ contains the interior of $f$; call it $D_f=D_h(cd)$. Then $D_f$ is the closure of a triangular face of $S$; every vertex in its interior misses the other hub; the interiors of the discs of distinct faces of $\FA^{(2)}$ are disjoint; and the part of the link path $P_h$ lying in $D_f$ is a segment $c=w_0,w_1,\dots,w_m=d$ with $m\ge2$, these segments having pairwise disjoint interiors.
	\end{lem}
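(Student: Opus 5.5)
The plan is to derive everything from the uniqueness of plane embeddings of $K_4$, together with the Jordan curve theorem applied to triangles through a hub. Fix $f=cdu\in\FA^{(2)}$ with $c,d\in C$ and $u\in E$. Since $ab\in E(G)$ by Lemma~\ref{lem:p3edge}, and $c,d$ are adjacent to each other and to both hubs, the six edges $ab,ac,ad,bc,bd,cd$ span a plane $K_4$ inside $G$. Its four faces are open discs bounded by the triangles $abc$, $abd$, $acd$ and $bcd$. The face bounded by $acd$ does not contain $b$, since $b$ is a vertex of this $K_4$; hence it is exactly the interior of $D_a(cd)$. Likewise the face bounded by $bcd$ is the interior of $D_b(cd)$. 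So these two interiors are disjoint faces of the $K_4$, and they are the only faces of the $K_4$ having $cd$ on their boundary.

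The interior of $f$ is connected and meets no edge of the $K_4$ (the hubs are not on $f$), so it lies in a single face of the $K_4$. Because $cd$ bounds $f$, that face must be $D_a(cd)$ or $D_b(cd)$, and since these are disjoint exactly one of them contains $f$; call it $D_f=D_h(cd)$. Say $h=a$; the case $h=b$ is symmetric. The crossing argument goes as follows. Any vertex $w$ in the interior of $D_a(cd)$ cannot be adjacent to $b$, because $b$ lies strictly outside the closed disc and an edge $wb$ would have to cross the triangle $acd$. Hence every interior vertex misses the other hub; in particular no vertex of $C$ lies inside. An edge of $S$ drawn inside $D_f$ would join two of $a,c,d$, which are already pairwise adjacent, and this is impossible in a simple graph. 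Therefore $D_f$ is the closure of a triangular face of $S$.

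For disjointness, note first that a triangular face of $S$ of the form $D_h(cd)$ determines $h$ and $cd$ by its vertex set. So if $f\ne f'$ had discs with intersecting interiors, those discs would be the same face of $S$, with the same edge $cd$, and $f,f'$ would then be the two faces of $G$ at $cd$. But only one side of $cd$ near its interior points lies inside $D_h(cd)$, since $cd$ is on the boundary of that disc. Hence only one of the two faces at $cd$ can be contained in it, a contradiction. For the segment, I would look at the rotation at $a$. The part of $D_a(cd)$ near $a$ is the angular sector between $ac$ and $ad$ on the side away from $b$. The faces at $a$ lying in this sector are consecutive in the link cycle of $a$, and they avoid $b$, so they give a subpath $c=w_0,w_1,\dots,w_m=d$ of $P_a$. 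If $m=1$, then $acd$ would be a face of $G$ lying in $D_a(cd)$, and it would be the face at $cd$ on that side. That face is $f$, which contains no hub, so $m\ge2$. Interior vertices of distinct segments lie in the interiors of distinct discs, and those interiors are disjoint, so the segments have disjoint interiors.

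I expect the main obstacle to be making the topological step rigorous: identifying $D_a(cd)$ with the $K_4$ face and showing that the sector at $a$ inside $D_f$ carries exactly a contiguous segment of the link path. I would handle this through the unique embedding of $K_4$ and the rotation system at $a$, rather than through direct Jordan-curve arguments.
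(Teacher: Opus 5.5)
Your proposal is correct and follows essentially the same route as the paper. The steps match in order: the embedded $K_4$ on $a,b,c,d$ places the interior of $f$ in exactly one of $D_a(cd)$, $D_b(cd)$; the Jordan-curve argument shows that interior vertices miss the other hub, so $D_f$ is an empty triangular face of $S$; the disc determines $h$ and $cd$, and only one face of $G$ lies on its inner side of $cd$, which gives disjointness; and the rotation at $h$ produces the segment of $P_h$. Your $m\ge2$ argument (a face $hcd$ would be the inner face at $cd$, hence equal to the hub-free face $f$) is only a rephrasing of the paper's ``no room for $u$'' argument.
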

	
	\begin{proof}
		The edges $ac,ad,bc,bd$ exist and, with $ab$ and $cd$, form an embedded $K_4$ on $a,b,c,d$; ignoring everything else, the two faces at $cd$ are bounded by $acd$ and $bcd$, with closures $D_a(cd)$ and $D_b(cd)$. Restoring $G$ subdivides these without crossing their boundaries, and $f$ is incident with $cd$, so its interior lies in exactly one of them; that one is $D_f$, and $u$ lies in its interior since $u\notin\{h,c,d\}$.
		
		Let $h'$ be the other hub, which lies outside $D_f$. If a vertex $w$ in the interior were adjacent to $h'$, the edge $wh'$ would cross the boundary curve $hcd$ by the Jordan curve theorem, which is impossible in a plane embedding; so every interior vertex misses $h'$ and hence lies in $E$. Deleting $E$ therefore empties the interior of $D_f$, and no edge of $S$ can remain inside it, since such an edge would have both ends among $h,c,d$ and those three edges already form the boundary. Thus $D_f$ is the closure of a face of $S$. If two faces of $\FA^{(2)}$ gave the same face of $S$, its boundary would determine $h$ and hence the opposite edge $cd$, and there is only one face of $G$ on the inner side of $cd$; so the discs are distinct, and their interiors are disjoint.
		
		Finally, the edges from $h$ into $D_f$ occur consecutively between $hc$ and $hd$ in the rotation at $h$; listing their other ends together with $c,d$ gives $c=w_0,\dots,w_m=d$, and consecutive pairs bound faces $hw_iw_{i+1}$, so the $w_iw_{i+1}$ form a segment of the link cycle of $h$. If $m$ were $1$ then $hc$ and $hd$ would be consecutive and the face between them would have interior $\operatorname{int}(D_f)$, leaving no room for $u$; so $m\ge2$. The other hub is outside $D_f$, so deleting it from the link cycle leaves this segment inside $P_h$, and its internal vertices lie in $\operatorname{int}(D_f)$, whence the segments have pairwise disjoint interiors.
	\end{proof}
	
	For $f\in\BB$ write $D=D_f=D_h(cd)$ and $c=w_0,\dots,w_m=d$ for its segment, and put
	\[
	Q_D=\sum_{i=0}^{m-1}x_{w_i}x_{w_{i+1}},\qquad T_D=\sum_{v\in\operatorname{int}(D)}x_v^{3},
	\]
	and split $\BB=\BB_{<}\cup\BB_{\ge}$ according to whether $Q_D<\sigma_f/2$ or not.
	
	Figure~\ref{fig:shortcut} illustrates the auxiliary path shortcut used for faces in $\BB_{<}$.
	
	\begin{figure}[H]
		\centering
		\begin{tikzpicture}[scale=1.0,
			hub/.style={circle,fill=black,inner sep=2.4pt},
			pv/.style={circle,draw=black,fill=white,inner sep=1.5pt}]
			\begin{scope}
				\fill[black!8] (0,2.7) -- (-2.0,0) -- (2.0,0) -- cycle;
				\fill[black!22] (-2.0,0) -- (0,0.66) -- (2.0,0) -- cycle;
				\node[hub] (h) at (0,2.7) {}; \node[above] at (h) {$h$};
				\node[hub] (hp) at (0,-2.7) {}; \node[below] at (hp) {$h'$};
				\node[pv] (c) at (-2.0,0) {}; \node[left] at (c) {$c$};
				\node[pv] (d) at (2.0,0) {}; \node[right] at (d) {$d$};
				\node[pv] (w1) at (-0.60,1.15) {};
				\node[pv] (w2) at (0.60,1.15) {};
				\node[left] at (w1) {$w_1$}; \node[right] at (w2) {$w_2$};
				\node[pv] (u) at (0,0.66) {}; \node[above] at (u) {$u$};
				\draw (h)--(c)--(hp)--(d)--(h); \draw (c)--(d);
				\draw (h) to[out=190,in=170,looseness=2.8] (hp);
				\draw (h)--(w1); \draw (h)--(w2);
				\draw (u)--(c); \draw (u)--(w1); \draw (u)--(w2); \draw (u)--(d);
				\draw[very thick] (c)--(w1)--(w2)--(d);
				\node at (0,1.83) {$D_f$}; \node at (0,0.25) {$f$};
				\node at (0,-3.5) {(a) The disc $D_f$};
			\end{scope}
			\begin{scope}[xshift=7.4cm]
				\node[pv] (c2) at (-1.9,0.90) {}; \node[below left] at (c2) {$c$};
				\node[pv] (d2) at (1.9,0.90) {}; \node[below right] at (d2) {$d$};
				\node[pv] (w12) at (-0.65,2.10) {}; \node[above] at (w12) {$w_1$};
				\node[pv] (w22) at (0.65,2.10) {}; \node[above] at (w22) {$w_2$};
				\draw[very thick] (c2)--(w12)--(w22)--(d2);
				\node at (0,1.40) {$Q_D$};
				\draw[->] (0,0.50)--(0,-0.50);
				\node[right] at (0,0) {replace by $cd$};
				\node[pv] (c3) at (-1.9,-0.9) {}; \node[below left] at (c3) {$c$};
				\node[pv] (d3) at (1.9,-0.9) {}; \node[below right] at (d3) {$d$};
				\draw[very thick] (c3)--(d3);
				\node at (0,-1.40) {$\sigma_f=x_cx_d$};
				\node at (0,-2.10) {$g_D=\sigma_f-Q_D$};
				\node at (0,-3.5) {(b) Auxiliary path shortcut};
			\end{scope}
			\pgfresetboundingbox
			\path[use as bounding box] (-4.65,-3.8) rectangle (10.0,3.2);
		\end{tikzpicture}
		\caption{A disc and its auxiliary path shortcut, illustrated with $m=3$. (a) The darker triangle is the face $f=cdu$ inside $D_f=D_h(cd)$; the other hub $h'$ lies outside. The thick chain $c,w_1,w_2,d$ is a segment of $P_h$, and the defining vertex $u$ does not belong to this segment. (b) For $f\in\BB_{<}$, replacing the segment by the existing edge $cd$ gives $g_D=\sigma_f-Q_D>\sigma_f/2>0$. Only the auxiliary path is shortened; the triangulation $G$ is unchanged.}\label{fig:shortcut}
	\end{figure}
	
	\begin{lem}\label{lem:p3short}
		For $h\in\{a,b\}$ replace, simultaneously, the segment of every face of $\BB_{<}$ whose disc has hub $h$ by the edge $cd$. This produces a path $\widehat P_h$ with the same ends, and with $g_h=F(\widehat P_h)-F(P_h)\ge0$ we have
		\[
		\sum_{f\in\BB_{<}}\ \prod_{v\in f}x_v\ \le\ \eta_n\bigl(Ag_a+Bg_b\bigr),\qquad
		\sum_{f\in\BB_{\ge}}\ \prod_{v\in f}x_v\ \le\ \epsilon_n\lambda M,
		\]
		where $\eta_n=2\delta/\min\{A,B\}$ and $\epsilon_n=\max\{64\delta^{2}/\nu^{2},\,4(r+1)^{1/3}/\lambda\}$ tend to zero. If $q_h$ vertices of total mass $T_h$ are deleted in forming $\widehat P_h$, then $q_h\le r$ and $T_h\le M$.
	\end{lem}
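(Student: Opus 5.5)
The plan is to treat the two families separately, using Lemma~\ref{lem:p3disc} throughout. For the shortened path, the segments attached to faces of $\BB_{<}$ with hub $h$ have pairwise disjoint interiors, so the replacements can be made simultaneously. Each segment $c=w_0,\dots,w_m=d$ is a subpath of $P_h$ with $m\ge2$, and $cd\in E(G)$, so replacing it by the single edge $cd$ again gives a path with the same ends. Only internal vertices $w_1,\dots,w_{m-1}$ are removed. The end terms $\tau(x_p+x_q)$ are unchanged: $p,q$ are ends of $P_h$ and lie in $C$, whereas the internal vertices of a segment lie in $\operatorname{int}(D_f)\subseteq E$.

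The change in $F$ is then $g_h=\sum_D(\sigma_f-Q_D)$ over these discs, and each term is $>\sigma_f/2$ by the definition of $\BB_{<}$. Hence $g_h\ge0$. The deleted vertices are interior to discs, so they lie in $E$ and miss the other hub $h'$. Since the discs have disjoint interiors, each deleted vertex is counted once for the hub it misses. By Lemma~\ref{lem:p3count} this gives $q_h\le\sum_{u\in E}\mu(u)=r$ and $T_h\le M$.

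For $\BB_{<}$, write the weight of $f=cdu$ as $\sigma_fx_u\le\delta\sigma_f<2\delta(\sigma_f-Q_D)$. Summing over faces whose disc has hub $h$ gives at most $2\delta g_h\le(2\delta/\min\{A,B\})\,x_hg_h$. Adding the two hubs yields the bound $\eta_n(Ag_a+Bg_b)$.

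For $\BB_{\ge}$, I use $Q_D\ge\sigma_f/2>\nu^2/(2\lambda)$, which will compare the face weight with the mass inside the disc. Each edge product satisfies $x_{w_i}x_{w_{i+1}}\le\delta\,\max\{x_{w_i},x_{w_{i+1}}\}$, and the internal vertices carry mass $T_D$. The ends $c,d$ are in $C$, however, so an edge touching them is not controlled by $T_D$. To handle this I would split the sum into the two end edges $x_cx_{w_1}+x_{w_{m-1}}x_d$ and the interior edges. The interior edges are at most $\sum_i x_{w_i}^2\le(m-1)^{1/3}T_D^{2/3}$ by H\"older, and the end edges are at most $\delta(x_{w_1}+x_{w_{m-1}})\le2\delta T_D^{1/3}$. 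Since $x_u\le\delta$, the weight of $f$ is at most $\delta\sigma_f\le2\delta Q_D$. Combined with $Q_D\gtrsim\nu^2/\lambda$, the end-edge case gives $T_D\gtrsim(\nu^2/(4\delta\lambda))^3$, so $\lambda T_D$ dominates the weight $\delta\sigma_f\le\delta^3$ with ratio $O(\delta^2/\nu^2)$ after rearranging. This is where $64\delta^2/\nu^2$ should come from. The interior-edge case, where $m$ is large, gives the term involving $(r+1)^{1/3}/\lambda$ via $m-1\le r$.

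Summing over the disjoint discs then gives $\sum_DT_D\le M$, with $\operatorname{int}(D)\subseteq E$ and $\mu\ge1$. The main obstacle is this $\BB_{\ge}$ estimate: arranging the case split so that the constants come out exactly as $\epsilon_n$. Both cases give $\epsilon_n\to0$, because $\delta=o(1)$ and $r=o(N)$ with $\lambda^3\sim16N$.
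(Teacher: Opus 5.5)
Your handling of the shortened paths $\widehat P_h$ is correct and is essentially the paper's argument; this covers the simultaneous replacement, the unchanged ends and end terms, and the additivity of the gains. The same holds for $g_h\ge0$, $q_h\le r$, $T_h\le M$, and the $\BB_{<}$ bound via $\sigma_fx_u\le\delta\sigma_f<2\delta(\sigma_f-Q_D)$.

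The gap is in the $\BB_{\ge}$ estimate, and it is not a matter of constants. You bound the weight of $f=cdu$ by $\delta\sigma_f$, and with that bound the quotient by $\lambda T_D$ is not small. In the end-edge case your inequalities give only $T_D^{1/3}\ge\sigma_f/(8\delta)$, hence, using $\lambda\sigma_f>\nu^{2}$,
\[
\frac{\delta\sigma_f}{\lambda T_D}\ \le\ \frac{512\,\delta^{4}}{\lambda\sigma_f^{2}}\ <\ \frac{512\,\delta^{4}\lambda}{\nu^{4}} .
\]
In the interior-edge case, $T_D\ge\sigma_f^{3/2}/(8m^{1/2})$ gives only the bound $8\delta m^{1/2}/(\nu\lambda^{1/2})$. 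Both can grow with $n$. At this stage Theorem~\ref{thm:p3stab} gives only $\delta=o(1)$ with no rate, and $m$ may be as large as $r+1$ with $r=o(N)$. So the ratio $O(\delta^{2}/\nu^{2})$ you announce does not follow; your own sketch actually produces order $\delta^{6}\lambda^{2}/\nu^{6}$. The inequalities you wrote never record that $u$ contributes to $T_D$, and that is why no rearrangement of them closes the gap.

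The missing observation is that the defining vertex $u$ itself lies in $\operatorname{int}(D_f)$ by Lemma~\ref{lem:p3disc}. Hence $x_u^{3}\le T_D$ and $\prod_{v\in f}x_v\le\sigma_fT_D^{1/3}$. The quotient to control is then $\sigma_f/(\lambda T_D^{2/3})$, and your case split finishes exactly as stated.
\begin{itemize}
\item If $x_cx_{w_1}+x_{w_{m-1}}x_d\ge\sigma_f/4$, then $(x_c+x_d)T_D^{1/3}\ge\sigma_f/4$. The quotient is at most $16(x_c+x_d)^{2}/(\lambda\sigma_f)\le64\delta^{2}/\nu^{2}$.
\item If the interior edges carry at least $\sigma_f/4$, then $m^{1/3}T_D^{2/3}\ge\sigma_f/4$. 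The quotient is at most $4m^{1/3}/\lambda\le4(r+1)^{1/3}/\lambda$, since the $m-1$ internal vertices are distinct vertices of $E$.
\end{itemize}
Thus $\prod_{v\in f}x_v\le\epsilon_n\lambda T_{D_f}$ for each $f\in\BB_{\ge}$. Summing with $\sum_fT_{D_f}\le M$, which holds because the disc interiors are disjoint and lie in $E$, gives the second bound.
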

	
	\begin{proof}
		By Lemma~\ref{lem:p3disc} the interiors of the discs are disjoint and consist of vertices of $E$, so $\sum_{f\in\BB}T_{D_f}\le\sum_{v\in E}x_v^{3}\le M$.
		
		Let $f\in\BB_{<}$. Replacing the segment by the edge $cd$ removes $Q_D$ and inserts $\sigma_f$, a gain of $g_D=\sigma_f-Q_D>\sigma_f/2>0$, so $\prod_{v\in f}x_v=\sigma_fx_u\le\delta\sigma_f\le2\delta g_D$. The segments have disjoint interiors, their ends lie in $C$ and their internal vertices in $E$, so no operation deletes an end of another and the ends $p,q$ of $P_h$ are retained; the resulting list is again a path, the term $\tau(x_p+x_q)$ is unchanged, and the gains add. Summing and using $A,B\ge\min\{A,B\}$ gives the first bound; the deleted vertices lie in $E$, so $q_h\le|E|\le r$ and $T_h\le M$.
		
		Let $f\in\BB_{\ge}$, so $T_D>0$ and $x_u\le T_D^{1/3}$. Write $Q_D=Q_\partial+Q_{\mathrm{int}}$ with $Q_\partial=x_cx_{w_1}+x_{w_{m-1}}x_d$; since $w_1,w_{m-1}$ lie inside $D$, $Q_\partial\le(x_c+x_d)T_D^{1/3}$. For the internal part, H\"older's inequality and $2(xy)^{3/2}\le x^{3}+y^{3}$ give $Q_{\mathrm{int}}\le m^{1/3}T_D^{2/3}$. As $Q_D\ge\sigma_f/2$, one of the two is at least $\sigma_f/4$. In the first case $T_D^{-2/3}\le16(x_c+x_d)^{2}/\sigma_f^{2}$ and $\lambda\sigma_f>\nu^{2}$, so
		\[
		\frac{\prod_{v\in f}x_v}{\lambda T_D}\ \le\ \frac{\sigma_f}{\lambda T_D^{2/3}}\ \le\ \frac{16(x_c+x_d)^{2}}{\lambda\sigma_f}\ \le\ \frac{64\delta^{2}}{\nu^{2}} ;
		\]
		in the second, $T_D^{-2/3}\le4m^{1/3}/\sigma_f$ and $m-1\le|E|\le r$, so the same quotient is at most $4(r+1)^{1/3}/\lambda$. In either case $\prod_{v\in f}x_v\le\epsilon_n\lambda T_D$, and summing over $\BB_{\ge}$ with $\sum T_{D_f}\le M$ gives the second bound. Both terms of $\epsilon_n$ tend to zero, the second because $r=o(N)$ and $\lambda^{3}\sim16N$.
	\end{proof}
	
	\begin{lem}\label{lem:p3R}
		$\displaystyle R\ \ge\ \Bigl(\frac{8\gamma}3-o(1)\Bigr)\frac r{\lambda^{2}}+\Bigl(\frac16-o(1)\Bigr)\lambda M+Ag_a+Bg_b$.
	\end{lem}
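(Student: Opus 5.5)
We work hub by hub: for $h\in\{a,b\}$ we bound $\Theta_{N-1,\tau}(Z)-F(P_h)$ from below, multiply by the hub weight, and add. The bridge is the modified path $\widehat P_h$ of Lemma~\ref{lem:p3short}. It has $d_h-q_h$ edges and its vertices carry mass $Z-M_h-T_h$. Since $F(\widehat P_h)=F(P_h)+g_h$, we have
\[
\Theta_{N-1,\tau}(Z)-F(P_h)\ \ge\ \bigl[\Theta_{N-1,\tau}(Z)-\Theta_{\hat d_h,\tau}(Z-M_h-T_h)\bigr]+g_h,\qquad \hat d_h=N-1-r_h-q_h .
\]
Multiplying by the hub weight gives the terms $Ag_a+Bg_b$ directly. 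It then remains to show that the bracketed differences, weighted by $A$ and $B$, are at least $(\frac{8\gamma}3-o(1))r/\lambda^2+(\frac16-o(1))\lambda M$.

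We split each bracket into a length part and a mass part:
\[
\bigl[\Theta_{N-1,\tau}(Z)-\Theta_{\hat d_h,\tau}(Z)\bigr]+\bigl[\Theta_{\hat d_h,\tau}(Z)-\Theta_{\hat d_h,\tau}(Z-M_h-T_h)\bigr].
\]

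For the length part we sum Lemma~\ref{lem:p3len} over $d=\hat d_h,\dots,N-2$. Each increment is at least $\gamma Z^{2/3}N^{-2/3}-O(\hat d_h^{-1})$, and $\hat d_h\ge N-2r=N-o(N)$, so the length part is at least $(r_h+q_h)\bigl(\gamma Z^{2/3}N^{-2/3}-O(N^{-1})\bigr)$. We discard $q_h\ge0$, multiply by $A$ or $B=\nu+o(1)$, and sum over the two hubs. With $Z=\frac23+o(1)$ and $\lambda^3=(16+o(1))N$ this gives
\[
\nu\gamma\Bigl(\frac23\Bigr)^{2/3}N^{-2/3}r=\gamma\,6^{-1/3}\Bigl(\frac23\Bigr)^{2/3}16^{2/3}\frac r{\lambda^2}(1+o(1)).
\]
Now $6^{-1/3}(2/3)^{2/3}=(4/54)^{1/3}=(2/27)^{1/3}=2^{1/3}/3$ and $16^{2/3}=2^{8/3}$, so the constant is $\gamma\cdot 2^3/3=\frac{8\gamma}3$, as required.

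For the mass part we apply Lemma~\ref{lem:p3mass} with mass removed $M_h+T_h$, which is $o(1)\le Z/2$ by Lemma~\ref{lem:p3Msmall} and $T_h\le M$. We drop $T_h\ge0$ by monotonicity of $\Theta$ in the mass, which gives at least $(\frac23-o(1))\hat d_h^{1/3}Z^{-1/3}M_h$. Weighting by $\nu$, using $\hat d_h^{1/3}=N^{1/3}(1+o(1))$ and $N^{1/3}=\lambda/16^{1/3}$, the coefficient of $\lambda M_h$ is
\[
\nu\cdot\frac23\cdot\Bigl(\frac32\Bigr)^{1/3}16^{-1/3}=\frac23\bigl(6\cdot\tfrac23\cdot16\bigr)^{-1/3}=\frac23\cdot\frac14=\frac16 .
\]
Summing over $h$ gives $(\frac16-o(1))\lambda M$.

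The main obstacle is uniformity. Lemmas~\ref{lem:p3len} and~\ref{lem:p3mass} are stated for $\tau$ and $W$ in compact ranges, and here the mass $W$ varies slightly over the steps ($Z$ versus $Z-M_h-T_h$). We handle this by doing the length step at full mass $Z$ and the mass step at the shorter length $\hat d_h$, as in the split above, so each lemma is applied at a fixed parameter. We also need $\tau=AB/(A+B)\to\nu/2$ and $Z\to\frac23$ to stay in fixed compact intervals, which holds by Theorem~\ref{thm:p3stab}. The error terms $O(N^{-1})\cdot r$ and $O(d^{-1/3})$ are absorbed into the $o(1)$ factors.

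Finally, we must check that $\widehat P_h$ is admissible for $\Theta_{\hat d_h,\tau}$ with the same end terms. Lemma~\ref{lem:p3short} guarantees this: the ends $p,q$ are retained and the vertex mass of $\widehat P_h$ is $Z-M_h-T_h$, so $F(\widehat P_h)\le\Theta_{\hat d_h,\tau}(Z-M_h-T_h)$, which is the inequality used in the first display.
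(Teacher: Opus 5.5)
Your argument is correct and is essentially the paper's proof: you bound $F(P_h)$ through $\widehat P_h$, split the deficit into a length loss (handled by telescoping Lemma~\ref{lem:p3len}) and a mass loss (handled by Lemma~\ref{lem:p3mass}), discard $q_h$ and $T_h$, and evaluate the constants $\frac{8\gamma}{3}$ and $\frac16$ exactly as the paper does. The only difference is the order of the two steps: the paper takes the mass loss at length $N-1$ and the length loss at mass $W_h=Z-M_h-T_h$, and you do the reverse. This is immaterial because both lemmas are uniform for $\tau$ and $W$ in compact ranges, so the reason you give for choosing your order is not actually needed.
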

	
	\begin{proof}
		Fix $h$. The path $\widehat P_h$ has $d_h-q_h$ edges and mass $W_h=Z-M_h-T_h$, and its coordinates are admissible, so $F(\widehat P_h)\le\Theta_{d_h-q_h,\tau}(W_h)$ and, by $F(\widehat P_h)=F(P_h)+g_h$,
		\[
		\Theta_{N-1,\tau}(Z)-F(P_h)\ \ge\ \bigl[\Theta_{N-1,\tau}(Z)-\Theta_{N-1,\tau}(W_h)\bigr]+\bigl[\Theta_{N-1,\tau}(W_h)-\Theta_{d_h-q_h,\tau}(W_h)\bigr]+g_h .
		\]
		Put $m_h=M_h+T_h$ and $e_h=r_h+q_h$, so $d_h-q_h=N-1-e_h$ with $e_h\le2r=o(N)$ and, by Lemma~\ref{lem:p3Msmall}, $m_h\le2M=o(1)$. Lemma~\ref{lem:p3mass} bounds the first bracket below by $(\frac23-o(1))N^{1/3}Z^{-1/3}m_h$. For the second, telescoping and Lemma~\ref{lem:p3len} give, with a constant $K$,
		\begin{align*}
			\Theta_{N-1,\tau}(W_h)-\Theta_{N-1-e_h,\tau}(W_h)
			&\ \ge\ \gamma W_h^{2/3}\!\!\sum_{d=N-1-e_h}^{N-2}\!\!d^{-2/3}
			-K\!\!\sum_{d=N-1-e_h}^{N-2}\!\!d^{-1}\\
			&\ \ge\ (\gamma-o(1))Z^{2/3}e_hN^{-2/3},
		\end{align*}
		since every index in the range is between $N/2$ and $N$ for large $n$, and $W_h\to Z$. Multiplying by $A$ for $h=a$ and by $B$ for $h=b$, adding, and discarding $q_h,T_h\ge0$,
		\[
		R\ \ge\ (\gamma-o(1))Z^{2/3}N^{-2/3}(Ar_a+Br_b)+\Bigl(\frac23-o(1)\Bigr)N^{1/3}Z^{-1/3}(AM_a+BM_b)+Ag_a+Bg_b .
		\]
		Now $Ar_a+Br_b=(\nu+o(1))r$ and $AM_a+BM_b=(\nu+o(1))M$, with relative errors that are uniform. Finally $\lambda^{2}\nu Z^{2/3}N^{-2/3}\to2^{8/3}\cdot6^{-1/3}(\tfrac23)^{2/3}=\frac83$ and $\frac23\nu N^{1/3}Z^{-1/3}/\lambda\to\frac16$, which gives the stated form.
	\end{proof}
	
	\begin{lem}\label{lem:p3S0}
		$\displaystyle S_0\ \le\ \Bigl(\frac1{12}+o(1)\Bigr)\lambda M+\Bigl(\frac49+o(1)\Bigr)\frac r{\lambda^{2}}+\eta_n\bigl(Ag_a+Bg_b\bigr)$.
	\end{lem}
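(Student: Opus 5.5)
This bound is obtained by combining Lemma~\ref{lem:p3S0first} with the two estimates of Lemma~\ref{lem:p3short}. Lemma~\ref{lem:p3S0first} already bounds $S_0$ by
\[
\Bigl(\frac1{12}+o(1)\Bigr)\lambda M+\Bigl(\frac49+o(1)\Bigr)\frac r{\lambda^{2}}+\sum_{f\in\BB}\prod_{v\in f}x_v .
\]
So it remains to control the weight of the bad faces $\BB=\BB_{<}\cup\BB_{\ge}$.

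First, split the last sum according to this partition. The faces of $\BB_{<}$ are handled by the first bound of Lemma~\ref{lem:p3short}: their total weight is at most $\eta_n(Ag_a+Bg_b)$, where the gains $g_h$ come from the simultaneous shortcut of the segments in the link paths. The faces of $\BB_{\ge}$ are handled by the second bound of Lemma~\ref{lem:p3short}: their total weight is at most $\epsilon_n\lambda M$ with $\epsilon_n\to0$. This term is absorbed into the $o(1)\lambda M$ already present, so the coefficient of $\lambda M$ stays $\frac1{12}+o(1)$.

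Adding these gives exactly the stated inequality. The $o(1)$ terms are uniform, because $\epsilon_n$ depends only on $\delta$, $r$ and $\lambda$, and each of these has been shown to behave suitably: $\delta=o(1)$, $r=o(N)$, and $\lambda^{3}\sim16N$.

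The only point that needs care is that the partition $\BB=\BB_{<}\cup\BB_{\ge}$ is exhaustive and disjoint, which holds by definition. One should also check that the shortcuts producing $g_a$ and $g_b$ are exactly the ones defined in Lemma~\ref{lem:p3short}. This matters because Lemma~\ref{lem:p3R} must use the same $g_h$ on the $R$ side. Then the term $\eta_n(Ag_a+Bg_b)$ is dominated by $Ag_a+Bg_b$ in $R$ once $\eta_n<1$, which is what makes the comparison $R>S_0$ work.

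I expect no genuine obstacle here: the lemma is a bookkeeping assembly of Lemmas~\ref{lem:p3S0first} and~\ref{lem:p3short}.
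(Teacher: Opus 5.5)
Your proposal is correct and is exactly the paper's argument: combine Lemma~\ref{lem:p3S0first} with the two bounds of Lemma~\ref{lem:p3short} over the partition $\BB=\BB_{<}\cup\BB_{\ge}$, and absorb the $\epsilon_n\lambda M$ term into the $o(1)$ coefficient of $\lambda M$. Your remarks about using the same gains $g_a,g_b$ on the $R$ side and about needing $\eta_n<1$ are not required for this lemma, but they correctly anticipate how it is used in Proposition~\ref{prop:p3r0}.
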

	
	\begin{proof}
		Combine Lemma~\ref{lem:p3S0first} with the two bounds of Lemma~\ref{lem:p3short}; the term $\epsilon_n\lambda M$ is absorbed into the coefficient of $\lambda M$ because $\epsilon_n\to0$.
	\end{proof}
	
	\begin{prop}\label{prop:p3r0}
		For all sufficiently large $n$ we have $r=0$; that is, both hubs are adjacent to every vertex of $U$.
	\end{prop}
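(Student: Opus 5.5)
We argue by contradiction, combining Lemma~\ref{lem:p3compare} with the two one-sided estimates just proved. Suppose that $r>0$ along a subsequence of extremal hypergraphs. Then $M>0$, because every Perron coordinate is positive and $r=\sum_{u\in E}\mu(u)\ge1$ forces $E\ne\emptyset$. Lemma~\ref{lem:p3compare} gives $S_0\ge R$. We substitute the lower bound of Lemma~\ref{lem:p3R} for $R$ and the upper bound of Lemma~\ref{lem:p3S0} for $S_0$, and obtain
\[
\Bigl(\frac{8\gamma}3-o(1)\Bigr)\frac r{\lambda^{2}}+\Bigl(\frac16-o(1)\Bigr)\lambda M+Ag_a+Bg_b
\ \le\ \Bigl(\frac1{12}+o(1)\Bigr)\lambda M+\Bigl(\frac49+o(1)\Bigr)\frac r{\lambda^{2}}+\eta_n\bigl(Ag_a+Bg_b\bigr).
\]

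We compare the three groups of terms separately. The shortcut gains satisfy $g_a,g_b\ge0$ by Lemma~\ref{lem:p3short}, and $\eta_n=2\delta/\min\{A,B\}\to0$ because $\delta=o(1)$ and $A,B\to\nu$. Hence, for large $n$, $(1-\eta_n)(Ag_a+Bg_b)\ge0$, and these terms can be dropped from the left side. For the $\lambda M$ terms we use $\frac16>\frac1{12}$. For the $r/\lambda^2$ terms we use $\frac{8\gamma}3>\frac49$: since $\gamma=(2^{1/3}-2^{-1/3})^2\approx0.21736>\frac16$, we get $\frac{8\gamma}{3}\approx0.5796>0.4444$. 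These are exactly the two numerical inequalities named at the start of the subsection. Fix $\eta>0$ smaller than both gaps, namely $\frac{8\gamma}3-\frac49$ and $\frac1{12}$. For $n$ large, all the $o(1)$ terms are below $\eta/2$. Rearranging then gives
\[
c_1\frac r{\lambda^{2}}+c_2\lambda M\le0
\]
for some $c_1,c_2>0$. This contradicts $r\ge1$ and $M>0$. Therefore $r=0$ for all large $n$, so every vertex of $U$ is adjacent to both $a$ and $b$.

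The only point needing care is uniformity. The $o(1)$ terms in Lemmas~\ref{lem:p3R} and~\ref{lem:p3S0} must be functions of $n$ alone, not depending on $r$ or $M$. They come from $\delta\to0$, $A,B\to\nu$, $Z\to\frac23$, $r=o(N)$ and $M=o(1)$, by Theorem~\ref{thm:p3stab} and Lemmas~\ref{lem:p3common} and~\ref{lem:p3Msmall}, together with the uniform error terms of Lemmas~\ref{lem:p3len} and~\ref{lem:p3mass}. All of these are controlled along any sequence of extremal hypergraphs, so a single threshold for $n$ works.

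We should also check that Lemmas~\ref{lem:p3len} and~\ref{lem:p3mass} are applied within their stated ranges. The parameter $\tau\to\nu/2$ and the masses $W_h\to\frac23$ stay in fixed compact intervals of $(0,\infty)$. The mass deficit satisfies $m_h\le 2M=o(1)\le W_h/2$.

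The substantive work is already contained in those lemmas, so the proposition itself is essentially bookkeeping. Finally, we argue by subsequences: if the claim failed for infinitely many $n$, we would pass to that subsequence, where each cited lemma still applies, and reach the contradiction above.
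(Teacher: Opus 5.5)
Your argument is correct and is essentially the paper's proof: combine $S_0\ge R$ from Lemma~\ref{lem:p3compare} with the bounds of Lemmas~\ref{lem:p3R} and~\ref{lem:p3S0}, absorb the $Ag_a+Bg_b$ terms using $\eta_n\to0$, and use the two gaps $\frac{8\gamma}{3}>\frac49$ and $\frac16>\frac1{12}$ to get $R-S_0>0$ whenever $r>0$. The only cosmetic difference is how $\gamma>\frac16$ is certified: the paper uses the rational bound $2^{1/3}>\frac54$, which gives $\gamma>\frac{81}{400}$, whereas you use a decimal approximation.
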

	
	\begin{proof}
		Write $g=Ag_a+Bg_b\ge0$. Lemmas~\ref{lem:p3R} and~\ref{lem:p3S0} compare the same three nonnegative quantities $r/\lambda^{2}$, $\lambda M$ and $g$, so there are sequences $a_n,b_n,c_n,d_n\to0$ with
		\[
		R\ \ge\ \Bigl(\frac{8\gamma}3-a_n\Bigr)\frac r{\lambda^{2}}+\Bigl(\frac16-b_n\Bigr)\lambda M+g,\qquad
		S_0\ \le\ \Bigl(\frac49+c_n\Bigr)\frac r{\lambda^{2}}+\Bigl(\frac1{12}+d_n\Bigr)\lambda M+\eta_ng .
		\]
		Since $(5/4)^{3}=125/64<2$ we have $2^{1/3}>\frac54$ and $2^{-1/3}<\frac45$, so
		\[
		\gamma=\bigl(2^{1/3}-2^{-1/3}\bigr)^{2}>\Bigl(\frac54-\frac45\Bigr)^{2}=\frac{81}{400}>\frac16,
		\qquad\text{whence}\qquad \kappa:=\frac{8\gamma}3-\frac49=\frac83\Bigl(\gamma-\frac16\Bigr)>0 .
		\]
		For large $n$ we have $a_n+c_n\le\kappa/2$, $b_n+d_n\le\frac1{24}$ and $\eta_n\le\frac12$, so
		\[
		R-S_0\ \ge\ \frac\kappa2\cdot\frac r{\lambda^{2}}+\frac1{24}\lambda M+\frac12g\ >\ 0
		\]
		whenever $r>0$, contradicting Lemma~\ref{lem:p3compare}. Hence no such subsequence exists and $r=r_a+r_b=0$.
	\end{proof}
	
	\begin{proof}[Proof of Theorem~\ref{thm:p3}]
		Let $n$ be large and let $\HH$ be extremal; by Lemma~\ref{lem:augment} we may write $\HH=\HH(G)$ with $G$ a plane triangulation, by Lemma~\ref{lem:p3edge} $ab\in E(G)$, and by Proposition~\ref{prop:p3r0} both hubs are adjacent to every vertex of $U$.
		
		Euler's formula for a triangulation gives $e(G)=3n-6=3N$, and the edges of $G$ are $ab$, the $N$ edges from $a$ to $U$, the $N$ from $b$ to $U$, and those of $G[U]$; hence $e(G[U])=3N-1-2N=N-1$. The link path $P_a$ now contains every vertex of $U$, and each of its $N-1$ edges lies in $G[U]$, so $E(P_a)=E(G[U])$ and $G[U]=P_a$ is a Hamilton path of $U$, say $u_1u_2\cdots u_N$. Therefore $G=K_2+P_N$.
		
		The faces containing $a$ but not $b$ are exactly $\{a,u_i,u_{i+1}\}$ for $1\le i<N$, and likewise for $b$; the two faces at $ab$ are $abp$ and $abq$, where $p,q$ are the ends of $P_a$, that is, $\{a,b,u_1\}$ and $\{a,b,u_N\}$; and there is no face inside $U$, since $G[U]$ is a path. This lists all $2N$ faces, and they are the hyperedges of $\HH$ by Lemma~\ref{lem:augment}. Hence $\HH\cong D_N$. An extremal hypergraph exists because the class is finite and contains $D_N$. Since every extremal hypergraph is isomorphic to $D_N$, this also proves that $D_N$ is extremal, with $N=n-2$.
	\end{proof}
	
	\begin{rem}\label{rem:p3small}
		The restriction to large $n$ cannot be removed.
		Let $G_*$ be the plane triangulation obtained from $K_4$ by inserting
		one vertex into each face and joining it to the three boundary
		vertices.
		By symmetry, the Perron vector of $\HH(G_*)$ has values $\alpha$
		and $\beta$ on the original and inserted vertices, respectively.
		Writing $\lambda_*=\lambda(\HH(G_*))$, the eigenequations give
		\[
		\lambda_*\alpha^2=6\alpha\beta,
		\qquad
		\lambda_*\beta^2=3\alpha^2,
		\]
		and hence $\lambda_*^{3}=108$.
		
		To compare this with $D_6$, assign the value $113$ to each hub
		and, in order, the values
		$84$, $92$, $94$, $94$, $92$, $84$ to the path vertices,
		obtaining a positive vector $z$.
		A direct calculation gives
		\[
		119z_v^2
		-25\sum_{\substack{e\in E(D_6)\\v\in e}}
		\prod_{u\in e\setminus\{v\}}z_u
		\ge 584>0
		\qquad\text{for every }v\in V(D_6).
		\]
		Let $x$ be a Perron vector of $D_6$, and choose $v$ maximizing
		$x_v/z_v$.
		Writing $t=x_v/z_v$, we obtain
		\[
		\lambda(D_6)x_v^2
		=\sum_{\substack{e\in E(D_6)\\v\in e}}
		\prod_{u\in e\setminus\{v\}}x_u
		\le t^2
		\sum_{\substack{e\in E(D_6)\\v\in e}}
		\prod_{u\in e\setminus\{v\}}z_u
		<\frac{119}{25}x_v^2.
		\]
		Since $119^3<108\cdot25^3$, it follows that
		\[
		\lambda(D_6)<\frac{119}{25}
		<108^{1/3}=\lambda_* .
		\]
		Thus $D_{n-2}$ is not extremal when $n=8$. Another useful benchmark is the bipyramid
		$\overline{K_2}+C_N$ for $N\ge3$, whose normalized Perron vector has weight $\nu$ on the hubs and $\beta=(2/(3N))^{1/3}$ on the cycle vertices, and whose spectral radius is therefore exactly $(16N)^{1/3}$; it has $2N$ faces meeting exactly one hub against $2N-2$ for $D_N$. Replacing a cycle edge by the edge between the hubs produces $D_N$ and changes the face polynomial at this vector by $6\nu\beta(\nu-\beta)$, which is positive for $N>4$ and is $\Theta(N^{-1/3})$ as $N\to\infty$.
	\end{rem}
	
	\section{Planar Hypergraphs: the case $k\ge4$}\label{sec:planar}
	
	\begin{prop}\label{prop:necklace}
		For every balanced theta graph, $\lambda\bigl(\HH(\Theta(p_1,\dots,p_t))\bigr)=2^{1-2/k}t^{2/k}$. In particular the value does not depend on $p_1,\dots,p_t$.
	\end{prop}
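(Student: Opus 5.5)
The plan is to prove matching upper and lower bounds directly from the variational definition of $\lambda$. Both bounds use the structure of $\Theta(p_1,\dots,p_t)$. Write $h_1,h_2$ for the branch vertices and $P_i$ for the set of internal vertices of the $i$th path, so $|P_i|=p_i$. The faces are then
\[
F_i=\{h_1,h_2\}\cup P_i\cup P_{i+1},\qquad i=1,\dots,t,
\]
with indices modulo $t$, and $|F_i|=2+p_i+p_{i+1}=k$. Every internal vertex lies on exactly two faces, namely $F_{i-1}$ and $F_i$ for $v\in P_i$. The number of internal vertices is $n-2=\frac t2(k-2)$.

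\emph{Upper bound.} Let $x\ge0$ with $\sum_vx_v^k=1$. Put $H=x_{h_1}^k+x_{h_2}^k$ and $W=1-H$, the mass on internal vertices. For each face let $m_i$ be the mass of $F_i\setminus\{h_1,h_2\}$. Since each internal vertex lies on exactly two faces, $\sum_i m_i=2W$. The steps, in order, are as follows.
\begin{itemize}
\item[$(1)$] By the arithmetic--geometric mean inequality, $x_{h_1}x_{h_2}\le(H/2)^{2/k}$.
\item[$(2)$] For each face, again by the arithmetic--geometric mean inequality on the $k-2$ numbers $x_v^k$, $\prod_{v\in F_i\setminus\{h_1,h_2\}}x_v\le\bigl(m_i/(k-2)\bigr)^{(k-2)/k}$.
\item[$(3)$] Summing $(2)$ over faces and applying H\"older's inequality with exponents $\frac k2$ and $\frac k{k-2}$ gives
\[
\sum_i\bigl(m_i/(k-2)\bigr)^{(k-2)/k}\le t^{2/k}\bigl(2W/(k-2)\bigr)^{(k-2)/k}.
\]
\end{itemize}
Combining these,
\[
P(x)\le k\,t^{2/k}\,2^{-2/k}\bigl(2/(k-2)\bigr)^{(k-2)/k}\,H^{2/k}(1-H)^{(k-2)/k}.
\]
The last factor is maximized at $H=2/k$, where it equals $(2/k)^{2/k}\bigl((k-2)/k\bigr)^{(k-2)/k}$. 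Simplifying, the powers of $k$ and of $k-2$ cancel and the bound becomes exactly $2^{1-2/k}t^{2/k}$.

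\emph{Lower bound.} Use the test vector that makes every inequality above tight. Set $x_{h_1}^k=x_{h_2}^k=1/k$ and give every internal vertex the value $c$ with $c^k=2/(kt)$. The total mass is then
\[
\frac2k+\frac t2(k-2)\cdot\frac2{kt}=1.
\]
All internal vertices are equal, so each $m_i=(k-2)c^k$; hence the arithmetic--geometric mean steps and the H\"older step are all equalities, and $H=2/k$ is the optimal split. Directly,
\[
P(x)=k\,t\,k^{-2/k}\bigl(2/(kt)\bigr)^{(k-2)/k}=2^{1-2/k}t^{2/k}.
\]
This shows the two bounds coincide. Since the computation uses only $p_i+p_{i+1}=k-2$ and the fact that each internal vertex lies on exactly two faces, the value does not depend on the $p_i$.

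The only delicate point is the bookkeeping identity $\sum_im_i=2W$. It relies on every internal vertex lying on exactly two faces, including for odd $t$ and for the outer face $F_t$, which holds because consecutive paths bound the faces cyclically. The rest is routine.
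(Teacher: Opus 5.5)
Your proof is correct and follows the paper's argument. It applies AM--GM to the $k-2$ internal coordinates of each face, then H\"older with exponents $\frac k2,\frac k{k-2}$ using that every internal vertex lies on exactly two faces, then AM--GM for the branch pair, and finally a one-variable optimization attained at $x_{h_1}^k=x_{h_2}^k=\frac1k$ with uniform internal weights; the only differences are cosmetic (you track face masses rather than path masses and optimize in $H$ rather than $z=H/2$).
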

	
	\begin{proof}
		Let $x\ge0$ have mass one, write $A$ and $B$ for its values at the two branch vertices, and let $P_i$ be the set of internal vertices of the $i$th path. Put $\pi_i=\prod_{v\in P_i}x_v$ and $m_i=\sum_{v\in P_i}x_v^{k}$, so that $A^{k}+B^{k}+W=1$ with $W=\sum_im_i$. The $i$th face consists of the two branch vertices together with $P_i\cup P_{i+1}$, a set of exactly $k-2$ vertices, so
		\[
		(\pi_i\pi_{i+1})^{\frac k{k-2}}=\Bigl(\prod_{v\in P_i\cup P_{i+1}}x_v^{k}\Bigr)^{\frac1{k-2}}\le\frac{m_i+m_{i+1}}{k-2}
		\]
		by the arithmetic--geometric mean inequality, and H\"older's inequality together with $\sum_i(m_i+m_{i+1})=2W$ gives
		\[
		\sum_{i=1}^{t}\pi_i\pi_{i+1}\ \le\ t^{2/k}\Bigl(\frac{2W}{k-2}\Bigr)^{\frac{k-2}k}.
		\]
		Hence $P_{\HH}(x)=kAB\sum_i\pi_i\pi_{i+1}\le kAB\,t^{2/k}\bigl(\frac{2W}{k-2}\bigr)^{(k-2)/k}$. Since $AB\le\bigl(\frac{A^{k}+B^{k}}2\bigr)^{2/k}$, the right side is largest when $A=B$; writing $z=A^{k}$ we have $W=1-2z$ and are left with maximizing $z^{2}(1-2z)^{k-2}$, whose unique maximum on $[0,\frac12]$ occurs at $z=\frac1k$.  Substituting $A^{2}=k^{-2/k}$ and $\frac{2W}{k-2}=\frac2k$ gives
		\[
		P_{\HH}(x)\ \le\ k\cdot k^{-2/k}\Bigl(\frac2k\Bigr)^{\frac{k-2}k}t^{2/k}=2^{1-2/k}t^{2/k}.
		\]
		Equality holds for $A=B=k^{-1/k}$ and $x_v=\bigl(\frac{2W}{t(k-2)}\bigr)^{1/k}$ at every internal vertex: all the $x_v$ are then equal and, because $p_i+p_{i+1}=k-2$ for every $i$, so are all the sums $m_i+m_{i+1}$.
	\end{proof}
	
	The exponent is $2/k$ rather than $1/k$, so the maximum here has order $n^{2/k}$.
	
	Throughout this section $G$ is a closed $k$-angulation on $n>k$ vertices with $f=2(n-2)/(k-2)$ faces, $\HH=\HH(G)$ is extremal among the planar $k$-uniform hypergraphs on $n$ vertices, $x$ is its Perron vector normalized by $\sum_vx_v^{k}=1$, and $\lambda=\lambda(\HH)$, with coordinates ordered as $x_1\ge x_2\ge\cdots\ge x_n$, so that
	\begin{equation}\label{eq:ptotal}
		\frac{\lambda}{k}=\sum_{F}\ \prod_{v\in F}x_v .
	\end{equation}
	Fix $\varepsilon\in(0,1)$, put $L=\{v:x_v\ge\varepsilon\}$ and $Z=\sum_{v\notin L}x_v^{k}$, so that $|L|\le\varepsilon^{-k}$, and partition the faces as
	\[
	\FF_j=\{F:|F\cap L|=j\}\ (j=0,1,2),\qquad \FF_{\ge3}=\{F:|F\cap L|\ge3\},
	\]
	writing $\Sigma_j$ for the total weight of $\FF_j$ in~\eqref{eq:ptotal}. The two regimes differ in which class carries the main term: for $k=3$ it is $\FF_1$, and $\lambda$ has order $n^{1/3}$, while for $k\ge4$ it is $\FF_2$, and $\lambda$ has order $n^{2/k}$.
	
	\begin{lem}\label{lem:three}
		In a closed $k$-angulation, three distinct vertices lie on at most two common faces.
	\end{lem}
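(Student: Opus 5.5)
The plan is to argue by contradiction through the vertex--face incidence graph, as the introduction indicates (``three vertices lie on at most two common faces is the nonplanarity of $K_{3,3}$''). Suppose three distinct vertices $u,v,w$ of a closed $k$-angulation $G$ lie on three distinct common faces $F_1,F_2,F_3$. Form the auxiliary plane graph $G^{+}$ by placing a new vertex $z_i$ in the interior of each face $F_i$ and joining $z_i$ to every vertex on the boundary cycle of $F_i$. These new edges run inside the open disc bounded by the facial cycle $F_i$ and do not cross one another, so $G^{+}$ is still plane. This is the standard radial, or vertex--face incidence, construction. It uses only the fact that each face of $G$ is bounded by a cycle, which holds because $G$ is $2$-connected, including for the outer face.

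In $G^{+}$ the vertices $z_1,z_2,z_3$ are each adjacent to all of $u,v,w$, since each of $u,v,w$ lies on the boundary of each $F_i$. The six vertices are distinct: the $z_i$ are new, and $u,v,w$ are distinct by hypothesis. Hence $G^{+}$ contains $K_{3,3}$ as a subgraph, with parts $\{u,v,w\}$ and $\{z_1,z_2,z_3\}$. This contradicts Kuratowski's theorem, so no three distinct vertices share three faces.

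The only point needing care is that the three faces are genuinely distinct faces of the embedding, so that the $z_i$ lie in three different open regions and their star edges do not interfere. This is built into the hypothesis. The bound is sharp, because in a balanced theta graph the two branch vertices together with an internal vertex of $P_i$ lie on the two faces bounded by $P_{i-1},P_i$ and by $P_i,P_{i+1}$. I do not expect any real obstacle; the argument is the same $K_{3,3}$ exclusion already used in Lemma~\ref{lem:p3master}, applied here in the incidence graph.
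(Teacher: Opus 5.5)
Your proof is correct and takes essentially the same route as the paper: the paper also places a point $p_i$ in the interior of each of the three faces and joins it by arcs inside $F_i$ to $u,v,w$, which gives a drawing of $K_{3,3}$ in the sphere. Joining $z_i$ to the whole boundary cycle, as you do, only adds edges and does not change the argument.
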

	
	\begin{proof}
		Regard the embedding as an embedding in the sphere and suppose $u,v,w$ lie on three distinct faces $F_1,F_2,F_3$. Each face is bounded by a cycle, so its closure is a closed disc with $u,v,w$ on its boundary; choose a point $p_i$ in the interior of $F_i$ and three arcs from $p_i$ to $u,v,w$ meeting only at $p_i$ and lying otherwise in the interior of $F_i$. Distinct faces have disjoint interiors, so the nine arcs realize a drawing of $K_{3,3}$ with parts $\{p_1,p_2,p_3\}$ and $\{u,v,w\}$ in the sphere, which is impossible.
	\end{proof}
	
	\begin{cor}\label{cor:f3}
		$\Sigma_{\ge3}\le\frac{k+3}{3k}\,\varepsilon^{-2k}$.
	\end{cor}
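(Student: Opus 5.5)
The plan is to bound the weight of each face of $\FF_{\ge3}$ by the arithmetic--geometric mean inequality, and then to count how often a single vertex can be charged. For $F\in\FF_{\ge3}$ we have
\[
\prod_{v\in F}x_v\ \le\ \frac1k\sum_{v\in F}x_v^{k},
\]
so that
\[
\Sigma_{\ge3}\ \le\ \frac1k\sum_{v}m_v\,x_v^{k},
\]
where $m_v$ is the number of faces of $\FF_{\ge3}$ containing $v$. I will split this sum into heavy and light $v$. Since $\sum_vx_v^{k}=1$, it suffices to bound $m_v$ uniformly in each range by a multiple of $\varepsilon^{-2k}$. An alternative is to split each face product into a factor carried by three heavy vertices and a factor carried by the remaining vertices, and apply the arithmetic--geometric mean inequality to each. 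This would produce a sum of the form $\frac13\cdot(\text{heavy count})+\frac1k\cdot(\text{total mass})$, which matches the shape $\frac{k+3}{3k}=\frac13+\frac1k$ of the constant.

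The key combinatorial input is Lemma~\ref{lem:three}. For a heavy vertex $h$, every face of $\FF_{\ge3}$ through $h$ contains $h$ together with some pair of further heavy vertices. Each such triple lies on at most two faces, so
\[
m_h\ \le\ 2\binom{|L|-1}{2}\ \le\ |L|^{2}\ \le\ \varepsilon^{-2k}.
\]
This is the source of the exponent $2k$ rather than $3k$: we count pairs at a fixed vertex, not all triples.

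Light vertices must also be controlled. For this I will bound the total number of faces in $\FF_{\ge3}$ linearly in $|L|$. The vertex--face incidence graph restricted to $L$ and $\FF_{\ge3}$ is a planar bipartite graph: put a point inside each face and join it to its boundary vertices, which is the same construction as in the proof of Lemma~\ref{lem:three}. Every face node has degree at least $3$, so with $m=|\FF_{\ge3}|$,
\[
3m\ \le\ 2(|L|+m)-4,
\qquad\text{whence}\qquad m\ \le\ 2|L|\ \le\ 2\varepsilon^{-k}.
\]
Every light vertex lies on at most $m$ such faces. Its contribution is therefore at most $\frac1k\,m\,Z\le\frac{2}{k}\varepsilon^{-k}$, which is at most $\frac1k\varepsilon^{-2k}$ up to the factor $2\varepsilon^{k}\le 2$. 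This will need care.

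The main obstacle is matching the stated constant exactly. To get it, I will use the refined split: in each face pick three heavy vertices $h_1,h_2,h_3$, and weight the arithmetic--geometric mean so that these three absorb the share $\frac13$ via the count $m_h\le\varepsilon^{-2k}$. The remaining $k-3$ vertices should then contribute at most $\frac1k$ times the total mass, times a multiplicity that is again at most $\varepsilon^{-2k}$. If the bookkeeping does not close, I will accept any constant of order $\varepsilon^{-2k}$, since only that order is used later.
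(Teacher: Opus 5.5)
Your heavy-vertex step is the paper's: after the arithmetic--geometric mean inequality, Lemma~\ref{lem:three} gives $m_h\le2\binom{|L|-1}{2}\le|L|^{2}\le\varepsilon^{-2k}$, and hence the term $\frac1k\varepsilon^{-2k}$.

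For light vertices the paper argues differently. It counts $|\FF_{\ge3}|\le2\binom{|L|}{3}\le|L|^{3}/3$ by the same lemma and uses the pointwise bound $x_v^{k}<\varepsilon^{k}$. So the light part is at most $\frac1k\varepsilon^{k}\cdot k|\FF_{\ge3}|\le\frac13\varepsilon^{-2k}$. Thus in $\frac{k+3}{3k}=\frac1k+\frac13$ the $\frac1k$ is the heavy share and the $\frac13$ is the light share, not the other way round as your last paragraph guesses.

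Your replacement, the Euler bound on the incidence graph between $L$ and $\FF_{\ge3}$, is correct. That graph is simple, planar and bipartite, each face node has degree at least $3$, and $\FF_{\ge3}\ne\emptyset$ forces $|L|\ge3$. It gives the much stronger linear count $|\FF_{\ge3}|\le2|L|-4$.

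However, you stop one line short of the statement. You leave the comparison of $\frac2k\varepsilon^{-k}$ with $\frac13\varepsilon^{-2k}$ as something that ``will need care''. From $\varepsilon<1$ alone this comparison holds only for $k\ge6$. The missing observation is already implicit in your Euler step. If $\FF_{\ge3}=\emptyset$ there is nothing to prove. Otherwise $\varepsilon^{-k}\ge|L|\ge3$, so the light part is at most $\frac{2|L|-4}{k}<\frac{2}{3k}|L|^{2}\le\frac{2}{3k}\varepsilon^{-2k}\le\frac13\varepsilon^{-2k}$.

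With this line your argument is complete, even with $\frac{5}{3k}$ in place of $\frac{k+3}{3k}$. The ``refined split'' is then unnecessary, and as written it is not an argument. The fallback of accepting some other constant of order $\varepsilon^{-2k}$ would not prove the statement as given, although it would suffice where the corollary is used.

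Your linear count in fact makes the heavy--light split superfluous. Each face product is at most $\frac1k\sum_vx_v^{k}=\frac1k$, so $\Sigma_{\ge3}\le\frac1k|\FF_{\ge3}|\le\frac{2|L|-4}{k}$. This is of order $\varepsilon^{-k}$ rather than $\varepsilon^{-2k}$. The paper's route needs only Lemma~\ref{lem:three}, at the price of a cubic face count that forces the split.
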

	
	\begin{proof}
		Every face of $\FF_{\ge3}$ contains a heavy triple and, by Lemma~\ref{lem:three}, a triple lies on at most two faces, so $|\FF_{\ge3}|\le2\binom{|L|}3$; for the same reason a heavy vertex $h$ lies on at most $2\binom{|L|-1}{2}\le|L|^{2}$ faces of $\FF_{\ge3}$. By the arithmetic--geometric mean inequality,
		\[
		\Sigma_{\ge3}\le\frac1k\sum_{F\in\FF_{\ge3}}\ \sum_{v\in F}x_v^{k}
		=\frac1k\Bigl(\sum_{h\in L}d_{\ge3}(h)x_h^{k}+\sum_{v\notin L}d_{\ge3}(v)x_v^{k}\Bigr),
		\]
		where $d_{\ge3}(v)$ counts the faces of $\FF_{\ge3}$ at $v$. The first sum is at most $|L|^{2}\sum_{h\in L}x_h^{k}\le\varepsilon^{-2k}$, and the second at most $\varepsilon^{k}\sum_vd_{\ge3}(v)=\varepsilon^{k}k|\FF_{\ge3}|\le\frac k3\varepsilon^{-2k}$.
	\end{proof}
	
	The estimates of Section~\ref{sec:def} split a face into $k-2$ vertices and one edge. In the present section the leftover pair is bounded directly, by $\varepsilon^{2}$, by $x_1\varepsilon$ or by $x_1x_2$, so property $(a)$ of Definition~\ref{def:assign} is not needed and the assignment can be produced unconditionally.
	
	\begin{lem}\label{lem:freeassign}
		Let $\FF$ be a family of faces of a closed $k$-angulation. Then there is a map assigning to each $F\in\FF$ a set $\phi(F)\subseteq F$ with $|\phi(F)|=k-2$ such that every vertex lies in at most $6$ of the sets $\phi(F)$.
	\end{lem}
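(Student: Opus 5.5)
The plan is to obtain $\phi$ as an integral flow in a network with capacities, and to check the cut condition with a planarity count on the vertex--face incidence graph. This is the flow argument announced in the introduction. Build a network with a source $s$, a node for each $F\in\FF$, a node for each vertex $v$, and a sink $t$. The arc $s\to F$ has capacity $k-2$, the arc $F\to v$ has capacity $1$ whenever $v\in F$, and the arc $v\to t$ has capacity $6$. An integral flow saturating every arc out of $s$ gives the required $\phi$. The unit capacities on $F\to v$ make the $k-2$ chosen vertices of $F$ distinct, so $|\phi(F)|=k-2$, and the capacity $6$ at $v$ is the multiplicity bound. By the max-flow min-cut theorem and integrality of flows, it suffices to show that every $s$--$t$ cut has capacity at least $(k-2)|\FF|$.

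Take a cut whose source side contains a set $\FF''$ of faces and a set $Y$ of vertices, and put $\FF'=\FF\setminus\FF''$ for the faces on the sink side. The cut capacity is
\[
(k-2)|\FF'|+\sum_{F\in\FF''}|F\setminus Y|+6|Y| ,
\]
since $|F\setminus Y|$ counts the arcs from $F$ to sink-side vertices. Because $|F\setminus Y|=k-|F\cap Y|$, the required inequality is equivalent to
\[
\sum_{F\in\FF''}\bigl(|F\cap Y|-2\bigr)\ \le\ 6|Y| \qquad\text{for all }\FF''\subseteq\FF,\ Y\subseteq V(G).
\]
Only faces with $|F\cap Y|\ge3$ contribute positively. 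So it is enough to bound $I=\sum_{F\in\FF_Y}|F\cap Y|$, where $\FF_Y$ is the set of faces with at least three vertices in $Y$.

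For this I would use the vertex--face incidence graph $\Gamma$. It is bipartite with one node per vertex and one per face, and each face node is joined to the vertices on its boundary. It is planar, since placing a point inside each face and drawing arcs to its boundary vertices embeds it, exactly as in the proof of Lemma~\ref{lem:three}. It is simple, because each face is bounded by a cycle and so meets each of its vertices once. The subgraph of $\Gamma$ induced by $Y\cup\FF_Y$ is a simple planar bipartite graph. It has $I$ edges and $|Y|+|\FF_Y|$ nodes, so $I\le2(|Y|+|\FF_Y|)-4$ whenever it has at least three nodes, and the small cases are trivial. Every face node in it has degree at least $3$, so $|\FF_Y|\le I/3$. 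Substituting gives $I\le2|Y|+\tfrac23I$, that is, $I\le6|Y|$, and then $\sum_{F\in\FF''}(|F\cap Y|-2)\le I\le6|Y|$.

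The main point to get right is the reduction of the min-cut condition to a clean inequality in $Y$ alone. Once that is set up as above, the only delicate step is the simplicity and planarity of $\Gamma$, which is where the fact that faces of a closed $k$-angulation are bounded by cycles is used. The constant $6$ comes out of the bipartite planar edge bound with minimum face-node degree $3$. Note that no property of the kind in Definition~\ref{def:assign}$(a)$ is claimed, consistent with Proposition~\ref{prop:noassign}.
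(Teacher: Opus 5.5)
Your proof is correct and is essentially the paper's argument: the same source--face--vertex--sink network with capacities $k-2$, $1$ and $6$, the integral max-flow min-cut theorem, and the edge bound $e\le 2(\text{nodes})-4$ for the simple planar bipartite vertex--face incidence graph. The only difference is bookkeeping. The paper fixes the faces $\mathcal D$ on the source side, reduces to $\sum_v\min\{6,d_{\mathcal D}(v)\}\ge(k-2)|\mathcal D|$, and applies the planar bound to $\mathcal D$ against the overloaded vertices. You keep the vertex set $Y$ free, apply the bound to $Y$ against the faces meeting $Y$ in at least three vertices, and use that face degree to absorb $|\FF_Y|$.
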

	
	\begin{proof}
		The vertex--face incidence graph is planar: place a point in each face and join it to the vertices on its boundary. Form a flow network with capacity $k-2$ from the source to each face, capacity $1$ from a face to each of its vertices, and capacity $6$ from each vertex to the sink. By the integral max-flow min-cut theorem, it suffices to prove
		\[
		\sum_v\min\{6,d_{\mathcal D}(v)\}\ge(k-2)|\mathcal D|
		\]
		for every nonempty $\mathcal D\subseteq\FF$, where $d_{\mathcal D}(v)$ is the number of faces of $\mathcal D$ containing $v$. Indeed, for a fixed set $\mathcal D$ of face nodes on the source side of a cut, minimizing over the vertex nodes gives the sum on the left.
		
		Put $X=\{v:d_{\mathcal D}(v)>6\}$. If $X=\emptyset$, the sum is $k|\mathcal D|$. Otherwise, the incidence graph $J$ between $\mathcal D$ and $X$ is a simple planar bipartite graph, and $|\mathcal D|\ge7$, so $e(J)\le2(|\mathcal D|+|X|)-4$. Hence
		\[
		\sum_v\min\{6,d_{\mathcal D}(v)\}
		=k|\mathcal D|-e(J)+6|X|
		\ge(k-2)|\mathcal D|+4|X|+4.
		\]
		An integral flow saturating all source--face edges therefore exists. The unit face--vertex capacities ensure that each face is assigned $k-2$ distinct vertices, and the vertex capacities give the required multiplicity bound.
	\end{proof}
	
	\begin{cor}\label{cor:sigma0}
		With $C_1=\bigl(\frac6{k-2}\bigr)^{\frac{k-2}{k}}$,
		\[
		\Sigma_0\ \le\ \Bigl(\frac{6Z}{k-2}\Bigr)^{\frac{k-2}{k}}f^{2/k}\,\varepsilon^{2},\qquad
		\Sigma_1\ \le\ C_1f^{2/k}\,x_1\varepsilon,\qquad
		\Sigma_2\ \le\ C_1f^{2/k}\,x_1x_2 .
		\]
	\end{cor}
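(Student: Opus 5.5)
The plan is to run one H\"older--AM-GM estimate on each class, in the spirit of Lemma~\ref{lem:general}$(i)$. The assignment comes from Lemma~\ref{lem:freeassign} instead of Proposition~\ref{prop:own}, and the leftover pair is bounded pointwise instead of through a spectral bound on an edge graph. Fix $j\in\{0,1,2\}$ and apply Lemma~\ref{lem:freeassign} to the family $\FF_j$, with $\phi(F)\subseteq F$, $|\phi(F)|=k-2$ and multiplicity at most $6$. For each face write $\prod_{v\in F}x_v=\pi_F Y_F$, where $Y_F=\prod_{v\in\phi(F)}x_v$ and $\pi_F$ is the product over the two leftover vertices.

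The key freedom is the choice of $\phi(F)$ inside $F$, which we use to control $\pi_F$. For $j=0$ every vertex is light, so $\pi_F\le\varepsilon^2$ and we also want $\phi(F)\subseteq V\setminus L$. For $j=1$ and $j=2$ we want $\phi(F)$ to avoid the heavy vertices of $F$, so that the leftover pair consists of the heavy vertices, completed by one light vertex when $j=1$; then $\pi_F\le x_1\varepsilon$ when $j=1$ and $\pi_F\le x_1x_2$ when $j=2$. To obtain this, we rerun the flow argument of Lemma~\ref{lem:freeassign} with the face--vertex arcs restricted to the light vertices of each face. A face of $\FF_j$ has $k-j\ge k-2$ light vertices, so the Hall-type count becomes $\sum_{v\notin L}\min\{6,d_{\mathcal D}(v)\}\ge(k-2)|\mathcal D|$. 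This holds as before: if $X=\emptyset$ the left side is at least $(k-j)|\mathcal D|$, and otherwise we subtract $e(J)\le2(|\mathcal D|+|X|)-4$. When $j=2$ the slack is zero, so we need the bound exactly $(k-2)|\mathcal D|-e(J)+6|X|\ge(k-2)|\mathcal D|$. This forces $e(J)\le6|X|$, which holds because $e(J)\le 2|\mathcal D|+2|X|-4$ and $|\mathcal D|$ relates to $X$ only through $J$. I expect this step to be the main obstacle. If the count fails, the fallback is a cruder assignment that puts heavy vertices into $\phi(F)$ and absorbs their mass via $x_h\le x_1$; for $\Sigma_2$ with $j=2$ this would instead require pairing the two heavy vertices as the leftover, which is the natural choice anyway.

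With the assignment fixed, H\"older's inequality with exponents $\frac k{k-2}$ and $\frac k2$ gives
\[
\sum_{F\in\FF_j}\pi_FY_F\le\Bigl(\sum_F Y_F^{\frac k{k-2}}\Bigr)^{\frac{k-2}k}\Bigl(\sum_F\pi_F^{\frac k2}\Bigr)^{\frac2k}.
\]
By AM--GM and the multiplicity bound, as in~\eqref{eq:first}, the first factor is at most $\bigl(\frac{6Z}{k-2}\bigr)^{\frac{k-2}k}\le C_1$, since $\phi(F)$ consists of light vertices and $Z\le1$. The second factor is at most $|\FF_j|^{2/k}\max_F\pi_F\le f^{2/k}\max_F\pi_F$. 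Inserting $\max_F\pi_F\le\varepsilon^2$, $x_1\varepsilon$ and $x_1x_2$ respectively gives the three bounds; for $\Sigma_0$ we keep the factor $Z$ explicitly.
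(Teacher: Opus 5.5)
Your treatment of $\Sigma_0$ is fine, but the step you flag as the main obstacle does fail, and the justification you give for it is wrong. For $j=2$ a face of $\FF_2$ has exactly $k-2$ light vertices. Once the arcs are restricted to light vertices, $\phi(F)$ is therefore forced to be the set of light vertices of $F$. The multiplicity of a light vertex $v$ is then the number of faces of $\FF_2$ through $v$, which Lemma~\ref{lem:three} bounds only by $2\binom{|L|}{2}$, not by $6$.

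In your count this shows up as the requirement $e(J)\le 6|X|$. That is false as soon as $X\ne\emptyset$, because every vertex of $X$ has degree at least $7$ in $J$ by definition. The planar bound $e(J)\le 2(|\mathcal D|+|X|)-4$ does not help.

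For $j=1$ you need $e(J)-6|X|\le|\mathcal D|$, and this also fails for general $L$. Take a balanced theta graph with $t>12$ faces, light branch vertices, and one heavy internal vertex on every other path, so that every face lies in $\FF_1$. The capacity available from light vertices is then $12+(k-3)t<(k-2)t$. Your fallback does not repair this, since for $j=2$ it again insists that the leftover pair be the two heavy vertices, which is exactly the forced assignment that fails.

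The missing observation is that no constraint on $\phi(F)$ is needed. The statement is set up to avoid one: for $\Sigma_1$ and $\Sigma_2$ it has the constant $C_1$, not $\bigl(\frac{6Z}{k-2}\bigr)^{(k-2)/k}$. Use the unconstrained assignment of Lemma~\ref{lem:freeassign}, and bound the first H\"older factor by the full mass: $\sum_F Y_F^{k/(k-2)}\le\frac{6}{k-2}\sum_v x_v^{k}\le\frac{6}{k-2}$.

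The leftover pair can then be bounded whatever it is:
\begin{itemize}
\item any two distinct vertices have product at most $x_1x_2$, since $x_1$ and $x_2$ are the two largest coordinates, which settles $\Sigma_2$;
\item a face of $\FF_1$ has only one heavy vertex, so its leftover pair contains a light vertex and has product at most $x_1\varepsilon$.
\end{itemize}
This is the paper's proof, and your closing H\"older computation then goes through unchanged.
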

	
	\begin{proof}
		Take the assignment of Lemma~\ref{lem:freeassign} for the family in question, write $F\setminus\phi(F)=\{a_F,b_F\}$ and $Y_F=\prod_{v\in\phi(F)}x_v$, and apply H\"older's inequality with exponents $\frac k{k-2}$ and $\frac k2$ as in Lemma~\ref{lem:general}. The arithmetic--geometric mean inequality and the multiplicity bound give $\sum_FY_F^{k/(k-2)}\le\frac6{k-2}\sum_{v}x_v^{k}$, where the sum on the right runs over the vertices of the family; this is at most $\frac{6Z}{k-2}$ for $\FF_0$, all of whose vertices are light, and at most $\frac6{k-2}$ for $\FF_1$ and for $\FF_2$. For the second factor, $x_{a_F}x_{b_F}\le\varepsilon^{2}$ for $\FF_0$, since both vertices are then light; $x_{a_F}x_{b_F}\le x_1\varepsilon$ for $\FF_1$, since a face of $\FF_1$ has only one heavy vertex, so that the pair $\{a_F,b_F\}$ contains at most one; and $x_{a_F}x_{b_F}\le x_1x_2$ for $\FF_2$, since $x_1$ and $x_2$ are the two largest coordinates. Each of the three families has at most $f$ members.
	\end{proof}
	
	The bound for $\FF_2$ has the right order but not the right constant. The sharp estimate must use the light mass $Z$ and control its multiplicity across heavy pairs. For each fixed heavy pair, the three-vertex bound above gives multiplicity at most $2$ for the light vertices; overlaps between different pairs will be controlled after the master inequality.
	
	The fan estimate of Lemma~\ref{lem:fan} is unavailable here. Its proof rests on Lemma~\ref{lem:local}$(ii)$, that two nonconsecutive faces at a vertex meet only in that vertex, and in a closed $k$-angulation with $k\ge4$ this fails: in a balanced theta graph every face at a branch vertex contains the other one. What replaces it is an estimate for the faces at a \emph{pair} of vertices, which is the configuration that governs the problem here.
	
	\begin{lem}\label{lem:pair}
		Let $e$ be a pair of vertices, let $F_1,\dots,F_t$ be faces containing $e$, put $A_i=F_i\setminus e$ and $M_e=\sum_i\sum_{v\in A_i}x_v^{k}$. Then $\sum_{i}\prod_{v\in A_i}x_v\le t^{2/k}\bigl(\frac{M_e}{k-2}\bigr)^{(k-2)/k}$.
	\end{lem}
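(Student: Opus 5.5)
The plan is to copy the opening computation in the proof of Proposition~\ref{prop:necklace}: a pointwise arithmetic--geometric mean bound on each face, followed by a single application of H\"older's inequality. Each $A_i$ has exactly $k-2$ vertices, because $F_i$ is a $k$-set containing the two vertices of $e$. Put $\pi_i=\prod_{v\in A_i}x_v$ and $m_i=\sum_{v\in A_i}x_v^{k}$, so that $M_e=\sum_i m_i$ by definition.

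First, the arithmetic--geometric mean inequality applied to the $k-2$ numbers $x_v^k$, $v\in A_i$, gives
\[
\pi_i^{\frac{k}{k-2}}=\Bigl(\prod_{v\in A_i}x_v^{k}\Bigr)^{\frac1{k-2}}\le\frac{m_i}{k-2}.
\]
Second, H\"older's inequality with exponents $\frac{k}{k-2}$ and $\frac k2$, pairing each $\pi_i$ with the constant $1$, gives
\[
\sum_{i=1}^{t}\pi_i\le t^{2/k}\Bigl(\sum_{i=1}^{t}\pi_i^{\frac k{k-2}}\Bigr)^{\frac{k-2}{k}}\le t^{2/k}\Bigl(\frac{1}{k-2}\sum_{i=1}^t m_i\Bigr)^{\frac{k-2}{k}}=t^{2/k}\Bigl(\frac{M_e}{k-2}\Bigr)^{\frac{k-2}{k}},
\]
which is the claimed bound.

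The one point to handle carefully is that the sets $A_i$ may overlap. By Lemma~\ref{lem:three}, a light vertex can lie in at most two of them. This does not affect the argument, because $M_e$ is defined as the sum over incidences $(i,v)$ with $v\in A_i$, not as the mass of $\bigcup_i A_i$. Any multiplicity is therefore already built into $M_e$, and no disjointness is needed. When the lemma is applied later, one bounds $M_e\le 2\sum_{v\in\bigcup A_i}x_v^k$ using Lemma~\ref{lem:three}, and that factor of $2$ is where the constant $2^{1-2/k}$ of Proposition~\ref{prop:necklace} comes from. So there is no real obstacle in this lemma; the only care needed is to state the multiplicity convention in $M_e$ explicitly.
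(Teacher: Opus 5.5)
Your proposal is correct and is essentially the paper's proof: the arithmetic--geometric mean bound $\bigl(\prod_{v\in A_i}x_v\bigr)^{k/(k-2)}\le\frac1{k-2}\sum_{v\in A_i}x_v^{k}$ on each face, followed by one application of H\"older's inequality with exponents $\frac k2$ and $\frac k{k-2}$. Your remark that $M_e$ already counts incidences with multiplicity, so that no disjointness of the $A_i$ is needed, matches how the paper later uses the lemma with $M_e\le 2Z_e$. The appeal to Lemma~\ref{lem:three} there actually applies to every vertex outside $e$, not only to light ones.
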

	
	\begin{proof}
		Each $A_i$ has exactly $k-2$ elements, so $\bigl(\prod_{v\in A_i}x_v\bigr)^{k/(k-2)}$ is the geometric mean of the numbers $x_v^{k}$, $v\in A_i$, hence at most $\frac1{k-2}\sum_{v\in A_i}x_v^{k}$. H\"older's inequality with exponents $\frac k2$ and $\frac k{k-2}$ finishes the proof.
	\end{proof}
	
	Lemma~\ref{lem:pair} is sharp: in a balanced theta graph every light vertex lies on exactly two faces, so $M_e=2Z$, and the bound is attained by the uniform vector, in agreement with Proposition~\ref{prop:necklace}.
	
	\begin{thm}\label{thm:pmaster}
		With the notation above and $C_1=\bigl(\frac6{k-2}\bigr)^{\frac{k-2}{k}}$,
		\begin{equation}\label{eq:pmasterA}
			\frac{\lambda}{k}\ \le\ C_1f^{2/k}\bigl(x_1x_2+x_1\varepsilon+\varepsilon^{2}\bigr)
			\ +\ \tfrac{k+3}{3k}\varepsilon^{-2k} .
		\end{equation}
	\end{thm}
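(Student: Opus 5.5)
This is a direct assembly of the estimates already in hand. Start from the identity \eqref{eq:ptotal}, which writes $\lambda/k$ as $\Sigma_0+\Sigma_1+\Sigma_2+\Sigma_{\ge3}$ over the partition of the faces according to $|F\cap L|$. Each class is then bounded separately, using results proved above.

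For the three classes with at most two heavy vertices, apply Corollary~\ref{cor:sigma0}. It gives $\Sigma_1\le C_1f^{2/k}x_1\varepsilon$ and $\Sigma_2\le C_1f^{2/k}x_1x_2$ directly. For $\Sigma_0$ it gives the bound $\bigl(\frac{6Z}{k-2}\bigr)^{(k-2)/k}f^{2/k}\varepsilon^2$. Since $Z\le\sum_vx_v^k=1$, this is at most $C_1f^{2/k}\varepsilon^2$. For the class with at least three heavy vertices, Corollary~\ref{cor:f3} gives $\Sigma_{\ge3}\le\frac{k+3}{3k}\varepsilon^{-2k}$. Adding the four bounds yields \eqref{eq:pmasterA}.

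The only points to check are the justifications behind Corollary~\ref{cor:sigma0}. First, the assignment of Lemma~\ref{lem:freeassign} applies to an arbitrary family of faces of a closed $k$-angulation, so in particular it applies to each class $\FF_j$ separately. Second, each class has at most $f$ members. Third, the leftover pair $\{a_F,b_F\}$ contains at most $j$ heavy vertices when $F\in\FF_j$, which gives the bounds $\varepsilon^2$, $x_1\varepsilon$ and $x_1x_2$ respectively. All three checks were already carried out in the proof of Corollary~\ref{cor:sigma0}.

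There is therefore no real obstacle here. The substantive work was the flow argument in Lemma~\ref{lem:freeassign}, which produces a multiplicity-$6$ assignment without property $(a)$, and the $K_{3,3}$ bound in Lemma~\ref{lem:three}, which underlies Corollary~\ref{cor:f3}. This inequality is only the crude master bound. The sharp constant for $\Sigma_2$, which must use $Z$ and Lemma~\ref{lem:pair}, is deferred to later refinements.
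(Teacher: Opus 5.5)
Your proposal is correct and is essentially the paper's own proof: it bounds $\Sigma_0,\Sigma_1,\Sigma_2$ by Corollary~\ref{cor:sigma0} (using $Z\le1$ for $\Sigma_0$) and $\Sigma_{\ge3}$ by Corollary~\ref{cor:f3}, then adds the four bounds in \eqref{eq:ptotal}. Your remark that this crude bound leaves the sharp treatment of $\Sigma_2$ via $Z$ and Lemma~\ref{lem:pair} for later also matches the paper.
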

	
	\begin{proof}
		The bounds for $\Sigma_0$, $\Sigma_1$ and $\Sigma_2$ are Corollary~\ref{cor:sigma0}, and that for $\Sigma_{\ge3}$ is Corollary~\ref{cor:f3}; add the four in~\eqref{eq:ptotal} and use $Z\le1$. The bound for $\FF_1$ is crude, in that the leftover pair of such a face is charged the heavy coordinate $x_1$ rather than a light one; it suffices when $\FF_1$ is not the main class, that is, when $k\ge4$, since the factor $\varepsilon$ then makes the whole term negligible in the limit taken below. For $k=3$ it does not, and the sharp link-cycle estimate of Section~\ref{sec:p3} is needed instead.
	\end{proof}
	
	Dividing~\eqref{eq:pmasterA} by $f^{2/k}$ leaves the error terms $x_1\varepsilon$, $\varepsilon^{2}$ and $\varepsilon^{-2k}f^{-2/k}$, of which the last tends to zero for each fixed $\varepsilon$ and the first two with $\varepsilon$, since $x_1\le1$; so one fixes $\varepsilon$, lets $f\to\infty$, and only then lets $\varepsilon\to0$, and no balancing of the errors is needed. With the lower bound $\lambda\ge2^{1-2/k}f^{2/k}$ of Proposition~\ref{prop:necklace} the inequality gives $\frac{2^{1-2/k}}{k}\le\bigl(\frac6{k-2}\bigr)^{(k-2)/k}x_1x_2+o(1)$, so that $x_1x_2$, and hence each of $x_1$ and $x_2$, is bounded below by a positive constant and $\lambda=\Theta(f^{2/k})$. The constant $\bigl(\frac6{k-2}\bigr)^{(k-2)/k}$ exceeds the constant $\bigl(\frac2k\bigr)^{(k-2)/k}$ of the extremal configuration, so~\eqref{eq:pmasterA} identifies the order but not the limits of $x_1$ and $x_2$. Those come from the fact that the multiplicity of the light mass is at most two for each heavy pair separately, by Lemma~\ref{lem:three}, and that only boundedly many light vertices serve more than one pair; this is carried out next.
	
	What keeps the constant in~\eqref{eq:pmasterA} away from its sharp value is that a light vertex may lie on two-hub faces of several different heavy pairs, and the sharp bound is recovered as soon as such vertices are few. The following lemma says that they are, and it is the planar substitute for Lemma~\ref{lem:local}$(iii)$. It uses only planarity and the fact that a face has at most $k$ vertices, so it holds in every closed $k$-angulation.
	
	\begin{lem}\label{lem:amb}
		Let $G$ be a plane graph every face of which has at most $k$ vertices on its boundary, and let $h_1,h_2,h_3$ be distinct vertices. Let $S$ be the set of vertices $v\notin\{h_1,h_2,h_3\}$ such that for each $i\in\{1,2,3\}$ there is a face containing $v$ and $h_i$ but not all three of $h_1,h_2,h_3$. Then $|S|\le6k-10$.
	\end{lem}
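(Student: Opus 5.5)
The plan is to work in the vertex--face incidence graph $I$ of $G$. It is planar: put a point in each face and join it to the vertices on its boundary. We then exclude a $K_{3,3}$ minor, as promised in the introduction.

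For each $v\in S$ and each $i$, fix a witness face $F_{v,i}$ that contains $v$ and $h_i$ but not all three hubs. Consider the three-star $v$--$F_{v,i}$--$h_i$ ($i=1,2,3$) in $I$. If three vertices $v,v',v''\in S$ had pairwise disjoint sets of witness faces, the three stars would give a subdivision of $K_{3,3}$ in $I$. The two sides would be $\{v,v',v''\}$ and $\{h_1,h_2,h_3\}$, with the face nodes as subdivision vertices. This contradicts planarity. So the proof reduces to a counting argument: few witness faces can be shared, and otherwise three vertices with disjoint witness sets exist.

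Key steps, in order:
\begin{itemize}
\item[(1)] Record the structure of a witness face. A face $F$ is a witness for at most $k$ vertices of $S$, since it has at most $k$ vertices. By the definition of $S$, $F$ contains at most two of the hubs. The faces containing two prescribed hubs but not the third are the problematic ones, since many vertices may use them.
\item[(2)] Choose witnesses greedily, or more cleanly build an auxiliary conflict graph $\Gamma$ on $S$. Join $v$ and $w$ when some face could serve as a witness for both. Any three pairwise non-adjacent vertices of $\Gamma$ give disjoint witness systems and hence a $K_{3,3}$ subdivision. So $\Gamma$ has independence number at most $2$.
\item[(3)] Bound the structure of $\Gamma$. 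Faces containing some hub $h_i$ form, around $h_i$, a cyclic sequence. A vertex $v\in S$ lies on faces at all three hubs. Since a planar graph has no $K_{3,3}$, I expect at most a bounded number of faces in total to be used as witnesses by vertices that are forced to share. Concretely, I would try to show that the witness faces used by $S$ can be chosen among at most about six ``separating'' faces, two per pair of hubs, by Lemma~\ref{lem:three}. Each of these contains at most $k$ vertices, less the hubs on it, which suggests the count $6k-10$. The arithmetic to hit is $6(k-2)+2=6k-10$, perhaps with two exceptional vertices.
\end{itemize}

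The main obstacle is step (3): turning ``no three vertices with disjoint witness systems'' into a bound linear in $k$. The naive argument only gives a Ramsey-type bound. I would first handle it topologically. Take two vertices $v,w\in S$ using disjoint witness systems. Their two stars form a $\theta$-like plane subgraph, namely a $K_{2,3}$ subdivision, which splits the sphere into three regions. Every further $u\in S$ with a witness system disjoint from both must then be blocked. Hence every other vertex of $S$ lies on one of the six witness faces $F_{v,i},F_{w,i}$. These have at most $6k$ vertices, the hubs are counted repeatedly, and $v,w$ are added back, which gives $6k-10$ after removing the multiply counted hubs. If the disjoint pair does not exist, all of $S$ shares witnesses with a single vertex $v$, and $|S|\le 3k$ directly.
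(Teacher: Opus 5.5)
Your final argument is correct and is essentially the paper's proof. With the witness faces fixed, three vertices of $S$ with pairwise disjoint witness sets give a $K_{3,3}$ minor in the planar incidence graph. A vertex whose witness set meets that of $v$ lies on one of the three faces $F_{v,i}$, each of which has at most $k-2$ vertices besides $v$ and $h_i$. So choosing $v$ and then $w$ greedily gives $|S|\le2\bigl(3(k-2)+1\bigr)=6k-10$. The ``Ramsey-type'' worry, the $K_{2,3}$ region picture and the speculation about six separating faces in step (3) are therefore unnecessary: each vertex conflicts with at most $3(k-2)$ others.

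Two small repairs are needed. First, if two witness faces of $v$ coincide, then $v$ has only two edges in its star and is not a branch vertex. So contract each core $\{v,F_{v,1},F_{v,2},F_{v,3}\}$ to obtain a $K_{3,3}$ minor, as the paper does, rather than asserting a subdivision with sides $\{v,v',v''\}$ and $\{h_1,h_2,h_3\}$. Second, when no disjoint pair exists the bound is $1+3(k-2)$, not $3k$; this matters for $k=3$, where $3k=9>6k-10=8$.
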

	
	\begin{proof}
		Let $I$ be the vertex--face incidence graph of $G$: its nodes are the vertices and the faces of $G$, and a vertex is joined to the faces on whose boundary it lies. Placing a point in the interior of each face and joining it to the vertices on its boundary shows that $I$ is planar. For $v\in S$ choose faces $F_1(v),F_2(v),F_3(v)$ with $v,h_i\in F_i(v)$ and $\{h_1,h_2,h_3\}\not\subseteq F_i(v)$, and let $C_v$ be the subgraph of $I$ formed by the six edges $vF_i(v)$ and $F_i(v)h_i$; the faces $F_i(v)$ need not be distinct. The \emph{core} of $C_v$ is the set consisting of $v$ and the faces $F_i(v)$; it induces a connected subgraph of $I$ containing none of $h_1,h_2,h_3$. Call $v$ and $v'$ \emph{separated} if their cores are disjoint, that is, if $C_v$ and $C_{v'}$ have no face in common.
		
		Suppose $v_1,v_2,v_3\in S$ are pairwise separated. Contracting the core of $C_{v_j}$ to a single node $c_j$, for $j=1,2,3$, is possible because the cores are disjoint and connected, and it produces a minor of $I$ in which each $c_j$ is adjacent to each of $h_1,h_2,h_3$, since $F_i(v_j)h_i$ is an edge of $I$ with $F_i(v_j)$ in the core of $C_{v_j}$. This minor contains $K_{3,3}$, which is impossible in a planar graph. Hence no three elements of $S$ are pairwise separated.
		
		A face $F_i(v)$ contains $v$ and $h_i$, so it contains at most $k-2$ elements of $S$ other than $v$; as $C_v$ uses at most three faces, at most $3(k-2)$ elements of $S$ are not separated from $v$. Now pick $v_1\in S$ and delete from $S$ the vertex $v_1$ and the at most $3(k-2)$ vertices not separated from it; if something is left, pick $v_2$ in the remainder and delete it together with the at most $3(k-2)$ vertices not separated from $v_2$. If a vertex $v_3$ still remains, then $v_1,v_2,v_3$ are pairwise separated. Therefore $|S|\le2\bigl(3(k-2)+1\bigr)=6k-10$.
	\end{proof}
	
	Call a light vertex \emph{ambiguous} if it lies on faces with at least three distinct heavy vertices, and let $S^{*}$ be the set of ambiguous vertices. An ambiguous vertex either lies on a face of $\FF_{\ge3}$, or lies on faces each carrying at most two heavy vertices, in which case it belongs to the set $S$ of Lemma~\ref{lem:amb} for some triple of heavy vertices. Since $|\FF_{\ge3}|\le2\binom{|L|}3$ by Lemma~\ref{lem:three}, and a face has $k$ vertices,
	\begin{equation}\label{eq:Sstar}
		|S^{*}|\ \le\ 2k\binom{|L|}3+(6k-10)\binom{|L|}3\ \le\ 2k\,|L|^{3}\ \le\ 2k\,\varepsilon^{-3k}.
	\end{equation}
	
	The preceding master inequality gives a constant $\eta=\eta(k)>0$ such that $x_2\ge\eta$ for all sufficiently large $n$. Throughout the stability argument below, take $n$ sufficiently large and $0<\varepsilon<\eta$. Let $h_1$ and $h_2$ be vertices carrying $x_1$ and $x_2$, with $h_1\ne h_2$, and put $e_0=\{h_1,h_2\}$. Then $h_1,h_2\in L$. We split $\FF_2$ into three parts: $\FF_2^{0}$, the faces containing $e_0$; $\FF_2^{*}$, the faces not containing $e_0$ that contain an ambiguous vertex; and $\FF_2'$, the remaining faces. Let $t_0=|\FF_2^{0}|$ and $s=t_0/f$. For a heavy pair $e$ let $V_e$ be the set of light vertices lying on faces of $\FF_2^{0}$ if $e=e_0$, and on faces of $\FF_2'$ with heavy pair $e$ if $e\ne e_0$, and let $Z_e$ be the mass of $V_e$.
	
	\begin{lem}\label{lem:sharp2}
		The sets $V_e$, $e$ a heavy pair, are pairwise disjoint, and
		\[
		\Sigma_2\ \le\ x_1\Bigl(x_2^{\,k/2}\,t_0+x_3^{\,k/2}\,(f-t_0)\Bigr)^{2/k}\Bigl(\frac{2Z}{k-2}\Bigr)^{\frac{k-2}{k}}+2k\,\varepsilon^{-5k}.
		\]
	\end{lem}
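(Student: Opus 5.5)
We first prove disjointness, then split $\Sigma_2=\Sigma(\FF_2^0)+\Sigma(\FF_2^*)+\Sigma(\FF_2')$. The class $\FF_2^*$ goes into the error term. The other two classes are handled together by one H\"older argument, built on Lemma~\ref{lem:pair} and on the fact that each light vertex has multiplicity at most $2$ within a single pair.

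\emph{Disjointness.} Suppose a light vertex $v$ lies in $V_e\cap V_{e'}$ with $e\ne e'$. Then $v$ lies on faces together with every vertex of $e\cup e'$, which contains at least three heavy vertices. So $v$ is ambiguous, $v\in S^*$. If neither $e$ nor $e'$ is $e_0$, this is impossible, since faces of $\FF_2'$ contain no ambiguous vertex. Now let $e=e_0$. Then $v$ lies on a face $F\in\FF_2'$ with heavy pair $e'\ne e_0$, again forcing $F$ to contain the ambiguous vertex $v$, a contradiction. Hence the sets $V_e$ are pairwise disjoint, and $\sum_e Z_e\le Z$.

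\emph{Error term.} Each face of $\FF_2^*$ has weight at most $1$. It contains an ambiguous vertex $v$ together with its heavy pair, and every such face contains the triple consisting of $v$ and the two heavy vertices. By Lemma~\ref{lem:three}, a triple lies on at most two faces. So $|\FF_2^*|\le 2|S^*|\binom{|L|}{2}\le 2k\varepsilon^{-3k}\varepsilon^{-2k}$ by \eqref{eq:Sstar}. This gives the term $2k\varepsilon^{-5k}$.

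\emph{Main term.} For each heavy pair $e=\{h,h'\}$ with $t_e$ faces in the relevant class, the weight is $x_hx_{h'}\sum_i\prod_{A_i}x_v$. Every face in the class has the form $e\cup A_i$ with $A_i\subseteq V_e$. By Lemma~\ref{lem:three}, a light vertex $v$ lies on at most two faces containing $e$, so $M_e\le 2Z_e$. Lemma~\ref{lem:pair} then bounds the contribution of $e$ by
\[
x_hx_{h'}\,t_e^{2/k}\Bigl(\frac{2Z_e}{k-2}\Bigr)^{\frac{k-2}{k}}.
\]
For $e=e_0$ the prefactor is $x_1x_2$. For $e\ne e_0$, at least one vertex of $e$ is not $h_1$ or not $h_2$, so $x_hx_{h'}\le x_1x_3$. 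Factor out $x_1$ and write the pair coefficient as $w_e$, with $w_{e_0}=x_2$ and $w_e\le x_3$ otherwise. Apply H\"older with exponents $k/2$ and $k/(k-2)$ to $\sum_e (w_e t_e^{2/k})Z_e^{(k-2)/k}$. This gives
\[
\Bigl(\sum_e w_e^{k/2}t_e\Bigr)^{2/k}\Bigl(\sum_e Z_e\Bigr)^{\frac{k-2}{k}}.
\]
Now use $\sum_e Z_e\le Z$, $t_{e_0}=t_0$, and $\sum_{e\ne e_0}t_e\le f-t_0$ (the classes are disjoint sets of faces not containing $e_0$). The result is the stated bound.

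\emph{Main obstacle.} The delicate step is confirming that faces of $\FF_2'$ cannot share light vertices across different pairs, including with $V_{e_0}$, whose faces may themselves contain ambiguous vertices. The argument above handles this asymmetrically, by always placing the ambiguous vertex on the non-$e_0$ face. The counting of $|\FF_2^*|$ is routine by comparison.
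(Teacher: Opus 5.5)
Your proposal is correct and follows the paper's proof. The disjointness comes from the fact that light vertices on faces of $\FF_2'$ are not ambiguous. The error term is the count $|\FF_2^{*}|\le2|S^{*}|\binom{|L|}2$, obtained via Lemma~\ref{lem:three} and \eqref{eq:Sstar}. The main term uses Lemma~\ref{lem:pair} with $M_e\le2Z_e$, followed by H\"older; the only cosmetic difference is that you apply one weighted H\"older step over all heavy pairs, where the paper first applies it over the pairs $e\ne e_0$ and then combines the result with the $e_0$ term.
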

	
	\begin{proof}
		A light vertex on a face of $\FF_2'$ with heavy pair $e$ is not ambiguous, so the only heavy vertices on faces with it are the two vertices of $e$; hence it lies on no face of $\FF_2'$ with a different heavy pair and on no face containing $e_0$. This proves the disjointness, and therefore $\sum_eZ_e\le Z$.
		
		A face of $\FF_2^{*}$ contains its heavy pair $e$ and an ambiguous vertex $v$, and by Lemma~\ref{lem:three} the three vertices of $e\cup\{v\}$ lie on at most two faces; so $|\FF_2^{*}|\le2|S^{*}|\binom{|L|}2\le|S^{*}||L|^{2}\le2k\varepsilon^{-5k}$ by~\eqref{eq:Sstar}, and each face has weight at most $1$.
		
		For a heavy pair $e$ with $t_e$ faces in $\FF_2^{0}$ or $\FF_2'$, Lemma~\ref{lem:pair} bounds the sum of the light products over these faces by $t_e^{2/k}\bigl(M_e/(k-2)\bigr)^{(k-2)/k}$, and $M_e\le2Z_e$ because by Lemma~\ref{lem:three} a light vertex lies on at most two faces containing $e$. The heavy product on such a face is $x_1x_2$ if $e=e_0$, and at most $x_1x_3$ otherwise, since a pair other than $e_0$ contains a vertex different from $h_1$ and $h_2$, whose coordinate is at most $x_3$. Hence, using $\sum_{e\ne e_0}t_e\le f-t_0$ and H\"older's inequality with exponents $\frac k2$ and $\frac k{k-2}$ over the pairs $e\ne e_0$,
		\[
		\Sigma_2-\sum_{F\in\FF_2^{*}}\prod_{v\in F}x_v
		\ \le\ x_1x_2\,t_0^{2/k}\Bigl(\frac{2Z_{e_0}}{k-2}\Bigr)^{\frac{k-2}{k}}
		+x_1x_3\,(f-t_0)^{2/k}\Bigl(\frac{2(Z-Z_{e_0})}{k-2}\Bigr)^{\frac{k-2}{k}} .
		\]
		Writing the two terms as $x_1\bigl(x_2^{k/2}t_0\bigr)^{2/k}A_1^{(k-2)/k}$ and $x_1\bigl(x_3^{k/2}(f-t_0)\bigr)^{2/k}A_2^{(k-2)/k}$ with $A_1+A_2\le2Z/(k-2)$, one more application of H\"older's inequality with the same exponents gives the lemma.
	\end{proof}
	
	The optimization is now the two-hub analogue of~\eqref{eq:meet}. For $p\ge q\ge0$ with $p^{k}+q^{k}\le1$ put
	\begin{equation}\label{eq:meet2}
		\Psi(p,q)=p\,q\Bigl(\frac{2(1-p^{k}-q^{k})}{k-2}\Bigr)^{\frac{k-2}{k}} .
	\end{equation}
	
	\begin{lem}\label{lem:meet2}
		$\Psi(p,q)\le2^{1-2/k}/k$, with equality if and only if $p=q=k^{-1/k}$.
	\end{lem}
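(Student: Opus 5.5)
Our plan is to reduce to a one-variable problem exactly as in the proof of Proposition~\ref{prop:necklace}. First we symmetrize the two hub coordinates. Put $\sigma=p^{k}+q^{k}$, so that $\Psi(p,q)=pq\bigl(\frac{2(1-\sigma)}{k-2}\bigr)^{(k-2)/k}$. By the power-mean (or arithmetic--geometric mean) inequality applied to $p^{k},q^{k}$ we have $pq=(p^{k}q^{k})^{1/k}\le\bigl(\frac{\sigma}{2}\bigr)^{2/k}$, with equality if and only if $p=q$. Hence
\[
\Psi(p,q)\ \le\ \Bigl(\frac{\sigma}{2}\Bigr)^{2/k}\Bigl(\frac{2(1-\sigma)}{k-2}\Bigr)^{\frac{k-2}{k}},
\]
and the right side depends only on $\sigma\in[0,1]$. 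Writing $z=\sigma/2\in[0,\frac12]$, its $k$th power equals $\bigl(\frac{2}{k-2}\bigr)^{k-2}z^{2}(1-2z)^{k-2}$.

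Second, we maximize $h(z)=z^{2}(1-2z)^{k-2}$ on $[0,\frac12]$. Logarithmic differentiation gives $\frac{2}{z}-\frac{2(k-2)}{1-2z}=0$, that is, $z=\frac1k$. Since $h$ vanishes at both endpoints and is positive inside, this critical point is the unique maximizer. At $z=\frac1k$ we have $1-2z=\frac{k-2}{k}$, so $\frac{2(1-2z)}{k-2}=\frac2k$. The bound then becomes $k^{-2/k}(2/k)^{(k-2)/k}=2^{1-2/k}k^{-2/k-(k-2)/k}=2^{1-2/k}/k$, which is the claimed constant. This is the same arithmetic as in Proposition~\ref{prop:necklace}, divided by $k$.

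Third, we handle the equality case. Equality in the final bound requires equality in both steps. The second step forces $\sigma=2/k$, by uniqueness of the maximizer of $h$, and the first forces $p=q$. Together these give $p^{k}=q^{k}=\frac1k$. Conversely, $p=q=k^{-1/k}$ satisfies $p\ge q$ and $p^{k}+q^{k}=\frac2k\le1$, and it attains the value. The constraint $p\ge q$ plays no role beyond this point.

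There is no real obstacle here. The only points needing care are the strictness of the power-mean step when $p\ne q$ (including $q=0$, where $\Psi=0$ is below the bound) and the uniqueness of the interior maximum of $h$, which follows from the single critical point together with the vanishing of $h$ at the endpoints.
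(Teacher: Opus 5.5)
Your proposal is correct and follows the paper's proof essentially step for step. Both use the power mean inequality $pq\le\bigl(\frac{p^k+q^k}{2}\bigr)^{2/k}$ and then maximize $z^2(1-2z)^{k-2}$ on $[0,\frac12]$, which has its unique maximum at $z=\frac1k$. In the equality case you could note that the factor $\bigl(\frac{2(1-\sigma)}{k-2}\bigr)^{(k-2)/k}$ is positive at $\sigma=\frac2k<1$; this is what lets equality in the first step force $p=q$.
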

	
	\begin{proof}
		By the power mean inequality $pq\le\bigl(\frac{p^{k}+q^{k}}2\bigr)^{2/k}$, with equality if and only if $p=q$. Writing $z=\frac{p^{k}+q^{k}}2\in[0,\frac12]$,
		\[
		\Psi(p,q)\le z^{2/k}\Bigl(\frac{2(1-2z)}{k-2}\Bigr)^{\frac{k-2}{k}}=\Bigl(\frac2{k-2}\Bigr)^{\frac{k-2}{k}}\bigl(z^{2}(1-2z)^{k-2}\bigr)^{1/k}.
		\]
		The derivative of $z^{2}(1-2z)^{k-2}$ is $2z(1-2z)^{k-3}(1-kz)$, so its maximum on $[0,\frac12]$ is attained only at $z=\frac1k$, where it equals $k^{-2}\bigl(\frac{k-2}k\bigr)^{k-2}$. Substituting, the right side becomes $2^{(k-2)/k}k^{-2/k}k^{-(k-2)/k}=2^{1-2/k}/k$.
	\end{proof}
	
	\begin{thm}\label{thm:pstab}
		Let $k\ge4$. For $n\to\infty$ along the admissible residue classes, let $\HH_n$ be extremal among the planar $k$-uniform hypergraphs on $n$ vertices, let $x$ be its Perron vector with $\sum_vx_v^{k}=1$, let $x_1\ge x_2\ge x_3$ be its three largest coordinates, and let $h_1,h_2$ be vertices carrying $x_1,x_2$. Then, with $f=2(n-2)/(k-2)$,
		\[
		\lambda(\HH_n)=\bigl(2^{1-2/k}+o(1)\bigr)f^{2/k},\qquad
		x_1^{\,k},\ x_2^{\,k}=\frac1k+o(1),\qquad x_3=o(1),
		\]
		and the number of faces containing both $h_1$ and $h_2$ is $f-o(f)$.
	\end{thm}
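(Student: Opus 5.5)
We follow the pattern of Theorem~\ref{thm:p3stab}: pass to a subsequence along which $x_1\to p$, $x_2\to q$, $x_3\to r$ and $s=t_0/f\to\sigma$, fix $\varepsilon$, let $n\to\infty$, and only then let $\varepsilon\to0$. The lower bound is $\lambda\ge2^{1-2/k}f^{2/k}$ from Proposition~\ref{prop:necklace}. By the remark after Theorem~\ref{thm:pmaster}, $q\ge\eta>0$, so for $\varepsilon<\eta$ the vertices $h_1,h_2$ are heavy. The upper bound comes from adding four estimates in~\eqref{eq:ptotal}: Corollary~\ref{cor:sigma0} for $\Sigma_0$ and $\Sigma_1$, which after division by $f^{2/k}$ contribute $O(\varepsilon^2+\varepsilon)$; Corollary~\ref{cor:f3} for $\Sigma_{\ge3}$, which together with the error $2k\varepsilon^{-5k}$ is $o(f^{2/k})$; and Lemma~\ref{lem:sharp2} for $\Sigma_2$. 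Dividing by $f^{2/k}$ and passing to the limits gives
\[
\frac{2^{1-2/k}}{k}\ \le\ p\bigl(q^{k/2}\sigma+r^{k/2}(1-\sigma)\bigr)^{2/k}\Bigl(\frac{2\bar Z}{k-2}\Bigr)^{\frac{k-2}{k}}+O(\varepsilon),
\]
where $\bar Z=\limsup Z$.

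Next we bound $Z$. If $r>0$, take $\varepsilon<r$, so that the three top vertices are heavy and $Z\le1-x_1^k-x_2^k-x_3^k$. In every case $Z\le1-x_1^k-x_2^k$. Since $r\le q$, we have $q^{k/2}\sigma+r^{k/2}(1-\sigma)\le q^{k/2}$, so the right-hand side is at most $\Psi(p,q)$ in the notation~\eqref{eq:meet2}. Letting $\varepsilon\to0$, Lemma~\ref{lem:meet2} then forces $\Psi(p,q)=2^{1-2/k}/k$, hence $p=q=k^{-1/k}$. To obtain $r=0$, note that if $r>0$ then $Z\le1-p^k-q^k-r^k$, which makes the bound strictly smaller than $\Psi(p,q)\le2^{1-2/k}/k$ and contradicts the lower bound. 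With $r=0$, the middle factor becomes $(q^{k/2}\sigma)^{2/k}=q\sigma^{2/k}$, and equality forces $\sigma=1$, so $t_0=f-o(f)$. Applying the same chain from above yields $\limsup\lambda f^{-2/k}\le k\cdot2^{1-2/k}/k$, which matches the lower bound. Since the subsequence was arbitrary, all four assertions follow.

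The main obstacle is checking that the error terms are uniform in the order of limits. Lemma~\ref{lem:sharp2} requires $h_1,h_2\in L$, which needs $\varepsilon<\eta$ with $\eta$ independent of $n$; the remark after Theorem~\ref{thm:pmaster} supplies this. The bounds on $\Sigma_1$ and $\Sigma_{\ge3}$ must also be $o(f^{2/k})+O(\varepsilon)$, and this holds because $|L|\le\varepsilon^{-k}$ is fixed while $f\to\infty$. A further subtlety is that $x_3$ might be heavy yet tending to zero; this is harmless, because the bound $x_1x_3$ on the pairs $e\ne e_0$ already appears in Lemma~\ref{lem:sharp2}. The last point to verify is that the strictness argument for $r>0$ and $\sigma<1$ survives the limit, which it does because the bound is continuous in $(p,q,r,\sigma)$.
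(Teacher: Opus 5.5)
Your argument is the paper's: the same upper bound assembled from Corollaries~\ref{cor:sigma0} and~\ref{cor:f3} and Lemma~\ref{lem:sharp2}, the same order of limits ($n\to\infty$ first, then $\varepsilon\to0$), the same bound of the middle factor by $x_2$, and Lemma~\ref{lem:meet2} to force equality. You only reorder the deductions: you get $p=q=k^{-1/k}$ before $r=0$, whereas the paper proves $r=0$ first and then obtains $p=q$ together with $\sigma=1$.

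One point needs repair.

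\begin{itemize}
\item The quantity $t_0=|\FF_2^{0}|$, and hence $s=t_0/f$, depends on $\varepsilon$ through the heavy set $L$. So a subsequence along which $s\to\sigma$ cannot be fixed before $\varepsilon$ and then kept while $\varepsilon\to0$.
\item The theorem is about the number $t$ of \emph{all} faces containing $h_1$ and $h_2$, not about $t_0$.
\end{itemize}

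Your last paragraph, which checks the order of limits, misses exactly this dependence. The paper chooses the subsequence with $t/f\to\sigma$, which does not involve $\varepsilon$. A face counted by $t$ but not by $t_0$ contains a third heavy vertex $g$, and by Lemma~\ref{lem:three} each triple $\{h_1,h_2,g\}$ lies on at most two faces. Hence $0\le t-t_0\le2|L|\le2\varepsilon^{-k}$, so $s\to\sigma$ for every fixed $\varepsilon<\eta$, and $\sigma=1$ is then exactly the assertion $t=f-o(f)$.

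In your ordering there is an equally short alternative. Your first two deductions ($p=q=k^{-1/k}$ and $r=0$) do not use $\sigma$. Now suppose a subsequence has $t/f\le1-\eta'$. Since $t_0\le t$ for every $\varepsilon$, the master inequality gives
\[
\frac{2^{1-2/k}}{k}\ \le\ (1-\eta')^{2/k}\,\frac{2^{1-2/k}}{k}+O(\varepsilon),
\]
which is a contradiction after $\varepsilon\to0$. Either way, one sentence relating $t_0$ to $t$ must be added.
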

	
	\begin{proof}
		Fix $0<\varepsilon<\eta$. Adding the bounds of Corollary~\ref{cor:f3}, Corollary~\ref{cor:sigma0} for $\FF_0$ and for $\FF_1$, and Lemma~\ref{lem:sharp2} in place of the bound for $\Sigma_2$ used in Theorem~\ref{thm:pmaster}, and dividing by $f^{2/k}$, we obtain
		\begin{equation}\label{eq:pmaster2}
			\frac{\lambda}{kf^{2/k}}\ \le\ x_1\bigl(x_2^{\,k/2}s+x_3^{\,k/2}(1-s)\bigr)^{2/k}\Bigl(\frac{2Z}{k-2}\Bigr)^{\frac{k-2}{k}}+2C_1\varepsilon+o_\varepsilon(1),
		\end{equation}
		where we used $x_1\le1$ and $\varepsilon<1$ to combine the two terms $C_1x_1\varepsilon$ and $C_1\varepsilon^{2}$, and where $o_\varepsilon(1)$ collects the terms $\frac{k+3}{3k}\varepsilon^{-2k}$ and $2k\varepsilon^{-5k}$, each divided by $f^{2/k}$; for fixed $\varepsilon$ these tend to zero as $f\to\infty$. On the other side, Proposition~\ref{prop:necklace} and extremality give
		\begin{equation}\label{eq:plb}
			\frac{\lambda}{kf^{2/k}}\ \ge\ \frac{2^{1-2/k}}{k} .
		\end{equation}
		
		Start with an arbitrary subsequence and pass to a further subsequence along which $x_1\to p$, $x_2\to q$, $x_3\to r$ and $t/f\to\sigma$, where $t$ is the number of all faces containing both $h_1$ and $h_2$. This choice does not involve $\varepsilon$. For each fixed $0<\varepsilon<\eta$, the two hubs are heavy, and a face counted by $t$ but not by $t_0$ contains a third heavy vertex $g$. By Lemma~\ref{lem:three}, the triple $\{h_1,h_2,g\}$ lies on at most two faces, so $0\le t-t_0\le2|L|\le2\varepsilon^{-k}$. Consequently $s=t_0/f\to\sigma$ along the same subsequence for every such fixed $\varepsilon$. It suffices to show $p=q=k^{-1/k}$, $r=0$ and $\sigma=1$ along this subsequence.
		
		Suppose $r>0$, and take $\varepsilon<\min\{\eta,r/2\}$. Then $h_1$, $h_2$ and a vertex carrying $x_3$ are heavy for all large $n$, so $Z\le1-x_1^{k}-x_2^{k}-x_3^{k}$; and $x_3\le x_2$ gives $\bigl(x_2^{k/2}s+x_3^{k/2}(1-s)\bigr)^{2/k}\le x_2$. Letting $n\to\infty$ in~\eqref{eq:pmaster2} and~\eqref{eq:plb}, and then $\varepsilon\to0$,
		\[
		\frac{2^{1-2/k}}{k}\ \le\ pq\Bigl(\frac{2(1-p^{k}-q^{k}-r^{k})}{k-2}\Bigr)^{\frac{k-2}{k}}
		\ <\ \Psi(p,q)\ \le\ \frac{2^{1-2/k}}{k},
		\]
		where the strict inequality uses $r>0$ and $pq>0$, the latter because otherwise the middle term would vanish, and the last is Lemma~\ref{lem:meet2}. This contradiction shows $r=0$.
		
		If $q=0$ then $x_3\le x_2\to0$, the bracket in~\eqref{eq:pmaster2} tends to zero, and the same passage to the limit gives $2^{1-2/k}/k\le0$; so $q>0$, and we take $\varepsilon<\min\{\eta,q/2\}$. Then $h_1,h_2$ are heavy, $Z\le1-x_1^{k}-x_2^{k}$, and since $r=0$ the first factor in~\eqref{eq:pmaster2} tends to $p\,(q^{k/2}\sigma)^{2/k}=pq\,\sigma^{2/k}$. Letting $n\to\infty$ and then $\varepsilon\to0$,
		\[
		\frac{2^{1-2/k}}{k}\ \le\ \sigma^{2/k}\,\Psi(p,q)\ \le\ \sigma^{2/k}\,\frac{2^{1-2/k}}{k} .
		\]
		Hence $\sigma=1$, and equality holds in Lemma~\ref{lem:meet2}, so $p=q=k^{-1/k}$. For the value of $\lambda$, use $s\le1$ and Lemma~\ref{lem:meet2} in~\eqref{eq:pmaster2} directly: they give $\limsup_{n\to\infty}\lambda/(kf^{2/k})\le2^{1-2/k}/k$, which with~\eqref{eq:plb} is the stated asymptotic. For the last assertion, $\sigma=\lim t/f=1$ says exactly that $t=f-o(f)$.
	\end{proof}
	
	Theorem~\ref{thm:pstab} is the first of the two steps toward Theorem~\ref{thm:planar}; the second, the elimination of the $o(f)$ faces that miss one of $h_1,h_2$, is taken up next. It holds and is used for every $k\ge4$, although for $k=4$ its conclusion is subsumed by Theorem~\ref{thm:k4}, which identifies the extremal hypergraph for every $n\ge5$ and by a different route.
	
	We now eliminate the faces that miss one of the two hubs. Throughout the rest of this section $k\ge4$, $\HH$ is extremal among the planar $k$-uniform hypergraphs on $n$ vertices with $f=2(n-2)/(k-2)$ large, $x$ is its Perron vector, $h_1,h_2$ carry $x_1,x_2$, and
	\[
	A=x_1,\qquad B=x_2,\qquad \delta=x_3,\qquad e_0=\{h_1,h_2\}.
	\]
	By Theorem~\ref{thm:pstab} we may assume $A^{k},B^{k}\in[\frac1{2k},\frac2k]$ and, more precisely, $A^{k}+B^{k}\le\frac2k+\frac1{6}$, so that $A,B\ge(2k)^{-1/k}\ge\frac12$ and $W:=1-A^{k}-B^{k}\ge\frac13$ for $k\ge4$; we may also assume $\delta\le1$ and that the faces containing $e_0$ number $t=f-a$ with $a\le f/2$. Let $\FA$ be the set of the $a$ faces missing at least one of $h_1,h_2$, let $N_0$ be the set of vertices other than $h_1,h_2$ lying on a face containing $e_0$, let $U$ be the set of the remaining vertices, and put $u=\sum_{v\in U}x_v^{k}$, so that the mass of $N_0$ is $W_0=1-A^{k}-B^{k}-u$. Finally let $\Phi_{e_0}(x)=\sum_{F\supseteq e_0}\prod_{v\in F\setminus e_0}x_v$ and
	\begin{equation}\label{eq:psplit}
		\frac{\lambda}{k}=AB\,\Phi_{e_0}(x)+S_0,\qquad S_0=\sum_{F\in\FA}\ \prod_{v\in F}x_v ,
	\end{equation}
	which is~\eqref{eq:ptotal} with the faces containing $e_0$ separated from the rest.
	
	The comparison is with the balanced theta graph, whose pair functional is known exactly. For all real $t,W\ge0$, define the comparison function
	\begin{equation}\label{eq:xi}
		\Xi(t,W):=t^{2/k}\Bigl(\frac{2W}{k-2}\Bigr)^{\frac{k-2}{k}}.
	\end{equation}
	Whenever a balanced theta graph with $t$ faces exists, the proof of Proposition~\ref{prop:necklace} shows that $\Xi(t,W)$ is the maximum of $\sum_{i=1}^t\pi_i\pi_{i+1}$ over nonnegative vectors of mass $W$ on its internal vertices. The formula also defines $\Xi(t,W)$ when $t=f-a$ is not an admissible face count. Put $W=1-A^{k}-B^{k}$.
	
	\begin{lem}\label{lem:pcompare}
		$S_0\ \ge\ R:=AB\bigl(\Xi(f,W)-\Phi_{e_0}(x)\bigr)$.
	\end{lem}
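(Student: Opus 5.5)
The plan is to repeat the argument of Lemma~\ref{lem:compare} and Lemma~\ref{lem:p3compare}, with the balanced theta graph taking the place of the fan and of $D_N$. We build a test vector on the face hypergraph of a balanced theta graph on the same $n$ vertices. Its value will be $kAB\,\Xi(f,W)$. Extremality then gives $\lambda\ge kAB\,\Xi(f,W)$, and subtracting the decomposition~\eqref{eq:psplit} gives the claim.

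First, check that a balanced theta graph with exactly $f=2(n-2)/(k-2)$ faces exists on $n$ vertices.
\begin{itemize}
\item Since $f$ is large, $f\ge3$.
\item If $f$ is even, take $p_i$ alternating between $1$ and $k-3$; this is valid because $k\ge4$.
\item If $f$ is odd, then $k$ must be even: for odd $k$ the number $k-2$ is odd, so $f=2(n-2)/(k-2)$ is even. Hence we may take all $p_i=\frac{k-2}{2}$.
\end{itemize}
In either case $\Theta(p_1,\dots,p_f)$ is a closed $k$-angulation on $2+\frac f2(k-2)=n$ vertices. Its face hypergraph is therefore a competitor in the planar class.

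Next, define the test vector. Put $A$ and $B$ on the two branch vertices. On each of the $n-2=\frac f2(k-2)$ internal vertices put the common value $y$ with $y^{k}=W/(n-2)$, where $W=1-A^{k}-B^{k}$. The total mass is then $A^{k}+B^{k}+W=1$.

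Now compute its value. Each face consists of both branch vertices and exactly $k-2$ internal vertices, because $p_i+p_{i+1}=k-2$. So each face product is $AB\,y^{k-2}=AB\bigl(\frac{2W}{f(k-2)}\bigr)^{(k-2)/k}$. Summing over the $f$ faces gives
\[
AB\,f^{2/k}\Bigl(\frac{2W}{k-2}\Bigr)^{\frac{k-2}{k}}=AB\,\Xi(f,W),
\]
which is the equality case recorded in the proof of Proposition~\ref{prop:necklace}. Consequently
\[
\lambda\bigl(\HH(\Theta)\bigr)\ \ge\ P_{\HH(\Theta)}(y)\ =\ kAB\,\Xi(f,W).
\]

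Finally, combine with extremality. Since $\HH$ is extremal, $\lambda/k\ge AB\,\Xi(f,W)$. Substituting the identity~\eqref{eq:psplit}, $\lambda/k=AB\,\Phi_{e_0}(x)+S_0$, and rearranging gives $S_0\ge AB\bigl(\Xi(f,W)-\Phi_{e_0}(x)\bigr)=R$.

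The only step needing care is the existence of the competitor for every admissible $n$ in the parity case where $f$ is odd, and the parity observation above settles it. No other obstacle arises, since the mass-$W$ value $\Xi(f,W)$ is attained exactly, rather than merely bounded, by the uniform internal vector.
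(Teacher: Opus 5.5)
Your proposal is correct and follows the paper's proof. You put $A,B$ on the branch vertices of a balanced theta graph with $f$ faces and a mass-$W$ maximizer of $\Xi$ (the uniform vector) on the internal vertices, then combine extremality with \eqref{eq:psplit}. Your parity check for the existence of the competitor and your explicit evaluation $AB\,\Xi(f,W)$ just spell out what the paper takes from the introduction and from the equality case in the proof of Proposition~\ref{prop:necklace}.
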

	
	\begin{proof}
		Put $A$ and $B$ on the two branch vertices of a balanced theta graph with $f$ faces and $n$ vertices, and a maximizer of~\eqref{eq:xi} with mass $W$ on the rest; this vector has $k$-norm one, so extremality and~\eqref{eq:psplit} give $\lambda/k\ge AB\,\Xi(f,W)$.
	\end{proof}
	
	We shall show that $R>0$ whenever $a>0$, and that
	$S_0\le C\delta R$ for a constant $C$ depending only on $k$.
	Since $\delta\to0$, these estimates contradict
	Lemma~\ref{lem:pcompare} for all sufficiently large $f$
	unless $a=0$. Fix $K\ge1$, to be chosen, let $Q=\{v\in N_0:x_v^{k}\ge K/f\}$ and $m_Q=\sum_{v\in Q}x_v^{k}$, so that $|Q|\le fm_Q/K$.
	
	\begin{lem}\label{lem:ploss}
		\begin{align*}
			R\ \ge\ AB\Bigl[\frac2k\Bigl(\frac{2W}{k-2}\Bigr)^{\frac{k-2}{k}}a\,f^{\frac2k-1}
			&+\frac{k-2}{k}\Bigl(\frac2{k-2}\Bigr)^{\frac{k-2}{k}}t^{2/k}W^{-2/k}\,(u+m_Q)
			-2\Bigl(\frac fK\Bigr)^{2/k}m_Q\Bigr].
		\end{align*}
	\end{lem}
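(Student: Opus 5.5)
This is the planar analogue of Lemma~\ref{lem:loss}, and I would prove it the same way. The difference is that the pair functional $\Xi$ is explicit, so no gluing lemma is needed. Put $P_Q$ for the part of $\Phi_{e_0}(x)$ coming from faces containing $e_0$ and a vertex of $Q$, and let $x'$ agree with $x$ on $N_0\setminus Q$ and vanish on $Q$. Then $\Phi_{e_0}(x)=\Phi_{e_0}(x')+P_Q$. The faces containing $e_0$ number $t=f-a$. By Lemma~\ref{lem:pair}, together with the fact (Lemma~\ref{lem:three}) that a vertex other than $h_1,h_2$ lies on at most two faces containing $e_0$, we get
\[
\Phi_{e_0}(x')\le t^{2/k}\Bigl(\frac{2(W_0-m_Q)}{k-2}\Bigr)^{\frac{k-2}{k}}=\Xi(t,W-u-m_Q).
\]
The quantity to bound is therefore
\[
\Xi(f,W)-\Phi_{e_0}(x)\ \ge\ \bigl[\Xi(f,W)-\Xi(t,W)\bigr]+\bigl[\Xi(t,W)-\Xi(t,W-u-m_Q)\bigr]-P_Q .
\]

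For the first bracket, the map $s\mapsto s^{2/k}$ is concave and $t\le f$, so $f^{2/k}-t^{2/k}\ge\frac2k f^{2/k-1}a$. This gives the first term of the statement. For the second bracket, $w\mapsto w^{(k-2)/k}$ is concave with derivative $\frac{k-2}{k}w^{-2/k}$, which is at least its value at $W$ on $[W-u-m_Q,W]$. Hence
\[
\Xi(t,W)-\Xi(t,W-u-m_Q)\ \ge\ t^{2/k}\Bigl(\frac{2}{k-2}\Bigr)^{\frac{k-2}{k}}\frac{k-2}{k}W^{-2/k}(u+m_Q),
\]
which is the second term. Multiplying everything by $AB$ then gives the lemma, provided $P_Q\le 2(f/K)^{2/k}m_Q$.

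That bound on $P_Q$ is the main obstacle. Let $\mathcal S$ be the set of faces containing $e_0$ and meeting $Q$. Each vertex of $Q$ lies on at most two faces containing $e_0$, so $|\mathcal S|\le2|Q|\le2fm_Q/K$. Apply Lemma~\ref{lem:pair} to $\mathcal S$. The light mass counted there is $M_e\le 2m_{\mathcal S}$, where $m_{\mathcal S}$ is the mass of the non-hub vertices on faces of $\mathcal S$. The vertices not in $Q$ have $x_v^k<K/f$ and there are at most $k-3$ of them per face, so
\[
m_{\mathcal S}\le m_Q+|\mathcal S|(k-3)K/f\le(2k-5)m_Q .
\]
This yields
\[
P_Q\le\Bigl(\frac{2f m_Q}{K}\Bigr)^{2/k}\Bigl(\frac{2(2k-5)m_Q}{k-2}\Bigr)^{\frac{k-2}{k}}=\Bigl(\frac fK\Bigr)^{2/k}m_Q\cdot 2^{2/k}\Bigl(\frac{2(2k-5)}{k-2}\Bigr)^{\frac{k-2}{k}} .
\]
The constant is not quite $2$, since $\frac{2(2k-5)}{k-2}<4$ gives only a constant below $2^{2/k}4^{(k-2)/k}$. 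To reach the stated constant I would bound more carefully. The cleanest route is to apply Hölder's inequality directly with the per-face product split as $\prod_{v\in F\setminus e_0}x_v\le\bigl(\frac1{k-2}\sum x_v^k\bigr)^{(k-2)/k}$. Each vertex of $N_0$ is counted at most twice, which gives $M_e\le 2(m_Q+(k-3)|\mathcal S|K/f)$. This should be combined with the sharper count $|\mathcal S|\le 2|Q|$ and $|Q|\le fm_Q/K$. If the arithmetic still misses, I would accept a constant $C_k(f/K)^{2/k}$ in place of $2(f/K)^{2/k}$; this is harmless because $K$ is chosen afterwards.
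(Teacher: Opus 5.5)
Your decomposition is the paper's, and every step except one is correct: zeroing $x$ on $Q$, bounding $\Phi_{e_0}(x')\le\Xi(t,W-u-m_Q)$ by Lemma~\ref{lem:pair} and Lemma~\ref{lem:three}, and the two concavity estimates. The gap is in the bound on $P_Q$.

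What you actually prove is $P_Q\le2^{2/k}\bigl(\frac{2(2k-5)}{k-2}\bigr)^{(k-2)/k}(f/K)^{2/k}m_Q$. Since $\frac{2(2k-5)}{k-2}>2$ for every $k\ge4$, this constant exceeds $2$: it equals $\sqrt6$ for $k=4$ and tends to $4$ as $k$ grows. So the stated inequality is not established. Your proposed refinement $M_e\le2\bigl(m_Q+(k-3)|\mathcal S|K/f\bigr)$ again gives $2(2k-5)m_Q$, so it does not help. Falling back on a constant $C_k$ would be harmless for the later choice of $K$ before~\eqref{eq:Rp}, but it proves a weaker lemma than the one stated.

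The loss comes from double counting the vertices outside $Q$. The term $(k-3)|\mathcal S|K/f$ is already a sum over faces, so it already counts each such vertex with its multiplicity. The multiplicity factor $2$ from Lemma~\ref{lem:three} belongs only on the vertices of $Q$, because $m_Q$ is a sum over distinct vertices. Counting $M_e$ face by face instead,
\begin{align*}
M_e&=\sum_{F\in\mathcal S}\ \sum_{v\in F\cap Q}x_v^{k}+\sum_{F\in\mathcal S}\ \sum_{v\in F\setminus(e_0\cup Q)}x_v^{k}\\
&\le 2m_Q+(k-3)\,|\mathcal S|\,\frac Kf\ \le\ 2m_Q+2(k-3)m_Q=2(k-2)m_Q .
\end{align*}
Lemma~\ref{lem:pair} then gives $P_Q\le(2fm_Q/K)^{2/k}(2m_Q)^{(k-2)/k}=2(f/K)^{2/k}m_Q$, which is the stated constant. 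This is exactly how the paper argues.
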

	
	\begin{proof}
		Let $x'$ agree with $x$ off $Q$ and vanish on $Q$. Then $\Phi_{e_0}(x)-\Phi_{e_0}(x')$ is the sum of the light products over the faces $\mathcal S$ containing $e_0$ and a vertex of $Q$; by Lemma~\ref{lem:three} a vertex of $Q$ lies on at most two such faces, so $|\mathcal S|\le2|Q|$. Such a face has at most $k-3$ vertices outside $e_0\cup Q$; these lie in $N_0\setminus Q$ and carry mass less than $K/f$ each, so over the at most $2|Q|\le2fm_Q/K$ faces of $\mathcal S$ they contribute at most $2(k-3)m_Q$ in all. Each vertex of $Q$ lies on at most two faces containing $e_0$ and so contributes at most $2m_Q$ in all. The mass counted with multiplicity is therefore at most $2(k-2)m_Q$, and Lemma~\ref{lem:pair} gives, as in Lemma~\ref{lem:loss},
		\[
		\Phi_{e_0}(x)-\Phi_{e_0}(x')\le(2|Q|)^{2/k}\Bigl(\frac{2(k-2)m_Q}{k-2}\Bigr)^{\frac{k-2}{k}}\le2\Bigl(\frac fK\Bigr)^{2/k}m_Q .
		\]
		The vector $x'$ lives on the $t$ faces containing $e_0$, every vertex of $N_0$ lies on at most two of them by Lemma~\ref{lem:three}, and its mass is $W_0-m_Q=W-u-m_Q$; hence $\Phi_{e_0}(x')\le\Xi(t,W-u-m_Q)$ by Lemma~\ref{lem:pair}. Now
		\begin{align*}
			\Xi(f,W)-\Xi(t,W-u-m_Q)&=\bigl(f^{2/k}-t^{2/k}\bigr)\Bigl(\frac{2W}{k-2}\Bigr)^{\frac{k-2}{k}}\\
			&\quad+t^{2/k}\Bigl(\frac2{k-2}\Bigr)^{\frac{k-2}{k}}\Bigl(W^{\frac{k-2}{k}}-(W-u-m_Q)^{\frac{k-2}{k}}\Bigr),
		\end{align*}
		and concavity of $s\mapsto s^{2/k}$ and of $s\mapsto s^{(k-2)/k}$ bounds the two differences from below by $\frac2kf^{2/k-1}a$ and $\frac{k-2}kW^{-2/k}(u+m_Q)$ respectively.
	\end{proof}
	
	Since $t\ge f/2$, $A,B\ge\frac12$ and $\frac13\le W\le1$, choosing $K$ so large that $2K^{-2/k}\le\frac12\cdot\frac{k-2}{k}\bigl(\frac2{k-2}\bigr)^{(k-2)/k}2^{-2/k}$ turns Lemma~\ref{lem:ploss} into
	\begin{equation}\label{eq:Rp}
		R\ \ge\ c_L\,a\,f^{\frac2k-1}+c_M\,f^{2/k}(u+m_Q),
	\end{equation}
	with $c_L,c_M>0$ depending only on $k$.
	
	For the weight of $\FA$ we need the mass on its faces. The vertices of $N_0$ lying on faces of $\FA$ number at most $ka$, since $\FA$ has $ka$ vertex--face incidences; so the mass $W_{\FA}$ of the non-hub vertices on faces of $\FA$ satisfies
	\begin{equation}\label{eq:WA}
		W_{\FA}\ \le\ u+m_Q+\frac{kKa}{f}.
	\end{equation}
	The weight of $\FA$ is bounded through an assignment, as in Section~\ref{sec:def}, but one that keeps away from the hubs. The following variant of Lemma~\ref{lem:freeassign} provides it.
	
	\begin{lem}\label{lem:freeassign2}
		Let $\FF$ be a family of faces of a closed $k$-angulation and let $h$ be a vertex. Then there is a map assigning to each $F\in\FF$ a set $\phi(F)\subseteq F\setminus\{h\}$ with $|\phi(F)|=k-2$ such that every vertex lies in at most $6$ of the sets $\phi(F)$.
	\end{lem}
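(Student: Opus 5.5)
We will repeat the flow argument of Lemma~\ref{lem:freeassign}, changing only the network: a face $F\in\FF$ is no longer joined to $h$. So the network has capacity $k-2$ from the source to each face, capacity $1$ from each face to each vertex of $F\setminus\{h\}$, and capacity $6$ from each vertex to the sink. Each face now has $k-1\ge k-2$ available vertices. By the integral max-flow min-cut theorem it suffices to show, for every nonempty $\mathcal D\subseteq\FF$,
\[
\sum_{v\ne h}\min\{6,d_{\mathcal D}(v)\}\ \ge\ (k-2)|\mathcal D| .
\]
As before, a cut with face set $\mathcal D$ on the source side costs at least this sum plus $(k-2)(|\FF|-|\mathcal D|)$.

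Put $X=\{v\ne h:\ d_{\mathcal D}(v)>6\}$. The face--vertex incidences of $\mathcal D$ avoiding $h$ number $(k-1)|\mathcal D|$, since each face contains $h$ at most once and possibly not at all (then there are even more). Hence the left side equals
\[
(\ge)\,(k-1)|\mathcal D|-e(J)+6|X|,
\]
where $J$ is the incidence graph between $\mathcal D$ and $X$.

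If $X=\emptyset$, the left side is at least $(k-1)|\mathcal D|$ and we are done. Otherwise $J$ is a simple bipartite planar graph, because it is a subgraph of the vertex--face incidence graph. It has a vertex of degree at least $7$, so $|\mathcal D|\ge7$ and $J$ has at least $3$ vertices. Therefore $e(J)\le2(|\mathcal D|+|X|)-4$, which gives
\[
\sum_{v\ne h}\min\{6,d_{\mathcal D}(v)\}\ \ge\ (k-3)|\mathcal D|+4|X|+4 .
\]

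This is short of $(k-2)|\mathcal D|$ by $|\mathcal D|$, and closing that gap is the main obstacle. The slack has to come from the vertex $h$ having been excluded. The remedy is to apply the planar bound to the incidence graph between $\mathcal D$ and $X\cup\{h\}$, which is still bipartite planar. This yields $e(J)+d_{\mathcal D}(h)\le2(|\mathcal D|+|X|+1)-4$.

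Now we recount. The incidences avoiding $h$ number exactly $k|\mathcal D|-d_{\mathcal D}(h)$, so the left side is
\[
k|\mathcal D|-d_{\mathcal D}(h)-e(J)+6|X|\ \ge\ (k-2)|\mathcal D|+4|X|+2\ \ge\ (k-2)|\mathcal D| .
\]
This is exactly the required inequality.

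The case $X=\emptyset$ is handled the same way. The left side is $k|\mathcal D|-d_{\mathcal D}(h)\ge(k-1)|\mathcal D|$, which is enough.

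An integral flow saturating every source--face edge therefore exists. The unit face--vertex capacities give $k-2$ distinct vertices of $F\setminus\{h\}$ to each face, and the sink capacities give multiplicity at most $6$.
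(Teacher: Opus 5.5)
Your proposal is correct and follows essentially the same route as the paper: the flow network of Lemma~\ref{lem:freeassign} with $h$ removed, the cut condition $\sum_{v\ne h}\min\{6,d_{\mathcal D}(v)\}\ge(k-2)|\mathcal D|$, and the planar bipartite edge bound applied to the incidences between $\mathcal D$ and $X\cup\{h\}$, which gives $(k-2)|\mathcal D|+4|X|+2$. Your preliminary attempt that uses only $X$ is an unnecessary detour, but the argument you settle on coincides with the paper's, including the case $X=\emptyset$.
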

	
	\begin{proof}
		Use the network from Lemma~\ref{lem:freeassign}, omitting the vertex $h$ and its incident arcs. Its integral cut condition is
		\[
		\sum_{v\ne h}\min\{6,d_{\mathcal D}(v)\}\ge(k-2)|\mathcal D|
		\qquad(\emptyset\ne\mathcal D\subseteq\FF).
		\]
		Put $X=\{v\ne h:d_{\mathcal D}(v)>6\}$. If $X=\emptyset$, the sum on the left is $k|\mathcal D|-d_{\mathcal D}(h)\ge(k-1)|\mathcal D|$. Otherwise, let $J$ be the incidence graph between the faces of $\mathcal D$ and the vertices of $X\cup\{h\}$. It is simple, planar and bipartite, with $|\mathcal D|\ge7$, so
		\[
		e(J)\le2(|\mathcal D|+|X|+1)-4.
		\]
		It follows that
		\[
		\sum_{v\ne h}\min\{6,d_{\mathcal D}(v)\}
		=k|\mathcal D|-e(J)+6|X|
		\ge(k-2)|\mathcal D|+4|X|+2.
		\]
		The integral max-flow min-cut theorem now gives the assignment. The unit face--vertex capacities ensure distinct choices within each face, and omitting $h$ ensures that no assigned set contains it.
	\end{proof}
	
	\begin{thm}\label{thm:pfinal}
		Let $k\ge4$. There is $f_1$, depending only on $k$, such that for $f\ge f_1$ every planar $k$-uniform hypergraph on $n=2+\frac f2(k-2)$ vertices of maximum spectral radius is the face hypergraph of a balanced theta graph.
	\end{thm}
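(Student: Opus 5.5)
The plan is to carry out the comparison announced before Lemma~\ref{lem:ploss}: show that $a>0$ forces $S_0<R$, contradicting Lemma~\ref{lem:pcompare}. After that, use $a=0$ to identify the graph.

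\emph{Bounding $S_0$.} First apply Lemma~\ref{lem:freeassign2} to $\FA$ with $h=h_1$. This gives sets $\phi(F)\subseteq F\setminus\{h_1\}$ of size $k-2$ with multiplicity at most $6$. Write $F\setminus\phi(F)=\{a_F,b_F\}$, where $a_F$ may be $h_1$. Every face of $\FA$ misses $h_1$ or $h_2$, so the leftover pair contains at most one of the two hubs. Hence $x_{a_F}x_{b_F}\le\max\{A,B\}\delta\le\delta$, unless $h_2\in\phi(F)$. In that case one factor $x_{h_2}$ sits inside $Y_F$; I would handle this by running the assignment with both hubs removed. Concretely, apply the cut condition of Lemma~\ref{lem:freeassign2} with $h_1,h_2$ both omitted and allow the leftover pair to contain a hub. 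The counting $e(J)\le2(|\mathcal D|+|X|+2)-4$ still leaves slack $\ge(k-2)|\mathcal D|+4|X|$, since a face of $\FA$ contains at most one hub. Then $Y_F$ involves only non-hub vertices. By H\"older as in Corollary~\ref{cor:sigma0},
\[
S_0\le\delta\,a^{2/k}\Bigl(\frac{6W_{\FA}}{k-2}\Bigr)^{\frac{k-2}{k}}
\le C\delta\, a^{2/k}\Bigl(u+m_Q+\frac{kKa}{f}\Bigr)^{\frac{k-2}{k}},
\]
using~\eqref{eq:WA}.

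\emph{Comparing with~\eqref{eq:Rp}.} Split $a^{2/k}X^{(k-2)/k}$, with $X=u+m_Q+kKa/f$, by weighted AM--GM:
\[
a^{2/k}X^{(k-2)/k}\le\tfrac2k\,a f^{\frac2k-1}+\tfrac{k-2}{k}\,f^{2/k}X .
\]
This is the inequality $\alpha^{2/k}\beta^{(k-2)/k}\le\frac2k\alpha+\frac{k-2}k\beta$ with $\alpha=af^{2/k-1}$ and $\beta=f^{2/k}X$; the powers of $f$ cancel. The term $f^{2/k}\cdot kKa/f$ is again of order $af^{2/k-1}$. Hence $S_0\le C'\delta\bigl(af^{2/k-1}+f^{2/k}(u+m_Q)\bigr)\le C''\delta R$ by~\eqref{eq:Rp}. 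By Theorem~\ref{thm:pstab}, $\delta=x_3\to0$, so for $f\ge f_1$ we get $S_0\le R/2$. If $a\ge1$ then $R>0$, which contradicts Lemma~\ref{lem:pcompare}. Thus $a=0$.

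\emph{Structure.} With $a=0$ every face contains $h_1$ and $h_2$. I would argue as in Corollary~\ref{cor:nonunique}: $h_1$ and $h_2$ are nonadjacent. Otherwise the edge $h_1h_2$ lies on at most two faces, and every other face through both hubs would meet the edge only at its ends; then, going around $h_1$, each face would have to contain $h_2$ via a separate boundary arc, which is impossible for $f\ge3$ since $h_1h_2$ would bound every face. The rotation at $h_1$ then lists the faces $F_1,\dots,F_f$ cyclically, with consecutive faces sharing an edge $h_1w_i$. Each face boundary is a $k$-cycle through $h_1$ and $h_2$, split into two $h_1$--$h_2$ paths. Consecutive faces share the path through $w_i$, because a vertex lies on at most two faces containing $e_0$ by Lemma~\ref{lem:three}. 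So $G$ is the union of $f$ internally disjoint $h_1$--$h_2$ paths, with $p_i+p_{i+1}=k-2$; that is, $G$ is a balanced theta graph.

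\emph{Main obstacle.} The hardest step is making the assignment avoid \emph{both} hubs so that the leftover pair really costs a factor $\delta$. I would check the flow cut inequality carefully there, including the case $X=\emptyset$, where the sum is at least $k|\mathcal D|-|\mathcal D|$ because each face has at most one hub.
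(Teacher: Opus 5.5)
Your comparison $S_0\le C\delta R$ is correct and is essentially the paper's argument. The paper splits $\FA$ into $\FA_1,\FA_2,\FA_3$ and applies Lemma~\ref{lem:freeassign2} with $h=h_1$, then with $h=h_2$, and Lemma~\ref{lem:freeassign}. Your single flow network omitting both hubs is a harmless variant. The cut bound $k|\mathcal D|-e(J)+6|X|\ge(k-2)|\mathcal D|+4|X|$ is right, and your variant only removes the factor $3$ in~\eqref{eq:Sp2}. The weighted AM--GM absorption of $kKa/f$, and the conclusion $a=0$, also match the paper.

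The gap is in the structural step, specifically your claim that $h_1$ and $h_2$ are nonadjacent. The reason you give is purely topological, and it is false as stated: the edge $h_1h_2$ need not ``bound every face''. For example, take $K_{2,m}$ and draw the edge $h_1h_2$ inside one of its faces. This is a simple $2$-connected plane graph in which all $m+1$ faces contain both $h_1$ and $h_2$, yet $h_1h_2$ lies on only two of them. So nonadjacency does not follow from ``every face contains both hubs''; it needs the equal face lengths together with simplicity. Corollary~\ref{cor:nonunique} contains no argument of this kind.

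The repair is the paper's: do not assume nonadjacency at the start.
\begin{itemize}
\item By Lemma~\ref{lem:three}, every vertex other than $h_1,h_2$ lies on at most two faces, hence has degree $2$.
\item So $G$ is a union of $f$ internally disjoint $h_1$--$h_2$ paths with $p_i\ge0$ internal vertices, where $p_i=0$ means the edge $h_1h_2$. Your rotation/path-sharing argument at $h_1$ goes through with this degenerate path allowed.
\item The face lengths give $p_i+p_{i+1}=k-2$, hence $p_{i+2}=p_i$.
\item If $f$ is odd, this forces all $p_j$ to equal $(k-2)/2\ge1$.
\item If $f\ge4$ is even and some $p_i=0$, then also $p_{i+2}=0$ with $i+2\not\equiv i$. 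That gives two parallel edges $h_1h_2$, contradicting simplicity.
\end{itemize}
Hence every $p_i\ge1$, and only at this point is $G$ a balanced theta graph.
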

	
	\begin{proof}
		Split $\FA$ into $\FA_1$, the faces containing $h_1$ but not $h_2$; $\FA_2$, those containing $h_2$ but not $h_1$; and $\FA_3$, those containing neither. Apply Lemma~\ref{lem:freeassign2} to $\FA_1$ with $h=h_1$, to $\FA_2$ with $h=h_2$, and Lemma~\ref{lem:freeassign} to $\FA_3$. For $F\in\FA_1$ the set $F\setminus(\{h_1\}\cup\phi(F))$ is a single vertex $b_F\ne h_1,h_2$, so $x_{b_F}\le\delta$ and
		\[
		\prod_{v\in F}x_v=A\,x_{b_F}\prod_{v\in\phi(F)}x_v\ \le\ \delta\,Y_F,\qquad Y_F:=\prod_{v\in\phi(F)}x_v .
		\]
		For $F\in\FA_3$ the two vertices of $F\setminus\phi(F)$ are not hubs, so $\prod_{v\in F}x_v\le\delta^{2}Y_F\le\delta Y_F$; and $\FA_2$ is treated like $\FA_1$. In each of the three families, H\"older's inequality with exponents $\frac k2$ and $\frac k{k-2}$, the arithmetic--geometric mean inequality $Y_F^{k/(k-2)}\le\frac1{k-2}\sum_{v\in\phi(F)}x_v^{k}$, and the multiplicity bound of the assignment give
		\[
		\sum_{F}Y_F\ \le\ |\FA_i|^{2/k}\Bigl(\sum_FY_F^{\frac k{k-2}}\Bigr)^{\frac{k-2}{k}}
		\ \le\ a^{2/k}\Bigl(\frac{6W_{\FA}}{k-2}\Bigr)^{\frac{k-2}{k}} ,
		\]
		since the vertices in the sets $\phi(F)$ are non-hub vertices on faces of $\FA$. Adding the three families and using~\eqref{eq:WA} with $(s+s')^{\theta}\le s^{\theta}+s'^{\theta}$,
		\begin{equation}\label{eq:Sp2}
			S_0\ \le\ 3\delta\Bigl(\frac6{k-2}\Bigr)^{\frac{k-2}{k}}\Bigl[a^{2/k}(u+m_Q)^{\frac{k-2}{k}}+(kK)^{\frac{k-2}{k}}\,a\,f^{-\frac{k-2}{k}}\Bigr].
		\end{equation}
		By the weighted arithmetic--geometric mean inequality with weights $\frac2k$ and $\frac{k-2}k$, and~\eqref{eq:Rp},
		\begin{align*}
			a^{2/k}(u+m_Q)^{\frac{k-2}{k}}
			&=\bigl(af^{\frac2k-1}\bigr)^{2/k}\bigl(f^{2/k}(u+m_Q)\bigr)^{\frac{k-2}{k}}\\
			&\le\frac2k\,a\,f^{\frac2k-1}+\frac{k-2}kf^{2/k}(u+m_Q)\ \le\ \max\bigl(c_L^{-1},c_M^{-1}\bigr)R ,
		\end{align*}
		where the exponent of $f$ in the first equality is $\frac2k(\frac2k-1)+\frac2k\cdot\frac{k-2}k=0$; and $af^{-(k-2)/k}=af^{2/k-1}\le c_L^{-1}R$. Hence~\eqref{eq:Sp2} gives $S_0\le C_3\,\delta\,R$ with $C_3$ depending only on $k$ and $K$. Since $\delta=x_3\to0$ by Theorem~\ref{thm:pstab}, there is $f_1$ such that $S_0\le R/2$ for $f\ge f_1$. If $a\ge1$ then $R>0$ by~\eqref{eq:Rp}, contradicting Lemma~\ref{lem:pcompare}. Hence every face contains both $h_1$ and $h_2$. A vertex other than $h_1,h_2$ then lies, by Lemma~\ref{lem:three}, on at most two faces, so has degree $2$; a $2$-connected plane graph in which all vertices but two have degree $2$ consists of internally disjoint paths between those two, every face is a $k$-cycle through both. List its $f$ paths in cyclic order, and let $p_i\ge0$ be their numbers of internal vertices. The face lengths give $p_i+p_{i+1}=k-2$, and hence $p_{i+2}=p_i$, with indices modulo $f$. If some $p_i=0$, these relations force $f$ to be even and, since $f\ge3$, at least two paths to have no internal vertex. This would give two parallel edges between the hubs, contrary to simplicity. Thus every $p_i\ge1$, and $G$ is a balanced theta graph.
	\end{proof}
	
	\begin{rem}\label{rem:pfinal}
		Only the qualitative conclusions of Theorem~\ref{thm:pstab} are used; no convergence rate for $x_3$ is needed. The assignment is what makes this possible: bounding all $k-1$ non-hub coordinates of a face of $\FA_1$ by the arithmetic--geometric mean inequality would charge the mass of a vertex once for every face of $\FA$ at it, and a vertex may lie on many such faces, so the charge would be controlled only through the number of faces of $\FA$ and the largest non-hub coordinate, which would require a polynomial rate for $x_3$. The assignment charges the mass of every vertex at most six times, and the one coordinate left over is bounded by $x_3$ outright. For $k=4$ Theorem~\ref{thm:k4} is stronger, since it holds for every $n\ge5$ and identifies $\HH(K_{2,n-2})$; for $k\ge5$ Theorem~\ref{thm:pfinal} is the first determination of the extremal hypergraph, and for $k\ge6$ the extremal hypergraph is not unique, as Corollary~\ref{cor:nonunique} makes precise.
	\end{rem}
	
	\begin{proof}[Proof of Theorem~\ref{thm:planar}]
		Theorem~\ref{thm:pfinal} says that an extremal hypergraph is the face hypergraph of a balanced theta graph, and by Proposition~\ref{prop:necklace} all balanced theta graphs with the same number of faces have the same spectral radius; hence every one of them is extremal.
	\end{proof}
	
	For $k=4$ there is a stronger statement: it holds for every $n\ge5$, and its proof passes through the underlying plane graph $G$ rather than through the Perron vector. In a closed $k$-angulation every edge lies on exactly two faces, and if $F$ has boundary cycle $v_1\cdots v_k$ then, with $y_v=x_v^{k/2}$, the product $\prod_{v\in F}x_v$ is the geometric mean of the $k$ numbers $y_{v_i}y_{v_{i+1}}$. The arithmetic--geometric mean inequality and a summation over the faces therefore give
	\begin{equation}\label{eq:shadowbd}
		\lambda\bigl(\HH(G)\bigr)\ \le\ \lambda_1(G)
	\end{equation}
	for every closed $k$-angulation $G$, where $\lambda_1(G)$ is the adjacency spectral radius of $G$.
	
	\begin{lem}\label{lem:bip}
		Every planar bipartite graph $G$ on $n\ge4$ vertices satisfies $\lambda_1(G)\le\sqrt{2n-4}$, with equality if and only if $G=K_{2,n-2}$.
	\end{lem}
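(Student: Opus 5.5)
Our approach is the Hong-type bound $\lambda_1(G)^2\le 2m-n+1$ for connected graphs, combined with the planar bipartite edge bound $m\le 2n-4$. First we reduce to the connected case. The spectral radius of a graph is the maximum over its components. Each component on $n'$ vertices is planar bipartite, and its spectral radius is at most $\sqrt{n'-1}$ if it is a tree (by the star bound) and at most the bound for $n'\ge4$ otherwise. In either case it is strictly below $\sqrt{2n-4}$ when $n'<n$, since $2n'-4<2n-4$ and $n'-1\le 2n-5$ for $n\ge4$. So we assume $G$ is connected.

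Next we invoke Hong's inequality for a connected graph with $n$ vertices and $m$ edges: $\lambda_1^2\le 2m-n+1$, with equality exactly for $K_n$ and for $K_{1,n-1}$. Planarity and bipartiteness give $m\le 2n-4$ when $n\ge3$, from Euler's formula with faces of length at least $4$. Combining, $\lambda_1^2\le 4n-8-n+1=3n-7$. This is too weak, since $3n-7>2n-4$ for $n>3$. Hong's bound alone is therefore not enough, and we need an argument that uses bipartiteness.

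The better route uses bipartiteness directly. Let $G$ have parts $X,Y$ and let $y$ be the Perron vector. Then $\lambda_1^2$ is the spectral radius of $A^2$ restricted to $X$. For $u\in X$ we have $\lambda_1^2 y_u=\sum_{w\in X} c(u,w)y_w$, where $c(u,w)$ is the number of common neighbours and $c(u,u)=d(u)$. Rather than working with this, we use the standard bound for bipartite graphs, $\lambda_1^2\le\max_{uv\in E}d(u)d(v)$. This follows from $\lambda_1^2\le\max_{uv} d(u)d(v)$ for any graph, obtained via the line-graph or $\sqrt{d(u)d(v)}$ weighting. However, this bound is also not sharp enough: a vertex of degree $n-2$ adjacent to one of degree $2$ gives $2n-4$, but large degrees on both ends of an edge can occur.

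The plan we commit to is an edge-local argument. Writing $\lambda_1^2=\sum_{v}(\sum_{w\sim v}y_w)^2$ with $\|y\|=1$, we use planarity through the observation that two vertices of the same part have at most two common neighbours unless $G$ is close to $K_{2,n-2}$. Concretely, a $K_{3,3}$-free condition bounds $\sum_{u\ne w\in X}c(u,w)\,y_uy_w$. We write
\[
\lambda_1^2=\sum_{u\in X}d(u)y_u^2+\sum_{u\ne w\in X}c(u,w)y_uy_w ,
\]
and we choose $X$ to be the part containing the two highest-weight vertices. We then bound the off-diagonal term using that $\sum_{\{u,w\}\subseteq X}c(u,w)=\sum_{v\in Y}\binom{d(v)}2$, together with planarity of the squared structure, in the spirit of Lemma~\ref{lem:amb}.

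This is the main obstacle: obtaining the exact constant $2n-4$ with equality characterization. The first thing we would try is the known proof for planar bipartite graphs, which shows $\lambda_1^2\le\max_{uv\in E}(d(u)+d(v))+\text{correction}$, or directly that $\lambda_1^2\le$ the maximum of $\sum_{w\sim v}d(w)$ over $v$. In a planar bipartite graph, $\sum_{w\sim v}d(w)\le 2n-4$ follows by applying Euler's formula to the subgraph consisting of the edges at $N(v)$, which is planar bipartite on at most $n$ vertices and whose edge count is exactly this sum. Since the maximum average 2-degree bounds $\lambda_1^2$ (as $\lambda_1^2\le\max_v\frac{1}{d(v)}\sum_{w\sim v}d(w)\cdot$, using the row sums of $A^2$), we would use $\lambda_1^2\le\lambda_1(A^2)\le\max_v\sum_{w\sim v}d(w)\le 2n-4$.

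For equality, the row sums of $A^2$ must all equal $2n-4$, and each star-neighbourhood subgraph must be a maximal planar bipartite graph (a quadrangulation) on all $n$ vertices. Since $G$ is connected and the row sums are constant, this forces $K_{2,n-2}$, which we would check by analysing when $\sum_{w\sim v}d(w)=2n-4$ holds for every $v$.
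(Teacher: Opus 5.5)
Your final argument for the inequality is the paper's. The row sum of $A(G)^2$ at $v$ is $\sum_{w\sim v}d(w)$. Since no edge has both ends in $N(v)$, this counts the edges of the planar bipartite subgraph $H_v$ formed by the edges at $N(v)$, and Euler's bound gives $e(H_v)\le 2|V(H_v)|-4\le 2n-4$. You should add the trivial case $|V(H_v)|\le2$, where $e(H_v)\le1$, because $e\le 2v-4$ needs $v\ge3$. The detours through Hong's bound, $\max_{uv}d(u)d(v)$ and the off-diagonal $K_{3,3}$ estimate contribute nothing and can be dropped. Also, $\max_v\frac1{d(v)}\sum_{w\sim v}d(w)$ bounds $\lambda_1$, not $\lambda_1^2$; as a bound on $\lambda_1^2$ it already fails for $C_4$.

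The genuine gap is the equality case. You assert it (``this forces $K_{2,n-2}$, which we would check\dots'') but do not prove it, and two steps are missing.

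First, connectivity of $G$ does not let you apply the Perron--Frobenius equality case (spectral radius equal to the largest row sum only if all row sums are equal) to $A(G)^2$ itself. For bipartite $G$ with parts $X,Y$ this matrix is block diagonal, with blocks $MM^{\top}$ on $X$ and $M^{\top}M$ on $Y$, hence reducible. You must apply the equality case to one block, say the one on $X$. That block is irreducible when $G$ is connected, because vertices of $X$ are linked by chains of common neighbours. It has spectral radius $\lambda_1(G)^2$, because the two blocks share their nonzero spectrum. Only this gives $e(H_v)=2n-4$ for every $v\in X$.

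Second, you observe that each $H_v$ must span all $n$ vertices but do not draw the consequence. Since $V(H_v)\cap Y=N(v)$, it says $N(v)=Y$ for every $v\in X$, so $G=K_{|X|,|Y|}$. Nonplanarity of $K_{3,3}$ gives $\min\{|X|,|Y|\}\le2$. The star is excluded because $\lambda_1(K_{1,n-1})^2=n-1<2n-4$ for $n\ge4$. This is exactly how the paper concludes; without it you have proved only the inequality.

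Your reduction to connected $G$ via trees and induction is valid but roundabout. The row-sum bound applied to a component on $n'<n$ vertices already gives $\lambda_1^2\le\max\{1,2n'-4\}<2n-4$. The paper instead joins the components by edges and uses strict monotonicity of $\lambda_1$.
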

	
	\begin{proof}
		Let $A$ and $B$ be the parts of $G$. Since $A(G)^{2}$ is nonnegative, $\lambda_1(G)^{2}$ is at most its largest row sum, which at $u$ equals $\sum_{v\sim u}d(v)$. Fix $u$, say $u\in A$, so that $N(u)\subseteq B$. Every edge of $G$ has exactly one endpoint in $B$, hence at most one endpoint in $N(u)$, so $\sum_{v\sim u}d(v)$ is the number of edges of the bipartite subgraph $H$ of $G$ between $N(u)$ and $A$. If $|N(u)|+|A|\le2$, then $e(H)\le1\le2n-4$. Otherwise $H$ is planar and bipartite on at least three vertices, so
		\begin{equation}\label{eq:rowsum}
			\sum_{v\sim u}d(v)=e(H)\le2\bigl(|N(u)|+|A|\bigr)-4\le2\bigl(|B|+|A|\bigr)-4=2n-4 .
		\end{equation}
		
		Suppose equality holds in the lemma. Then $G$ must be connected: otherwise, joining all its components by suitable edges through their outer faces gives a connected planar bipartite proper supergraph of $G$. Its spectral radius is strictly larger, contradicting the bound just proved. Then the restriction of $A(G)^{2}$ to $A$ is irreducible, and an irreducible nonnegative matrix has spectral radius equal to its largest row sum only if all row sums are equal; so, for the part attaining the maximum, say $A$, we have $\sum_{v\sim u}d(v)=2n-4$ for every $u\in A$, and~\eqref{eq:rowsum} forces $N(u)=B$. Hence $G\supseteq K_{|A|,|B|}$, and since $K_{3,3}$ is nonplanar, $\min\{|A|,|B|\}\le2$. The value $1$ is excluded: if $|A|=1$ then $H$ is a star with $n-1$ edges, so $n-1=2n-4$ and $n=3$, and if $|B|=1$ then $G$ itself is that star and the same equality forces $n=3$. So $\min\{|A|,|B|\}=2$, and then $2(n-2)=|A|\,|B|\le e(G)\le2n-4$ leaves no room for a further edge, that is, $G=K_{2,n-2}$.
	\end{proof}
	
	\begin{proof}[Proof of Theorem~\ref{thm:k4}]
		Write $\HH=\HH(G)$ with $G$ a closed $4$-angulation on $n\ge5$ vertices. A plane graph all of whose faces are even is bipartite, so~\eqref{eq:shadowbd} and Lemma~\ref{lem:bip} give $\lambda(\HH)\le\lambda_1(G)\le\sqrt{2n-4}$. If equality holds then $\lambda_1(G)=\sqrt{2n-4}$, so $G=K_{2,n-2}$ by Lemma~\ref{lem:bip}; and $K_{2,n-2}$ is the balanced theta graph with $p_1=\dots=p_{n-2}=1$, whose value is $\sqrt{2(n-2)}$ by Proposition~\ref{prop:necklace}.
	\end{proof}
	
	The hypothesis $n\ge5$ is needed for the equality case. For $n=4$ the only closed $4$-angulation is $C_4$, both of whose faces have vertex set $V(C_4)$; so $\HH(C_4)$ has a single edge, $\lambda(\HH(C_4))=1$, and the bound $2$ is not attained. This is also the one value of $t$, namely $t=2$, excluded in Proposition~\ref{prop:necklace}.
	
	The bound~\eqref{eq:shadowbd} holds for every $k$. For balanced theta graphs as $n\to\infty$, its right side has order $\sqrt n$ while $\lambda(\HH)$ has order $n^{2/k}$, so it gives the sharp extremal value in this family only for $k=4$. The reason is visible in the equality case: for a balanced theta graph the boundary cycle of a face reads $h_1,P_i,h_2,P_{i+1}$, and the arithmetic--geometric mean step is an equality only if all $k$ edge products along that cycle agree. When $k=4$ every edge of a face joins a branch vertex to an internal vertex and all four products agree; when $k\ge5$, every face has an internal edge, whose product is smaller than that of an edge incident with a branch vertex at the Perron vector of a balanced theta graph. Thus this edge-based arithmetic--geometric mean estimate is strict. So Theorem~\ref{thm:k4} does not indicate how to treat $k\ge5$, which is why those $k$ are treated through the Perron vector in Section~\ref{sec:planar}.
	
	\section{Concluding Remarks}\label{sec:remark}
	
	The proof of Theorem~\ref{thm:planar} has the same shape as that of Theorem~\ref{thm:main}: a heavy--light stability statement, Theorem~\ref{thm:pstab}, followed by a comparison with the extremal configuration itself in Theorem~\ref{thm:pfinal}. Two devices are specific to the planar case. Lemma~\ref{lem:amb} bounds the number of light vertices serving more than one heavy pair by a constant depending on $k$ and the size of the heavy set, through a $K_{3,3}$ minor in the vertex--face incidence graph; without it the multiplicity of the light mass across pairs would spoil the constant in the master inequality. And Lemmas~\ref{lem:freeassign} and~\ref{lem:freeassign2} replace the ownership of Lemma~\ref{lem:own}, which is unavailable because the dual of a closed $k$-angulation is not a tree; they give assignments of bounded multiplicity without the edge condition of Definition~\ref{def:assign}, and the elimination step needs no more, because at least one non-hub coordinate left unassigned on each such face is bounded by $x_3$ directly. Proposition~\ref{prop:noassign} shows that the boundary-edge condition cannot be imposed with uniformly bounded multiplicity; the arguments here do not require it. What we do not have for the planar problem is the full range of $n$: for $k=4$ Theorem~\ref{thm:k4} covers every $n\ge5$, whereas for $k\ge5$ the threshold in Theorem~\ref{thm:planar} is not explicit. At the level of graphs the corresponding question is now closed: the outerplanar conjecture by Lin and Ning~\cite{LN} and the planar one by Liu, Ning and Wang~\cite{LNW}.
	
	Three quantitative points remain open in the outerplanar case. The first is the exponent $\varrho_k$ of Theorem~\ref{thm:stab}, which we have not tried to optimize; it is what balances the two error terms of the master inequality, and it is far from what the extremal hypergraph suggests. Because Section~\ref{sec:hub} uses only the qualitative content of Theorem~\ref{thm:stab}, improved quantitative bounds in Theorem~\ref{thm:stab} could reduce the threshold $f_1$ of Theorem~\ref{thm:main}; the estimates allow an explicit threshold to be extracted, but we have not optimized it. The second is the increment of the fan functional. The gluing argument of Lemma~\ref{lem:super} gives a lower bound with constant $\gamma_k=c_k/(4k)$. As Remark~\ref{rem:increment} notes, the asymptotic $\theta_d=c_kd^{1/k}(1+o(1))$ yields increments only in blocks of faces and does not by itself determine the asymptotic of the single-face increment. The lower bound suffices here, but a sharp increment estimate would simplify the comparison in Section~\ref{sec:hub}. The third is the range of $n$. For $k=3$ the theorem of~\cite{ELW} is also only for large $n$, whereas at the level of graphs both conjectures are now settled for all $n$ with an explicit finite list of exceptions: the fan $K_1+P_{n-1}$ is extremal among outerplanar graphs for every $n\ne6$~\cite{LN}, and $K_2+P_{n-2}$ among planar graphs for every $n\ge9$~\cite{LNW}. Exceptions occur for hypergraphs as well, since Remark~\ref{rem:p3small} shows that $D_{n-2}$ is not extremal for $n=8$. Whether the fan $k$-angulation is extremal for every admissible $n$, with finitely many exceptions listed explicitly, is open for every $k\ge3$.
	
	Finally, the graph problem has been studied on surfaces other than the sphere. Hong~\cite{H1} bounded the spectral radius of projective planar and toroidal graphs, and later~\cite{H2} of graphs of given genus; Ellingham and Zha~\cite{EZ} sharpened the latter to $2+\sqrt{2n+8\gamma-6}$ for Euler genus $\gamma$. The extremal structure was settled only recently: Zhai, Fang and Lin~\cite{ZFL} proved that for every fixed $\gamma$ and all sufficiently large $n$, every extremal graph is obtained from $K_2+P_{n-2}$ by adding exactly $3\gamma$ edges, all confined to a core of bounded order, the rest of the graph consisting of two pendant paths whose lengths differ by at most one. The two dominant vertices therefore survive on every surface, and only a bounded local configuration changes with $\gamma$.

	Every ingredient of the present argument is a statement about the embedding, and the two that are specific to the sphere both rest on the nonplanarity of $K_{3,3}$: that three vertices lie on at most two common faces (Lemma~\ref{lem:three}) and that few light vertices serve more than one heavy pair (Lemma~\ref{lem:amb}). On a surface of Euler genus $\gamma$ they are replaced by bounds depending on $\gamma$, since $K_{3,2\gamma+3}$ does not embed there, and bounds of this kind are all the stability arguments require. If the graph case is a guide, what the surface should change is not the number of dominant vertices but a bounded local configuration around them. Identifying that configuration for $k$-angulations of a surface of Euler genus $\gamma\ge1$ is open for every $k\ge3$.

	\section*{Declaration of generative AI and AI-assisted technologies in the manuscript preparation process}
	
	During the preparation of this work, the authors used Claude (Anthropic) for language refinement, technical editing, and computational verification of examples. The authors reviewed and edited the output as needed and take full responsibility for the content of the published article.


\begin{thebibliography}{99}
		
		\bibitem{BR}
		B.N. Boots and G.F. Royle, A conjecture on the maximum value of the principal eigenvalue of a planar graph, {\it Geograph. Anal.} {\bf 23} (1991) 276--282.
		
		\bibitem{BH}
		A.E. Brouwer and W.H. Haemers, {\it Spectra of Graphs}, Universitext, Springer, New York, 2012.
		
		\bibitem{CV}
		D. Cao and A. Vince, Spectral radius of a planar graph, {\it Linear Algebra Appl.} {\bf 187} (1993) 251--257.
		
		\bibitem{CPZ}
		K.C. Chang, K. Pearson and T. Zhang, Perron--Frobenius theorem for nonnegative tensors, {\it Commun. Math. Sci.} {\bf 6} (2008) 507--520.
		
		\bibitem{CD}
		J. Cooper and A. Dutle, Spectra of uniform hypergraphs, {\it Linear Algebra Appl.} {\bf 436} (2012) 3268--3299.
		
		\bibitem{CR}
		D. Cvetkovi\'c and P. Rowlinson, The largest eigenvalue of a graph: a survey, {\it Linear Multilinear Algebra} {\bf 28} (1990) 3--33.
		
		\bibitem{ELW}
		M.N. Ellingham, L. Lu and Z. Wang, Maximum spectral radius of outerplanar 3-uniform hypergraphs, {\it J. Graph Theory} {\bf 100} (2022) 671--685.
		
		\bibitem{EZ}
		M.N. Ellingham and X. Zha, The spectral radius of graphs on surfaces, {\it J. Combin. Theory Ser. B} {\bf 78} (2000) 45--56.
		
		\bibitem{FGH}
		S. Friedland, S. Gaubert and L. Han, Perron--Frobenius theorems for nonnegative multilinear forms and extensions, {\it Linear Algebra Appl.} {\bf 438} (2013) 738--749.
		
		\bibitem{GR}
		C. Godsil and G. Royle, {\it Algebraic Graph Theory}, Graduate Texts in Mathematics {\bf 207}, Springer, New York, 2001.
		
		\bibitem{GH}
		B.D. Guiduli and T.P. Hayes, On the spectral radius of graphs of given genus, preprint, 1998.
		
		\bibitem{H1}
		Y. Hong, On the spectral radius and the genus of graphs, {\it J. Combin. Theory Ser. B} {\bf 65} (1995) 262--268.
		
		\bibitem{H2}
		Y. Hong, Upper bounds of the spectral radius of graphs in terms of genus, {\it J. Combin. Theory Ser. B} {\bf 74} (1998) 153--159.
		
		\bibitem{Li}
		L.-H. Lim, Singular values and eigenvalues of tensors: a variational approach, in: {\it Proceedings of the IEEE International Workshop on Computational Advances in Multi-Sensor Adaptive Processing}, vol.\ 1, Puerto Vallarta, 2005, pp.\ 129--132.
		
		\bibitem{LN}
		H. Lin and B. Ning, A complete solution to the Cvetkovi\'c--Rowlinson conjecture, {\it J. Graph Theory} {\bf 97} (2021) 441--450.
		
		\bibitem{LNW}
		L. Liu, B. Ning and Y. Wang, A complete solution to the Boots--Royle/Cao--Vince conjecture, preprint, arXiv:2607.19268, 2026.
		
		\bibitem{Q}
		L. Qi, Eigenvalues of a real supersymmetric tensor, {\it J. Symbolic Comput.} {\bf 40} (2005) 1302--1324.
		
		\bibitem{SW}
		A.J. Schwenk and R.J. Wilson, On the eigenvalues of a graph, in: L.W. Beineke and R.J. Wilson (eds.), {\it Selected Topics in Graph Theory}, Academic Press, London, 1978, pp.\ 307--336.
		
		\bibitem{TT}
		M. Tait and J. Tobin, Three conjectures in extremal spectral graph theory, {\it J. Combin. Theory Ser. B} {\bf 126} (2017) 137--161.
		
		\bibitem{W}
		D.B. West, {\it Introduction to Graph Theory}, 2nd ed., Prentice Hall, Upper Saddle River, NJ, 2001.
		
		\bibitem{Y}
		H. Yuan, A bound on the spectral radius of graphs, {\it Linear Algebra Appl.} {\bf 108} (1988) 135--139.
		
		\bibitem{ZFL}
		M. Zhai, L. Fang and H. Lin, Spectral extremal graphs on closed surfaces of fixed Euler genus, preprint, arXiv:2601.16380, 2026.
		
		\bibitem{Z}
		A.A. Zykov, Hypergraphs, {\it Uspekhi Mat. Nauk} {\bf 29} (1974) 89--154 (in Russian).
		
	\end{thebibliography}
\end{document}